\numberwithin{equation}{section} 
\newtheorem{thm}{Theorem}[section]
\newtheorem{prp}[thm]{Proposition} 
\newtheorem{lmm}[thm]{Lemma}  
\newtheorem{crl}[thm]{Corollary}
\newtheorem*{prp*}{Proposition}
\theoremstyle{definition}
\newtheorem{dfn}[thm]{Definition}
\newtheorem{rmk}[thm]{Remark}
\newtheorem{cnj}[thm]{Conjecture}
\DeclareFontFamily{U}{mathx}{\hyphenchar\font45}
\DeclareFontShape{U}{mathx}{m}{n}{
      <5> <6> <7> <8> <9> <10>
      <10.95> <12> 
      mathx10
      }{}
\DeclareSymbolFont{mathx}{U}{mathx}{m}{n}
\DeclareMathAccent{\widecheck}{0}{mathx}{"71}
\def\BE#1{\begin{equation}\label{#1}}
\def\EE{\end{equation}}
\def\eref#1{(\ref{#1})}
\def\lra{\longrightarrow}
\def\llra{\longleftrightarrow}
\def\xra#1{\xrightarrow{\hspace*{#1cm}}}
\def\lr#1{\langle{#1}\rangle}
\def\blr#1{\big\langle{#1}\big\rangle}
\def\ov#1{\overline{#1}}
\def\ti#1{\tilde{#1}}
\def\wc#1{\widecheck{#1}}
\def\wt#1{\widetilde{#1}}
\def\wh#1{\widehat{#1}}
\def\tn#1{\textnormal{#1}}
\def\smsize#1{\begin{small}#1\end{small}}
\def\ri#1{\accentset{\circ}{#1}}
\def\al{\alpha}
\def\be{\beta}
\def\de{\delta}
\def\ep{\epsilon}
\def\ga{\gamma}
\def\io{\iota}
\def\ka{\kappa}
\def\la{\lambda}
\def\na{\nabla}
\def\om{\omega}
\def\th{\theta}
\def\vph{\varphi}
\def\vr{\varrho}
\def\vt{\vartheta}
\def\uvt{\underline{\vt}}
\def\vp{\varpi}
\def\ve{\varepsilon}
\def\ze{\zeta}
\def\Ga{\Gamma}
\def\De{\Delta}
\def\Om{\Omega}
\def\Th{\Theta}
\def\fa{\mathfrak a}
\def\ufa{\underline{\fa}} 
\def\cA{\mathcal A}
\def\C{\mathbb C}
\def\cC{\mathcal C}
\def\bfC{\mathbf C}
\def\fD{\mathfrak D}
\def\bD{\mathbb D}
\def\bE{\mathbb E}
\def\tE{\textnormal{E}}
\def\bF{\mathbb F}
\def\cF{\mathcal F}
\def\cH{\mathcal H}
\def\bI{\mathbb I}
\def\fj{\mathfrak j} 
\def\cK{\mathcal K}
\def\cL{\mathcal L}
\def\fL{\mathfrak L}
\def\M{\mathcal M}
\def\chM{\wc\M}
\def\bN{\mathbb N}
\def\cN{\mathcal N}
\def\cO{\mathcal O}
\def\P{\mathbb P}
\def\cP{\mathcal P}
\def\cR{\mathcal R}
\def\Q{\mathbb Q}
\def\R{\mathbb R}
\def\T{\mathbb T}
\def\cU{\mathcal U}
\def\chU{\wc\cU}
\def\tV{\textnormal{V}}
\def\cZ{\mathcal Z}
\def\Z{\mathbb Z}
\def\bm{\mathbf m}
\def\bs{\mathbf s}
\def\bz{\mathbf z}
\def\bfI{\mathbf I}
\def\ne{\textnormal{e}}
\def\fI{\mathfrak i}
\def\0{\mathbf 0}
\def\Si{\Sigma}
\def\coker{\textnormal{coker}}
\def\cyl{\textnormal{cyl}}
\def\Deck{\tn{Deck}}
\def\deg{\textnormal{deg}}
\def\ev{\textnormal{ev}}
\def\GT{\textnormal{GT}}
\def\GW{\textnormal{GW}}
\def\Hom{\textnormal{Hom}}
\def\hor{\textnormal{hor}}
\def\id{\textnormal{id}}
\def\neck{\textnormal{neck}}
\def\ord{\textnormal{ord}}
\def\PD{\textnormal{PD}}
\def\Re{\textnormal{Re}}
\def\RA{\textnormal{RA}}
\def\st{\textnormal{st}}
\def\vrt{\textnormal{vrt}}
\def\vir{\textnormal{vir}}
\def\to{~$\lra$~}
\def\i{\infty}
\def\w{\wedge}
\def\prt{\partial}
\def\dbar{\bar\partial}
\def\eset{\emptyset}
\def\nd{\textnormal{d}}
\def\bu{\bullet}
\def\II{\mathrm{II}}
\def\XVm{X_V^m}
\def\XYm{X\!\cup_V^m\!Y}
\def\tXV{\wh{X}_V}
\def\tYV{\wh{Y}_V}
\def\tXVm{\wh{X}_V^m}
\def\tYVm{\wh{Y}_V^m}
\def\tXYm{X\!\wh\cup_V^m\!Y}
\def\oXV{\ri{X}_V}
\def\oYV{\ri{Y}_V}
\def\oXVm{\ri{X}_V^m}
\def\oYVm{\ri{Y}_V^m}
\def\oXYm{X\!\ri\cup_V^m\!Y}
\def\oSi{\ri\Si}
\def\hSi{\wh\Si}
\def\oTh{\ri\Th}
\def\tTh{\wh\Th}
\def\oJ{\ri{J}}
\def\oJm{\ri{J}_m}
\def\ou{\ri{u}}
\def\tu{\wh{u}}
\begin{document}

\title{On Symplectic Sum Formulas in\\ Gromov-Witten Theory}
\author{Mohammad F.~Tehrani and Aleksey Zinger\thanks{Partially 
supported by NSF grant 0846978}}
\date{\small March~14, 2014. Updated: \today}
\maketitle

\begin{abstract}

\noindent
This manuscript describes in detail the symplectic sum formulas in 
Gromov-Witten theory and related topological and analytic issues.
In particular, we analyze and compare two analytic approaches
to these formulas.
The Ionel-Parker formula contains two unique features, 
rim tori refinements of relative invariants and the so-called $S$-matrix, 
which   
have been a mystery in Gromov-Witten theory over the past decade.
We explain why the latter, which appears due to imprecise reasoning, 
should not be present and how the former should be interpreted.
While the key gluing argument in the Ionel-Parker work attempts to address
all of the issues relevant to certain ``semi-positive" cases,
it contains several highly technical, but crucial, mistakes, which 
invalidate it and thus the whole paper almost completely.
The idea behind the Li-Ruan approach is to adapt the SFT stretching
of the target.
This has the potential of avoiding many issues with the degeneration
of the metric on the target occurring in the Ionel-Parker approach,
which we expect to realize in a forthcoming paper.
Unfortunately, the Li-Ruan paper is vague about the key notions and 
aspects of the setup, including the definition of relative stable maps, and
does not contain even a description of the local structure of the relative moduli space
or an attempt at a complete proof of any major statement, 
such as the compactness and Hausdorffness of the relative moduli space or
the bijectivity of the gluing construction.
The only technical arguments in this paper concern fairly minor points 
and are either incorrect or unnecessary.
Neither of the two papers even considers gluing stable maps with extra rubber structure,
which is necessary to do for defining the relevant invariants outside of 
a relatively narrow collection of ``semi-positive" cases
and is fundamentally different from the gluing situation in the absolute Gromov-Witten theory. 
In this manuscript, we 
re-formulate the (numerical) symplectic sum formula, 
describe the issues arising in both approaches,  
and explain how the Li-Ruan SFT type idea can be used to address~them.
\end{abstract}

\tableofcontents

\section{Introduction}
\label{intro_sec}

\noindent
Gromov-Witten invariants of symplectic manifolds, which include nonsingular projective varieties, 
are certain counts of pseudo-holomorphic curves that play 
prominent roles in symplectic topology, algebraic geometry, and string theory.
The decomposition formulas, known as symplectic sum formulas 
in symplectic topology and degeneration formulas in algebraic geometry, 
are one of the main tools used to compute Gromov-Witten invariants.
Such formulas are suggested in~\cite{T} and described fully in \cite{LR,Jun2,IPsum}.
They relate Gromov-Witten invariants of a target symplectic manifold 
to Gromov-Witten invariants of simpler symplectic manifolds;
in many cases, these formulas completely determine the former in terms of the latter.\\

\noindent
The main formula of~\cite{IPsum} contains two features not present in 
the formulas of~\cite{LR} and~\cite{Jun2}:
a rim tori refinement of relative invariants and the so-called $S$-matrix.
In Section~\ref{Smat_subs}, we explain why the latter should not have appeared
in the first place and acts by the identity anyway for essentially the same reason.
The situation with the former is explored in detail in~\cite{GWrelIP,GWsumIP}.
While \cite{IPrel,IPsum} only suggest how to construct this refinement,
making incorrect statements on key aspects and in simple examples,
it is possible to implement the general idea behind this refinement
and to extract some qualitative implications from~it.
In this manuscript, we  point out several highly technical, but crucial, mistakes
in the key gluing argument of~\cite{IPsum}.\\

\noindent
The symplectic sum formula of~\cite{LR}, 
which is spread out across multiple statements, is not formulated entirely correctly.
The idea of~\cite{LR} to adapt the SFT  stretching of the target
beautifully captures the degeneration of both the domain and
the target and has a great potential of avoiding many analytic difficulties
caused by the degeneration of the latter arising in~\cite{IPsum}.
Unfortunately, the implementation of this idea does not contain even an attempt at 
a complete proof of any major statement, 
such the compactness of the moduli space of relative maps or  
the bijectivity of the gluing construction.
There is not even a reasonably precise definition of relative stable map in~\cite{LR};
the definition of morphism into the singular fiber is simply wrong for the intended purposes,
as it does not describe limits of maps into smooth fibers.
The only technical arguments in this paper concern fairly minor points 
and are either incorrect or add unnecessary complications.  
Neither~\cite{LR} nor~\cite{IPrel,IPsum} even considers gluing stable maps with 
extra rubber structure,
which is necessary to do for defining the relevant invariants outside of 
the relatively narrow collection of ``semi-positive" cases.\\

\noindent
Section~\ref{issues_sec} summarizes our understanding of the issues 
with \cite{IPrel,IPsum} and~\cite{LR} and
directs to places in this manuscript where they are described in more detail;
considerations related to \cite{Jun1,Jun2} appear in~\cite{AF,Chen,GS}.
Throughout this manuscript, we generally follow the reasoning and notation in~\cite{IPsum} closely, 
but also adapt some of the statements from~\cite{LR} and~\cite{Jun2}.

\subsection{Symplectic sums}
\label{SympSum_subs0}

\noindent
We denote by $(X,\om_X)$ and $(Y,\om_Y)$ compact symplectic manifolds,
of the same dimension and without boundary.
A compact submanifold~$V$ of $(X,\om_X)$ is a \textsf{symplectic hypersurface}
if the real codimension of~$V$ in~$X$ is~2 and $\om_X|_V$ is a nondegenerate two-form on~$V$.
The normal bundle of a symplectic hypersurface~$V$ in~$X$,
\BE{cNXVsymp_e}\cN_XV\equiv \frac{TX|_V}{TV}\approx TV^{\om_X}
\equiv \big\{v\!\in\!T_xV\!:\,x\!\in\!V,\,\om_X(v,w)\!=\!0~\forall\,w\!\in\!T_xV\big\},\EE
then inherits a symplectic structure~$\om_X|_{\cN_XV}$ from~$\om_X$ and 
thus a complex structure up to homotopy.
If
\BE{cNVcond_e}e(\cN_XV)=-e(\cN_YV)\in H^2(V;\Z),\EE
there exists an isomorphism
\BE{cNpair_e}\Phi\!:\cN_XV\otimes_{\C}\cN_YV\approx V\!\times\!\C\EE
of complex line bundles.\\

\noindent
As recalled in Section~\ref{SympSum_subs},
a \textsf{symplectic sum} of symplectic manifolds $(X,\om_X)$ and $(Y,\om_Y)$ 
with a common symplectic divisor~$V$ such that~\eref{cNVcond_e} holds
is a symplectic manifold $(Z,\om_Z)\!=\!(X\!\#_V\!Y,\om_{\#})$ obtained from~$X$ and~$Y$ 
by gluing the complements of tubular neighborhoods of~$V$ in~$X$ and~$Y$ 
along their common boundary as directed by the isomorphism~\eref{cNpair_e}.
In fact, the symplectic sum construction of~\cite{Gf, MW} produces 
a \textsf{symplectic fibration} $\pi\!:\cZ\!\lra\!\De$ 
with central fiber \hbox{$\cZ_0\!=\!X\!\cup_V\!Y$}, where
$\De\!\subset\!\C$ is a disk centered at the origin and 
$\cZ$ is a symplectic manifold with symplectic form~$\om_{\cZ}$
such~that 
\begin{enumerate}[label=$\bullet$,leftmargin=*]
\item  $\pi$ is surjective and is a submersion outside of $V\!\subset\!\cZ_0$,
\item the restriction~$\om_{\la}$ of~$\om_{\cZ}$ to $\cZ_{\la}\equiv\pi^{-1}(\la)$ 
is nondegenerate for every $\la\!\in\!\De^*$,
\item $\om_{\cZ}|_X\!=\!\om_X$, $\om_{\cZ}|_Y\!=\!\om_Y$.
\end{enumerate}
The symplectic manifolds $(\cZ_{\la},\om_{\la})$ with $\la\!\in\!\De^*$
are then symplectically deformation equivalent to each other and denoted~$(X\!\#_V\!Y,\om_{\#})$.
However, different homotopy classes of the isomorphisms~\eref{cNpair_e} give rise
to generally different topological manifolds; see~\cite{Gf0}.\\

\noindent
There is also a retraction $q\!:\cZ\!\lra\!\cZ_0$ such that $q_{\la}\!\equiv\!q|_{\cZ_{\la}}$ 
restricts to a diffeomorphism 
$$\cZ_{\la}-q_{\la}^{-1}(V)\lra \cZ_0-V$$
and to an $S^1$-fiber bundle $q_{\la}^{-1}(V)\!\lra\!V$, whenever $\la\!\in\De^*$.
We denote by $q_{\#}\!:X\!\#_V\!Y\!\lra\!X\!\cup_V\!Y$ a typical collapsing map~$q_{\la}$.\\

\noindent
In the algebraic setting of~\cite{Jun2}, 
$\pi\!:\cZ\!\lra\!\De$ is a holomorphic map from a Kahler manifold~$\cZ$
with an ample line bundle $\cL\!\lra\!\cZ$;
the curvature form of a suitably chosen metric on~$\cL$ gives rise to a symplectic form~$\om_{\cZ}$
on~$\cZ$, as in \cite[Section 1.2]{GH}.

\subsection{Absolute and relative GW-invariants}
\label{AbsRelGWs_subs}

\noindent
If $g,k\!\in\!\Z^{\ge0}$, $\chi\!\in\!\Z$, $A\!\in\!H_2(X;\Z)$, and 
$J$ is an $\om_X$-compatible almost complex structure on~$X$, 
let $\ov\M_{g,k}(X,A)$ and $\wt\M_{\chi,k}(X,A)$ denote the moduli spaces of 
stable $J$-holomorphic $k$-marked maps from connected nodal curves of genus~$g$ 
and from (possibly) disconnected nodal curves of euler characteristic~$\chi$, respectively;
the latter moduli spaces are quotients of disjoint unions of products 
of the former moduli spaces. 
If $V\!\subset\!X$ is a symplectic divisor, $\bs\!\equiv\!(s_1,\ldots,s_{\ell})$
is an $\ell$-tuple of positive integers such that 
\BE{bsumcond_e} s_1+\ldots+s_{\ell}=A\cdot V,\EE
and $J$ restricts to a complex structure on $V$, let 
$\ov\M_{g,k;\bs}^V(X,A)$ and $\wt\M_{\chi,k;\bs}^V(X,A)$ denote the moduli spaces of 
stable $J$-holomorphic $(k\!+\!\ell)$-marked maps from connected nodal curves of genus~$g$ 
and from (possibly) disconnected nodal curves of euler characteristic~$\chi$, respectively,
that have contact with~$V$ at the last $\ell$ marked points of orders $s_1,\ldots,s_{\ell}$.
These moduli spaces are introduced in \cite{LR,IPrel,Jun1} under certain assumptions on~$J$
and reviewed in Section~\ref{RelInv_sub0}.\\

\noindent
There are natural evaluation morphisms
\BE{evdfn_e}\begin{split}
\ev\!\equiv\!\ev_1\!\times\!\ldots\!\times\!\ev_k\!: 
\wt\M_{\chi,k}(X,A),\wt\M_{\chi,k;\bs}^V(X,A)&\lra X^k,\\
\ev^V\!\equiv\!\ev_{k+1}\!\times\!\ldots\!\times\!\ev_{k+\ell}:
\wt\M_{\chi,k;\bs}^V(X,A)&\lra V_{\bs}\equiv V^{\ell},
\end{split}\EE 
sending each element to the values of the map at the marked points.
We denote the restrictions of these maps~to
$$\ov\M_{g,k}(X,A)\subset\wt\M_{2-2g,k}(X,A)
\qquad\hbox{and}\qquad
\ov\M_{g,k;\bs}^V(X,A)\subset\wt\M_{2-2g,k;\bs}^V(X,A)$$
by the same symbols.
Along with the virtual class for $\ov\M_{g,k}(X,A)$,
constructed in \cite{RT2} in the semi-positive case,
in \cite{BF} in the algebraic case, 
and in~\cite{FO,LT} in the general case, the morphisms~\eref{evdfn_e} with $V\!=\!\eset$ give rise
to the (\textsf{absolute}) \textsf{Gromov-Witten} 
and \textsf{Gromov-Taubes} invariants of~$(X,\om_X)$,
\begin{alignat*}{2}
\GW_{g,A}^X\!: \T^*(X)&\lra\Q, &\quad
\GW_{g,A}^X(\al)&=\sum_{k=0}^{\i}\blr{\ev^*\al,[\ov\M_{g,k}(X,A)]^{\vir}},\\
\GT_{\chi,A}^X\!: \T^*(X)&\lra\Q,  &\quad
\GT_{\chi,A}^X(\al)&=\sum_{k=0}^{\i}\blr{\ev^*\al,[\wt\M_{\chi,k}(X,A)]^{\vir}},
\end{alignat*}
where
$$\T^*(X)\equiv \bigoplus_{k=0}^{\i} H^{2*}(X)^{\otimes k} 
\subset\bigoplus_{k=0}^{\i} H^{2*}(X^k)$$
is the tensor algebra of $H^{2*}(X)\!\equiv\!H^{2*}(X;\Q)$.\footnote{Odd cohomology
classes can be considered as well, but at the cost of introducing suitable
signs into the symplectic sum formulas.}
Along with the virtual class for $\ov\M_{g,k;\bs}^V(X,A)$, the morphisms~\eref{evdfn_e} give rise
to the \textsf{relative Gromov-Witten} 
and \textsf{Gromov-Taubes} invariants of~$(X,V,\om_X)$,
\begin{alignat*}{2}
\GW_{g,A;\bs}^{X,V}\!: \T^*(X)&\lra H_*(V_{\bs}), \quad
\GW_{g,A;\bs}^{X,V}(\al)&=\sum_{k=0}^{\i}
\ev^V_*\big(\ev^*\al\!\cap\!\big[\ov\M_{g,k;\bs}^V(X,A)\big]^{\vir}\big),\\
\GT_{\chi,A;\bs}^{X,V}\!: \T^*(X)&\lra H_*(V_{\bs}), \quad
\GT_{\chi,A;\bs}^{X,V}(\al)&=\sum_{k=0}^{\i}\ev^V_*
\big(\ev^*\al\!\cap\!\big[\wt\M_{\chi,k;\bs}^V(X,A)\big]^{\vir}\big),
\end{alignat*}
where $H_*(V_{\bs})\!\equiv\!H_*(V_{\bs};\Q)$.
Such a virtual class is constructed in \cite{IPrel} in ``semi-positive" cases
and in \cite{Jun1} in the algebraic case and is used in~\cite{LR} in the general case;
see Section~\ref{RelInv_subs} for more details.
While the homomorphisms $\GW_{g,A}^X$ and $\GW_{g,A;\bs}^{X,V}$ completely determine
the homomorphisms $\GT_{\chi,A}^X$ and $\GT_{\chi,A;\bs}^{X,V}$, 
the latter lead to more streamlined decomposition formulas for 
(primary) GW-invariants, as noticed in~\cite{IPsum}.

\subsection{A splitting formula for GW-invariants}
\label{GWsplit_subs}

\noindent
The symplectic sum formulas for GW-invariants relate the absolute GW-invariants of~$X\!\#_V\!Y$
to the relative GW-invariants of the pairs $(X,V)$ and $(Y,V)$.
Let 
\BE{H2XYV_e}H_2(X;\Z)\!\times_V\!H_2(Y;\Z)=
\big\{(A_X,A_Y)\!\in\!H_2(X;\Z)\!\times\!H_2(Y;\Z)\!:~A_X\!\cdot_X\!V=A_Y\!\cdot_Y\!V\big\},\EE
where $\cdot_X$ and $\cdot_Y$ denote the homology intersection pairings in~$X$ and~$Y$, e.g.
$$A_X\!\cdot_X\!V=\blr{\PD_XA_X\cup \PD_X[V],[X]}\in\Z.$$
As described in \cite[Section~2.1]{GWrelIP}, there is a natural homomorphism
\BE{homsumdfn_e}
H_2(X;\Z)\!\times_V\!H_2(Y;\Z)\lra H_2(X\!\#_V\!Y;\Z)/\cR_{X,Y}^V, \qquad
(A_X,A_Y)\lra A_X\!\#_V\!A_Y,\EE
where
\BE{cRXYVprop_e}
\cR_{X,Y}^V=\ker\big\{q_{\#*}\!:H_2(X\!\#_V\!Y;\Z)\lra H_2(X\!\cup_V\!Y;\Z)\big\}.\EE
We arrange the GT-invariants of $X\!\#_V\!Y$ into the formal power series
\BE{GTabsser_e} \GT^{X\#_VY}=\sum_{\chi\in\Z}\sum_{\eta\in H_2(X\!\#_V\!Y;\Z)/\cR_{X,Y}^V}
\sum_{C\in\eta}\GT^{X\#_VY}_{\chi,C}\,t_{\eta}\la^{\chi}.\EE
By Gromov's Compactness Theorem for $J$-holomorphic curves, only 
finitely many distinct elements $C\!\in\!\eta$ can be
represented by $J$-holomorphic curves of a given genus, 
since $\om_{\#}$ vanishes on~$\cR_{X,Y}^V$.
Thus, the coefficient of each $t_{\eta}\la^{\chi}$ in $\GT^{X\#_VY}$ is finite.\\

\noindent
For a tuple $\bs\!=\!(s_1,\ldots,s_{\ell})\in(\Z^+)^{\ell}$, let 
$$\ell(\bs)=\ell, \qquad  |\bs|= s_1+\ldots+s_{\ell},   \qquad 
\lr\bs= s_1\cdot\ldots\cdot s_{\ell}.$$
We arrange the GW-invariants of $(X,V)$ and $(Y,V)$ into the formal power series
\BE{GTrelser_e}
\GT^{M,V}=\sum_{\chi\in\Z}\sum_{A\in H_2(M;\Z)}
\sum_{\ell=0}^{\i}\sum_{\begin{subarray}{c}\bs\in(\Z^+)^{\ell}\\ |\bs|=A\cdot_MV\end{subarray}}
\!\!\!\!\!\GT_{\chi,A;\bs}^{M,V}\,t_A\la^{\chi},\EE
where $M\!=\!X,Y$.\\

\noindent
Let
$$V_{\i}=\bigsqcup_{\ell=0}^{\i}\bigsqcup_{\bs\in(\Z^+)^{\ell}}\!\!\!\!\!V_{\bs}\,.$$
We define a pairing $\star\!: H_*(V_{\i})\otimes H_*(V_{\i})\lra\Q[\la^{-1}]$ by
\BE{bspairing_e}
Z_X\star Z_Y=
\begin{cases}
\frac{\lr\bs}{\ell(\bs)!}\la^{-2\ell(\bs)}
Z_X\cdot_{V_{\bs}}\!Z_Y,&\hbox{if}~Z_X,Z_Y\in H_*(V_{\bs});\\
0,&\hbox{if}~Z_X\in H_*(V_{\bs}),~Z_Y\in H_*(V_{\bs'}),~\bs\neq\bs'.
\end{cases}\EE
For homomorphisms $L_X\!:\T^*(X)\!\lra\!H_*(V_{\i})$ and $L_Y\!:\T^*(Y)\!\lra\!H_*(V_{\i})$,
define
\BE{bspairing_e1}
L_X\!\star\!L_Y\!: \T^*(X)\otimes\T^*(Y)\lra\Q[\la^{-1}] \quad\hbox{by}\quad
\{L_X\!\star\!L_Y\}(\al_X\!\otimes\!\al_Y)=L_X(\al_X)\star L_Y(\al_Y).\EE
If in addition  $(A_X,A_Y)\!\in\!H_2(X;\Z)\!\times_V\!H_2(Y;\Z)$ and $\chi_X,\chi_Y\!\in\!\Z$, 
let 
\BE{bspairing_e2}
L_Xt_{A_X}\la^{\chi_X}\star L_Yt_{A_Y}\la^{\chi_Y}
=L_X\!\star\!L_Y\, t_{A_X\#_V A_Y}\la^{\chi_X+\chi_Y}.\EE

\begin{thm}\label{main_thm}
Let $(X,\om_X)$ and $(Y,\om_Y)$ be symplectic manifolds and 
$V\!\subset\!X,Y$ be a symplectic hypersurface satisfying~\eref{cNVcond_e}.
If $q_{\#}\!:X\!\#_V\!Y\!\lra\!X\!\cup_V\!Y$ is a collapsing map 
for an associated symplectic sum fibration
and $q_{\sqcup}\!:X\!\sqcup\!Y\!\lra\!X\!\cup\!_VY$ is the quotient map, then
\BE{SympSumForm_e} \GT^{X\#_VY}(q_{\#}^*\al)=\{\GT^{X,V}\star\GT^{Y,V}\}(q_{\sqcup}^*\al)\EE
for all $\al\!\in\!\T^*(X\!\cup_V\!Y)$.
\end{thm}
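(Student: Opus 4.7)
The plan is to follow a degeneration and gluing analysis built on the symplectic sum fibration $\pi\!:\cZ\!\lra\!\De$ from Section~\ref{SympSum_subs0}. First, choose an $\om_{\cZ}$-compatible almost complex structure $\wt J$ on $\cZ$ that preserves~$TV$ and restricts to $V$-compatible almost complex structures $J_X,J_Y$ on $X,Y$, and set $J_{\la}\!\equiv\!\wt J|_{\cZ_{\la}}$. For small $\la\!\in\!\De^*$ the diffeomorphism $\cZ_{\la}\!\approx\!X\!\#_V\!Y$ together with $J_{\la}$ computes $\GT^{X\#_VY}$, so Theorem~\ref{main_thm} reduces to comparing $J_{\la}$-holomorphic maps into~$\cZ_{\la}$ for $\la$ near~$0$ with pairs of $J_X$- and $J_Y$-holomorphic relative maps matched along~$V_{\bs}$.

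The heart of the argument is a compactness-plus-gluing correspondence. For \emph{compactness}, I would show that any sequence of $J_{\la_n}$-holomorphic maps into $\cZ_{\la_n}$ with $\la_n\!\to\!0$ and bounded energy subconverges, after SFT-type stretching along $V\!\subset\!\cZ_0$, to a stable relative map whose components distribute between $X$ and $Y$, possibly separated by a chain of rubber components into $\P(\cN_XV\!\oplus\!\C)$-type targets, and whose contact orders along~$V$ match some multi-index~$\bs$. Generic limits contain no rubber and are precisely pairs $(u_X,u_Y)\!\in\!\wt\M^V_{\chi_X,k_X;\bs}(X,A_X)\!\times_{V_{\bs}}\!\wt\M^V_{\chi_Y,k_Y;\bs}(Y,A_Y)$ with contact markings identified by an element of $\mathfrak S_{\ell(\bs)}$. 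This is the step where the Li--Ruan stretching, implemented carefully, should avoid the target-degeneration issues encountered in~\cite{IPsum}.

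For the \emph{gluing} direction, each matched pair with contact profile $\bs$ yields an $\lr\bs$-fold family of pre-glued approximate $J_{\la}$-holomorphic maps (one for each $s_i$-th root of~$\la$ at the $i$-th contact node), each corrected uniquely to a genuine $J_{\la}$-holomorphic map by a Newton scheme for the linearized $\dbar$-operator. Symmetrizing over the $\ell(\bs)!$ orderings of the contact points produces the factor $\lr\bs/\ell(\bs)!$ in~\eref{bspairing_e}; smoothing the $\ell(\bs)$ contact nodes lowers the euler characteristic by $2\ell(\bs)$, accounting for the $\la^{-2\ell(\bs)}$ weight; and the resulting homology class is $A_X\!\#_V\!A_Y$ in $H_2(X\!\#_V\!Y;\Z)/\cR_{X,Y}^V$, in agreement with~\eref{bspairing_e2}. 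Pulling cohomology insertions back by~$q_{\#}$ on the glued side and by~$q_{\sqcup}$ on the broken side identifies the two evaluation homomorphisms under this correspondence, converting the limit/gluing bijection on virtual cycles into~\eref{SympSumForm_e}.

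The main obstacle is promoting this to a \emph{bijection} on virtual fundamental classes, rather than merely a set-theoretic surjection. Two points are especially delicate: showing that the strata with nontrivial rubber contribute compatibly to the formula as written in~\eref{bspairing_e1}--\eref{bspairing_e2}---which requires gluing relative maps together with their rubber layers, a step not carried out in either \cite{IPsum} or \cite{LR}---and controlling the cokernel of the linearized operator so that the Newton iteration produces each $J_{\la}$-holomorphic map with the correct multiplicity. Reformulating the output in~\eref{SympSumForm_e} as an identity in $\Q[\la^{-1}]$ is precisely what makes the rubber contributions and the $\la^{-2\ell(\bs)}$ weights bookkeepable in a uniform way, and is also why the $S$-matrix of~\cite{IPsum} should not appear here (cf.\ Section~\ref{Smat_subs}).
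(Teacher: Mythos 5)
Your proposal follows the same degeneration-and-gluing strategy the paper outlines in Sections~\ref{GWsplit_subs} and~\ref{sumpf_subs}: stretch along $V$ in the symplectic sum fibration, match pairs of relative maps along $V_{\bs}$, glue with $\lr\bs$ choices per contact profile, and account for the $\ell(\bs)!$ reordering and $\la^{-2\ell(\bs)}$ euler-characteristic shift via the pairing~\eref{bspairing_e}--\eref{bspairing_e2}, with the $S$-matrix absent for the dimension reasons of Section~\ref{Smat_subs}. You also correctly flag the same outstanding gaps the paper identifies---gluing stable maps with rubber layers and promoting the set-theoretic correspondence to a bijection on virtual classes---so this is a faithful reconstruction of the paper's intended argument rather than a divergent route.
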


\noindent
The motivation behind~\eref{SympSumForm_e}, as well as all other symplectic sum
formulas for GW-invariants, is the following.
The curves in the smooth fibers $\cZ_{\la}\!=\!X\!\#_V\!Y$ of the fibration 
$\pi\!:\cZ\!\lra\!\De$ that contribute to the left-hand side of~\eref{SympSumForm_e}
degenerate, as $\la\!\lra\!0$, to curves in the singular fiber $\cZ_0\!=\!X\!\cup_V\!Y$.
Each of the irreducible components of a limiting curve lies in either $X$ or~$Y$.
Furthermore, the union of the irreducible components of each limiting curve 
that map to~$X$ meets $V\!\subset\!X$ at the same points with the same multiplicity
as  the union of the irreducible components of each limiting curve 
that map to~$Y$; see Figure~\ref{limitcurve_fig}.
Such curves contribute to the right-hand side of~\eref{SympSumForm_e}.
The contact conditions with~$V$ are encoded by a tuple~$\bs$ as above. 
For the reasons outlined in \cite[p938]{IPsum},
each limiting curve of type~$\bs$ arises as a limit of $\lr\bs$ distinct families
of curves into smooth fibers, requiring the factor of~$\lr\bs$ in~\eref{bspairing_e};
see also Section~\ref{IPconv_subs}.
The factor of~$\ell(\bs)!$ in~\eref{bspairing_e} arises due to the fact that the contact 
points with~$V$ are not \'a priori ordered, while the factor of $\la^{-2\ell(\bs)}$ accounts
for the difference between the geometric and algebraic euler characteristics 
of the limiting curve.
Since connected curves can limit to disconnected curves on one side, it is more natural
to formulate decomposition formulas for GW-invariants in terms of counts of disconnected curves,
i.e.~the GT-invariants, as done in~\cite{IPsum}.

\begin{figure}
\begin{pspicture}(38,-2.3)(11,1)
\psset{unit=.3cm}
\psline[linewidth=.1](55,-2)(73,-2)
\rput(54,-2){$V$}\rput(55,-5){$X$}\rput(55,1){$Y$}
\psarc[linewidth=.04](60,-5){3}{0}{180}\pscircle*(60,-2){.3}
\psarc[linewidth=.04](64,-5){1}{180}{0}
\psarc[linewidth=.04](68,-5){3}{0}{180}\pscircle*(68,-2){.3}
\psarc[linewidth=.04](60,1){3}{180}{0}\psarc[linewidth=.04](68,1){3}{180}{0}
\end{pspicture}
\caption{A possible limit of connected curves.}
\label{limitcurve_fig}
\end{figure}
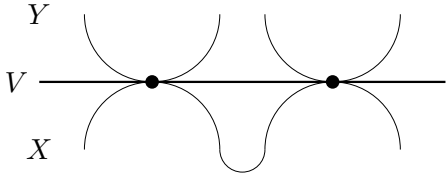

\subsection{Background and alternative formulations}
\label{OtherSSF_subs}

\noindent
A symplectic sum formula for GW-invariants is suggested in \cite[Theorem~10.2]{T},
which is stated without a proof (\cite{T} is expository notes for a conference talk).
The statement of this theorem is limited to the genus~0 GW-invariants 
with primary insertions (i.e.~as in Theorem~\ref{main_thm})
in a ``semi-positive" setting.
The contacts in \cite[Section~10]{T} are assumed to be transverse
(i.e.~only the tuples $\bs\!=\!(1,\ldots,1)$ are considered relevant);
this is not generally the case even in a ``semi-positive" setting, 
as illustrated in Sections~\ref{CH_subs} and~\ref{Hurwitz_subs}.
The formula of \cite[Theorem~10.2]{T} is roughly the specialization of 
\cite[(5.7),(5.4)]{LR} to this simplified case and does not include \cite[(5.9)]{LR},
i.e.~the final third of the symplectic sum formula in~\cite{LR},
which explicitly splits the absolute GW-invariants of~$X\!\#_V\!Y$ into
the relative GW-invariants of~$(X,V)$ and~$(Y,V)$;
the latter had not been defined at the time of~\cite{T}.
The correct multiplicities for non-transverse contacts are suggested 
by elementary algebraic considerations, as in \cite[p938]{IPsum},
which are applied in~\cite{CaH};
the main recursion of~\cite{CaH} is recovered from~\eref{SympSumForm_e} 
in Section~\ref{CH_subs}.\\

\noindent
Theorem~\ref{main_thm} is a basic decomposition formula for GW-invariants, 
presented in the succinct style of~\cite{IPsum}.
However, it is not in any of the three standard symplectic sum papers
and is not directly implied by any formula in these papers.
The primary inputs $q_{\#}^*\al$ on the $X\#_VY$ side of~\eref{SympSumForm_e}
are of the same type as in \cite{LR,Jun2,IPsum}.
A characterization of which cohomology classes on $X\#_VY$ are of the form $q_{\#}^*\al$
is provided in~\cite{IPsum}; see \cite[Lemma~4.11]{GWrelIP}.
The identity~\eref{SympSumForm_e} is equivalent to the intended symplectic sum formula
in~\cite{LR}; unfortunately, it is spread out across several statements in~\cite{LR}
and contains some misstatements, as described in Section~\ref{DfnStatComp_subs}.
The symplectic sum formula in~\cite{IPsum}
 contains two distinct features, the $S$-matrix and rim tori refinements
of relative invariants;
the former should not be present, while the latter is never properly constructed.
Even ignoring these two features, the main symplectic sum statements in~\cite{IPsum},
(0.2) and~(10.14), do not reduce to~\eref{SympSumForm_e},
in part because of definitions that do not make sense; see Section~\ref{DfnStatComp_subs}.
The only one of the three standard symplectic sum papers which contains a correct 
version of the symplectic sum formula (even in the
basic case of primary inputs) is~\cite{Jun2}.
Unfortunately, the main decomposition formulas in~\cite{Jun2},
the two formulas at the bottom of page~201, often yield less sharp
versions of~\eref{SympSumForm_e}, as 
their left-hand sides combine GW-invariants in
the homology classes whose difference lies in a submodule of $H_2(X;\Z)$
containing (often strictly)~$\cR_{X,Y}^V$.\\

\noindent
The general symplectic sum formulas, considered in~\cite{Jun2,IPsum}
and mentioned in~\cite{LR}, involve descendant classes.
These classes effectively impose an order on the combined set of marked points 
of the limiting curve, which has to be taken into account by the 
pairing~\eref{bspairing_e2}.
This is done in~\cite{Jun2} by summing over rules of assignment~$I$
($\vt$ in the notation of Section~\ref{DfnStat_subs}).
It is stated in~\cite{LR} that the symplectic sum formula extends to descendant
invariants, without any mention of some kind of rule of assignment.
In Theorem~\ref{main2_thm}, we give a general symplectic sum
formula summing the GT-type formulas in the style of~\cite{IPsum}
over the rules of assignments of~\cite{Jun2}.
It seems impossible to condense the general symplectic sum formula into 
the format of the formulas~(0.2) and~(10.14) in~\cite{IPsum},
i.e.~the attempted formulation of the symplectic sum formulas in~\cite{IPsum}
is a beautiful idea which
unfortunately does not work as well beyond the case of primary invariants.\\

\noindent
A deficiency of the decomposition formula~\eref{SympSumForm_e} is that it expresses
sums of GW-invariants of $X\!\#_V\!Y$ over homology classes differing by elements of~$\cR_{X,Y}^V$
in terms of relative GW-invariants of $(X,V)$ and $(Y,V)$;
it would of course be preferable to express GW-invariants of $X\!\#_V\!Y$ in
each homology class in terms of relative GW-invariants of $(X,V)$ and~$(Y,V)$.
Rim tori are introduced in \cite[Section~5]{IPrel} with the aim of defining sufficiently fine
relative GW-invariants to rectify this deficiency;
they also provide a concrete way of understanding this deficiency.
Unfortunately, the construction of the refined relative GW-invariants in~\cite{IPrel} 
is only sketched and its description contains incorrect material statements;
its application in the simple cases of \cite[Lemmas~14.5,14.8]{IPsum} is also wrong.
In~\cite{GWrelIP}, we describe the intended construction of~\cite{IPrel},
explain the dependence of the refined ``invariants"
on the choices involved, and obtain some qualitative implications.
The usual relative GW-invariants, as in \cite{LR} and \cite{Jun2}, 
factor through the relative invariants of~\cite{IPrel}
and so the latter are thus indeed refinements (though not necessarily strict refinements)
of the former; see Section~\ref{RelInv_subs2}.
As explained in \cite[Section~1.2]{GWsumIP}, 
these refinements make it possible to express the GW-invariants of $X\!\#_V\!Y$
in terms of the GW-invariants $X\!\cup_V\!Y$, but generally not in terms of 
the (refined) GW-invariants of $(X,V)$ and~$(Y,V)$.
The use of the refined relative invariants in the statement of the symplectic sum formula 
in~\cite{IPsum} causes further problems, including with the definitions of 
the GT power series in \cite[Section~1]{IPsum}; see Section~\ref{DfnStatComp_subs}.\\

\noindent
As explained in \cite[Section~1.5]{GWrelIP},  
the deficiency in question is at most minor in the Kahler category 
and in many other cases.
The speculative extension of this fact to the symplectic category is 
stated~below.

\begin{cnj}\label{RimTori_cnj}
Let $(X,\om_X)$ and $(Y,\om_X)$ be symplectic manifolds and 
$(Z,\om_Z)\!=\!(X\!\#_V\!Y,\om_{\#})$ be their symplectic sum along
a symplectic hypersurface $V\!\subset\!X,Y$ satisfying~\eref{cNVcond_e}.
If $C_1,C_2\!\in\!H_2(Z;\Z)$ are such that $C_1\!-\!C_2\!\in\!\cR_{X,Y}^V$
and $\GW_{g_1,C_1}^Z,\GW_{g_2,C_2}^Z\!\neq\!0$ for some $g_1,g_2\!\in\!\Z^{\ge0}$,
then $C_1\!-\!C_2$ is a torsion class.
\end{cnj}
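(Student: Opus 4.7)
The plan is to use the neck-stretching approach indicated in the Li-Ruan framework and reduce the statement to a homological question about lifts from $H_2(X\!\cup_V\!Y;\Z)$ to $H_2(Z;\Z)$. Fix a compatible almost complex structure $J$ on $Z\!=\!X\#_V\!Y$ that agrees with $J_X$ on $X$, with $J_Y$ on $Y$, and is cylindrical on a long neck $S^1\!\times\![-T,T]\!\times\!V$. After a generic perturbation making the relevant moduli spaces regular, the hypotheses $\GW_{g_i,C_i}^Z\!\neq\!0$ for $i\!=\!1,2$ produce actual $J$-holomorphic curves $u_i$ of genus $g_i$ in class $C_i$. Since $\om_\#$ vanishes on $\cR_{X,Y}^V$, the symplectic areas $\om_\#(C_1)$ and $\om_\#(C_2)$ coincide, so as $T\!\to\!\i$ both sequences are uniformly area-bounded; by SFT-type compactness, a subsequence of each converges to a relative $J$-holomorphic building $\wt u_i$ in $X\!\cup_V\!Y$, possibly with rubber levels over~$V$.

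By construction, $\wt u_1$ and $\wt u_2$ represent the same class $q_{\#*}C_1\!=\!q_{\#*}C_2$ in $H_2(X\!\cup_V\!Y;\Z)$. The conjecture reduces to identifying which preimages in $H_2(Z;\Z)$ of this common class can appear as the homology class of a reglued building. This preimage ambiguity is exactly $\cR_{X,Y}^V$, which via the rim-torus construction is a quotient of $H_1(V;\Z)$, with generators $[S^1_{\tn{fib}}]\!\times\!\ga$ for $\ga\!\in\!H_1(V;\Z)$. The distinguished preimage determined by $\wt u_i$ should be recorded in the asymptotic winding of the rubber cylinders of $\wt u_i$ around the $S^1$-factor of the neck, and one would argue that this winding data is pinned down up to torsion by $\wt u_i$ itself.

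The main obstacle is precisely this last claim: a priori two distinct reglueings of a fixed building could differ by a non-torsion rim torus. I would attempt to rule this out by arguing that such a difference, realized on both ends by honest $J$-holomorphic curves, would interpolate between $u_1$ and $u_2$ via a one-parameter family of regluings whose trace sweeps out a closed surface in $Z$ of vanishing $\om_\#$-area but non-torsion homology, contradicting positivity of area along $J$-holomorphic deformations. Making this rigorous requires a compatibility between the analytic gluing theorem and the homological lift that has not been set up carefully in the symplectic category, as well as a precise accounting of the winding data carried by rubber components at their punctures. This compatibility is automatic in the Kahler setting via the smooth family $\pi\!:\cZ\!\to\!\De$, which identifies $H_2$'s of nearby fibers directly, and that is precisely why the statement is established there but remains conjectural in the symplectic category.
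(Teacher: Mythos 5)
The statement you are addressing is labeled \textbf{Conjecture}~\ref{RimTori_cnj}, not a theorem, and the paper offers no proof of it. The sentence immediately preceding it in the text describes it as a ``speculative extension'' to the symplectic category of a fact known in the Kahler (and some other) cases, citing \cite{GWrelIP} for the latter. So there is no ``paper's own proof'' against which to compare your proposal; what you have written is a sketch of an approach to an explicitly open problem, and you do correctly acknowledge at the end that the key step remains unjustified --- which is consistent with the paper's own framing.

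That said, a few remarks on the sketch itself, since it is worth understanding why it does not close the gap. First, the hypothesis $C_1\!-\!C_2\!\in\!\cR_{X,Y}^V$ already guarantees $q_{\#*}C_1\!=\!q_{\#*}C_2$ in $H_2(X\!\cup_V\!Y;\Z)$ by~\eref{cRXYVprop_e}, so the neck-stretching compactness you invoke only reconfirms this; it does not produce the ``same building'' on both sides. The curves $u_1$ and $u_2$ can (and generically will) degenerate to entirely different buildings in $\cZ_0$, not two reglueings of one. Second, the interpolation/positivity-of-area argument in your final paragraph does not work: the rim tori in $\cR_{X,Y}^V$ are \emph{precisely} classes on which $\om_{\#}$ vanishes (this is why the sum in~\eref{GTabsser_e} makes sense), so a closed surface of vanishing $\om_{\#}$-area representing a non-torsion class in $\cR_{X,Y}^V$ is not a contradiction; it is the definition of the difficulty. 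Moreover, an ``interpolating one-parameter family of reglueings'' between two distinct $J$-holomorphic curves will generally consist of non-holomorphic maps, so positivity of symplectic area for $J$-holomorphic curves says nothing about the trace it sweeps out. The genuine obstruction --- the one the paper flags by calling this a conjecture --- is that in the general symplectic setting there is no known mechanism pinning the homology class of the glued curve to within torsion of a distinguished lift of the building's class, whereas in the Kahler/algebraic setting the total space $\cZ$ and the family $\pi\!:\cZ\!\to\!\De$ provide such a mechanism. Your closing paragraph identifies this correctly; the preceding ``proof'' of it does not.
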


\noindent
Families of curves in the smooth fibers $\cZ_{\la}\!=\!X\!\#_V\!Y$ of the fibration 
$\pi\!:\cZ\!\lra\!\De$ can limit, as $\la\!\lra\!0$, to a curve in
the singular fiber $\cZ_0\!=\!X\!\cup_V\!Y$ with some components contained 
in the divisor~$V$.
The $S$-matrix in the symplectic sum formula of~\cite{IPsum} is intended to account
for such components of the limiting curves by viewing them as curves in the \textsf{rubber},
a union of a finite number of copies of 
\BE{PVdfn_e}\P_XV\equiv \P(\cN_XV\!\oplus\!\cO_V)
\approx\P(\cO_V\!\oplus\!\cN_YV)\equiv \P_YV,\EE
where $\cO_V\!\lra\!V$ is the trivial complex line bundle.
Such curves also appear as limits of relative maps into $(X,V)$ in \cite{IPrel},
but only up to the natural action of $\C^*$ on each~$\P_XV$.
For this reason, moduli spaces of such limits have lower (virtual) dimensions than 
the corresponding moduli space of smooth relative maps, after a suitable regularization,
and thus do not contribute to the relative invariants of~$(X,V)$.
By the same reasoning as in~\cite{IPrel}, the components of limits of curves in~$\cZ$
that map to~$V$ should be viewed as $\C^*$-equivalence classes of curves in~$\P_XV$; 
moduli spaces of such limits  have
lower (virtual) dimensions than the corresponding moduli space of maps without 
irreducible components contained in~$V$, after a suitable regularization, 
and thus have no effect on the symplectic sum formula.
Even without taking the $\C^*$-equivalence classes, 
the effect of the spaces of maps with non-trivial rubber components on 
the action of the $S$-matrix in the main decomposition formulas in~\cite{IPsum}, 
(0.2) and~(10.4), is to produce 0-dimensional sets (after cutting down by all possible
constraints) on which $\C^*$ acts non-trivially; these sets are thus empty.
It follows that the maps with rubber components have no effect on the action
of the $S$-matrix in the symplectic sum formulas in~\cite{IPsum} and so 
the $S$-matrix acts as if it were the identity; this is not observed in~\cite{IPsum} either.
We discuss the situation with the $S$-matrix in more detail in Section~\ref{Smat_subs}.

\subsection{Outline of the paper}
\label{outline_subs}

\noindent
Sections~\ref{IP_subs} and~\ref{LR_subs} summarize the key issues with \cite{IPrel,IPsum}
and~\cite{LR}, respectively, and direct the reader to the portions of this manuscript 
where they are discussed in more detail.
Section~\ref{SympSum_subs} reviews the symplectic sum construction of \cite{Gf,MW}
from the point of view of~\cite{IPsum};
Section~\ref{SympCut_subs} translates this description into the symplectic cut 
perspective of~\cite{Ler} used in~\cite{LR}.
In Sections~\ref{RelInv_sub0} and~\ref{RelInv_sub0b},
we recall the now-standard notions of relative stable maps and morphisms to $X\!\cup_V\!Y$
and compare them with the notions used in~\cite{IPrel,IPsum} and~\cite{LR}.
The geometric constructions of the absolute and relative GW-invariants in ``semi-positive" 
cases are the subject of Section~\ref{RelInv_subs}.
In Section~\ref{RelInv_subs2}, we summarize the substance of the rim tori refinement to the standard relative 
GW-invariants suggested in~\cite{IPrel}.
A more general version of Theorem~\ref{main_thm} is stated in Section~\ref{DfnStat_subs};
a comparison of the versions of this formula appearing in \cite{LR,Jun2,IPsum}
is presented in Section~\ref{DfnStatComp_subs}.
The topological refinement to these formulas suggested in~\cite{IPsum} is discussed
in Section~\ref{RefSymSum_subs}.
Section~\ref{sumpf_subs} reviews the arguments of~\cite{IPsum} and~\cite{LR}
that are intended to establish symplectic sum formulas and 
outlines how to complete~them.
The power of these formulas for GW-invariants is illustrated in Section~\ref{appl_sec}, 
based on the applications described in~\cite{IPsum} and~\cite{LR}.
For the reader's convenience, 
we include detailed lists of typos/misstatements in \cite{IPrel,IPsum} and~\cite{LR}.
The references in this manuscript are labeled as in~\cite{IPsum},
whenever possible.\\

\noindent
The authors would like to thank the many people in the GW-theory community
with whom they had related discussions over the past decade,
including K.~Fukaya, E.~Ionel, D.~McDuff, J.~Nelson, Y.~Ruan, G.~Tian, and R.~Wang
over the past year.
The second author is also grateful to the IAS School of Mathematics for its hospitality 
during the early stages of this~project.

\section{Summary of issues with \cite{IPrel,IPsum} and~\cite{LR}}
\label{issues_sec}

\noindent
This section summarizes our understanding of the key problems with 
\cite{IPrel,IPsum} and~\cite{LR} and directs the reader to 
the portions of this manuscript where they are analyzed in detail.
We hope that the detailed list of specific points below will make it easier 
for others to gain some mathematical understanding of the issues involved,
instead of judging this manuscript or the related papers based on feelings and hearsay.\\

\noindent
The problems in \cite{IPrel,IPsum} and~\cite{LR} are of very different nature.
The arguments in \cite{IPrel,IPsum} are generally very concrete, often highly technical, 
and aim to completely address all relevant issues, but go wrong in several crucial places
and in particular do not deal correctly with the key gluing issues
(see \ref{IPadj_it}-\ref{quad_it} below), which were the main problems 
that needed to be addressed.
In contrast, \cite{LR} attempts to adapt the beautiful idea of stretching the target
in the normal direction to the divisor~$V$, 
which had been previously used in contact geometry by others
and fits naturally with the relevant gluing issues in the symplectic sum setting.
Unfortunately, \cite{LR} makes hardly any reasonably precise statement, 
either when defining the key objects, specifying the questions to be addressed, or
proving the key claims, even in special cases (to which many sketches of the arguments
in~\cite{LR} are restricted), and does not even mention many of the issues
that need to be addressed.

\begin{rmk}[by A.~Zinger]\label{IPRL_rmk}
A link to the first version of this manuscript, which is still available~at
$$\hbox{http://www.math.sunysb.edu/$\sim$azinger/research/SympSum031414.pdf}\,,$$
was e-mailed to the authors on March~14, 2014.
This was done earlier than we would have liked in order to enable discussions
of these issues during the Simons Center workshop the following week;
unfortunately such discussions hardly happened.
A second e-mail was sent  on March~31, stating that 
I intended to post this manuscript by the following Monday.
This e-mail also stated my belief that the authors' decision to stand by their papers 
would indicate their views on the standards for the {\it Annals} and {\it Inventiones}
backed by their current standing.
The full content of both e-mails is available~at
$$\hbox{http://www.math.sunysb.edu/$\sim$azinger/research/SympSumEmails.pdf}\,.$$ 
If the issues with these papers concerned a specific gap, I would have contacted
the authors individually so that they could fix it (as I have done with 
E.~Ionel and Y.~Ruan before).
However, in the given case, I believe the issues raised leave very little of 
the argument needed to address the main problem (proof of the symplectic sum formula)
and prevented others from doing so 15~years ago;
I realize that the authors' views may be different from~mine.
I~also believe that on some fundamental level the authors had been
at least vaguely aware of the general nature of the main issues in their papers before 
publication or at least had uneasy feelings about some aspects of their arguments.
Some of the reasons for this belief are indicated in Remarks~\ref{IP_rmk} and~\ref{LR_rmk}
and the specific points listed below.
\end{rmk}

\subsection{Comments on~\cite{IPrel,IPsum}}
\label{IP_subs}

\noindent
The approach in \cite{IPrel,IPsum} to the symplectic sum formula for GW-invariants 
follows a clear, logical order.
The aim of~\cite{IPrel} is to define a notion of relative stable map into~$(X,V)$
and a topology on the space of such maps, to show that the resulting moduli space is compact,
and to construct (relative) GW-invariants of $(X,V)$, at least in ``semi-positive" cases.
The main technical part of~\cite{IPrel} is Section~6, which studies limits of sequences
of $(J,\nu)$-holomorphic maps and is the key to the compactness property of the moduli space.
The Hausdorffness of the moduli space is never considered, but it is not necessary to 
define primary GW-invariants in ``semi-positive" cases.
The aim of~\cite{IPsum} is to express the GW-invariants of $X\!\#_V\!Y$ 
in terms of the GW-invariants of~$(X,V)$ and~$(Y,V)$ as defined in~\cite{IPrel}.
This involves determining which maps into the singular fiber $\cZ_0\!=\!X\!\cup_V\!Y$
are limits of $(J,\nu)$-maps into the smooth fibers $\cZ_{\la}\!\approx\!X\!\#_V\!Y$
and in precisely how many ways.
The former is fairly straightforward, though the automorphisms of the rubber components
are not taken into account in~\cite{IPsum}.  
The hard part is the latter, which involves constructing approximately $(J,\nu)$-holomorphic maps,
obtaining uniform Fredholm estimates for their linearizations and 
uniform bounds for the associated quadratic error term,
and verifying the injectivity and surjectivity of the resulting gluing map.
Unfortunately, \cite{IPrel} and especially~\cite{IPsum} contain very little which is 
both correct and of much substance.\\

\noindent
We begin with problems of descriptional and topological flavor in~\cite{IPrel,IPsum}.
\begin{enumerate}[label=(IPt\arabic*),leftmargin=*]

\item\label{IPrelS_it} According to the abstract and summary in \cite{IPrel},
 relative GW-invariants are defined for arbitrary~$(X,\om,V)$  and
more generally than the relative GW-invariants of~\cite{LR}.
While the relative moduli spaces in~\cite{IPrel} are defined
for a wider class of almost complex structures on $(X,\om,V)$ than in~\cite{LR},
relative GW-invariants for~$(X,\om,V)$ are defined in~\cite{IPrel} only in 
a narrow range of ``semi-positive" settings, which are not specified quite correctly;
see Sections~\ref{RelInv_sub0} and~\ref{RelInv_subs}.

\item\label{IPrelVFC_it} According to the last paragraph of \cite[Section~1]{IPrel}, 
the main construction of relative GW-invariants in~\cite{IPrel} applies 
to arbitrary~$(X,\om,V)$ because of a VFC construction
{\it in a separate paper~[IP5]}, listed as {\it in preparation} 
(not {\it work in progress}) in the references.
This citation  first appeared in the 2001 arXiv version;
it replaced Remark~1.8 in the 1999 arXiv version, which claimed that
the semi-positive restriction can be removed because of the VFC construction of~\cite{LT}.
However, applying this construction would have required gluing maps with rubber components,
which is not done in~\cite{IPrel}.
The VFC construction advertised in~\cite{IPrel} is claimed in~\cite{IPvfc} by building on~\cite{CM}. 
However, \cite{CM} first appeared on arXiv almost 5.5~years after the 2001 version 
of~\cite{IPrel}.
Furthermore, for two of the most crucial analytic points, 
\cite[Lemma~7.4]{IPvfc} and \cite[(11.4)]{IPvfc},  
which require rubber gluing (see \ref{rubbglue_it} below),
the authors cite~\cite{IPrel} and~\cite{IPsum};
these two papers restrict to ``semi-positive" cases precisely to avoid such gluing.

\item According to the abstract, the long summary, and the main theorems 
in~\cite{IPsum}, i.e.~{\it Symplectic Sum Theorem} and Theorems~10.6  and~12.3,
the symplectic sum formulas in~\cite{IPsum} are proved without any restrictions 
on $X,Y,V$, but the arguments are clearly restricted
to ``semi-positive" cases;
see Section~\ref{RelInv_subs} and in particular the paragraph before Remark~\ref{SemiPos_rmk}.

\item\label{RefRel_it} Refined relative GW-invariants of $(X,V)$ are obtained in~\cite{IPrel} by 
lifting the relative evaluation morphism~$\ev^V$ in~\eref{evdfn_e} over 
a covering~$\cH_{X;\bs}^V$ of~$V_{\bs}$.
Such a covering is described set-theoretically in \cite[Section~5]{IPrel} without 
ever specifying a topology on~$\cH_{X;\bs}^V$, 
especially when the contact points come together, or showing that~$\ev^V$ actually lifts.
The description of this cover is wrong about the group of its deck transformations
and about the resulting GW-invariants in the simple cases of 
\cite[Lemmas~14.5,14.8]{IPsum};
see Section~\ref{RelInv_subs2} and \cite[Remarks~6.5,6.8]{GWsumIP}.
Furthermore, the lifts to these covers are not unique and 
the refined relative GW-``invariants" generally depend on the choice of such a lift;
see \cite[Sections~1.1,1.2]{GWrelIP}.
A standard way to specify a covering is to specify a subgroup of
the fundamental group of the~base, as is done in \cite[Sections~5.1,6.1]{GWrelIP} 
based on the informal sketch at the end of \cite[Section~5]{IPrel}.
A standard way to show that a continuous map~$\ev^V$ lifts to such a cover is to show
that the image of the fundamental group of the domain under~$\ev^V$ is contained
in the chosen subgroup, as is done in \cite[Lemma~6.3]{GWrelIP}.

\item\label{Noncpt_it} 
The refined symplectic sum formula of~\cite{IPsum} for the GW-invariants of $X\!\#_V\!Y$
involves cohomology classes on products of the covers~$\cH_{X;\bs}^V$ and~$\cH_{Y;\bs}^V$ 
that are Poincare dual to components of the fiber product of the two covers over
the diagonal in $V_{\bs}\!\times\!V_{\bs}$; see Section~\ref{RefSymSum_subs}.
As these covers are often not finite, these cohomology classes need not admit a Kunneth decomposition
into cohomology classes from the two factors; see \cite[Section~1.2]{GWsumIP}.
In such a case, the refined symplectic sum formula of~\cite{IPsum} does not express
the GW-invariants of $X\!\#_V\!Y$ in terms of any kind of numbers arising from
$(X,V)$ and~$(Y,V)$.

\item\label{Smar_it} As explained in the summary and in Section~12 in~\cite{IPsum},
the $S$-matrix appears in the main formulas~(0.2) and~(12.7) of~\cite{IPsum}
due to components of limiting maps sinking into~$V$.
As we explain in Section~\ref{Smat_subs}, such components correspond to maps
into~$\P_XV\!=\!\P_YV$ only up to the~$\C^*$-action on the target, just as happens
in the relative maps setting of \cite[Section~7]{IPrel}.
This action, which is forgotten in the imprecise limiting argument of
\cite[Section~12]{IPsum}, implies that such limits do not contribute 
to the GW-invariants of $X\!\#_VY$ for dimensional reasons,
and so the $S$-matrix should not appear in any symplectic sum formula of~\cite{IPsum}.
As we also show in Section~\ref{Smat_subs}, the $S$-matrix does not matter anyway
because it {\it acts as} the identity in all cases and not just in the cases considered in
\cite[Sections~14,15]{IPsum}, when the $S$-matrix {\it is} the identity.

\item The main symplectic sum formulas in~\cite{IPsum} involve
generating series defined by exponentiating homology classes 
on $\ov\M_{g,n}\!\times\!\cH_{X;\bs}^V$ without an explanation
of how these exponentials are defined.
The use of $\cH_{X;\bs}^V$ in place of~$V_{\bs}$ makes defining such exponentials
particularly difficult, even in the case of primary insertions
(as in Theorem~\ref{main_thm}).
If descendant insertions are also used (as in Theorem~\ref{main2_thm}),
a symplectic sum formula must incorporate some version
of rules of assignment of~\cite{Jun2}.
Finally, the normalizations of the generating series for the absolute and relative
GW-invariants in~\cite{IPsum} are not the same, which makes them incompatible with 
the stated symplectic sum formulas.
These issues are discussed in detail in Section~\ref{DfnStatComp_subs}.

\item\label{arCoh_it} The extension of the symplectic sum formula to arbitrary cohomology
insertions in \cite[Section~13]{IPsum} is not well-defined;
see Section~\ref{RefSymSum_subs}.\\

\end{enumerate}

\noindent
We next list problems of analytic nature in~\cite{IPrel,IPsum};
these concern fairly technical, but at the same time very specific, points.
\begin{enumerate}[label=(IPa\arabic*),leftmargin=*]

\item The index of the linearization of the $\dbar$-operator at a $V$-regular map~$u$
described below \cite[(6.2)]{IPrel} is lower than 
the desired index, given by  \cite[(6.2)]{IPrel},
while the index of the linearization described at the beginning 
of \cite[Section~7]{IPsum} is typically higher than 
the desired one; see Remark~\ref{SemiPos_rmk} and Section~\ref{IPunif_subs}.
As a result, a transverse claim is made about a wrong bundle section
in \cite[Section~6]{IPrel}.

\item\label{2rubb_it} The rescaling arguments of \cite[Sections~6,7]{IPrel}  
do not involve adding new components to the domain of a map to~$X$.
They cannot lead to limiting maps such that some of the component maps 
into a rubber level are stable and some are unstable;
see Remark~\ref{IPrel67_rmk}.

\item\label{Haus_it} It is neither shown nor claimed that the relative moduli 
$\ov\M_{g,k}^V(X,A)$ constructed in \cite[Section~7]{IPrel} is Hausdorff.
This is not relevant for the pseudocycle construction of GW-invariants 
in the ``semi-positive" cases considered in~\cite{IPrel}, but 
is a useful property of $\ov\M_{g,k}^V(X,A)$ for other applications.
With the notion of relative stable map described by \cite[Definitions~7.1,7.2]{IPrel},
this space is not even Hausdorff; see Remark~\ref{IPrel67_rmk}.

\item The gluing constructions of \cite[Sections~6-9]{IPsum} claim uniform  
estimates along each stratum, which are not established even when 
restricting to $\de$-flat maps.
The first failure of uniformity occurs on the level of curves,
essentially because the construction above \cite[Remark~4.1]{IPsum}
need not extend outside of the open strata~$\cN_{\ell}$;
see Remark~\ref{IPsumSec4_rmk} for more details.
The second failure occurs on the level of maps because
the extra bubbling can occur away from the nodes on the divisor
and because the construction requires stabilizing the domains  as in 
\cite[Remark~1.1]{IPsum}, which can be done only locally.
The statement about the linearized operator being Fredholm for a generic~$\de$
in the second paragraph of page~976 in~\cite{IPsum}
pretty much rules out any possibility for uniform estimates
across whole strata.
However, such uniform estimates along entire strata are not necessary and seem 
unrealistic especially in situations requiring a virtual fundamental class construction,
while uniform estimates along compact subsets of open strata are much easier to establish.
This implies that the top arrow in \cite[(10.3)]{IPsum} is defined only 
after restricting to the preimage of a compact subset~$K$ and for $\la$ sufficiently
small (depending on~$K$);
see Remarks~\ref{IPsumSec4_rmk} and~\ref{IPsumSec6_rmk}.

\item\label{C0bnd_it} The uniform control of the $C^0$-norm by the $L^p_1$-norm 
claimed in \cite[Remark~6.6]{IPsum} requires a justification
because the domains~$C_{\mu}$ change (which is not an issue) and 
the metric on the targets~$\cZ_{\la}$ degenerates;
see Remark~\ref{IPsumSec6_rmk}.

\item  The proof of \cite[Lemma 6.9]{IPsum} ignores two of the three components 
of the map $F\!-\!f$ as in \cite[(6.14)]{IPsum}.
The actual estimate is weaker, but good enough;
see~\eref{nonJnu_e} in Section~\ref{preglue_sec}.

\item\label{IPadj_it} The operator in \cite[(7.5)]{IPsum} is not the adjoint of 
the operator in \cite[(7.4)]{IPsum} with respect to any inner-product,
because the first component of its image does not satisfy the average condition.
This  ruins the argument regarding the linearized operators being uniformly 
invertible, which is the main point of the analytic part of~\cite{IPsum},
at the start; see Section~\ref{IPunif_subs}.

\item\label{Gauss_it} Gauss's relation for curvatures, \cite[(8.7)]{IPsum}, is written
in a rather peculiar way, resulting in a sign error.
This appears to be what is referred to as a Bochner formula on
page~939 of~\cite{IPsum}.
The sign error in \cite[(8.7)]{IPsum}
is crucial to establishing a uniform bound on the incorrect adjoint 
operator in \cite[(7.5)]{IPsum};
see Section~\ref{IPunif_subs}.

\item The argument at the bottom of page~984 in \cite{IPsum}
implicitly presupposes that the limiting element~$\eta$ lies in the Sobolev 
space~$L_{\bs}^{1,2}$; see Section~\ref{IPunif_subs}.

\item The justification for the uniform elliptic estimate in \cite[Lemma~8.5]{IPsum}
indicates why the degeneration of the domains does not cause a problem,
but makes no comment about the degeneration of the target.
It is unclear that it is in fact uniform; 
see Section~\ref{IPunif_subs}.

\item The map $\Phi_{\la}$ in \cite[Proposition~9.1]{IPsum} appears to be
non-injective  because the metrics on the target~$\cZ_{\la}$ collapse 
in the normal direction to the divisor~$V$ as $\la\!\lra\!0$.
The wording of the second-to-last paragraph on page~938 suggests that 
the norms are weighted to account for this collapse and 
the convergence estimate of \cite[Lemma~5.4]{IPsum}  
could accommodate norms weighted heavier in the vertical direction,
but the rather light weights in the norms of \cite[Definition~6.5]{IPsum}
appear far from sufficient.
We discuss this issue in Section~\ref{glue_sec}.

\item\label{quad_it} Neither the summary of~\cite{IPsum} nor the proof of \cite[Proposition~9.4]{IPsum} 
makes any mention of whether the quadratic error term in the expansion
\cite[(9.10)]{IPsum} of the $\dbar$-operator is uniformly bounded.
The latter mentions only the need for the 0-th and 1-st order terms to be uniform
(in (a) and (b) on page~939).

\item\label{rubbglue_it} 
In order to define relative invariants and prove a symplectic sum formula
without any semi-positivity restrictions via known techniques,
it is necessary to describe a gluing procedure for maps involving rubber components;
see Section~\ref{RelInv_sub0b}.
This involves two issues not encountered in gluing rubber-free maps into~$X\!\cup_V\!Y$:
\begin{enumerate}[label=(RG\arabic*),leftmargin=*]
\item\label{RG1_it} the component maps into each rubber level are defined only up to~$\C^*$-action;
\item\label{RG2_it} the natural generalization of the gluing construction for maps to~$X\!\cup_V\!Y$
would send maps with rubber to an isomorphic, but not identical, space
(see Section~\ref{glue_sec}).
\end{enumerate}
Such a gluing would be much harder to carry out with the almost complex structures
in \cite{IPrel,IPsum} than with the more restricted ones in~\cite{LR};
\cite{IPsum} fails to do so even in the much simpler case of maps to~$X\!\cup_V\!Y$
with no components mapped to~$V$. 
Since \cite{IPrel} and \cite{IPsum} are restricted to the semi-positive case, 
the issues~\ref{RG1_it} and~\ref{RG2_it} do not need to arise.
However, because of~\ref{Smar_it}, gluing of  maps to rubber still needs
to be considered, and so the second issue  still arises.
\end{enumerate}

\begin{rmk}[by A.~Zinger]\label{IPt_rmk}
Regarding~\ref{Smar_it}, E.~Ionel feels that the limiting
argument in \cite[Section~12]{IPsum} is correct;
she also feels that she can renormalize
the collapse  so that the maps converge to a slice of the $\C^*$-action on the rubber.
I~believe these two statements, which were made during a long discussion 
in D.~McDuff's office on 03/26/14, are contradictory as the dimension of
the slice is smaller than the dimension of all maps.
\end{rmk}

\begin{rmk}[by A.~Zinger]\label{IP_rmk}
The first version of~\cite{LR} appeared on arXiv almost 3~months before
the first version of~\cite{IPmrl},
which is a brief announcement of relative GW-``invariants" and a symplectic sum formula, 
almost 1.5~years before the first version of~\cite{IPrel}, and 
over 2.5~years before the first version  of~\cite{IPsum}.
In particular, the announcement~\cite{IPmrl} appears to have been very premature 
(many people believe that a complete proof of a claimed result must 
appear within 6~months).
Furthermore, \cite[Section~2]{IPmrl} does not impose the last two conditions 
of \cite[Definition~3.2]{IPrel} on~$(J,\nu)$;
it only imposes the obvious conditions~$J(TV)\!=\!TV$ and~\eref{nuVrestr_E}.
This results in a relative moduli space with a codimension~1 boundary   and
the chamber dependence of the relative GW-``invariants" appearing in 
\cite[Theorem~2.5]{IPmrl};
this dependence is accidentally mentioned even in \cite[Definition~11.3]{IPsum}.
Unlike E.~Ionel and T.~Parker's claim to have a proof of 
the Gopakumar-Vafa super-rigidity a decade ago,
a proof of the symplectic sum formula was a purely technical, 
even if non-trivial, problem with all of the necessary tools available and 
gathered in~\cite{LR} 
(the clever argument in~\cite{IPgv}, which may be basically correct, bypasses 
the super-rigidity problem, which is yet to be established).
Another related example of a premature claim is the citation~[IP5] in~\cite{IPrel}
(including the 2001 version),
which appeared as~\cite{IPvfc}, also prematurely and almost preempting a
PhD thesis.
The applications appearing in~\cite{IPsum} are very nice, but
are not new (as stated in the paper).
The exposition in \cite{IPrel,IPsum} is more geometric and easier to follow
than in~\cite{LR}.
As for the content, the rescaling procedure in \cite[Section~6]{IPrel} is 
a (more geometric) reformulation of the stretching procedure in \cite[Section~3.2]{LR}
for a $(J,\nu)$-approach in the style of \cite{RT1,RT2}
with suitable restrictions on~$(J,\nu)$.
Just as in~\cite{LR}, the relative moduli space is not shown to be Hausdorff
(see~\ref{Haus_it}).
Unlike~\cite{LR}, \cite{IPsum} attempts to address all of the crucial gluing issues,
but does not deal successfully with any of the key ones.
The more general pairs~$(J,\nu)$ allowed in \cite{IPrel,IPsum} cause major problems 
in this regard, especially if rubber components are involved
(see~\ref{rubbglue_it} above);
it is unclear to me that they can be overcome with a reasonable effort
in the setting of~\cite{IPrel,IPsum}.
The \'a priori estimates in \cite[Sections~3-5]{IPsum} appear essentially correct,
but these are pretty minor statements in themselves.
The formulations and implications of two of the distinguishing topological features in~\cite{IPrel,IPsum},
the refined relative GW-invariants and distinguishing GW-invariants 
in classes differing by vanishing cycles, are stated very vaguely and often incorrectly;
see~\ref{RefRel_it} and~\ref{Noncpt_it}.
The other two distinguishing topological features,
extensions to arbitrary primary insertions and the $S$-matrix, are simply wrong;
see~\ref{Smar_it} and~\ref{arCoh_it}.
The common problem behind the most crucial analytic errors, such as~\ref{IPadj_it}
and~\ref{Gauss_it}, seems to be  the confusing way in which 
\cite[Sections~7,8]{IPsum} are written.
In particular, the equation \cite[(8.7)]{IPsum} containing the crucial 
sign error (see \ref{Gauss_it}) is written in a very complicated way;
while the sign error very well might not have been intentional,
\cite[(8.6)]{IPsum} should have raised concerns (its right-hand side appears to have
the potential to go negative if \cite[(8.8)]{IPsum} were correct).
It is my understanding from a conversion with E.~Ionel on 03/28/14 
that they are now trying to redo the gluing with the almost complex structures 
of~\cite{LR} based on my explanations on 03/21/14 and in the first version of
this manuscript.
This would only reaffirm my point that the correct parts of \cite{IPrel,IPsum} 
contain little beyond what is in~\cite{LR}, even in  ``semi-positive" cases.
It seems clear that the referee for~\cite{IPsum} did not even read past the long summary,
which should have been obvious to the handling editor, 
and both should publicly acknowledge if this was roughly the case.
While the referee for~\cite{IPrel} apparently noticed the issue with
Remark~1.8 in the 1999 version (see~\ref{IPrelVFC_it}),
he clearly missed a number of crucial, more technical issues,
was misled into believing that the construction applied 
outside of semi-positive cases (see~\ref{IPrelS_it} and~\ref{IPrelVFC_it}),
and was apparently swayed by~\cite{IPrel} being part of a package with~\cite{IPsum}.
\end{rmk}

\subsection{Comments on~\cite{LR}}
\label{LR_subs}

\noindent
About half of~\cite{LR}, i.e.~Sections 1,2, and~6, is devoted to applications 
and the general setting.
While this part could have been written a lot more efficiently,
it appears to be solid content-wise. 
The remainder of~\cite{LR}, just 43 lightly written journal pages,
is organized in a rather haphazard way, in contrast to \cite{IPrel,IPsum}, and 
purports to establish the compactness and Hausdorffness of the relative moduli space,
define relative GW-invariants via a new virtual cycle construction,
and address all of the gluing issues needed to prove a symplectic sum formula for GW-invariants;
see the beginning of Section~\ref{IP_subs}.
It begins with the symplectic cut construction of~\cite{Ler} and introduces 
relative GW-invariants as being associated to such a cut
(instead of a pair $(X,V)$ as in~\cite{IPrel}).
It then discusses fairly straightforward points concerning convergence to periodic orbits
in an overly complicated way and then barely touches on the main analytic issues.
Crucially, the notion of stable morphism into $X\!\cup_V\!Y$ introduced in
\cite[Definition~3.18]{LR} does not describe limits of maps into smooth fibers,
as needed for the purposes of establishing a symplectic sum formula. 
In summary, \cite{LR} does not contain anything resembling a proof of a symplectic sum
formula for GW-invariants.\\

\noindent
The issues in \cite{LR} include the following.
\begin{enumerate}[label=(LR\arabic*),leftmargin=*]

\item The symplectic sum formula (for primary invariants only)
in~\cite{LR} is spread out between three formulas in Section~5,
one of which is incorrect as stated;  
see Section~\ref{DfnStatComp_subs}.

\item Definition~3.14 in~\cite{LR} of the key notion of relative stable map
is not remotely precise. 
For example, it is not specific about the relation between the three different domains
of the map or the equivalence relation; see Section~\ref{RelInv_sub0}.

\item In addition to being imprecise, 
Definition~3.18 in~\cite{LR} of the key notion of stable map to 
$X\!\cup_V\!Y$ ($\ov{M}^+\!\cup_D\!\ov{M}^-$ in the notation of~\cite{LR}) 
is not suitable for the intended purposes, 
as it separates  the rubber components into $X$ and~$Y$-parts;
see Section~\ref{RelInv_sub0b}.

\item\label{Mor_it} The proof of \cite[Proposition~3.4]{LR} is based on an infinite-dimensional version 
of the Morse lemma, for which no justification or citation is provided.
The desired conclusion of this Morse lemma involves the inner-product \cite[(3.14)]{LR}
with respect to which the domain $W^2_r(S^1,SV)$ is not even complete. 

\item\label{Thm37_it} The statement of \cite[Theorem~3.7]{LR} is incorrect.
It describes the asymptotic behavior of $J$-holomorphic maps from~$\C$,
but what is needed to establish compactness in \cite[Section~3.2]{LR} 
and pregluing estimates in \cite[Section~4.1]{LR} is its analogue for maps from 
the punctured disk.
The 4-5 page justification of \cite[Theorem~3.7]{LR}, 
which is one of only three somewhat technical arguments in the paper,
includes \cite[Proposition~3.4]{LR} and circular reasoning.
The correct, required version can be justified in a few lines
and the elaborate sup energy of~\cite{H} can be avoided in the present situation;
see Section~\ref{IPconv_subs}.

\item The compactness argument of \cite[Section~3.2]{LR} is vague on the targets
of the relevant sequences of maps and does not even consider marked maps.
It also involves one node at a time and thus does not lead to 
the kinds of maps described in~\ref{2rubb_it} either.

\item\label{peri_it} In~(3) of the proof of \cite[Lemma~3.11]{LR}, 
the horizontal distance bound \cite[(3.55)]{LR} is used (incorrectly)
to draw a conclusion about the vertical distance in the last equation;
in contrast to the setting in \cite{H,HWZ1}, 
the horizontal and vertical directions in the setting of~\cite{LR}
are not tied together.

\item\label{nonco_it} The statement of \cite[Lemma~3.12]{LR} explicitly rules out 
``contracted" rubber maps from stable domains with only one puncture/node 
at one of the divisors.

\item\label{LRHaus_it} The moduli spaces of relative maps and of maps to $X\!\cup_V\!Y$
are implicitly claimed to be Hausdorff in \cite[Lemmas~4.2,4.4]{LR}.
For a proof, the reader is referred to~\cite{R5},
which does not deal with maps to varying targets.

\item\label{RG_it} The rubber gluing issues, (RG1) and~(RG2) above, 
are not addressed in~\cite{LR} either, even in 
the special, one-node, case considered in \cite[Section~4.1]{LR}. 
The gluing construction of \cite[Section~4.1]{LR} for relative maps
involves a specific representative of a map to the rubber 
(not up to the $\C^*$-action on the target) and 
defines the target of the glued map in a way which depends on the gluing parameter.
These issues are fundamental to~\cite{LR}, in contrast to~\cite{IPsum},
because the former does not impose any semi-positivity conditions.
We discuss them in more detail in Sections~\ref{RelInv_subs} and~\ref{preglue_sec}.

\item\label{InjSur_it} Neither the injectivity nor surjectivity of the gluing construction
of \cite[Section~4.1]{LR} is even mentioned;
in light of~\ref{RG_it}, this would be impossible to do.
Some version of \cite[Sections~4,5]{IPsum} is a necessary preliminary 
to handle these issues.
Both properties are implicitly used in the proof of 
\cite[Proposition~4.10]{LR}.

\item The proof of \cite[Proposition~4.10]{LR} applies the Implicit Function Theorem
in an infinite-dimensional setting without any mention of the needed bounds
on the 0-th and 1-st order terms and the quadratic correction term.
The first two are the subject of the preceding section, but there is no mention
of uniform estimates on the last one anywhere in~\cite{LR};
see Section~\ref{glue_sec}.

\item The VFC approach of \cite{LR} is based on a global regularization of 
the moduli space using the twisted dualizing  sheaf  introduced after 
\cite[Lemma~4.4]{LR}.
It is treated as a line bundle over the entire moduli space with Sobolev norms
on its sections, without any explanation.
The 3-4 pages dedicated to this line bundle in \cite[Sections~4.1,4.2]{LR}
could be avoided by using the local VFC approach of~\cite{FO} or~\cite{LT}.

\item The regularization of maps in \cite[Sections~4.1,4.2]{LR} needs to respect
the $\C^*$-action on maps to the rubber;
this issue is not even mentioned in~\cite{LR}.

\item\label{LRsum_it} The discussion of gluing for maps to $X\!\cup_V\!Y$,
which is needed to establish a symplectic sum formula,
consists of a few lines after \cite[Lemma~5.4]{LR}.
There is no explanation of the crucial multiplicity coefficient~$k$
($\lr\bs$ in our notation) appearing in \cite[Theorem~5.7]{LR}.
The domain and target gluing formulas \cite[(4.12)-(4.15)]{LR} hint
at this coefficient, but barely so even in the case of one node.
If the rubber components are present, these multiplicities no longer show up directly;
the argument in~\cite{Jun2} obtaining them on the level of homology classes
(rather than numbers) is pretty delicate and involves passing to a desingularization.
Because of the much more limited scope of~\cite{IPsum}, this issue is not relevant for~\cite{IPsum}. 
In contrast to~\cite{IPsum}, \cite{LR} does not even clearly describe 
the general setup.
In particular, the one-node case considered in~\cite{LR} as supposedly capturing
all the issues in the general case cannot be representative of the general
case because the target of the glued maps, described by \cite[(4.12),(4.13)]{LR}, 
depends on the gluing parameter associated with each node.
Thus, these parameters must be chosen systematically, 
which is done for rubber-free maps in~\cite{IPsum} and becomes  more complicated
for general maps; see Section~\ref{preglue_sec}.

\item\label{pscyc_it} The most technical part of~\cite{LR}, roughly 4 pages, concerns 
the variation of various operators in Section~4.1 with respect to the norm~$r$ of
the gluing parameter~$(r)$,
which is considered without explicitly identifying the domains and targets of these operators.
This part is used only to show that the integrals \cite[(4.50)]{LR} defining
relative invariants converge.
However, this is not necessary, since the relevant evaluation morphisms had supposedly 
been shown to be rational pseudocycles before then
(and thus define invariants by intersection as in \cite[Section~7.1]{MS2}
and \cite[Section~1]{RT1}).

\end{enumerate}

\begin{rmk}[by A.~Zinger]\label{LR_rmk}
The applications in~\cite{LR} appear well justified and are new (in contrast to~\cite{IPsum}).
However, they are relatively minor, can be handled without a symplectic sum formula
(as can be seen from the clever geometric argument in~\cite{LR}),
and are special cases of~\cite{HLR}.
I~believe the SFT type idea behind the main argument in~\cite{LR} can be used to prove
the symplectic sum formula in an efficient manner.
If such a radical idea had actually been introduced in~\cite{LR}, it would have been
a very clear contribution appearing in the originally published version;
the issues listed above could then have been viewed as some gaps to be filled
(though still very significant ones).
However, this idea already appears in~\cite{H}, \cite{HWZ1}, and perhaps other works
from that time.
Essentially the only contributions of~\cite{LR} in terms of formulating and proving 
a symplectic sum formula are the notion of relative stable map and
an attempt to adapt an SFT idea to symplectic cuts,
and even this is done pretty poorly.
Based on \cite[Section~3.1]{LR}, the authors appear to have only vague understanding
of what is actually needed in the symplectic cut setting and follow~\cite{H} and~\cite{HWZ1}
very closely; see~\ref{Mor_it}, \ref{Thm37_it}, and~\ref{peri_it} above.
With some imagination and knowledgeable help, the intended stretching construction of
\cite[Section~3.2]{LR} can be understood and 
the desired definition of relative stable map of \cite[Definition~3.14]{LR}
could then be deduced.
However, the Hausdorffness of the resulting relative moduli space is not even mentioned
and the justification of compactness has multiple issues;
see \ref{Mor_it}-\ref{nonco_it} above.
The most serious problem with~\cite{LR} in my view is that
many crucial issues arising in 
the most important part, i.e.~gluing, which is the subject of \cite[Section~4]{LR},
are not even mentioned.
It seems unlikely to me that neither of the authors saw that 
some of these issues, such as \ref{LRHaus_it}, \ref{RG_it},
\ref{InjSur_it}, and~\ref{LRsum_it}, needed to be addressed (or at least commented~on);
if this is indeed the case, then they did not have a reasonable understanding of the problem 
at hand.
The entire content of~\cite{LR} could be compressed down to 25-30 journal pages
(at the density similar to~\cite{RT1} or \cite{IPrel,IPsum}, for example);
in contrast \cite{Jun1} and~\cite{Jun2} are 165~pages together and build
on a hundred years of algebraic geometry.
In summary, it does not appear to~me that \cite{LR} either introduces a fundamentally new idea
or comes remotely close to technically justifying a symplectic sum formula.
It seems that the referee for~\cite{LR} only complained about the length 
of the original version, read through Sections~1,2,6, and barely looked at 
the crucial Sections~3,4 (which are actually fairly easy to read).
\end{rmk}

\begin{rmk}[by A.~Zinger]\label{LRresp_rmk}
The response~\cite{AMLi} to the first arXiv version of this manuscript contains
little of mathematical substance and indicates that the author is still unfamiliar
with  GW-theory.
For example, \ref{LRHaus_it}, \ref{RG_it}, and~\ref{InjSur_it}
are supposedly non-issues because they are completely standard.
The same claims are made about the relative moduli spaces, either explicitly or implicitly,
in the first three arXiv versions of~\cite{LR}.
However, in these three versions, 
the equivalence relation on the relative maps does not involve
 the $\C^*$-action on the rubber components (only~$\R$-action, corresponding to the log of the norm);
see the middle of page~63 and Definition~4.16 in the third version, for example. 
These properties cannot be satisfied by both versions of the relative moduli space at the same time. 
Crucially, A.-M.~Li explicitly acknowledges that the notion of stable morphism to 
the singular fiber $\cZ_0\!=\!X\!\cup_V\!Y$ introduced in \cite[Definition~3.18]{LR}
was not an unintentional misstatement, but sees no fundamental problem with~it.
Detailed comments on his response are available on my website.
\end{rmk}

\begin{rmk}[by A.~Zinger]
The primary purpose of this manuscript is an exposition on the symplectic sum formula
and the literature regarding this topic;
it is only natural for such an exposition to include a thorough review of 
the relevant literature.
Unfortunately, it has become normal in mathematics to criticize papers and 
to undermine their authors behind their backs, often without even reading their papers;
this is wrong and creates lingering tension. 
I hope to minimize such lingering tension by listing specific issues with
specific papers in a way that makes it relatively easy for others to judge 
the substance of the concerns raised and for the authors to dispute them.
If any factual statement (as opposed to an opinion)
made in this manuscript is pointed out to me as incorrect,
I will change it (updated versions will be posted on my website).
I also believe that papers, especially for the {\it Annals} and {\it Inventiones},
should be judged on their significance and correctness,
not their authors' status or likability.
While the significance criterion will always remain subjective, it is clear to me that 
the correctness criterion was not applied diligently to either~\cite{LR} 
or~\cite{IPrel,IPsum}.
\end{rmk}

\section{Preliminaries}
\label{RimTori_sec}

\noindent
We review the symplectic sum construction of~\cite{Gf,MW} from
the point of view of~\cite{IPsum} in Section~\ref{SympSum_subs}.
In Section~\ref{SympCut_subs}, we describe the symplectic cut perspective of~\cite{LR}.
The former is more geometric and leads to a simpler description of 
the key notions of relative stable map and relative moduli space.
On the other hand, the latter fits better with the analytic issues 
that need to be addressed in proving a symplectic sum formula for GW-invariants;
unfortunately, \cite[Sections~2,3.0]{LR} do not actually specify a symplectic sum,
but instead provide plenty of related examples of symplectic quotients.
The symplectic manifolds $(\ov{M}_-,\om_-)$, $(\ov{M}_+,\om_+)$, and $(M,\om)$
in~\cite{LR} correspond to $(X,\om_X)$, $(Y,\om_Y)$, and  $(Z,\om_{\#})$,
respectively, in our notation (which is similar to that in~\cite{IPsum});
the hypersurface $\wt{M}\!\subset\!M$ along which~$M$ is split into its 
parts is denoted by~$SV$ below.

\subsection{The symplectic sum}
\label{SympSum_subs}

\noindent
Suppose $V$ is a manifold and $\pi_{\cN}\!:(\cN,\fI_{\cN})\!\lra\!V$ is a complex line bundle.
Let $(g_{\cN},\na^{\cN})$ be a \textsf{Hermitian structure} on  $(\cN,\fI_{\cN})$,
i.e.~a metric and a connection on~$\cN$ such~that 
\begin{gather*}
g_{\cN}(\fI_{\cN}v,w)=\fI\, g_{\cN}(v,w)=-g_{\cN}(v,\fI_{\cN}w)\quad 
\forall~v,w\!\in\!\cN_x,~x\!\in\!V, \\
\na^{\cN}\big(\fI_{\cN}\xi\big)=\fI_{\cN}\na^{\cN}\xi,~~
\nd\big\{g_{\cN}(\xi,\ze)\big\}=g_{\cN}\big(\na^{\cN}\xi,\ze\big)
+g_{\cN}\big(\xi,\na^{\cN}\ze)\quad\forall~\xi,\ze\!\in\!\Ga(V;\cN).
\end{gather*}
Let 
$$\rho_{\cN}\!:\cN\lra\R, \qquad \rho_{\cN}(v)=g_{\cN}(v,v)=|v|^2,$$
be the square of the norm function, 
$q_{\cN}\!:S\cN\!\lra\!V$ be the sphere (circle) bundle of~$\cN$, and 
$$T^{\vrt}(S\cN)\equiv\ker\nd q_{\cN}\subset T(S\cN)$$
be  its \textsf{vertical tangent bundle}.
The connection~$\na^{\cN}$ in~$\cN$ induces a splitting of the exact sequence
$$0\lra T^{\vrt}(S\cN)\lra T(S\cN)\stackrel{\nd q_{\cN}}{\lra} q_{\cN}^*TV\lra0 $$
of vector bundles over~$S\cN$; see \cite[Lemma~1.1]{anal}.
Denote by $\al_{\cN}$ the 1-form on $S\cN$ vanishing~on 
the image of $q_{\cN}^*TV$ in $T(S\cN)$ 
corresponding to this splitting such~that 
$$\al_{\cN}\bigg(\frac{\nd}{\nd\th}\ne^{\fI\th}v\bigg|_{\th=0}\bigg)=1
\quad\forall\,v\in S\cN.$$
We extend it to a 1-form on $\cN\!-\!V$ via the radial retraction
$$\cN\!-\!V\lra S\cN, \quad v\lra\frac{v}{|v|}\,.$$
The 1-form $\rho_{\cN}\al_{\cN}$ is then well-defined and smooth on the total space of $\cN\!\lra\!V$.\\

\noindent
If in addition  $\om_V$ is a symplectic form on~$V$ and $\ep\!\in\!\R$, the 2-form 
\BE{omepsdfn_e}\om_{\cN,V}^{(\ep)}\equiv \pi_{\cN}^*\om_V
+\frac{\ep^2}2 \nd\big(\rho_{\cN}\al_{\cN}\big)\EE
on  the total space of $\cN$ is closed and $\om_{\cN,V}^{(\ep)}|_{TV}\!=\!\om_V$.
If $V$ is compact, there exists $\ep_{\cN}\!\in\!\R^+$ such that the restriction~of
$\om_{\cN,V}^{(\ep)}$ to
$$\cN(\de)\equiv \big\{v\!\in\!\cN\!:\,|v|\!<\!\de\big\}$$
is symplectic whenever $\de,\ep\!\!\in\!\R^+$ and  $\de\ep\!<\!\ep_{\cN}$.\\

\noindent
Suppose $(X,\om_X)$ is a symplectic manifold and $V\!\subset\!X$ is a  symplectic hypersurface. 
Let $\om_X|_{\cN_XV}$ be the induced symplectic form on the normal bundle~$\cN_XV$  of~$V$ in~$X$
as in~\eref{cNXVsymp_e}.
We will call a (fiberwise) complex structure~$\fI_X$ on~$\cN_XV$ \textsf{$\om_X$-compatible}
if $\fI_X$ is compatible with~$\om_X|_{\cN_XV}$, i.e.
$$ \om_X|_{\cN_XV}\big(\fI_Xv,\fI_Xw\big)=\om_X|_{\cN_XV}\big(v,w\big)
\quad \forall~v,w\!\in\!\cN_XV|_x,~x\!\in\!V.$$
For an $\om_X$-compatible complex structure~$\fI_X$ on~$\cN_XV$,
we will call a Hermitian structure $(g_X,\na^X)$ on $(\cN_XV,\fI_X)$ \textsf{$\om_X$-compatible}
if $g_X$ is  compatible with~$\om_X|_{\cN_XV}$ and~$\fI_X$, i.e.
\BE{gXomX_e}g_X(v,w)=\om_X|_{\cN_XV}\big(v,\fI_Xw\big) \quad \forall~v,w\!\in\!\cN_XV|_x,~x\!\in\!V;\EE
this requirement specifies~$g_X$.
The spaces of (fiberwise) $\om_X$-compatible complex structures on~$\cN_XV$ and 
of $\om_X$-compatible Hermitian structures on $(\cN_XV,\fI_X)$ are non-empty and contractible.\\

\noindent
For example, $V$ is a symplectic hypersurface in a neighborhood~$X$ of $V$ in a Hermitian line bundle
$\cN\!\lra\!V$ with respect to the symplectic form~\eref{omepsdfn_e},
$$TV^{\om_{\cN,V}^{(\ep)}}=  T^{\vrt}\cN\big|_V \approx\cN_XV,
\qquad\hbox{and}\qquad
\om_{\cN,V}^{(\ep)}\big|_{\cN_XV}= \frac{\ep^2}2 \nd\big(\rho_{\cN}\al_{\cN}\big)\big|_{\cN_XV}.$$
The original complex structure~$\fI_{\cN}$ on~$\cN$ is $\om_{\cN,V}^{(\ep)}$-compatible,
while the original Hermitian structure $(g_{\cN},\na^{\cN})$ is $\om_{\cN,V}^{(1)}$-compatible.\\

\noindent
For the remainder of this section, let $(X,\om_X)$ and $(Y,\om_Y)$ be compact symplectic manifolds
and $V\!\subset\!X,Y$ be a symplectic hypersurface so that~\eref{cNVcond_e} holds.
Denote  by~$\om_V$ the symplectic form $\om_X|_V\!=\!\om_Y|_V$ on~$V$.
Fix (fiberwise) complex structures~$\fI_X$ and~$\fI_Y$ on the normal bundles
$$\pi_{X,V}\!:\cN_XV\lra V \qquad\hbox{and}\qquad \pi_{Y,V}\!:\cN_YV\lra V$$
of $V$ in~$X$ and $V$ in~$Y$ that are compatible with $\om_X$ and $\om_Y$, respectively.
Choose an isomorphism~$\Phi$ as in~\eref{cNpair_e} compatible with $\om_X$ and~$\om_Y$,
i.e.~so that 
\BE{omPhicomp_e} 
\big|\Phi_2(v\!\otimes_{\C}\!w)\big|^2=\om_X|_{\cN_XV}(v,\fI_Xv)\cdot \om_Y|_{\cN_YV}(w,\fI_Yw)
\quad\forall~v\!\in\!\cN_XV|_x,~w\!\in\!\cN_YV|_x,~x\!\in\!V,\EE
where $\Phi_2$ is the composition of $\Phi$ with the projection $V\!\times\!\C\!\lra\!\C$.
Since $(\cN_XV,\fI_X)$ and~$(\cN_YV,\fI_Y)$ are of rank~1,
\eref{omPhicomp_e} can be achieved by scaling any given isomorphism~$\Phi$ in~\eref{cNpair_e};
this does not change the homotopy class of~$\Phi$.\\

\noindent
Choose Hermitian structures $(g_X,\na^X)$ on $(\cN_XV,\fI_X)$ and 
$(g_Y,\na^Y)$ on $(\cN_YV,\fI_Y)$ that are compatible with $\om_X$ and $\om_Y$, 
in the sense described above, and with~$\Phi$, in the sense that 
\begin{alignat}{2}
\label{omPhicomp_e2a}
\big|\Phi_2(v\!\otimes_{\C}\!w)\big|^2 &=\rho_X(v)\cdot \rho_Y(w)
&\quad &\forall~v\!\in\!\cN_XV|_x,~w\!\in\!\cN_YV|_x,~x\!\in\!V,\\
\label{omPhicomp_e2b}
\nd\big\{\Phi_2(\xi\!\otimes_{\C}\!\ze)\big\} &
=\Phi_2\big((\na^X\xi)\!\otimes_{\C}\!\ze\big)+\Phi_2\big(\xi\!\otimes_{\C}\!(\na^Y\ze)\big)
&\quad &\forall~\xi\!\in\!\Ga(V;\cN_XV),~\ze\!\in\!\Ga(V;\cN_YV).
\end{alignat}
The metrics~$g_X$ and~$g_Y$ are determined by $(\om_X,\fI_X)$ and $(\om_Y,\fI_Y)$
via~\eref{gXomX_e};
they satisfy~\eref{omPhicomp_e2a} by the assumption~\eref{omPhicomp_e}.
The choice of~$\na^X$ and \eref{omPhicomp_e2b} determine~$\na^Y$,
which is compatible with~$\fI_Y$ and~$g_Y$.\\

\noindent
Denote by $\al_X$ and $\al_Y$ the connection 1-forms on $\cN_XV\!-\!V$ and $\cN_YV\!-\!V$
corresponding to $(g_X,\na^X)$ and~$(g_Y,\na^Y)$, respectively.
For $\ep\!\in\!\R$, define
$$\om_{X,V}^{(\ep)}=\pi_{X,V}^*\om_V+\frac{\ep^2}2\nd(\rho_X\al_X)
\qquad\hbox{and}\qquad
 \om_{Y,V}^{(\ep)}=\pi_{Y,V}^*\om_V+\frac{\ep^2}2\nd(\rho_Y\al_Y)\,.$$
The 2-forms $\om_{X,V}^{(1)}$ and $\om_{Y,V}^{(1)}$
restrict to $\om_X$ and $\om_Y$ on $T(\cN_XV)|_V$ and $T(\cN_YV)|_V$
under the isomorphisms as in~\eref{cNXVsymp_e}.
By the Symplectic Neighborhood Theorem \cite[Theorem~3.30]{MS1},
there thus exist $\de_V\!\in\!\R^+$ and smooth injective open~maps
$$\Psi_X\!: \big(\cN_XV(\de_V),V\big)\lra (X,V) \qquad\hbox{and}\qquad
\Psi_Y\!: \big(\cN_YV(\de_V),V\big)\lra (Y,V)$$
such that 
$$\nd_x\Psi_X,\nd_x\Psi_Y=\id~~~\forall~x\!\in\!V, \qquad
\Psi_X^*\om_X=\om_{X,V}^{(1)}\big|_{\cN_XV(\de_V)}, \qquad
\Psi_Y^*\om_Y=\om_{Y,V}^{(1)}\big|_{\cN_YV(\de_V)}.$$
For $\ep\!\in\!\R^+$, define
\begin{alignat*}{2}
\Psi_{X;\ep}\!: \cN_XV\big(\ep^{-1}\de_V\big)&\lra X, &\qquad 
\Psi_{X;\ep}(v)&=\Psi_X(\ep v),\\
\Psi_{Y;\ep}\!: \cN_YV\big(\ep^{-1}\de_V\big)&\lra Y, &\qquad 
\Psi_{Y;\ep}(v)&=\Psi_Y(\ep v).
\end{alignat*}
These smooth injective open maps satisfy 
\BE{omends_eq} 
\Psi_{X;\ep}^*\om_X=\om_{X,V}^{(\ep)}\big|_{\cN_XV(\ep^{-1}\de_V)}
\qquad\hbox{and}\qquad
\Psi_{Y;\ep}^*\om_Y=\om_{Y,V}^{(\ep)}\big|_{\cN_YV(\ep^{-1}\de_V)}\EE
and restrict to the identity on~$V$.\\

\noindent
Let 
$$\pi_V,\pi_X,\pi_Y\!: \cN_XV\!\oplus\!\cN_YV\lra V,\cN_XV,\cN_YV$$
be the natural projections and
$$\wh\Phi_2\!: \cN_XV\!\oplus\!\cN_YV\lra \cN_XV\!\otimes_{\C}\!\cN_YV
\stackrel{\Phi_2}{\lra} \C$$
be the composition of $\Phi_2$ with the natural product map.
Let $\ga(t)$ be a path in~$V$,
$\wt\ga_X(t)$ be a $\na^X$-horizontal lift of~$\ga$ to the sphere bundle $S_XV\!\subset\!\cN_XV$,
and $\wt\ga_Y(t)$ be a $\na^Y$-horizontal lift of~$\ga$ to the sphere bundle $S_YV\!\subset\!\cN_YV$.
By~\eref{omPhicomp_e2b}, $\wh\Phi_2(\wt\ga_X(t),\wt\ga_Y(t))$ is a constant function.
Thus,
\BE{alXalY_e} \wh\Phi_2^*\nd\th=\pi_X^*\al_X+\pi_Y^*\al_Y
\qquad\hbox{on}\quad (\cN_XV\!-\!V)\!\times_V\!(\cN_YV\!-\!V);\EE
this identity can also be verified using local coordinates.
By~\eref{omPhicomp_e2a} and~\eref{alXalY_e},
\BE{piomC_e}
\wh\Phi_2^*\om_{\C}=\frac12\nd\big(\rho_X\rho_Y\wh\Phi_2^*\nd\th\big)
=\frac12\nd\big(\rho_X\rho_Y(\pi_X^*\al_X\!+\!\pi_Y^*\al_Y)\big),\EE
where $\om_{\C}\equiv\frac12 \nd r^2\!\w\!\nd\th$ is the standard symplectic form on~$\C$.\\

\noindent
For $\de,\ep\!\in\!\R^+$ to be chosen later, let 
\begin{gather}
\notag
\cZ_X=\big(X\!-\!\Psi_{X;\ep}(\ov{\cN_XV(1)})\big)\!\times\!\C, \quad 
\cZ_Y=\big(Y\!-\!\Psi_{Y;\ep}(\ov{\cN_YV(1)})\big)\!\times\!\C,\\
\label{cZdfn_e}
\cZ_V=\big\{(v,w)\!\in\!\cN_XV\!\oplus\!\cN_YV\!:~|v|,|w|\!<\!2,~
~\ep|\wh\Phi_2(v,w)|\!<\!\de\big\},\\
\notag
\cZ_{V;X}=\big\{(v,w)\!\in\!\cZ_V\!:\,|v|\!>\!1\big\}, \quad
\cZ_{V;Y}=\big\{(v,w)\!\in\!\cZ_V\!:\,|w|\!>\!1\big\}.
\end{gather}
With $\ep\!\in\!\R^+$ to be chosen first, we assume that
\BE{epdecond_e} 2\ep\!<\!\de_V, \qquad 2\de<\ep.\EE
Let $\cZ$ be the smooth manifold obtained by gluing $\cZ_X$, $\cZ_Y$, and~$\cZ_V$
by the open maps
\begin{alignat*}{2}
\psi_X\!:\,\cZ_{V;X}&\lra\cZ_X,  &\qquad (v,w)&\lra \big(\Psi_{X;\ep}(v),\ep\wh\Phi_2(v,w)\big), \\
\psi_Y\!:\,\cZ_{V;Y}&\lra\cZ_Y,  &\qquad (v,w)&\lra \big(\Psi_{Y;\ep}(w),\ep\wh\Phi_2(v,w)\big);
\end{alignat*}
by the first assumption in~\eref{epdecond_e},  $\psi_X$ and $\psi_Y$ are well-defined 
diffeomorphisms between open subsets of their domains and targets.
Since the~maps 
$$\cZ_V\lra\C, ~ (v,w)\lra \ep\wh\Phi_2(v,w), \quad
\cZ_X\lra\C,~ (v,\la)\lra\la, \quad
\cZ_Y\lra\C,~ (w,\la)\lra\la, $$
are intertwined by $\psi_X$ and $\psi_Y$, they induce a smooth map $\pi_{\ep}\!:\cZ\!\lra\!\C$.
By the second assumption in~\eref{epdecond_e}, every fiber 
$\cZ_{\la}\!\equiv\!\pi_{\ep}^{-1}(\la)$ of~$\pi_{\ep}$ with $|\la|<\!\de$ is compact
($\de\!<\!2\ep$ would have sufficed here).\\

\noindent
We next define a closed 2-form~$\om_{\cZ}^{(\ep)}$ on~$\cZ$.
Let $\eta\!:\R\!\lra\![0,1]$ be a smooth function such~that 
$$\eta(r)=\begin{cases}0,&\hbox{if}~r\le \frac12;\\
1,&\hbox{if}~r\ge1.\end{cases}$$
By the second assumption in~\eref{epdecond_e},
\BE{etarhoXY_e} (\eta\!\circ\!\rho_X)\cdot (\eta\!\circ\!\rho_Y)=0
\qquad\hbox{on}~\cZ_V.\EE
Define
\begin{equation*}\begin{split}
\wt\om_V^{(\ep)}\equiv\pi_V^*\om_V
+\frac{\ep^2}{2}\nd\Big( (1\!-\!\eta\!\circ\!\rho_Y)\pi_X^*(\rho_X\al_X)
&+\big(1\!-\!\eta\!\circ\!\rho_X\big)\pi_Y^*(\rho_Y\al_Y)\\
&+\big(\eta\!\circ\!\rho_X\!+\!\eta\!\circ\!\rho_Y\big) 
\rho_X\rho_Y\big(\pi_X^*\al_X\!+\!\pi_Y^*\al_Y\big)\Big).
\end{split}\end{equation*}
By~\eref{etarhoXY_e}, \eref{omends_eq}, and~\eref{piomC_e}, the restrictions of this closed 2-form 
on $\cZ_V$ to~$\cZ_{V;X}$ and~$\cZ_{V;Y}$ are 
\begin{equation*}\begin{split}
\pi_V^*\om_V+\frac{\ep^2}2\nd\Big(\pi_X^*(\rho_X\al_X)+
 \rho_X\rho_Y\big(\pi_X^*\al_X\!+\!\pi_Y^*\al_Y\big)\Big)
&=\psi_X^*\om_X+\ep^2\wh\Phi_2^*\om_{\C}=\psi_X^*(\om_X\!+\!\pi_{\ep}^*\om_{\C}),\\
\pi_V^*\om_V+\frac{\ep^2}2\nd\Big(\pi_Y^*(\rho_Y\al_Y)+
 \rho_X\rho_Y\big(\pi_X^*\al_X\!+\!\pi_Y^*\al_Y\big)\Big)
&=\psi_Y^*\om_Y+\ep^2\wh\Phi_2^*\om_{\C}=\psi_Y^*(\om_Y\!+\!\pi_{\ep}^*\om_{\C}),
\end{split}\end{equation*}
respectively.
Thus, along with the 2-forms 
\BE{wtomXY_e}\wt\om_X\equiv \om_X +\pi_{\ep}^*\om_{\C} \qquad\hbox{and}\qquad
\wt\om_Y\equiv \om_Y+\pi_{\ep}^*\om_{\C} \EE
on~$\cZ_X$ and on~$\cZ_Y$, $\wt\om_V^{(\ep)}$ induces 
a closed 2-form~$\om_{\cZ}^{(\ep)}$ on~$\cZ$.\\

\noindent
Let $D\al_X,D\al_Y\!\in\!\Om^2(V)$ denote the curvature forms of~$\al_X$ and~$\al_Y$.
Define 
\begin{gather*}
f_X,f_Y\!:\cN_XV\!\oplus\!\cN_YV\lra\R \qquad\hbox{by}\\
f_X=(1\!-\!\eta\!\circ\!\rho_Y)+(\eta\!\circ\!\rho_X\!+\!\eta\!\circ\!\rho_Y)\rho_Y,\quad
f_Y=(1\!-\!\eta\!\circ\!\rho_X)+(\eta\!\circ\!\rho_X\!+\!\eta\!\circ\!\rho_Y)\rho_X.
\end{gather*}
By~\eref{alXalY_e} and~\eref{etarhoXY_e}, 
\BE{Cdebnd_e}
D\al_X=-D\al_Y \qquad\hbox{and}\qquad \frac12< f_X(v,w),f_Y(v,w)<5
~~~\forall~(v,w)\!\in\!\cZ_V,\EE
respectively.
Let
\begin{equation*}\begin{split}
\wt\om_{V;\bu}^{(\ep)}&=
\pi_V^*\bigg(\om_V+\frac{\ep^2}{2}\Big(f_X\rho_XD\al_X\!+\!f_Y\rho_YD\al_Y\Big)\bigg)
+\frac{\ep^2}{2}f_X\pi_X^*(\nd\rho_X\!\w\!\al_X) 
+\frac{\ep^2}{2}f_Y\pi_Y^*(\nd\rho_Y\!\w\!\al_Y),\\
\vp_V^{(\ep)}&=\frac{\ep^2}2\Big((\rho_Y\!-\!1)\nd(\eta\!\circ\!\rho_Y)
+\rho_Y\nd(\eta\!\circ\!\rho_X)
+(\eta\!\circ\!\rho_X\!+\!\eta\!\circ\!\rho_Y)\nd\rho_Y\Big)
\w\pi_X^*(\rho_X\al_X)\\
&\hspace{.5in}
+\frac{\ep^2}2\Big((\rho_X\!-\!1)\nd(\eta\!\circ\!\rho_X)
+\rho_X\nd(\eta\!\circ\!\rho_Y)
+(\eta\!\circ\!\rho_X\!+\!\eta\!\circ\!\rho_Y)\nd\rho_X\Big)
\w\pi_Y^*(\rho_Y\al_Y).
\end{split}\end{equation*}
Thus, $\wt\om_V^{(\ep)}\!=\!\wt\om_{V;\bu}^{(\ep)}\!+\!\vp_V^{(\ep)}$.\\

\noindent
For $\ep\!\in\!\R^+$ sufficiently small (dependent only on $\al_X$), 
\eref{Cdebnd_e} ensures the existence of $\ep_{\al}\!\in\!\R^+$ such that 
the 2-form  $\wt\om_{V;\bu}^{(\ep)}\!+\!\vp$ is nondegenerate on~$\cZ_V$ for any 2-form~$\vp$ 
on~$\cZ_V$ with  $\|\vp\|_{C^0}\!<\!\ep_{\al}\ep^2$.
On the other hand, there exists $C_{\eta}\!\in\!\R^+$ such that 
\BE{omVest_e}\big|\vp_V^{(\ep)}\big|_{(v,w)} \le C_{\eta}\ep^2\big|\wh\Phi_2(v,w)\big|
\le \big(C_{\eta}\de/\ep\big)\ep^2
\qquad\forall~(v,w)\!\in\!\cZ_V.\EE
Thus, the closed 2-form $\wt\om_{\cZ}^{(\ep)}$ on~$\cZ$ is nondegenerate if $C_{\eta}\de\!<\!\ep_{\al}\ep$.\\

\noindent
We now define an $\om_{\cZ}^{(\ep)}$-tame almost complex structure~$J_{\cZ}'$ on~$\cZ$
which preserves (the tangent spaces to) the fibers of the fibration $\pi_{\ep}\!:\cZ\!\lra\!\De$.
The connections~$\na^X$ and~$\na^Y$  induce a splitting of the exact sequence
\BE{TZsplit_e}\begin{split}
0&\lra \pi_V^*(\cN_XV\!\oplus\!\cN_YV) \lra T\cZ_V
\stackrel{\nd\pi_V}{\lra} \pi_V^*TV\lra0
\end{split}\EE
of vector bundles over~$\cZ_V$.
The image of $\pi_V^*TV$ corresponding to this splitting~is
$$\ker\nd\rho_X \cap \ker\pi_X^*\al_X\cap \ker\nd\rho_Y \cap \ker\pi_Y^*\al_Y
\subset T\cZ_V\,$$
outside of~$V$,
as can be seen from \cite[Lemma~1.1]{anal}, for example.
By \cite[Appendix]{IPrel}, there exist $C\!>\!0$ and
a smooth family  $J_{V;\rho}$ with $\rho\!\in\!(-5\ep^2,5\ep^2)$
of almost complex structures on~$V$ such that $J_{V;\rho}$ is compatible
with the symplectic form $\om_V\!+\!(\rho/2) D\al_X$ and 
\BE{Jrho_e}\big\|J_{V;\rho}-J_{V;0}\big\|\le C\rho \qquad\forall~\rho\in(-5\ep^2,5\ep^2).\EE
Let $\wt{J}_V|_{(v,w)}$ be the  complex structure on $T_{(v,w)}\cZ_V$
induced by the complex structure $\fI_X\!\oplus\!\fI_Y$ in the fibers of~$\pi_V$ and
the almost complex structure
$$J_{V;\ep^2(\rho_X(v)f_X(v,w)-\rho_Y(w)f_Y(v,w))}$$
on~$V$ via the splitting~\eref{TZsplit_e}
and $\wt{J}_{V;0}|_{(v,w)}$ be the complex structure induced by $\fI_X\!\oplus\!\fI_Y$ and~$\wt{J}_{V;0}$. 
The almost complex structure~$\wt{J}_V$ on~$\cZ_V$ is
$\wt\om_{V;\bu}^{(\ep)}$-compatible.
By~\eref{omPhicomp_e2b}, it preserves the fibers
\BE{Zlaneck_e}\pi_{\ep}^{-1}(\la)\cap\cZ_V=
\big\{(v,w)\!\in\!\cN_XV\!\oplus\!\cN_YV\!:~\ep\wh\Phi_2(v,w)\!=\!\la\big\}.\EE
Along with~\eref{omVest_e}, this implies that $\wt\om_V^{(\ep)}|_{\cZ_{\la}}$
is a symplectic form taming~$\wt{J}_V$ for all $\la\!\in\!\De$,
provided $\De\!\subset\!\C$ is a sufficiently small neighborhood of the origin.\\

\noindent
Since $\wt{J}_V$ is tamed by $\wt\om_V^{(\ve)}$ and preserves the fibers~\eref{omPhicomp_e2b},
it can be extended to an almost complex structure~$J_{\cZ}'$ on~$\cZ$
which is tamed by~$\wt\om_V^{(\ep)}$ and preserves the fibers of~$\pi_{\ve}$.
The restriction of~$\wt{J}_{V;0}$ to $\pi_V^{-1}(\Si)$
is Kahler for every (real) surface $\Si\!\subset\!V$ preserved by~$J_{V;0}$;
see \cite[Lemma~2.4]{anal}.
Along with~\eref{Jrho_e}, this implies~that 
$$N_{J_{\cZ}'}(v,w)\in T_xV \qquad\forall~v,w\!\in\!T_x\cZ,~x\!\in\!V,$$
where $N_{J_{\cZ}'}$ is the Nijenhuis tensor of~$J_{\cZ}'$.\\

\noindent
In the region $|v|,|w|\!<\!\frac12$, $\wt\om_{V}^{(\ep)}\!=\!\wt\om_{V;\bu}^{(\ep)}$.
Thus, $\wt{J}_V$ is $\wt\om_{V}^{(\ep)}$-compatible and
$$\wt{g}_V^{(\ep)}\equiv \wt\om_V^{(\ep)}(\cdot,\wt{J}_V\cdot)$$
is a metric on this neighborhood of~$V$ in~$\cZ$.
It agrees with the product metric 
$$\wt{g}_{V;0}^{(\ep)}\equiv \om_V(\cdot,J_{V;0}\cdot)\oplus \ep^2g_X \oplus \ep^2g_Y$$
to the second order in~$(v,w)$,
since the splitting of~\eref{TZsplit_e} is $\wt\om_{V;\bu}^{(\ep)}$-orthogonal.
Thus, the second fundamental form $\II_V$ of~$V$ with respect to the metric 
$$g_{\cZ}'^{(\ep)}(\cdot,\cdot)\equiv
\frac12\big(\om_{\cZ}^{(\ve)}(\cdot,J_{\cZ}'\cdot)-\om_{\cZ}^{(\ve)}(J_{\cZ}'\cdot,\cdot)\big)$$
determined by $\om_{\cZ}^{(\ve)}$ and $J_{\cZ}'$ vanishes.\\

\noindent
The  $\wt\om_V^{(\ep)}$-tame almost complex structure~$J_{\cZ}'$ can be replaced
by an $\wt\om_V^{(\ep)}$-compatible almost complex structure~$J_{\cZ}$
by deforming it outside of a neighborhood of~$V$ in~$\cZ$.
Let $J_X$ be an $\om_X$-compatible almost complex structure on~$X$,
$J_Y$ be an $\om_Y$-compatible almost complex structure on~$Y$,
and $\fj_{\C}$ be the standard complex structure on~$\C$.
The almost complex structures
$$\wt{J}_X\equiv J_X\oplus\fj_{\C} \qquad\hbox{and}\qquad
\wt{J}_Y\equiv J_Y\oplus\fj_{\C}$$
on~$\cZ_X$ and~$\cZ_Y$ are then compatible with the symplectic forms~\eref{wtomXY_e}
and preserve the fibers of the projection~$\pi_{\ep}$.
Let
$$\wt{g}_{V;\bu}^{(\ep)}(\cdot,\cdot)\equiv \wt\om_{V;\bu}^{(\ep)}(\cdot,\wt{J}_V\cdot),\qquad
\wt{g}_X(\cdot,\cdot)=\wt\om_X(\cdot,\wt{J}_X\cdot), 
\qquad \wt{g}_Y(\cdot,\cdot)=\wt\om_X(\cdot,\wt{J}_X\cdot).$$
By \cite[Lemma~A.1]{IPrel}, an $\om_Z$-compatible almost complex structure~$J_Z$
on a symplectic manifold $(Z,\om_Z)$ preserves
a symplectic submanifold $W\!\subset\!Z$ if and only if 
the $\om_Z$-orthogonal complement of~$TW$ is also orthogonal to~$TW$
with respect to the metric~$\om_Z(\cdot,J_Z\cdot)$.
Thus, the metrics $\wt{g}_{V;\bu}^{(\ep)}$, $\wt{g}_X$, and $\wt{g}_Y$ can
be patched together over the regions 
\BE{neckends_e}\frac14\le |v| \le 1 \qquad\hbox{and}\qquad 
\frac14\le |w|\le 1\EE
in~$\cZ_V$ into a metric~$g_{\cZ}^{(\ve)}$ compatible with~$\om_{\cZ}^{(\ep)}$
so that the corresponding $\om_{\cZ}^{(\ep)}$-compatible almost complex 
structure~$J_{\cZ}$ preserves~$\cZ_{\la}$ for every $\la\!\in\!\De$.\\

\noindent
For $\la\!\in\!\De^*$, let $\om_{\la}\!=\!\om_{\cZ}^{(\ep)}|_{\cZ_{\la}}$.
By the symplectic sum construction above, the complex normal bundles of 
$X,Y\!\subset\!\cZ$ are given~by
\begin{alignat*}{2}
\cN_{\cZ}X&\approx\big(\pi_{X,V}^*\cN_YV\!\sqcup\!(X\!-\!V)\!\times\!\C\big)\big/\!\!\sim,
&~~ (v,w)&\sim \big(\Psi_{X;\ep}(v),\ep^2\wh\Phi_2(v,w)\big)
~\forall\,(v,w)\!\in\!\pi_{X,V}^*\cN_YV,\,v\!\neq\!0, \\
\cN_{\cZ}Y&\approx\big(\pi_{Y,V}^*\cN_XV\!\sqcup\!(Y\!-\!V)\!\times\!\C\big)\big/\!\!\sim,
&~~ (w,v)&\sim\big(\Psi_{Y;\ep}(w),\ep^2\wh\Phi_2(v,w)\big)
~\forall\,(w,v)\!\in\!\pi_{Y,V}^*\cN_XV,\,w\!\neq\!0,
\end{alignat*}
where
$$\pi_{X,V}\!: \cN_XV(2)\lra V \qquad\hbox{and}\qquad 
\pi_{Y,V}\!: \cN_YV(2)\lra V$$
are the restrictions of the bundle projections.
Since the canonical  meromorphic sections of these line bundles 
are nowhere zero and have polar divisors~$V$,
$$ \blr{c_1(\cN_{\cZ}X),A}=-A\!\cdot_X\!V~~~\forall\,A\!\in\!H_2(X;\Z), \quad
\blr{c_1(\cN_{\cZ}Y),B}=-B\!\cdot_X\!V~~~\forall\,B\!\in\!H_2(Y;\Z).$$
On the other hand, the normal bundle of~$\cZ_{\la}$ with $\la\!\in\!\De^*$ is trivial.\\

\noindent
We now compare the first chern class of $(\cZ_{\la},\om_{\la})$ 
with the first chern classes of~$(X,\om_X)$ and~$(Y,\om_Y)$.
Two 2-pseudocycles 
$$f_X\!:(Z_X,x_1,\ldots,x_{\ell})\lra(X,V) \qquad\hbox{and}\qquad  
f_Y\!:(Z_Y,y_1,\ldots,y_{\ell})\lra(Y,V)$$ 
with boundary disjoint  from~$V$ such that 
\begin{gather*}
f_X^{-1}(V)=\{x_1,\ldots,x_{\ell}\},\qquad f_Y^{-1}(V)=\{y_1,\ldots,y_{\ell}\},\\
f_X(x_i)=f_Y(y_i),\quad
\ord_{x_i}^Vf_X=\ord_{y_i}^Vf_Y\qquad\forall\,i\!=\!1,2,\ldots,\ell,
\end{gather*}
determine a 2-pseudocycle $f_X\!\#_{\la}\!f_Y\!:Z_X\!\#\!Z_Y\!\lra\!\cZ_{\la}$;
see \cite[Section~2.2]{GWrelIP}.
Since the homology class of $f_X\!\#_{\la}\!f_Y$ in~$\cZ$ is the sum of
the homology classes of~$f_X$ and~$f_Y$, it follows~that 
\begin{equation*}\begin{split}
\blr{c_1(T\cZ_{\la}),[f_X\!\#_{\la}\!f_Y]}
&=\blr{c_1(T\cZ),[f_X]}+\blr{c_1(T\cZ),[f_Y]}\\
&=\big(\blr{c_1(TX),[f_X]}-[f_X]\!\cdot_X\!V\big)
+\big(\blr{c_1(TY),[f_Y]}-[f_Y]\!\cdot_Y\!V\big).
\end{split}\end{equation*}
In particular, the left-hand side of this expression depends only on 
the homology classes of~$[f_X]$ in~$X$ and~$[f_Y]$ in~$Y$.
Thus,
$$\blr{c_1(T\cZ_{\la}),A\!\#_{\la}\!B}\in\Z$$
is well-defined for all $(A,B)\!\in\!H_2(X;\Z)\!\times_V\!H_2(Y;\Z)$.
We have thus established the following.

\begin{prp}[Gompf's Symplectic Sum]\label{SymSum_prp}
Let $(X,\om_X)$ and $(Y,\om_Y)$ be compact symplectic manifolds
and $V\!\subset\!X,Y$ be a symplectic hypersurface satisfying~\eref{cNVcond_e}.
For each choice of homotopy class of isomorphisms~\eref{cNpair_e}, 
there exist a symplectic manifold $(\cZ,\om_{\cZ})$, 
a smooth map $\pi\!:\cZ\!\lra\!\De$, and 
an $\om_{\cZ}$-compatible almost complex structure~$J_{\cZ}$ on~$\cZ$ such~that 
\begin{enumerate}[label=$\bullet$,leftmargin=*]
\item  $\pi$ is surjective and $\cZ_0\!=\!X\!\cup_V\!Y$,
\item $\pi$ is a submersion outside of $V\!\subset\!\cZ_0$, 
\item the restriction~$\om_{\la}$ of~$\om_{\cZ}$ to $\cZ_{\la}\equiv\pi^{-1}(\la)$ 
is nondegenerate for every $\la\!\in\!\De^*$,
\item  $\om_{\cZ}|_X\!=\!\om_X$, $\om_{\cZ}|_Y\!=\!\om_Y$, 
\item $J_{\cZ}$ preserves $T\cZ_{\la}$ for every $\la\!\in\!\De^*$,
\item  $N_{J_{\cZ}}(v,w)\in T_xV$ for all $v,w\!\in\!T_x\cZ$, $x\!\in\!V$, and
\item the second fundamental form $\II_V$ of~$V$ with respect to the metric 
$\om_{\cZ}(\cdot,J_{\cZ}\cdot)$ vanishes.
\end{enumerate}
Furthermore, 
\BE{c1Zla_e}
\blr{c_1(T\cZ_{\la}),A\!\#_{\la}\!B}=\blr{c_1(TX),A}+\blr{c_1(TY),B}-2A\!\cdot_X\!V\EE
for all $\la\!\in\!\De^*$ and $(A,B)\!\in\!H_2(X;\Z)\!\times_V\!H_2(Y;\Z)$.
\end{prp}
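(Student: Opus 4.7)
The plan is to construct $\cZ$ as a smooth manifold fibered over a disc $\De\!\subset\!\C$ by patching three open pieces along two neck regions, equip the result with a compatible pair $(\om_\cZ,J_\cZ)$ by interpolating model structures on each piece, and finally extract the Chern class identity via a pseudocycle gluing argument. First I would use the Symplectic Neighborhood Theorem to identify disc neighborhoods of $V$ in $(X,\om_X)$ and $(Y,\om_Y)$ with disc subbundles of $(\cN_XV,\om_{X,V}^{(1)})$ and $(\cN_YV,\om_{Y,V}^{(1)})$ via symplectomorphisms $\Psi_X,\Psi_Y$, then rescale by $\ep$ to obtain $\Psi_{X;\ep},\Psi_{Y;\ep}$. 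The three open pieces are $\cZ_X\!=\!(X\!-\!\Psi_{X;\ep}(\ov{\cN_XV(1)}))\!\times\!\C$, its counterpart $\cZ_Y$, and a neck $\cZ_V\!\subset\!\cN_XV\!\oplus\!\cN_YV$ cut out by $|v|,|w|\!<\!2$ and $\ep|\wh\Phi_2(v,w)|\!<\!\de$. The fibration $\pi$ is projection to the second coordinate on $\cZ_X,\cZ_Y$ and the map $(v,w)\!\mapsto\!\ep\wh\Phi_2(v,w)$ on $\cZ_V$; by construction of the gluing maps $\psi_X,\psi_Y$, these agree on overlaps.

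\textbf{Symplectic form.} On $\cZ_X,\cZ_Y$ I would take $\om_X\!+\!\pi^*\om_\C$ and $\om_Y\!+\!\pi^*\om_\C$. On the neck I would interpolate, via a bump function $\eta$ vanishing for $r\!\le\!\tfrac12$ and equal to $1$ for $r\!\ge\!1$, between the three model closed $2$-forms $\pi_V^*\om_V\!+\!\tfrac{\ep^2}{2}\nd(\rho_X\al_X)$, $\pi_V^*\om_V\!+\!\tfrac{\ep^2}{2}\nd(\rho_Y\al_Y)$, and $\pi_V^*\om_V\!+\!\tfrac{\ep^2}{2}\nd(\rho_X\rho_Y(\pi_X^*\al_X\!+\!\pi_Y^*\al_Y))$. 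Identity \eref{piomC_e}, together with \eref{omends_eq} and \eref{alXalY_e}, makes these match the pullbacks from $\cZ_X,\cZ_Y$ on the overlap regions. I expect the \emph{main obstacle} to be nondegeneracy of the neck form. I would address it by splitting the form as a ``model'' piece $\wt\om_{V;\bu}^{(\ep)}$, nondegenerate purely from the Hermitian data and the symplectic fibration $\cN_XV\!\oplus\!\cN_YV\!\to\!V$, plus a correction $\vp_V^{(\ep)}$ whose $C^0$-norm is bounded by a constant times $\ep^2|\wh\Phi_2|\!\le\!\ep\de$. Choosing $\ep$ small depending only on the Hermitian geometry, and then $\de\!\ll\!\ep$, makes the correction dominated by the nondegeneracy margin of the model piece.

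\textbf{Compatible almost complex structure.} I would first construct a tame $J_\cZ'$ preserving the fibers of $\pi$. On $\cZ_X,\cZ_Y$ I pick $\om_X$- and $\om_Y$-compatible $J_X,J_Y$ and use $J_X\!\oplus\!\fj_\C$ and $J_Y\!\oplus\!\fj_\C$. On the neck, I combine $\fI_X\!\oplus\!\fI_Y$ on the vertical subbundle with a smooth family $\{J_{V;\rho}\}$ on $V$ compatible with $\om_V\!+\!(\rho/2)D\al_X$, whose existence is \cite[Appendix]{IPrel}, via the connection splitting \eref{TZsplit_e}. Fiber-preservation follows because $\wh\Phi_2$ is holomorphic by virtue of \eref{omPhicomp_e2b}. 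The Nijenhuis condition $N_{J_\cZ}(v,w)\!\in\!T_xV$ along $V$ reduces to the Kahler property of $\wt J_{V;0}$ on each $\pi_V^{-1}(\Si)$ for a $J_{V;0}$-invariant surface $\Si\!\subset\!V$, and the vanishing of the second fundamental form $\II_V$ follows because the induced metric agrees with the product metric $\om_V(\cdot,J_{V;0}\cdot)\oplus\ep^2 g_X\oplus\ep^2 g_Y$ to second order in $(v,w)$, thanks to the $\wt\om_{V;\bu}^{(\ep)}$-orthogonality of \eref{TZsplit_e}. Finally, away from a neighborhood of $V$, I would deform $J_\cZ'$ into an $\om_\cZ$-compatible $J_\cZ$ via \cite[Lemma~A.1]{IPrel}, preserving both the fiber structure and the orthogonality to $TV$.

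\textbf{Chern class identity.} To prove \eref{c1Zla_e}, I would use pseudocycle representatives. Given $(A,B)\!\in\!H_2(X;\Z)\!\times_V\!H_2(Y;\Z)$, represent $A,B$ by $2$-pseudocycles $f_X\!:Z_X\!\to\!X$ and $f_Y\!:Z_Y\!\to\!Y$ meeting $V$ transversely at matched points with matched contact orders, and glue them to a pseudocycle $f_X\#_\la f_Y$ representing $A\#_\la B$ in $\cZ_\la$, as in \cite[Section~2.2]{GWrelIP}. Since $\cN_\cZ\cZ_\la$ is trivial for $\la\!\neq\!0$, one obtains $\blr{c_1(T\cZ_\la),A\#_\la B}\!=\!\blr{c_1(T\cZ),[f_X]}\!+\!\blr{c_1(T\cZ),[f_Y]}$. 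From the explicit descriptions of $\cN_\cZ X,\cN_\cZ Y$ exhibited after the gluing construction, whose canonical meromorphic sections are nowhere zero with polar divisor $V$, one reads off $\blr{c_1(\cN_\cZ X),A}\!=\!-A\!\cdot_X\!V$ and $\blr{c_1(\cN_\cZ Y),B}\!=\!-B\!\cdot_Y\!V$. Combining with the adjunction sequence $0\!\to\!TX\!\to\!T\cZ|_X\!\to\!\cN_\cZ X\!\to\!0$ and its analogue for $Y$ then yields \eref{c1Zla_e}.
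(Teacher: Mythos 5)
Your proposal reproduces the paper's own construction essentially step for step: the same three-piece decomposition $\cZ_X,\cZ_Y,\cZ_V$ glued by $\psi_X,\psi_Y$, the same closed 2-form with the bump function interpolated inside $\nd(\cdot)$ and nondegeneracy verified via the split $\wt\om_V^{(\ep)}=\wt\om_{V;\bu}^{(\ep)}+\vp_V^{(\ep)}$ with the $C_\eta\de/\ep$ bound, the same construction of the almost complex structure via the family $J_{V;\rho}$ and the connection splitting, and the same pseudocycle/meromorphic-section computation for \eref{c1Zla_e}. The one phrase to sharpen is "interpolate between the three model closed 2-forms": a pointwise convex combination of closed 2-forms with nonconstant coefficients is not closed, and the paper's crucial device is to place the bump functions $\eta\circ\rho_X,\eta\circ\rho_Y$ \emph{inside} the exterior derivative so that $\wt\om_V^{(\ep)}$ is closed by construction; since you subsequently invoke $\wt\om_{V;\bu}^{(\ep)}$ and $\vp_V^{(\ep)}$ you evidently mean this, but the wording should make it explicit.
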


\begin{rmk}\label{SympSumConstr_rmk}
In \cite[Section~2]{IPsum},  the above $\ep$-rescaling of the symplectic forms
on~$\cN_XV$ and~$\cN_YV$ is absorbed into 
the starting Hermitian structures on~$\cN_XV$ and~$\cN_YV$.
The second identity in~\eref{c1Zla_e} is \cite[Lemma~2.4]{IPsum}.
Our proof of this identity adds details to the proof in~\cite{IPsum}
and in particular formally extends over~$X$ and~$Y$ the key bundles that
are described only over neighborhoods of~$V$ in~\cite{IPsum}. 
In other aspects, the review of the symplectic sum construction in \cite[Section~2]{IPsum} is mostly wrong.
The ``ends'' $\cZ_X$ and~$\cZ_Y$ are described incorrectly:
the specification in~\cite{IPsum} leads to non-compact fibers~$\cZ_{\la}$.
The verification of the nondegeneracy of~$\om_{\cZ}$ in the overlap region is wrong:
this region should not be tied to the parameter~$\de$ appearing in the relevant bounds.
The nondegeneracy of~$\om_{\cZ}|_{\cZ_{\la}}$ is taken for granted.
The justification of~\eref{piomC_e} in \cite[Section~2]{IPsum} is incomplete
and refers to $\pi_X^*\al_X\!+\!\pi_Y^*\al_Y$ as a connection 1-form 
on $\cN_XV\!\oplus\!\cN_YV$, which differs from the standard usage.
The moment maps in \cite[(2.4)]{IPsum} play no role in the symplectic sum construction
described there.
\end{rmk}

\subsection{A symplectic cut perspective: \cite[Sections~2,3.0]{LR}}
\label{SympCut_subs}

\noindent
The symplectic sum formula for GW-invariants is approached in \cite[Section~3.0]{LR}
from the opposite direction by cutting~$(M,\om)\!=\!(X\!\#_V\!Y,\om_{\#})$
into two pieces~$M^-$ and~$M^+$ along a compact hypersurface~$\wt{M}$.
This hypersurface is the preimage of a regular value of a Hamiltonian~$H$ 
on a neighborhood~$U$ of~$\wt{M}$ generating a free $S^1$-action on~$\wt{M}$.
By the Mardsen-Weinstein construction \cite[Section~5.4]{MS1},
the quotient $V\!=\!\wt{M}/S^1$ 
is then a smooth manifold with a symplectic form~$\om_V$
such that $\pi^*\om_V\!=\!\om|_{\wt{M}}$, where $\pi\!:\wt{M}\!\lra\!V$ is the projection
(in~\cite{LR}, $(V,\om_V)$ is denoted by~$(Z,\tau_0)$).
The symplectic cutting construction of~\cite{Ler} collapses the ends of~$M^-$ and~$M^+$
and produces symplectic manifolds $(\ov{M}^-,\om_-)$
and $(\ov{M}^+,\om_+)$ containing $(V,\om_V)$ as a symplectic hypersurface
with dual normal bundles.\\

\noindent
In the description of  Section~\ref{SympSum_subs}, $\wt{M}$ corresponds
to the hypersurface
$$SV_{\la}\equiv \big\{(v,w)\!\in\!\cZ_V\!:~\ep^2\wh\Phi_2(v,w)\!=\!\la,\,|v|\!=\!|w|\big\}
\subset\cZ_{\la}\,,$$
with $\la\!\in\!\C^*$ small.
The symplectic manifolds $(\ov{M}^-,\om_-)$ and $(\ov{M}^+,\om_+)$ 
obtained in this way are symplectically deformation equivalent~to 
$(X,\om_X)$ and~$(Y,\om_Y)$.
We will identify $SV_{\la}$ with the sphere (circle) bundle $SV\!\equiv\!S_XV$
of~$\cN_XV$ and use the isomorphism~\eref{cNpair_e} to identify~$S_YV$ with~$SV$, i.e.  
\BE{SVident_e} S_YV\ni w \llra v\in SV=S_XV\qquad\hbox{if}\quad \wh\Phi_2(v,w)=1\in\C.\EE
In particular, we use the complex structure on~$\cN_XV$ to induce an $S^1$-action on~$SV$
for the purposes of the approach in~\cite{LR}; 
the complex structure on~$\cN_YV$ would induce the inverse $S^1$-action on~$SV$.
The restriction of the Hamiltonian vector field~$\ze_H$, denoted~$X_H$ in \cite[Section~3.0]{LR},
to~$\wt{M}$ then corresponds to the characteristic vector field of the $S^1$-action
on~$SV$, i.e. $\left.\frac{\nd}{\nd\th}(\ne^{\fI\th}v)\right|_{\th=0}$ at each $v\!\in\!SV$.
Let $\al\!=\!\al_X$ be a connection 1-form on~$SV$ as before (denoted by~$\la$ in~\cite{LR}).\\

\noindent
The family of almost complex structures~$\wh{J}_{\la}$ on~$\cZ_{\la}$ used in~\cite{LR}
is more restrictive than in~\cite{IPsum} on the necks.
Given $\de'\!\in\!(0,\frac14)$, let $\wt\eta\!:\R\!\lra\![0,1]$ be a smooth function such~that 
$$\wt\eta(r)=\begin{cases}0,&\hbox{if}~r\le\de';\\
1,&\hbox{if}~r\ge2\de'. \end{cases} $$
With $J_{V;\rho}$ as in~\eref{Jrho_e}, let
$$J_{V;(v,w)}=J_{V;\wt\eta(\rho_X(v)+\rho_Y(w))\ep^2(\rho_X(v)f_X(v,w)-\rho_Y(v)f_Y(v,w))}
\qquad\forall~(v,w)\!\in\!\cZ_V.$$
If $\de'$ is sufficiently small, this  complex structure  on $T_{\pi_V(v,w)}V$ is tamed~by
the symplectic form 
$$\om_V+\frac{\ep^2}{2}\wt\eta\big(\rho_X(v)\!+\!\rho_Y(w)\big)
\big(\rho_X(v)f_X(v,w)\!-\!\rho_Y(w)f_Y(v,w)\big)D\al.$$ 
Let $\wt{J}_V|_{(v,w)}$ be the complex structure on $T_{(v,w)}\cZ_V$
induced by the complex structure $\fI_X\!\oplus\!\fI_Y$ in the fibers of
$\pi_V\!: \cZ_V\!\lra\!V$ and the complex structure
$J_{V;(v,w)}$ on~$T_{\pi_V(v,w)}V$ via the splitting~\eref{TZsplit_e}. 
It again preserves the fibers~\eref{Zlaneck_e} and
is tamed by the symplectic form~$\wt\om_V^{(\ep)}$ of Section~\ref{SympSum_subs} everywhere on~$\cZ_V$.
Thus, we can again extend it to an $\om_{\cZ}^{(\ep)}$-tame almost complex structure~$\wt{J}_{\cZ}$
on~$\cZ$ which preserves the fibers~$\cZ_{\la}$.
Let $\wh{g}_{\cZ}$ be the metric on~$\cZ$ determined by~$\om_{\cZ}^{(\ep)}$ and~$\wt{J}_{\cZ}$.
We denote the restrictions of $(\wt{J}_{\cZ},\wh{g}_{\cZ})$ to~$X$, $Y$, and $\cZ_{\la}$
with $\la\!\in\!\De^*$ small by $(\wt{J}_X,\wh{g}_X)$, $(\wt{J}_Y,\wh{g}_Y)$, 
and $(\wt{J}_{\la},\wh{g}_{\la})$, respectively.\\

\noindent
The stretching construction of~\cite{LR} presents the complements of~$V$ in
tubular neighborhoods in~$X$ and~$Y$ as bundles over~$V$ whose fibers are
infinite half-cylinders.
In the notation of~\cite{IPsum}, the ``height" coordinates can be taken to~be
$$a_X\big(\Psi_{X;\ep}(v)\big)=\ln|v| \qquad\hbox{and}\qquad 
a_Y\big(\Psi_{Y;\ep}(w)\big)=-\ln|w|$$
on~$X$ and~$Y$, respectively. 
Thus, 
\BE{oXVdfn_e}\R^-\!\times\!SV\subset \oXV\equiv X\!-\!V,\quad 
\R^+\!\times\!SV\subset \oYV\equiv\!Y\!-\!V,\EE
and $X$ and $Y$ are quotients of the manifolds with boundary
\BE{tXVdfn_e}
\tXV\equiv \big(\oXV\cup_{\R^-\times SV} [-\i,0)\!\times\!V\big)/\sim
\quad\hbox{and}\quad
\tYV\equiv \big(\oYV\cup_{\R^+\times SV} (0,\i]\!\times\!V\big)/\sim
\,,\EE
respectively; $V\!\subset\!X,Y$ is the quotient of $\{\mp\i\}\!\times\!SV$ by the $S^1$-action.
For each $a\!\in\!(0,\i]$, let 
\begin{equation*}\begin{split}
X_a=\oXV-\big\{(a_X,v)\!\in\!\R^-\!\times\!SV\!:~a_X\!\le\!-\frac34a\big\},\qquad
Y_a=\oYV-\big\{(a_Y,w)\!\in\!\R^+\!\times\!SV\!:~a_Y\!\ge\!\frac34a\big\}.
\end{split}\end{equation*}
In the approach of~\cite{LR}, the symplectic sum~$\cZ_{\la}$ of~\cite{IPsum} is viewed~as
\BE{cZlaLR_e}
\cZ_{a,\vt}=\big(X_a\sqcup Y_a\big)\big/\!\sim,~~
X_a\!-\!\ov{X}_{\frac{a}{3}}\ni(a_X,v)\sim(a_X\!+\!a,\ne^{\fI\vt}v)\in Y_a\!-\!\ov{Y}_{\frac{a}{3}}
\EE
if $\ep^2\la^{-1}\!=\!\ne^{a+\fI\vt}$;
in the notation of \cite[(4.11,4.12)]{LR}, 
$(a,\vt)\!=\!(4kr,\th_0)$ and $(a_X,a_Y)\!=\!(a_2,a_1)$.\\

\noindent
For any $\ve\!\in\!(0,1]$, let 
\BE{cZepdfn_e}\begin{split}
\cZ_{a,\vt;\ve}&=\big\{(v,w)\!\in\!\cZ_V\!\cap\!\cZ_{\la}\!:\,|v|,|w|\!\le\!\ve^{1/2}\big\}\\
&=\big\{(a_X,v)\!\in\!\R^-\!\times\!SV\!:\,\frac{\ln\ve}{2}\!\ge\!a_X\!\ge\!-\frac{3a}4\big\}
\cup
\big\{(a_Y,w)\!\in\!\R^+\!\times\!SV\!:-\frac{\ln\ve}{2}\!\le\!a_Y\!\le\!\frac{3a}4\big\},
\end{split}\EE
with the union on the second line taken inside of~$\cZ_{a,\vt}$.
Denote by $\frac{\prt}{\prt a_{\la}}$ the vector field on~$\cZ_{a,\vt;1}$ restricting
to $\frac{\prt}{\prt a_X}$ on the intersection with~$X_a$
and to $\frac{\prt}{\prt a_Y}$ on the intersection with~$Y_a$.
The almost complex structure~$\ti{J}$ of the previous paragraph satisfies
$$\wt{J}_X\frac\prt{\prt a_X}=\ze_H ~~\hbox{on}~X_a\!-\!X_{-\frac23\ln\de'}, \quad
\wt{J}_Y\frac\prt{\prt a_Y}=\ze_H ~~\hbox{on}~Y_a\!-\!Y_{-\frac23\ln\de'}, \quad
\wt{J}_{\la}\frac\prt{\prt a_{\la}}=\ze_H ~~\hbox{on}~\cZ_{a,\vt;\de'}\,.$$
It restricts to the pull-back of~$J_V$ on $\ker\al\!\subset\!T(SV)\!\subset\!T\cZ_{a,\vt}$ and
differs slightly from the initially fixed almost complex structures~$J_X$ and~$J_Y$
over
$$\Big\{(a_X,v)\!\in\!\R^-\!\times\!SV\!:\,a_X\ge\frac{\ln\de'}{2}\Big\}\subset X
\qquad\hbox{and}\qquad
\Big\{(a_Y,w)\!\in\!\R^+\!\times\!SV\!:\,a_Y\le-\frac{\ln\de'}{2}\Big\}\subset Y\,,$$
respectively, in a way depending on~$\la$.\\

\noindent
Finally, we specify complete metrics~$\wt{g}_X$, $\wt{g}_Y$, and $\wt{g}_{a,\vt}$ 
on $\oXV$, $\oYV$, and~$\cZ_{a,\vt}$, respectively.
Let $\wh\eta(r)\!=\!\wt\eta(16r)$.
Denote by~$g_{\cyl}$ the metric on~$\R\!\times\!SV$ given~by
$$g_{\cyl}\big((a_1,v_1),(a_2,v_2)\big)=a_1a_2+\al(v_1)\al(v_2)+
q_V^*g_V(v_1,v_2),$$
where $g_V(\cdot,\cdot)\!=\!\om_V(\cdot,J_V\cdot)$ is the metric on~$V$ 
induced by~$J_V$.
Following \cite[(3.7),(3.8)]{LR}, we define
the metrics~$\wt{g}_X$ on~$X\!-\!V$ and~$\wt{g}_Y$ on~$Y\!-\!V$ by
\begin{equation*}\begin{split}
\wt{g}_X|_x&=\begin{cases}\wh{g}_X|_x,&\hbox{if}~x\!\in\!\oXV\!-\!(-\i,-1)\!\times\!SV;\\
\wh\eta(\rho_X(x))\wh{g}_X|_x\!+\!(1\!-\!\wh\eta(\rho_X(x)))g_{\cyl}|_x
,&\hbox{if}~x\!\in\!\R^-\!\times\!SV;
\end{cases}\\
\wt{g}_Y|_y&=\begin{cases}\wh{g}_Y|_y,&~~\hbox{if}~y\!\in\!\oYV\!-\!(1,\i)\!\times\!SV;\\
\wh\eta(\rho_Y(y))\wh{g}_Y|_y\!+\!(1\!-\!\wh\eta(\rho_Y(y)))g_{\cyl}|_y
,&~~\hbox{if}~y\!\in\!\R^+\!\times\!SV.
\end{cases}
\end{split}\end{equation*}
For each $a\!\in\!\R^+$ sufficiently large, we similarly define 
$$\wh{g}_{a,\vt}|_x=
\begin{cases}\wh{g}_{\ne^{-(a+\fI\vt)}}|_x,&\hbox{if}~x\!\in\!\cZ_{a,\vt}\!-\!\cZ_{a,\vt;1};\\
\wh\eta(\rho_X(x)\!+\!\rho_Y(x))\wh{g}_{\ne^{-(a+\fI\vt)}}|_x\!+\!
(1\!-\!\wh\eta(\rho_X(x)\!+\!\rho_Y(x)))g_{\cyl}|_x
,&\hbox{if}~x\!\in\!\cZ_{a,\vt;2}.
\end{cases}$$
This metric agrees with the cylindrical metric on $\cZ_{a,\vt;\de'/16}$.
Its injectivity radius is uniformly (independently of~$(a,\vt)$) bounded below
and the norm of its Riemannian curvature tensor is uniformly bounded above.
 
\begin{rmk}\label{LR230_rmk}
The review of the symplectic sum and cutting constructions in~\cite{LR}
consists of \cite[Examples~2.6-2.8]{LR}.
In particular, the symplectic form~$\om_{\cZ}^{(\ep)}|_{\cZ_{\la}}$ 
on the glued manifold in \cite[(4.11,4.12)]{LR}  is not specified;
as indicated in Section~\ref{SympSum_subs}, constructing such a form is not trivial.
The symplectic form~$\om_0$ on~$\C^n$ in \cite[Example~2.6]{LR} is not specified.
The second set on the RHS of \cite[(2.6)]{LR} can be easily absorbed into the first;
it would perhaps be clearer to describe $\mu^{-1}(0)$ as $|z|^2\!+\!|w|^2\!=\!\ep$.
Since~$z$ is a vector, the expression $z\nd\bar{z}$ in \cite[(2.8)]{LR} does not make sense;
the intended meaning is presumably as in \cite[(2.16)]{LR}.
The formula \cite[(2.8)]{LR} does not seem to appear in~[MS1].
The $S^1$-action for the Hamiltonian in \cite[(2.10)]{LR} with respect to 
the symplectic form in \cite[(2.9)]{LR}
is given by the multiplication by~$\ne^{-\fI t/\ep}$, not as in \cite[(2.11)]{LR}.
The wording of \cite[Lemma~2.5]{LR} is incorrect; 
there should be a homotopy of the maps~$\vph$ as well.
The third sentence on page~165 in~\cite{LR} is vague.
The wording of the paragraph in~\cite{LR} containing this sentence suggests that 
the symplectic blowup construction 
involves almost complex structures, which is not the case;
it is described explicitly on pages 239-250 in~\cite{MS1}.
A direct connection of this paragraph to \cite[Proposition~2.10]{LR} is also unclear.
Other, fairly minor misstatements in \cite[Sections~2,3.0]{LR} include
\begin{enumerate}[label={},topsep=-5pt,itemsep=-5pt,leftmargin=*]
\item\textsf{p161, Ex~2.2:} $\ne^{\fI\th}$ \to $\ne^{\fI\th/2}$;
\item\textsf{p161, Dfn~2.3:} concave dfn is correct only if $N$ is connected;
\item\textsf{p161, bottom:} not by (1.10); {\it The map (1.10) induces a homomorphism...}
\item\textsf{p162, top:} [FO] and [LT] do not require integrality;
\item\textsf{p162, line 7:} this equality does not hold, as LHS is degenerate along $\pi^{-1}(Z)$;
\item\textsf{p162, after (2.9):} on the whole total space, as used above Prop.~2.10;
\item\textsf{p163, lines 4,13:} Example 1 \to Example 2.6;
\item\textsf{p164, line -17:} $\vph$ is not specified in (2.13);
\item\textsf{p164, line -6:} the antipodal \to a conjugation;
\item\textsf{p165, line 12:} Example 2 \to Example 2.7;
\item\textsf{p165, Lemma 2.11, proof:} $\ov{M}^-$ is not a subset of~$M$;
\item\textsf{p166, line 14:} $M_t$ has not been defined;
\item\textsf{p168, Section~3:} need to require $H^{-1}(0)$ to be compact and the $S^1$-action
to be free; the relation of~$\nd\la$ with the Chern class is irrelevant;
\item\textsf{p168, lines -9,-8,-3:} $\{$ and $\}$ should not be here;
identifications along $\{\pm\ell\}\!\times\!\wt{M}$;
\item\textsf{p169, line 1:} this sentence does not make sense and is not used here;
\item\textsf{p169, (3.7),(3.8):} these metrics need to be patched together;
\item\textsf{p170, line 1:} $\Pi$ can be taken to be $\nd\pi$.
\end{enumerate}
\end{rmk}

\section{Relative stable maps}
\label{RelMaps_sec}

\noindent
In Sections~\ref{RelInv_sub0} and~\ref{RelInv_sub0b}, 
we recall the notions of stable (relative) morphism into $(X,V)$
and of stable (predeformable) morphism into the singular space $X\!\cup_V\!Y$,
respectively.
In Section~\ref{RelInv_subs}, we review the geometric construction of the absolute GW-invariants,
due to \cite{RT1, RT2}, and its adaption to the relative setting, due to~\cite{IPrel};
we also comment on the general case considered in~\cite{LR}.
The rim tori refinement for the standard relative GW-invariants suggested in
\cite[Section~5]{IPrel} is discussed in Section~\ref{RelInv_subs2}.\\

\noindent
The notion of stable morphism into $(X,V)$ arises from \cite[Definition~3.14]{LR}.
The formulation we describe first is based on \cite[Definition~4.7]{Jun1}, but 
is adapted to the almost Kahler setting of~\cite{IPrel}. 
The notion of stable (predeformable) morphism into the singular space $X\!\cup_V\!Y$
is due to \cite[Definition~2.5]{Jun1};
it is crucial for establishing the symplectic sum formula.
In \cite[Definition~3.18]{LR}, a pair of relative stable morphisms
into  $(X,V)$ and $(Y,V)$ with matching conditions at the contact points is used instead.
In \cite[Section~12]{IPsum}, the analogue of the notion of~\cite{Jun1} is used without
quotienting out by the reparametrizations of the rubber components; see~\eref{Thdfn_e2}.
The (virtually) main strata of the moduli spaces of stable morphisms into $X\!\cup_V\!Y$  arising 
from~\cite{LR} and~\cite{IPsum} are the same as those of~\cite{Jun1},  
but other strata (i.e.~maps into the spaces  $X\!\cup_V^m\!Y$ of~\eref{XnVYdfn_e} with $m\!\in\!\Z^+$)
are~not.
In particular, the morphisms into $X\!\cup_VY$ in the sense of either~\cite{LR} or~\cite{IPsum} 
are not limits of morphisms into smoothings of $X\!\cup_VY$
with respect to a Hausdorff topology on the moduli space of morphisms into~$\cZ$
and do not provide the necessary setting for establishing the symplectic sum formula.

\subsection{Moduli spaces for $(X,V)$:  \cite[Section~7]{IPrel}, \cite[Sections~3.2,3.3]{LR}}
\label{RelInv_sub0}

\noindent
Let $(X,\om_X)$ be a compact symplectic manifold, $V\!\subset\!X$ be a closed
symplectic hypersurface, and $J_X$ be an $\om_X$-compatible almost complex structure,
such that $J_X(TV)\!=\!TV$.
If $u\!:(\Si,\fj)\!\lra\!(X,J_X)$ is a smooth map from a Riemann surface, let
$$\dbar_{J_X,\fj}u=\frac12\big(\nd u+\{u^*J_X\}\circ\nd u\circ\fj\big)
\in \Ga^{0,1}_{J_X,\fj}(\Si;u^*TX)\equiv
\Ga\big(\Si;(T^*\Si)^{0,1}\!\otimes_{\C}\!u^*TX\big).$$
We denote by $\na$ the Levi-Civita connection of the metric $\om_X(\cdot,J_X\cdot)$ 
on~$X$ and by $\wt\na$ the corresponding $J_X$-linear connection; 
see \cite[p41]{MS2}.
If $u\!:(\Si,\fj)\!\lra\!(X,J_X)$ is $(J_X,\fj)$-holomorphic, 
i.e.~$\dbar_{J_X,\fj}u\!=\!0$, the linearization of the $\dbar_{J_X,\fj}$-operator at~$u$ 
is given~by
\begin{gather}
D_u\!: \Ga(\Si;u^*TX)\lra \Ga^{0,1}_{J_X,\fj}(\Si;u^*TX),\notag\\
\label{Dudfn_e}
D_u\xi=\frac12\big(\wt\na^u\xi+\{u^*J_X\}\circ\wt\na^u\xi\circ\fj)
+\frac14N_{J_X}^u(\xi,\nd u),
\end{gather}
where $\wt\na^u$ and $N_{J_X}^u$ are the pull-backs of the connection~$\wt\na$  
and of the Nijenhuis tensor~$N_{J_X}$ of~$J_X$ normalized as in \cite[p18]{MS2},
respectively, by~$u$; see \cite[(3.1.6)]{MS2}.
If in addition $u(\Si)\!\subset\!V$, 
$$D_u\big(\Ga(\Si;u^*TV)\big)\subset \Ga^{0,1}_{J_X,\fj}(\Si;u^*TV),$$
because the restriction of $D_u$ to $\Ga(\Si;u^*TV)$ is the linearization
of the $\dbar_{J_X,\fj}$-operator at~$u$  for the space of maps to~$V$.
Thus, $D_u$ descends to a first-order differential operator
\BE{DuNXV_e} D_u^{\cN_XV}\!: \Ga(\Si;u^*\cN_XV)\lra \Ga^{0,1}_{J_X,\fj}(\Si;u^*\cN_XV),\EE
which plays a central role in compactifying the moduli space of relative maps to~$(X,V)$.\\

\noindent
Since $J_X(TV)\!=\!TV$, $J_X$ induces a complex structure~$\fI_{X,V}$ on
(the fibers~of) the normal bundle
$$\pi_{X,V}\!: \cN_XV\equiv TX|_V\big/TV\lra V.$$
A connection~$\na^{\cN_XV}$ in $(\cN_XV,\fI_{X,V})$ induces a splitting of the exact sequence
\BE{NXVsplit_e}\begin{split}
0&\lra \pi_{X,V}^*\cN_XV\lra T(\cN_XV)
\stackrel{\nd\pi_{X,V}}{\lra} \pi_{X,V}^*TV\lra0
\end{split}\EE
of vector bundles over~$\cN_XV$ which restricts to the canonical splitting
over the zero section and is preserved by the multiplication by~$\C^*$;
see \cite[Lemma~1.1]{anal}.
For each trivialization 
$$\cN_XV|_U\approx U\!\times\!\C$$ 
over an open subset~$U$ of~$V$, there exists 
$\al\in \Ga(U;T^*V\!\otimes_{\R}\!\C)$
such that the image of $\pi_{X,V}^*TV$ corresponding to this splitting is given~by
$$T_{(x,w)}^{\hor}(\cN_XV)=\big\{(v,-\al_x(v)w)\!:\,v\!\in\!T_xV\big\}
\qquad\forall~(x,w)\!\in\!U\!\times\!\C.$$
The isomorphism $(x,w)\!\lra\!(x,w^{-1})$ of $U\!\times\!\C^*$ maps this vector space~to
\begin{equation*}\begin{split}
T_{(x,w^{-1})}^{\hor}\big((\cN_XV)^*\big)
&=\big\{(v,w^{-2}\al_x(v)w)\!:\,v\!\in\!T_xV\big\}\\
&=\big\{(v,\al_x(v)w^{-1})\!:\,v\!\in\!T_xV\big\}
\qquad\forall~(x,w)\!\in\!U\!\times\!\C^*.
\end{split}\end{equation*}
Thus, the splitting of \eref{NXVsplit_e} induced by a connection in~$(\cN_XV,\fI_{X,V})$ 
extends to a splitting of the exact sequence 
$$0\lra T^{\vrt}(\P_XV) \lra T(\P_XV)
\stackrel{\nd\pi_{X,V}}{\lra} \pi_{X,V}^*TV\lra0,$$
where  $\P_XV$ is as in~\eref{PVdfn_e} and $\pi_{X,V}\!:\P_XV\!\lra\!V$
is the bundle projection map; this splitting restricts to the canonical splittings over 
$$\P_{X,\i}V\equiv\P(\cN_XV\!\oplus\!0)
\qquad\hbox{and}\qquad \P_{X,0}V\equiv\P(0\!\oplus\!\cO_V)$$
and is preserved by the multiplication by~$\C^*$.
Via this splitting, the almost complex structure $J_V\!\equiv\!J_X|_V$ and 
the complex structure $\fI_{X,V}$ in the fibers of~$\pi_{X,V}$ induce
an almost complex structure~$J_{X,V}$ on~$\P_XV$ which restricts to almost complex
structures on $\P_{X,\i}V$ and~$\P_{X,0}V$
and is preserved by the~$\C^*$-action.
Furthermore, the projection $\pi_{X,V}\!:\P_XV\!\lra\!V$ is $(J_V,J_{X,V})$-holomorphic.
By \cite[Lemma~2.2]{anal}, $\xi\!\in\!\Ga(V;\cN_XV)$ is 
$(J_{X,V},J_X|V)$-holomorphic if and only if~$\xi$ lies in the kernel of 
the $\dbar$-operator on~$(\cN_XV,\fI_{X,V})$ corresponding to the connection used above.\\

\noindent
For each $m\!\in\!\Z^{\ge0}$, let 
\begin{gather}\label{XmVdfn_e} 
X_m^V=\big(X\sqcup\{1\}\!\times\!\P_XV\sqcup\ldots\sqcup
\{m\}\!\times\!\P_XV\big)/\!\!\sim\,,\qquad\hbox{where}\\
\notag
x\sim 1\!\times\!\P_{X,\i}V|_x\,,~~~
r\!\times\!\P_{X,0}V|_x\sim (r\!+\!1)\!\times\!\P_{X,\i}V|_x
\quad  \forall\,x\!\in\!V,~r=1,\ldots,m\!-\!1;
\end{gather}
see Figure~\ref{relcurve_fig}.
We denote by $J_m$ the almost complex structure on~$X_m^V$ so that 
$$J_m|_X=J_X \qquad\hbox{and}\qquad 
J_m|_{\{r\}\times\P_XV}=J_{X,V} \quad \forall~r=1,\ldots,m.$$
For each $(c_1,\ldots,c_m)\!\in\!\C^*$, define
\BE{Thdfn_e}\Th_{c_1,\ldots,c_m}\!:X_m^V\lra X_m^V \qquad\hbox{by}\quad
\Th_{c_1,\ldots,c_m}(x)=\begin{cases}x,&\hbox{if}~x\!\in\!X;\\
(r,[c_rv,w]),&\hbox{if}~x\!=\!(r,[v,w])\!\in\!r\!\times\!\P_XV.
\end{cases}\EE
This diffeomorphism is biholomorphic with respect to~$J_m$ and
preserves the fibers of the projection $\P_XV\!\lra\!V$
and the sections~$\P_{X,0}V$ and~$\P_{X,\i}V$.\\

\noindent
The moduli space of relative stable maps into $(X,V)$ is constructed in~\cite{IPrel}
under the additional assumption that
\BE{NijenCond_e}N_{J_X}(v,w)\in T_xV \qquad\forall~v,w\!\in\!T_xX,~x\!\in\!V.\EE
In light of~\eref{Dudfn_e}, this assumption insures that the operator $D_u^{\cN_XV}$
is $\C$-linear for every $(J_X,\fj)$-holomorphic map $u\!:\Si\!\lra\!V$ and thus
the operator 
$$\Ga(V;TX|_V)\lra\Ga\big(V;(T^*V)^{0,1}\!\otimes_{\C}TX|_V\big), \qquad
\xi\lra\frac12\big(\wt\na\xi+J_X\circ\wt\na\xi\circ J_X\big),$$
induces a $\dbar$-operator on $(\cN_XV,\fI_{X,V})$ corresponding to some connection 
$\na^{\cN_XV}$ in $(\cN_XV,\fI_{X,V})$; see \cite[Section~2.3]{anal}.
Let $J_{X,V}$ be the complex structure on~$\P_XV$ induced by~$J_X$
and~$\na^{\cN_XV}$ as in the paragraph above the previous one;
it depends only on the above $\dbar$-operator and not on the connection~$\na^{\cN_XV}$
realizing~it.
Thus, for every $(J_X,\fj)$-holomorphic map $u\!:\Si\!\lra\!V$
and $\xi\!\in\!\Ga(\Si;u^*\cN_XV)$,
$\xi\!\in\!\ker D_u^{\cN_XV}$ if and only if $\xi\!:\Si\!\lra\!\P_XV$
is a $(J_{X,V},\fj)$-holomorphic map.\\

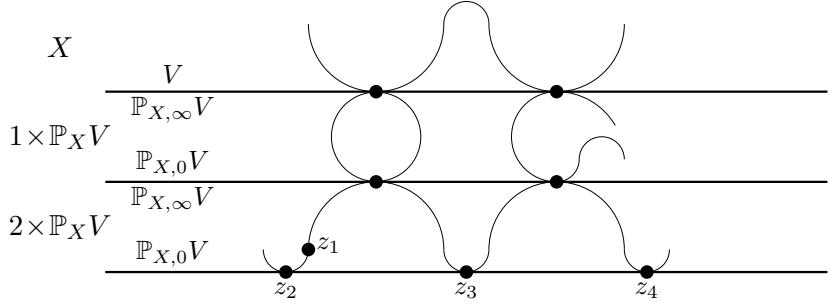
\begin{figure}
\begin{pspicture}(38,-.8)(11,3.2)
\psset{unit=.3cm}
\psline[linewidth=.1](48,-2)(80,-2)
\rput(51,6.8){\smsize{$V$}}\rput(51,5.2){\smsize{$\P_{X,\i}V$}}
\rput(51,2.8){\smsize{$\P_{X,0}V$}}\rput(51,1.2){\smsize{$\P_{X,\i}V$}}
\rput(51,-1.2){\smsize{$\P_{X,0}V$}}
\rput(46,8){$X$}\rput(46,4){$1\!\times\!\P_XV$}\rput(46,0){$2\!\times\!\P_XV$}
\psarc[linewidth=.04](60,9){3}{180}{0}\pscircle*(60,6){.3}
\psarc[linewidth=.04](64,9){1}{0}{180}
\psarc[linewidth=.04](68,9){3}{180}{0}\pscircle*(68,6){.3}
\psarc[linewidth=.04](60,-1){3}{0}{180}
\psarc[linewidth=.04](68,-1){3}{0}{180}
\psarc[linewidth=.04](64,-1){1}{180}{0}
\psarc[linewidth=.04](56,-1){1}{180}{0}
\psline[linewidth=.1](48,2)(80,2)
\pscircle*(57,-1){.3}\rput(57.9,-.8){\smsize{$z_1$}}
\pscircle*(60,2){.3}\rput(56,-2.8){\smsize{$z_2$}}
\pscircle*(64,-2){.3}\rput(64,-2.8){\smsize{$z_3$}}
\pscircle*(68,2){.3}
\pscircle[linewidth=.04](60,4){2}
\psarc[linewidth=.04](68,4){2}{90}{270}\psarc[linewidth=.04](68,3){3}{30}{90}
\psarc[linewidth=.04](68,3){1}{270}{0}\psarc[linewidth=.04](70,3){1}{0}{180}
\psarc[linewidth=.04](72,-1){1}{180}{0}
\psline[linewidth=.1](48,6)(80,6)
\pscircle*(56,-2){.3}\pscircle*(72,-2){.3}
\rput(72,-2.8){\smsize{$z_4$}}
\end{pspicture}
\caption{The image of a relative map with $k\!=\!1$ and $\bs\!=\!(2,2,2)$
to the space $X_2^V$.}
\label{relcurve_fig}
\end{figure}

\noindent
Suppose $k,\ell\!\in\!\Z^{\ge0}$ and $\bs\!=\!(s_1,\ldots,s_{\ell})\!\in\!(\Z^+)^{\ell}$ 
is a tuple satisfying~\eref{bsumcond_e}.
A \textsf{$k$-marked $J_X$-holomorphic map into~$X_0^V$ with contacts of order~$\bs$} is
a $J_X$-holomorphic map $u\!:\Si\!\lra\!X$ 
from a marked connected nodal Riemann surface $(\Si,z_1,\ldots,z_{k+\ell})$
such~that 
$$u^{-1}(V)=\big\{z_{k+1},\ldots,z_{k+\ell}\big\}, \quad
\ord_{z_{k+1}}^V\big(u|_{\Si}\big)=s_1,~\ldots,~
\ord_{z_{k+\ell}}^V\big(u|_{\Si}\big)=s_{\ell}.$$
For $m\!\in\!\Z^+$, 
a \textsf{$k$-marked $J_X$-holomorphic morphism into~$X_m^V$ with contacts of order~$\bs$} is
a continuous map $u\!:\Si\!\lra\!X_m^V$ 
from a marked connected nodal Riemann surface $(\Si,z_1,\ldots,z_{k+\ell})$
such~that 
\BE{MarOnTop_e} 
u^{-1}\big(\{m\}\!\times\!\P_{X,0}V\big)
=\big\{z_{k+1},\ldots,z_{k+\ell}\big\}, \quad
\ord_{z_{k+1}}^{\P_{X,0}V}(u|_{\Si})\!=\!s_1,~\ldots,~
\ord_{z_{k+\ell}}^{\P_{X,0}V}(u|_{\Si})\!=\!s_{\ell},\EE
and the restriction of~$u$ to each irreducible component $\Si_j$ of~$\Si$ is either 
\begin{enumerate}[label=$\bullet$,leftmargin=*]
\item a $J_X$-holomorphic map to $X$ such that the set $u|_{\Si_j}^{\,-1}(V)$ 
consists~of the nodes joining~$\Si_j$ to irreducible components of~$\Si$ mapped to
$\{1\}\!\times\!\P_XV$, or
\item a $J_{X,V}$-holomorphic map to $\{r\}\!\times\!\P_XV$ for some $r\!=\!1,\ldots,m$ such~that 
\begin{enumerate}[label=$\circ$,leftmargin=*]
\item the set $u|_{\Si_j}^{\,-1}(\{r\}\!\times\!\P_{X,\i}V)$ 
consists~of the nodes~$z_{j,i}$ joining~$\Si_j$ to irreducible components of~$\Si$ mapped to
$\{r\!-\!1\}\!\times\!\P_XV$ if $r\!>\!1$ and to~$X$ if $r\!=\!1$
and 
$$\ord_{z_{j,i}}^{\P_{X,\i}V}\big(u|_{\Si_j}\big)=
\begin{cases}
\ord_{z_{i,j}}^{\P_{X,0}V}(u|_{\Si_{i,j}}),&\hbox{if}~r\!>\!1;\\
\ord_{z_{i,j}}^V(u|_{\Si_{i,j}}),&\hbox{if}~r\!=\!1;
\end{cases}$$ 
where $z_{i,j}\!\in\!\Si_{i,j}$ is the point identified with~$z_{j,i}$, 
\item  if $r\!<\!m$, the set $u|_{\Si_j}^{\,-1}(\{r\}\!\times\!\P_{X,0}V)$ 
consists~of the nodes joining~$\Si_j$ to irreducible components of~$\Si$ mapped to
$\{r\!+\!1\}\!\times\!\P_XV$; 
\end{enumerate}
\end{enumerate}
see Figure~\ref{relcurve_fig}.
The \textsf{genus} and the \textsf{degree} of such a map $u\!:\!\Si\!\lra\!X_m^V$
are the arithmetic genus of~$\Si$ and the homology class
\BE{Adeg_e}A=\big[\pi_m\!\circ\!u\big]\in H_2(X;\Z),\EE
where $\pi_m\!:X_m^V\!\lra\!X$ is the natural projection.\\

\noindent
Two tuples $(\Si,z_1,\ldots,z_{k+\ell},u)$ and 
$(\Si',z_1',\ldots,z_{k+\ell}',u')$ as above are \textsf{equivalent}
if there exist a biholomorphic map $\vph\!:\Si'\!\lra\!\Si$ 
and \hbox{$c_1,\ldots,c_m\!\in\!\C^*$} so~that 
$$\vph(z_1')=z_1, \quad\ldots,\quad \vph(z_{k+\ell}')=z_{k+\ell}, \quad\hbox{and}\quad
u'=\Th_{c_1,\ldots,c_m}\circ u\circ\vph.$$
A tuple as above is \textsf{stable} if it has finitely many automorphisms
(self-equivalences).
For each stable tuple $(\Si,z_1,\ldots,z_{k+\ell},u)$ as above and $r\!=\!1,\ldots,m$, 
either 
\begin{enumerate}[label=$\bullet$,leftmargin=*]
\item the degree of the composition of $u|_{u^{-1}(\{r\}\times\P_XV)}$ 
with the projection to~$V$ is not zero, or 
\item the arithmetic genus of some topological component of 
$u^{-1}(\{r\}\times\P_XV)$ is positive, or
\item some topological component of 
$u^{-1}(\{r\}\times\P_XV)$ carries one of the marked points $z_1,\ldots,z_k$, or
\item the restriction of~$u$ to some topological component of 
$u^{-1}(\{r\}\times\P_XV)$ has at least
three special points: nodal, branch, or in the preimage of~$\P_{X,0}V$ or~$\P_{X,\i}V$.\\
\end{enumerate}
If $A\!\in\!H_2(X;\Z)$, $g,k,\ell\!\in\!\Z^{\ge0}$, and $\bs\!=\!(s_1,\ldots,s_{\ell})\!\in\!(\Z^+)^{\ell}$ 
is a tuple satisfying~\eref{bsumcond_e}, let
\BE{relmoddfn_e}\M_{g,k;\bs}^V(X,A)\subset \ov\M_{g,k;\bs}^V(X,A)\EE
denote the set of equivalence classes of stable $k$-marked genus~$g$ degree~$A$ $J_X$-holomorphic maps
into $X_0^V\!\equiv\!X$ and into $X_m^V$ for any $m\!\in\!\Z^{\ge0}$, respectively.
The latter space has a natural compact Hausdorff topology with respect to which 
the former space is an open subspace, but not necessarily a dense one.

\begin{rmk}\label{IPrel67_rmk}
The notion of relative map described by \cite[Definitions~7.1,7.2]{IPrel}
omits the first requirement in~\eref{MarOnTop_e}, allowing the contact marked points
to lie in any layer; this requirement is necessary to get a Hausdorff topology on $\ov\M_{g,k;\bs}^V(X,A)$.
The relative maps are defined in \cite{IPrel} in terms of 
elements of the kernel of the operator~\eref{DuNXV_e}.
It is never mentioned that such elements are $J_{X,V}$-holomorphic maps
to~$\P_XV$ for a certain $\C^*$-invariant almost complex structure~$J_{X,V}$
determined by the operator~\eref{DuNXV_e};
according to E.~Ionel, the authors realized this only recently.
This is necessary to get the multi-layered structure of \cite[Section~7]{IPrel}
by repeatedly rescaling in the normal direction as in \cite[Section~6]{IPrel}.
The rescaling argument in \cite[Section~6]{IPrel} does not ensure that the bubbles
in the different layers connect.
It also does not involve adding new components to the domain of a map to~$X$ 
and thus does not
allow for the appearance of maps as those from domains as in 
Figure~\ref{relcurve_fig} that restrict to a fiber map on the left component mapping 
into $\{1\}\!\times\!\P_XV$.
This means \cite{IPrel} does not even have the necessary ingredients to establish 
that $\ov\M_{g,k;\bs}^V(X,A)$ is compact and Hausdorff
(and the latter issue is never even considered).
\end{rmk}

\noindent
The roles of the ``components" $X$ and $\P_XV$  of the target space
for relative stable maps in the setting of~\cite{LR} are played by 
$\oXV$ and $\R\!\times\!SV$,
respectively, or alternatively by $\tXV$ and $[-\i,\i]\!\times\!SV$;
see \eref{oXVdfn_e} and~\eref{tXVdfn_e}.
For each $m\!\in\!\Z^{\ge0}$, let
\BE{otXVmdfn_e}
\oXVm=\oXV\sqcup\bigsqcup_{r=1}^m\{r\}\!\times\!\R\!\times\!SV, \qquad
\tXVm=\bigg(\tXV\sqcup\bigsqcup_{r=1}^m\{r\}\!\times\![-\i,\i]\!\times\!SV\bigg)\Big/\!\!\sim,\EE
where 
$$(-\i)\!\times\!x\sim 1\!\times\!\i\!\times\!x\,,~~~
r\!\times\!(-\i)\!\times\!x \sim (r\!+\!1)\!\times\!\i\!\times\!x
\qquad \forall\,x\!\in\!SV,~r=1,\ldots,m\!-\!1.$$
The homeomorphism~\eref{Thdfn_e} induces homeomorphisms
\BE{otThdfn_e}\oTh_{c_1,\ldots,c_m}\!: \oXVm\lra\oXVm \qquad\hbox{and}\qquad
\tTh_{c_1,\ldots,c_m}\!: \tXVm\lra\tXVm; \EE
the first is the restriction of the homeomorphism~\eref{Thdfn_e},
while the second is the continuous extension of the first.
As in the setting of~\cite{IPrel} described above, an almost complex structure~$J_X$
on~$X$ such that $J_X(TV)\!=\!TV$ induces an almost complex structures~$\oJm$
on~$\oXVm$ so that the first map in~\eref{otThdfn_e} is biholomorphic.
In the approach of~\cite{LR}, $J_X$ is chosen so that it has an asymptotic behavior near~$V$
as at the end of Section~\ref{SympSum_subs}.
The almost complex structure~$\oJm$ then satisfies
$$\oJm\frac\prt{\prt a_X}=\ze_H ~~\hbox{on}~(-\i,-a_0)\!\times\!SV\subset X, 
\qquad
\oJm\frac\prt{\prt a_{\R}}=\ze_H ~~\hbox{on}~\{r\}\!\times\!\R\!\times\!SV,~r\!=\!1,\ldots,m,$$
for some $a_0\!\in\!\R^+$ sufficiently large, where $\ze_H$ is the characteristic vector
of the $S^1$ action as before and $\frac\prt{\prt a_X}$ and $\frac\prt{\prt a_{\R}}$
are the coordinate vector fields in the $\R$-direction on $(-\i,-a_0)\!\times\!SV$
and $\R\!\times\!SV$, respectively.
It restricts to the pull-back of~$J_V$ on $\ker\al\!\subset\!T(SV)$,
where $\al$ is a connection 1-form on the $S^1$-bundle $SV\!\lra\!V$.\\

\noindent
The roles of the components~$\Si_j$ of the domains of $J_m$-holomorphic maps~$u$ 
into~$\XVm$ with contact with $V\!\subset\!X$ or $\P_{X,0}V,\P_{X,\i}V\!\subset\!\P_XV$
of order~$s_{j,i}$ at $z_{j,i}\!\in\!\Si_j$ are played by the punctured Riemann 
surfaces $\oSi_j\!=\!\Si_j\!-\{z_{j,i}\!\!:i\}$.
The relative maps in the sense of \cite[Definition~3.14]{LR} are $\oJm$-holomorphic maps
\BE{oudfn_e}\ou\!: \oSi\equiv\bigsqcup_j\oSi_j\lra \oXVm\EE
for some $m\!\in\!\Z^{\ge0}$ satisfying certain limiting, stability, and degree conditions.
Let 
$$\pi_{\R},\pi_{SV}\!: \R^-\!\times\!SV\lra\R^-,SV, \qquad 
\pi_{\R},\pi_{SV}\!: \{r\}\!\times\!\R\!\times\!SV\lra\R,SV$$
denote the projection maps.
The punctures of each topological component~$\oSi_j$ are either positive or negative 
with respect to~$\ou$.
If $z\!=\!\ne^{-t+\fI\th}$ is a local coordinate centered at a 
\textsf{positive puncture}~$z_{j,i}$ of~$\Si_j$,
i.e.~$t\!\lra\!\i$ as $z\!\lra\!0$, then 
$\ou(\oSi_j)\!\subset\!\{r\}\!\times\!\R\!\times\!SV$ for some $r\!=\!1,\ldots,m$
and  
$$\lim_{t\lra\i}\pi_{\R}\!\circ\!\ou\big(\ne^{-t+\fI\th}\big)= \i, \qquad
\lim_{t\lra\i}\pi_{SV}\!\circ\!\ou\big(\ne^{-t+\fI\th}\big)= \ga\big(\ne^{\fI k\th}\big) 
\qquad\forall~\th\!\in\!S^1,$$
for some $k\!\in\!\Z^+$ and 1-periodic $S^1$-orbit $\ga\!:S^1\!\lra\!SV$ over 
a point $x\!\in\!V$.
In such a case, we will write 
$$\cP_{z_{j,i}}^+(\ou)=(x,k), \qquad \ord_{z_{j,i}}^+(\ou)=k.$$
If $z\!=\!\ne^{t+\fI\th}$ is a local coordinate centered at a \textsf{negative puncture} of~$\Si_j$,
i.e.~$t\!\lra\!-\i$ as $z\!\lra\!0$, then either
$$\ou(\oSi_j)\subset\{r\}\!\times\!\R\!\times\!SV  ~~\hbox{for some}~~r\!=\!1,\ldots,m
\qquad\hbox{or}\qquad \ou(\ne^{t+\fI\th})\in \R^-\!\times\!SV\subset X\!-\!V$$
and 
$$\lim_{t\lra-\i}\pi_{\R}\!\circ\!\ou\big(\ne^{t+\fI\th}\big)=-\i, \qquad
\lim_{t\lra-\i}\pi_{SV}\!\circ\!\ou\big(\ne^{t+\fI\th}\big)= \ga\big(\ne^{\fI k\th}\big) 
\qquad\forall~\th\!\in\!S^1,$$
for some $k\!\in\!\Z^+$ and 1-periodic $S^1$-orbit $\ga\!:S^1\!\lra\!SV$
over a point~$x$ in~$V$.
In either of the two cases, we will write 
$$\cP_{z_{j,i}}^-(\ou)=(x,k), \qquad \ord_{z_{j,i}}^-(\ou)=k.$$
Any map~\eref{oudfn_e} satisfying these conditions has a well-defined degree $A\!\in\!H_2(X;\Z)$
obtained by composing~$\ou$ with the projection to~$\XVm$ (which sends each limiting orbit
$\ga\!\subset\!SV$
to a single point $x\!\in\!V$) and then with the projection $\pi_m\!:\XVm\!\lra\!X$.\\

\noindent
For any nodal Riemann surface, we denote by $\Si^*\!\subset\!\Si$ the subspace
of smooth points.
Let \hbox{$A\!\in\!H_2(X;\Z)$}, $g,k,\ell\!\in\!\Z^{\ge0}$, and
$\bs\!=\!(s_1,\ldots,s_{\ell})\!\in\!(\Z^+)^{\ell}$ 
be a tuple satisfying~\eref{bsumcond_e}.
The relative moduli space $\ov\M_{g,k;\bs}^V(X,A)$ of~\cite{LR}
consists of stable tuples $(\Si,z_1,\ldots,z_{k+\ell},\ou)$,
where $(\Si,z_1,\ldots,z_{k+\ell})$ is a genus~$g$ marked nodal connected compact 
Riemann surface, 
$$\ou\!: \oSi\equiv\bigsqcup_j\oSi_j\lra \oXVm,
\qquad \Si^*\!-\!\{z_{k+1},\ldots,z_{\ell}\} \subset \oSi \subset \Si\!-\!\{z_{k+1},\ldots,z_{\ell}\},$$
such that 
$\ou$ is a $\oJm$-holomorphic map of degree~$A$,
$$\ou^{-1}\big(\{r\}\!\times\!\R\!\times\!SV\big)\neq\eset \quad\forall~r=1,\ldots,m,
\quad \ord_{z_{k+i}}^-(\ou)=s_i\quad\forall~i\!=\!1,\ldots,\ell,$$
punctured neighborhoods of~$z_{k+1},\ldots,z_{k+\ell}$ are mapped 
to~$\{m\}\!\times\!\R\!\times\!SV$ if $m\!\in\!\Z^+$,
each node in $\Si\!-\!\oSi$ gives rise to one positive and one negative puncture of~$(\oSi,\ou)$,
and the positive punctures~$z_{j,i}$ of any component~$\Si_j$ mapped into 
$\{r\}\!\times\!\R\!\times\!SV$ for some $r\!=\!1,\ldots,m$
correspond to the nodes of~$\Si$ joining~$\Si_j$
to the components mapped into $\{r\!-\!1\}\!\times\!\R\!\times\!SV$ if $r\!>\!1$ and to~$X$ if $r\!=\!1$
outside of the punctures~and 
\BE{cPcond_e}\cP_{z_{j,i}}^+(\ou)\!=\!\cP_{z_{i,j}}^-(\ou),\EE 
where $z_{i,j}\!\in\!\Si_{i,j}$ is the point identified with~$z_{j,i}$.
Two such relative maps $(\Si,z_1,\ldots,z_{k+\ell},\ou)$ and 
$(\Si',z_1',\ldots,z_{k+\ell}',\ou')$ are \textsf{equivalent}
if there are $c_1,\ldots,c_m\!\in\!\C^*$
and a biholomorphic $\vph\!:\Si'\!\lra\!\Si$ so~that 
$$\vph(z_1')=z_1, \quad\ldots,\quad \vph(z_{k+\ell}')=z_{k+\ell}, \quad\hbox{and}\quad
\ou'=\Th_{c_1,\ldots,c_m}\circ \ou\circ\vph.$$
A tuple as above is \textsf{stable} if it has finitely many automorphisms
(self-equivalences).\\

\noindent
By the same construction as in~\eref{tXVdfn_e}, 
the punctured Riemann surfaces~$\oSi_j$ above can be compactified 
to bordered surfaces~$\hSi_j$.
The matching condition~\eref{cPcond_e} insures that the surfaces~$\hSi_j$
can be glued together along pairs of boundary components corresponding
to the same node of~$\Si$ into a surface~$\hSi$ with $\ell$~boundary components
in  such a way that~$\ou$ extends to a continuous~map
$\tu\!:\hSi\!\lra\!\tXVm$.
Composing~$\tu$ with the projection $\tXVm\!\lra\!\XVm$, we obtain a relative map
 $u\!:\Si\!\lra\!\XVm$.
Removing the preimages of $V\!\subset\!X$ and $\P_{X,0}V,\P_{X,\i}V\!\subset\!\P_XV$
under a relative map $u\!:\Si\!\lra\!\XVm$,
we obtain a relative map $\ou\!:\oSi\!\lra\!\oXVm$ in the sense of~\cite{LR}.
Thus, the moduli spaces of relative maps $\ov\M_{g,k;\bs}^V(X,A)$ 
in the two descriptions are canonically identified
when the same almost complex structure~$J_X$ on~$X$ is~used. 
While the space of admissible~$J_X$ is smaller in~\cite{LR},
it is still non-empty and path-connected, possesses 
the same transversality properties as the larger space of~$J_X$ in~\cite{IPrel},
and so is just as good for defining relative invariants.
On the other hand, the stronger restriction on~$J_X$ in~\cite{LR} simplifies
the required gluing constructions; see Section~\ref{sumpf_subs}.

\begin{rmk}\label{LR33_rmk}
The key definition of relative stable maps, \cite[Definition~3.14]{LR},
is not remotely precise.
It involves three different Riemann surfaces, without a clear connection between~them, 
a continuous map into a vaguely described space, and a vaguely specified equivalence relation. 
The signs of the limiting periods are not properly defined either.
\end{rmk}

\subsection{Moduli spaces for $X\!\cup_V\!Y$:  \cite[Section~12]{IPrel}, \cite[Sections~3.2,3.3]{LR}}
\label{RelInv_sub0b}

\noindent
Let $V\!\subset\!X$ be as in Section~\ref{RelInv_sub0}.
Suppose $(Y,\om_Y)$ is another symplectic manifold containing~$V$ as 
a symplectic hypersurface so that~\eref{cNVcond_e} holds, 
$J_Y$ is an $\om_Y$-compatible almost complex structure,
such that $J_Y(TV)\!=\!TV$ and $J_Y|_{TV}\!=\!J_X|_{TV}\!\equiv\!J_V$,
and we have chosen an isomorphism as in~\eref{cNpair_e}.
Such an isomorphism identifies $\P_XV$ with~$\P_YV$.
For each $m\!\in\!\Z^{\ge0}$, let
\BE{XnVYdfn_e}\begin{split}
X\!\cup_V^m\!Y=&\big(X_m^V\sqcup Y_m^V\big)\big/\!\!\sim, \\
 X_m^V\ni(r,x)\sim(m\!+\!1\!-\!r,x)\in Y_m^V &\qquad\forall~r=1,\ldots,m,~x\in\P_XV=\P_YV\,;
\end{split}\EE
see Figure~\ref{sumcurve_fig}.
We extend~\eref{Thdfn_e} to an isomorphism
\BE{Thdfn_e2}\Th_{c_1,\ldots,c_m}\!: X\!\cup_V^m\!Y\lra X\!\cup_V^m\!Y\EE
by taking it to be the identity on~$Y$.
By the discussion following~\eref{NXVsplit_e}, 
the almost complex structures~$J_{X,V}$ and~$J_{Y,V}$ on $\P_XV\!=\!\P_YV$
agree if they are induced from~$J_V$ using dual connections in~$\cN_XV$ and~$\cN_YV$.\\

\noindent
The moduli spaces of stable maps into~$X\!\cup_V\!Y$ are defined under the additional
assumptions that~\eref{NijenCond_e} holds for~$X$ and~$Y$
and  the linearized operator $D_u^{\cN_XV}$ and $D_u^{\cN_YV}$ as in~\eref{DuNXV_e}
are dual to each other.
These assumptions ensure that the almost complex structures~$J_{X,V}$ and~$J_{Y,V}$ 
on $\P_XV\!=\!\P_YV$ agree and so induce a well-defined almost complex structure~$J_m$
on $X\!\cup_V^m\!Y$, which is preserved by~\eref{Thdfn_e2}.
They are satisfied by $J_X\!=\!J_{\cZ}|_X$ and $J_Y\!=\!J_{\cZ}|_Y$ 
if $J_{\cZ}$ is as in Proposition~\ref{SymSum_prp}.\\

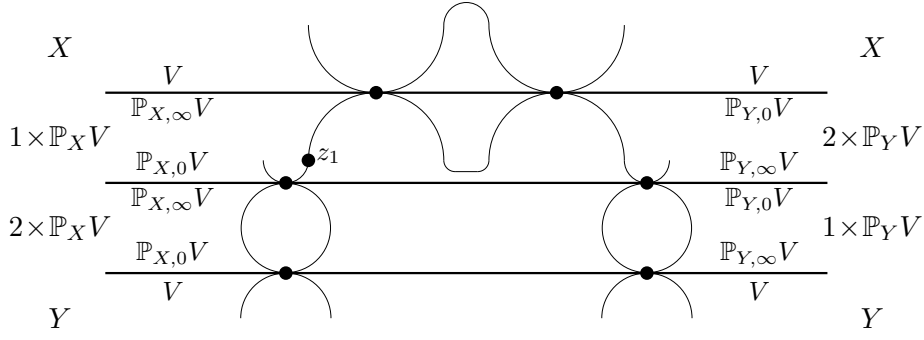
\begin{figure}
\begin{pspicture}(38,-2.3)(11,3)
\psset{unit=.3cm}
\psline[linewidth=.1](48,-2)(80,-2)
\rput(51,6.8){\smsize{$V$}}\rput(51,-2.8){\smsize{$V$}}
\rput(51,5.2){\smsize{$\P_{X,\i}V$}}
\rput(51,2.8){\smsize{$\P_{X,0}V$}}\rput(51,1.2){\smsize{$\P_{X,\i}V$}}
\rput(51,-1.2){\smsize{$\P_{X,0}V$}}\rput(46,8){$X$}\rput(46,-4){$Y$}
\rput(46,4){$1\!\times\!\P_XV$}\rput(46,0){$2\!\times\!\P_XV$}
\rput(77,6.8){\smsize{$V$}}\rput(77,-2.8){\smsize{$V$}}
\rput(77,5.2){\smsize{$\P_{Y,0}V$}}
\rput(77,2.8){\smsize{$\P_{Y,\i}V$}}\rput(77,1.2){\smsize{$\P_{Y,0}V$}}
\rput(77,-1.2){\smsize{$\P_{Y,\i}V$}}\rput(82,8){$X$}\rput(82,-4){$Y$}
\rput(82,4){$2\!\times\!\P_YV$}\rput(82,0){$1\!\times\!\P_YV$}
\psarc[linewidth=.04](60,9){3}{180}{0}\pscircle*(60,6){.3}
\psarc[linewidth=.04](64,9){1}{0}{180}
\psarc[linewidth=.04](68,9){3}{180}{0}\pscircle*(68,6){.3}
\psarc[linewidth=.04](60,3){3}{0}{180}\psarc[linewidth=.04](68,3){3}{0}{180}
\psarc[linewidth=.04](63.5,3){.5}{180}{270}\psarc[linewidth=.04](64.5,3){.5}{270}{0}
\psline[linewidth=.04](63.5,2.5)(64.5,2.5)
\psarc[linewidth=.04](56,3){1}{180}{0}\psarc[linewidth=.04](72,3){1}{180}{0}
\psline[linewidth=.1](48,2)(80,2)
\pscircle*(56,2){.3}\pscircle*(72,2){.3}
\pscircle[linewidth=.04](56,0){2}\pscircle[linewidth=.04](72,0){2}
\psline[linewidth=.1](48,6)(80,6)
\pscircle*(56,-2){.3}\pscircle*(72,-2){.3}
\pscircle*(57,3){.3}\rput(57.9,3.2){\smsize{$z_1$}}
\psarc[linewidth=.04](56,-4){2}{0}{180}\psarc[linewidth=.04](72,-4){2}{0}{180}
\end{pspicture}
\caption{The image of a limit map with $k\!=\!1$ to the space $X\!\cup_V^2Y$.}
\label{sumcurve_fig}
\end{figure}

\noindent
Let $k\!\in\!\Z^{\ge0}$.
A \textsf{$k$-marked $(J_X,J_Y)$-holomorphic map into~$X\!\cup_0^V\!Y$} is
a continuous map \hbox{$u\!:\Si\!\lra\!X\!\cup_V\!Y$} 
from a marked connected nodal Riemann surface $(\Si,z_1,\ldots,z_k)$
such~that the restriction of~$u$ to each irreducible component $\Si_j$ of~$\Si$ is either 
\begin{enumerate}[label=$\bullet$,leftmargin=*]

\item a map to $X$ such that the set $u|_{\Si_j}^{\,-1}(V)$ 
consists~of the nodes~$z_{j,i}$ joining~$\Si_j$ to irreducible components of~$\Si$ mapped to~$Y$
and
$$\ord_{z_{j,i}}^V\big(u|_{\Si_j}\big)=\ord_{z_{i,j}}^V\big(u|_{\Si_{i,j}}\big),$$
where $z_{i,j}\!\in\!\Si_{i,j}$ is the point identified with~$z_{j,i}$, or  

\item a map to $Y$ such that the set $u|_{\Si_j}^{\,-1}(V)$ 
consists~of the nodes~$z_{j,i}$ joining~$\Si_j$ to irreducible components of~$\Si$ mapped to~$X$
and
$$\ord_{z_{j,i}}^V\big(u|_{\Si_j}\big)=\ord_{z_{i,j}}^V\big(u|_{\Si_{i,j}}\big),$$
where $z_{i,j}\!\in\!\Si_{i,j}$ is the point identified with~$z_{j,i}$.\\

\end{enumerate}

\noindent
Suppose in addition that $m\!\in\!\Z^+$.
A \textsf{$k$-marked $(J_X,J_Y)$-holomorphic map into~$X\!\cup_m^V\!Y$} is
a continuous map \hbox{$u\!:\Si\!\lra\!X\!\cup_V^m\!Y$} 
from a marked connected nodal Riemann surface $(\Si,z_1,\ldots,z_k)$
such~that 
the restriction of~$u$ to each irreducible component $\Si_j$ of~$\Si$ is either 
\begin{enumerate}[label=$\bullet$,leftmargin=*]
\item a map to $X$ such that the set $u|_{\Si_j}^{\,-1}(V)$ 
consists~of the nodes~$z_{j,i}$ joining~$\Si_j$ to irreducible components of~$\Si$ mapped to
$\{1\}\!\times\!\P_XV$ and
$$\ord_{z_{j,i}}^V\big(u|_{\Si_j}\big)=
\ord_{z_{i,j}}^{\P_{X,\i}V}\big(u|_{\Si_{i,j}}\big), $$ 
where $z_{i,j}\!\in\!\Si_{i,j}$ is the point identified with~$z_{j,i}$, or  
\item a map to $Y$ such that the set $u|_{\Si_j}^{\,-1}(V)$ 
consists~of the nodes~$z_{j,i}$ joining~$\Si_j$ to irreducible components of~$\Si$ mapped to
$\{1\}\!\times\!\P_YV$ and
$$\ord_{z_{j,i}}^V\big(u|_{\Si_j}\big)=
\ord_{z_{i,j}}^{\P_{Y,\i}V}\big(u|_{\Si_{i,j}}\big), $$ 
where $z_{i,j}\!\in\!\Si_{i,j}$ is the point identified with~$z_{j,i}$, or
\item a map to $\{r\}\!\times\!\P_XV\!=\!\{m\!+\!1\!-\!r\}\!\times\!\P_YV$ 
for some $r\!=\!1,\ldots,m$ such~that 
\begin{enumerate}[label=$\circ$,leftmargin=*]
\item the set $u|_{\Si_j}^{\,-1}(\{r\}\!\times\!\P_{X,\i}V)$ 
consists~of the nodes~$z_{j,i}$ joining~$\Si_j$ to irreducible components of~$\Si$ mapped to
$\{r\!-\!1\}\!\times\!\P_XV$ if $r\!>\!1$ and to~$X$ if $r\!=\!1$
and 
$$\ord_{z_{j,i}}^{\P_{X,\i}V}\big(u|_{\Si_j}\big)=\begin{cases}
\ord_{z_{i,j}}^{\P_{X,0}V}(u|_{\Si_{i,j}}),&\hbox{if}~r\!>\!1;\\
\ord_{z_{i,j}}^V(u|_{\Si_{i,j}}),&\hbox{if}~r\!=\!1;\end{cases}$$ 
where $z_{i,j}\!\in\!\Si_{i,j}$ is the point identified with~$z_{j,i}$, and
\item  the set $u|_{\Si_j}^{\,-1}(\{r\}\!\times\!\P_{Y,\i}V)$ 
consists~of the nodes~$z_{j,i}$ joining~$\Si_j$ to irreducible components of~$\Si$ mapped to
$\{r\!-\!1\}\!\times\!\P_YV$ if $r\!>\!1$ and to~$Y$ if $r\!=\!1$
and 
$$\ord_{z_{j,i}}^{\P_{Y,\i}V}\big(u|_{\Si_j}\big)=\begin{cases}
\ord_{z_{i,j}}^{\P_{Y,0}V}(u|_{\Si_{i,j}}),&\hbox{if}~r\!>\!1;\\
\ord_{z_{i,j}}^V(u|_{\Si_{i,j}}),&\hbox{if}~r\!=\!1;\end{cases}$$ 
where $z_{i,j}\!\in\!\Si_{i,j}$ is the point identified with~$z_{j,i}$;
\end{enumerate}
\end{enumerate}
see Figure~\ref{sumcurve_fig}.
The \textsf{genus} and the \textsf{degree} of such a map $u\!:\!\Si\!\lra\!X\!\cup^m_V\!Y$
are the arithmetic genus of~$\Si$ and the homology class
\BE{Adeg_e2}A=\big[\pi_m\!\circ\!u\big]\in H_2(X\!\cup_V\!Y;\Z),\EE
where $\pi_m\!:X\!\cup^m_V\!Y\!\lra\!X\!\cup_V\!Y$ is the natural projection.\\

\noindent
Two tuples $(\Si,z_1,\ldots,z_k,u)$ and 
$(\Si',z_1',\ldots,z_k',u')$ as above are \textsf{equivalent}
if there exist a biholomorphic map $\vph\!:\Si'\!\lra\!\Si$
and $c_1,\ldots,c_m\!\in\!\C^*$ so~that 
$$\vph(z_1')=z_1, \quad\ldots,\quad \vph(z_k')=z_k, \quad\hbox{and}\quad
u'=\Th_{c_1,\ldots,c_m}\circ u\circ\vph\,.$$
A tuple as above is \textsf{stable} if it has finitely many automorphisms
(self-equivalences).\\

\noindent
If $A\!\in\!H_2(X\!\cup_V\!Y;\Z)$ and $g,k\!\in\!\Z^{\ge0}$, let
\BE{summoddfn_e}\M_{g,k}\big(X\!\cup_V\!Y,A\big)\subset 
\ov\M_{g,k}\big(X\!\cup_V\!Y,A\big)\EE
denote the set of equivalence classes of stable $k$-marked genus~$g$ degree~$A$ 
$(J_X,J_Y)$-holomorphic maps
into $X\!\cup_V^0\!Y\!\equiv\!X\!\cup_V\!Y$ and into $X\!\cup_V^m\!Y$ for any $m\!\in\!\Z^{\ge0}$, 
respectively.
The latter space has a natural compact Hausdorff topology with respect to which 
the former space is an open subspace, but not necessarily a dense one.
We denote by $\wt\M_{\chi,k}(X\!\cup_V\!Y,A)$ the analogue of the space
$\ov\M_{g,k}(X\!\cup_V\!Y,A)$ with disconnected domains~$\Si$.\\

\noindent
While each element of the smaller space in~\eref{summoddfn_e} corresponds 
to a pair of relative maps, from possibly disconnected domains, into $(X,V)$ and $(Y,V)$ 
with matching conditions at the contact points,
there is no canonical splitting of this type for other elements of 
the larger space in~\eref{summoddfn_e}.
Nevertheless, the compact Hausdorff topologies on the relative moduli spaces
described in Section~\ref{RelInv_sub0}
induce the compact Hausdorff topology on $\ov\M_{g,k}(X\!\cup_V\!Y;A)$
of the previous paragraph.

\begin{rmk}\label{IPsum12_rmk}
The moduli space $\ov\M_{g,k}(X\!\cup_V\!Y,A)$ does not appear in~\cite{IPrel} or~\cite{IPsum}.
The space similar to $\ov\M_{g,k}(X\!\cup_V\!Y,A)$ that appears at the top of 
page~1003 in~\cite{IPsum} does not quotient the maps by the $(\C^*)^m$-action on~$X\!\cup_V^m\!Y$
and thus cannot be Hausdorff by \cite[Sections~6,7]{IPrel}.
This space is also not relevant and leads to the mistaken appearance of the $S$-matrix
in the main symplectic sum formulas in~\cite{IPsum}; see Section~\ref{Smat_subs} for more details.
\end{rmk}

\noindent
Continuing with the setup at the end of Section~\ref{SympSum_subs}, let
\BE{otXnVYdfn_e}\begin{split}
\oXYm=\big(\oXVm\sqcup\oYVm\big)\big/\!\!\sim, &\qquad
\tXYm=\big(\tXVm\sqcup\tYVm\big)\big/\!\!\sim,\\
 \tXVm\ni(r,x)\sim(m\!+\!1\!-\!r,x)\in\tYVm &\qquad
 \forall~r=1,\ldots,m,~x\in[-\i,\i]\!\times\!SV.
\end{split}\EE
The homeomorphisms~\eref{otThdfn_e} extend to these spaces by taking them to be 
the identity on~$\oYV$.\\

\noindent
Let $A\!\in\!H_2(X\!\cup_V\!Y;\Z)$ be an element in the image of 
$H_2(X;\Z)\!\times_V\!H_2(Y;\Z)$ under the natural homomorphism
$$H_2(X;\Z)\oplus H_2(Y;\Z)\lra H_2(X\!\cup_V\!Y;\Z)$$
and $g,k\!\in\!\Z^{\ge0}$.
For almost complex structures~$J_X$ on~$X$ and~$J_Y$ on~$Y$ satisfying $J_X|_V\!=\!J_Y|_V$ and 
the asymptotic condition at the end of Section~\ref{SympSum_subs},
the notion of $k$-marked genus~$g$ degree~$A$ stable map to $X\!\cup_V^m\!Y$ 
described above can be re-formulated in the terminology of~\cite{LR} 
similarly to the re-formulation for relative maps to~$(X,V)$
in the second half of Section~\ref{RelInv_sub0}.
Such a map is a tuple $(\Si,z_1,\ldots,z_k,\ou)$,
where $(\Si,z_1,\ldots,z_k)$ is a genus~$g$ marked nodal connected compact 
Riemann surface, 
$$\ou\!: \oSi\equiv\bigsqcup_j\oSi_j\lra \oXYm,
\qquad \Si^*\! \subset \oSi \subset \Si,$$
such that 
\begin{enumerate}[label=$\bullet$,leftmargin=*]
\item $\ou$ is a $\oJm$-holomorphic map of degree~$A$,
\item $\ou^{-1}\big(\{r\}\!\times\!\R\!\times\!SV\big)\neq\eset$ for every $r\!=\!1,\ldots,m$,
\item each node in $\Si\!-\!\oSi$ gives rise to one positive and 
one negative puncture of~$(\oSi,\ou)$,
\item the positive punctures~$z_{j,i}$ of any component~$\Si_j$ mapped into 
$\{r\}\!\times\!\R\!\times\!SV$ for some $r\!=\!1,\ldots,m$
correspond to the nodes of~$\Si$ joining~$\Si_j$
to the components mapped into $\{r\!-\!1\}\!\times\!\R\!\times\!SV$ 
if $r\!>\!1$ and to~$X$ if $r\!=\!1$
outside of the punctures,
\item the negative punctures~$z_{j,i}$ of any component~$\Si_j$ mapped into 
$\{m\!+\!1\!-\!r\}\!\times\!\R\!\times\!SV$ for some $r\!=\!1,\ldots,m$
correspond to the nodes of~$\Si$ joining~$\Si_j$
to the components mapped into $\{m\!-\!r\}\!\times\!\R\!\times\!SV$ 
if $r\!>\!1$ and to~$Y$ if $r\!=\!1$
outside of the punctures, and 
\item $\cP_{z_{j,i}}^+(\ou)\!=\!\cP_{z_{i,j}}^-(\ou)$,
where $z_{i,j}\!\in\!\Si_{i,j}$ is the point identified with~$z_{j,i}$.
\end{enumerate}
The notion of equivalences is defined as before.
The moduli spaces $\ov\M_{g,k}(X\!\cup_V\!Y,A)$ of stable maps 
in the two descriptions are again canonically identified,
by the same procedure as in the relative case at the end of Section~\ref{RelInv_sub0}.

\begin{rmk}\label{LR33b_rmk}
According to \cite[Definition~3.18]{LR}, 
a stable map into $X\!\cup_V\!Y$ is a pair of relative maps into $(X,V)$ and~$(Y,V)$
with matching conditions at the~nodes.
According to \cite[\S3.4.2]{AMLi}, this was the intended meaning and not a mis-wording.
Thus, the version of $\ov\M_{g,k}(X\!\cup_V\!Y,A)$ in~\cite{LR}
does not consist of limits of maps into smoothings of $X\!\cup_V\!Y$.
This means that the setup in~\cite{AMLi} is not even suitable for
comparing invariants of the singular and smooth fibers via a virtual class construction,
since finite-dimensional subspaces of $\Ga_J^{0,1}$ need to be chosen continuously over
a family of moduli spaces. 
\end{rmk}

\begin{rmk}\label{LR33c_rmk}
The analysis related to the compactness of the moduli spaces $\ov\M_{g,k;\bs}^V(X,A)$
and $\ov\M_{g,k}(X\!\cup_V\!Y,A)$ is contained in \cite[Sections~3.1,3.2]{LR}.
Nearly all arguments in \cite[Section~3.1]{LR}, which is primarily concerned 
with rates of convergence for maps to $\R\!\times\!SV$, are either incorrect or incomplete, but 
the only desired claim is easy to establish; see Section~\ref{IPconv_subs} below.
\cite[Section~3.2]{LR} applies this claim to study convergence for sequences
of $J$-holomorphic maps from Riemann surfaces with punctures
into $\oXV$, $\R\!\times\!SV$, and~$X\!\#_V\!Y$, 
though the targets are never specified.
The assumptions $u'(\Si')\!\subset\!D_p(\ep)$ and $u'(\prt\Si')\!\subset\!\prt D_p(\ep)$ 
in \cite[Lemma~3.8]{LR}, which is missing a citation, should be weakened to 
$u'(\prt\Si')\!\cap\!D_p(\ep)\!=\!\eset$.
The bound on the energy of $J$-holomorphic maps to~$\R\!\times\!\wt{V}$
claimed below \cite[(3.44)]{LR} needs a justification;
it follows from the correspondence with maps to~$\P_XV$.
There is no specification of the target of the sequence of maps~$u_i$ 
central to the discussion of \cite[Section~3.2]{LR}.
The sentence containing \cite[(3.48)]{LR} and the next one do not make sense
as stated.
There is no mention of what happens to nodal points of the domain
or if $\wt{m}_0\!=\!\wt{m}(q)$ is zero 
(which can happen, since $\wt{m}_0$ measures only the horizontal energy).
The main argument applies \cite[Theorem~3.7]{LR} to maps from disk,
even though it is stated only for maps from~$\C$ (as done in \cite{H,HWZ1}).
In~(3) of the proof of \cite[Lemma~3.11]{LR}, 
the horizontal distance bound \cite[(3.55)]{LR} is used (incorrectly)
to draw a conclusion about the vertical distance in the last equation;
it would have implied the last claim of~(3) without 
\cite[(3.51),(3.52)]{LR}.
Because of the arbitrary choice of~$t_0$ in \cite[(3.53)]{LR}, 
the claim of \cite[Lemma~3.11(3)]{LR} in fact cannot be possibly true.
The statement of \cite[Lemma~3.12]{LR} even explicitly excludes stable 
ghost bubbles with one puncture going into the rubber, which is incorrect.
As the rescaling argument in \cite[Section~3.2]{LR} concerns one node at a time,
it has the same kind of issue as described in the second-to-last sentence of Remark~\ref{IPrel67_rmk}. 
On the other hand, the approach of~\cite{LR} is better suited to deal with this issue
because it can be readily interpreted as a rescaling on the target.
The proof of \cite[Lemma~3.15]{LR} has basically no content.
Other, fairly minor misstatements in \cite[Sections~3.2,3.3]{LR} include
\begin{enumerate}[label={},topsep=-5pt,itemsep=-5pt,leftmargin=*]
\item\textsf{p178, lines 5-7:} $u$ has finite energy;
\item\textsf{p178, (3.44):} $P$ is not used until Section~3.3;
\item\textsf{p178, below (3.44):} $E_{\phi}(u)$ is fixed, according to (3.43);
\item\textsf{p180, (3.49),(3.50):} follow from \cite[Lemma~4.7.3]{MS2};
\item\textsf{p180, line -1:} $\log\ep\le s\le \log\de_i'$;
\item\textsf{p181, line 2:} $\de_i\!<\!\de_i'$, $\log\ep\le s\le \log\de_i$;
\item\textsf{p181, (3.52):} $log$ \to $\log$;
\item\textsf{p181, Lemma~3.11:} $N$ already denotes a space;
\item\textsf{p181, line -6:} Lemma (3.5) \to Lemma 3.5;
\item\textsf{p182, (3.57):} $lim$ \to $\lim$; $A(\ep,R\de_i)$ \to $A(R\de_i,\ep)$;
\item\textsf{p182, below (3.57):} repeat of first sentence of (3);
\item\textsf{p183, Rmk 3.13:} the collapsed compact manifold is 
$\P(\wt{V}\!\times_{S^1}\!\C\oplus V\!\times\!\C)$;
\item\textsf{p183, Section 3, lines 1,2:} $\Si_1\!\vee\!\Si_2\!\lra\!\R\!\times\!\wt{M}$
be a map;
\item\textsf{p185, line 14:} $\bigoplus$ \to $\bigsqcup$.
\end{enumerate}
\end{rmk}

\subsection{GW-invariants: \cite{IPrel}, \cite[Section~1]{IPsum}, 
\cite[Section~4]{LR}}
\label{RelInv_subs}

\noindent
Let $X$, $V$, $A$, $g$, $k$, and~$\bs$ be as in Section~\ref{RelInv_sub0}.
The moduli space $\ov\M_{g,k;\bs}^V(X,A)$ carries a virtual fundamental class (VFC),
which gives rise to relative GW-invariants of~$(X,\om,V)$ and 
is used in the proof of the symplectic sum formula in~\cite{Jun2, LR}.
The argument in~\cite{IPsum} is restricted to the cases when the relevant
relative and absolute invariants can be realized more geometrically,
but the principles of~\cite{IPsum} apply in the general VFC setting as well,
once the VFC is shown to exist.
In the restricted setting of \cite{IPrel, IPsum}, it is not even necessary
to consider the elaborate rubber structure (maps to~$\P_XV$)
described in Section~\ref{RelInv_sub0}.
Below we review the geometric construction of the absolute GW-invariants,
due to \cite{RT1, RT2}, and its adaption to relative invariants,
due to~\cite{IPrel}.
We then comment on the general case considered in~\cite{LR}.\\

\noindent
We begin with two definitions which are later used to describe
the cases when the absolute and relative invariants can be realized geometrically.

\begin{dfn}\label{semipos_dfn}
A $2n$-dimensional symplectic manifold $(X,\om)$ is 
\begin{enumerate}[label=(\arabic*),leftmargin=*] 
\item \textsf{semi-positive} if $\lr{c_1(X),A}\!\ge\!0$ for all $A\!\in\!\pi_2(M)$ such that  
\BE{semipos_e1} \lr{\om,A}>0 \qquad\hbox{and}\quad c_1(A)\ge 3-n;\EE
\item \textsf{strongly semi-positive} if $\lr{c_1(X),A}\!>\!0$ for all $A\!\in\!\pi_2(M)$ 
such that \eref{semipos_e1} holds.
\end{enumerate}  
\end{dfn}

\begin{dfn}\label{relsemipos_dfn}
Let $(X,\om)$ be a $2n$-dimensional symplectic manifold and 
$V\!\subset\!X$ be a symplectic divisor.
The triple $(X,\om,V)$ is 
\begin{enumerate}[label=(\arabic*),leftmargin=*] 
\item \textsf{semi-positive} if $\lr{c_1(X),A}\!\ge\!A\!\cdot_X\!V\!$ 
for all $A\!\in\!\pi_2(M)$ such that  
\BE{relsemipos_e1} A\!\cdot_X\!V\ge0,\qquad
\lr{\om,A}>0,  \quad\hbox{and}\quad
 \lr{c_1(X),A} \ge \max(3,A\!\cdot_X\!V\!+\!2)-n;\EE
\item  \textsf{strongly semi-positive}
if $\lr{c_1(X),A}\!>\!A\!\cdot_X\!V\!$ 
for all $A\!\in\!\pi_2(M)$ such that \eref{relsemipos_e1} holds.
\end{enumerate}  
\end{dfn}

\noindent
A $2n$-dimensional symplectic manifold $(X,\om)$ is semi-positive if $n\!\le\!3$
and is strictly semi-positive if $n\!\le\!2$.
Similarly, if $V\!\subset\!X$ is a symplectic hypersurface, 
$(X,\om,V)$ is semi-positive if $n\!\le\!2$ and is strictly semi-positive if $n\!=\!1$.\\   
 
\noindent
Let $g,k\!\in\!\Z^{\ge0}$ be such that $2g\!+\!k\!\ge\!3$, 
\BE{PrymCov_e}\chM_{g,k} \lra \ov\M_{g,k} \EE
be the branched cover of the Deligne-Mumford space of stable genus~$g$ $k$-marked
curves by the associated moduli space of Prym structures constructed in~\cite{Lo}, and 
$$\pi_{g,k}\!:\chU_{g,k}\!\lra\!\chM_{g,k}$$ 
be the corresponding universal curve.
A \textsf{genus~$g$ $k$-marked nodal curve with a Prym structure} is 
a connected compact nodal $k$-marked Riemann surface $(\Si,z_1,\ldots,z_k)$ of arithmetic genus~$g$
together with a holomorphic stabilization map $\st_{\Si}\!:\Si\!\lra\!\chU_{g,k}$
which surjects on a fiber of~$\pi_{g,k}$ and takes the marked points of~$\Si$
to the corresponding marked points of the fiber.\\

\noindent
Let $A\!\in\!H_2(X;\Z)$, $J$ be an almost complex structure on~$X$, and 
\BE{nucond0_e}\nu \in \Ga_{g,k}(X,J)\equiv 
\Ga\big(\chU_{g,k}\!\times\!X,\pi_1^*(T^*\chU_{g,k})^{0,1}\!\otimes_{\C}\!\pi_2^*(TX,J)\big).\EE
A \textsf{degree~$A$ genus~$g$ $k$-marked $(J,\nu)$-map} is a tuple $(\Si,z_1,\ldots,z_k,\st_{\Si},u)$
such that $(\Si,z_1,\ldots,z_k,\st_{\Si})$ is a genus~$g$ $k$-marked nodal curve with a Prym structure
and $u\!:\Si\!\lra\!X$ is a smooth (or $L^p_1$, with $p\!>2$) map such that 
$$u_*[\Si]=A \qquad\hbox{and}\qquad
\dbar_{J,\fj}u\big|_z= \nu|_{(\st_{\Si}(z),u(z))}\!\circ\!\nd_z\st_{\Si}\quad\forall\,z\!\in\!\Si,$$
where $\fj$ is the complex structure on~$\Si$.
Two such tuples are \textsf{equivalent} if they differ by a reparametrization
of the domain commuting with the maps to~$\chU_{g,k}$ and~$u$.\\

\noindent
By \cite[Corollary~3.9]{RT2}, the space $\ov\M_{g,k}(X,A;J,\nu)$ of 
equivalence classes~of degree~$A$ genus~$g$ $k$-marked $(J,\nu)$-maps
is Hausdorff and compact (if $X$ is compact) in Gromov's convergence topology.
By \cite[Theorem~3.16]{RT2}, for a generic~$(J,\nu)$
each stratum of $\ov\M_{g,k}(X,A;J,\nu)$ consisting of simple (not multiply covered) 
maps of a fixed combinatorial type is a smooth manifold of the expected even dimension, 
which is less than the expected dimension of 
the subspace of simple maps with smooth domains
(except for this subspace itself).
By \cite[Theorem~3.11]{RT2}, the last stratum has a canonical orientation.
By \cite[Proposition~3.21]{RT2}, 
the images of  the strata of $\ov\M_{g,k}(X,A;J,\nu)$ consisting of multiply covered maps 
under the morphism
\BE{RTcyc_e}
\ev\!\times\!\st\!:\ov\M_{g,k}(X,A;J,\nu)\lra X^k\times\ov\M_{g,k}\EE
are contained in images of maps from smooth even-dimensional manifolds of
 dimension less than this stratum
if $(J,\nu)$ is generic and $(X,\om)$ is semi-positive.
Thus, \eref{RTcyc_e} is a pseudocycle. 
Intersecting it with classes in the target and dividing by
the order of the covering~\eref{PrymCov_e}, we obtain (absolute) GW-invariants 
of a semi-positive symplectic manifold $(X,\om)$ in the stable range, i.e.~with
$(g,k)$ such that $2g\!+\!k\!\ge\!3$.
If $g\!=\!0$, the same reasoning applies with $\nu\!=\!0$ and yields
the same conclusion if $(X,\om)$ is strictly semi-positive.\\

\noindent
Suppose in addition that $V\!\subset\!X$ is a symplectic divisor preserved
by the almost complex structure~$J$ and $\bs\!\in\!(\Z^+)^{\ell}$ is 
a tuple satisfying~\eref{bsumcond_e}.
For
\BE{nuVrestr_E}\nu \in \Ga_{g,k}(X,J) \qquad\hbox{s.t.}\quad
\nu|_{\wc\cU_{g,k}\times V}\in  \Ga_{g,k}(V,J|_V)\,,\EE
we define the moduli space
$$\M_{g,k;\bs}^V(X,A;J,\nu)\subset \ov\M_{g,k+\ell}(X,A;J,\nu)$$
analogously to the smaller moduli space in~\eref{relmoddfn_e}.
If $u\!:\Si\!\lra\!X$ is a $(J,\nu)$-holomorphic map such that $u(\Si)\!\subset\!V$,
the linearization of $\dbar_{J,\fj}\!+\!\nu$ at~$u$ again descends
to a first-order differential operator
$$D_u^{\cN_XV}\!: \Ga(\Si;u^*\cN_XV)\lra \Ga^{0,1}_{J,\fj}(\Si;u^*\cN_XV).$$
If~$J$ satisfies~\eref{NijenCond_e} and
\BE{nanucond_e} \wt\na_w\nu+J\wt\na_{Jw}\nu\in 
(T^*\wc\cU_{g,k})^{0,1}\!\otimes_{\C}\!T_xV 
\qquad\forall~w\!\in\!T_xX,~x\!\in\!V,\EE
then this linearization is $\C$-linear and in fact is the same as 
the corresponding operator with $\nu\!=\!0$.
A compact moduli space $\ov\M_{g,k;\bs}^V(X,A;J,\nu)$  can then be defined 
analogously to the smaller moduli space in~\eref{relmoddfn_e}. 
The component maps into the rubber layers $\{r\}\!\times\!\P_XV$ are then 
$(J_{X,V},\nu_{X,V})$-holomorphic, with
\begin{gather*}
\nu_{X,V}\in\Ga_{g',k'}(\P_XV,J_{X,V}), \\
\{\nu_{X,V}|_w\}(v)=\big(\{\wt\na_w\nu\}(v),\nu(v)\big)\in
T_w^{\vrt}\cN_XV\oplus T_w^{\hor}\cN_XV
\qquad\forall~w\in\cN_XV,~v\in T\wc\cU_{g',k'},
\end{gather*}
with $(g',k')$ determined by each component.

\begin{rmk}\label{LinD_rmk}
There are a number of misstatements in the related parts of~\cite{IPrel,IPsum}.
In the linearization \cite[(3.2)]{IPrel} of the $(J,\nu)$-equation,
$\na_{\xi}\nu$ should be replaced by $\wt\na_{\xi}\nu$,
as can be seen from the proof of  \cite[Proposition~3.1.1]{MS2};
otherwise, it would not even map into the right space.
Thus, $\na$ on the left-hand side in \cite[(3.3c)]{IPrel} should 
be replaced by~$\wt\na$;
the right-hand side of \cite[(3.3c)]{IPrel} is zero by \cite[(C.7.1)]{MS2}.
In \cite[(1.11)]{IPsum}, $+(J\na_{\xi}J)$ should be $-(J\na_{\xi}J)$ to agree with 
\cite[Proposition~3.1.1]{MS2} in the $\nu\!=\!0$ case.
This is also necessary to obtain \cite[(1.14)]{IPsum} with $1/4$ instead of~$1/8$ and
$$N_J(\xi,\ze)=-[\xi,\ze]-J[J\xi,\ze]-J[\xi,J\ze]+[J\xi,J\ze]
\qquad\forall\,\xi,\ze\in\Ga(X,TX),$$
as in~\eref{Dudfn_e} above and in~\cite{MS2}.
Furthermore, $\Phi_f\!=\!0$ if $f$ is $(J,\fj,\nu)$-holomorphic; 
otherwise, there are lots of linearizations of $\dbar_{J,\fj}\!+\!\nu$.
The three-term expression in parenthesis in \cite[(1.11)]{IPsum} reduces 
to $\{\partial f\!-\!\nu\}(w)$,
but should be just $\partial f(w)$ to be consistent with \cite[Proposition~3.1.1]{MS2};
otherwise, this term is not even $(J,\fj)$-antilinear.
In this equation, $\na$ denotes the pull-back connection of the Levi-Civita connection~$\na$ 
for the metric \cite[(1.1)]{IPsum} to a connection in~$u^*TX$ 
 the first two times it appears, but~$\na$ itself the last two
times it appears (contrary to p945, line -4);
the term~$\na_{\xi}\nu$ should be replaced by~$\wt\na_{\xi}\nu$. 
Via the first equation in~\cite[(C.7.5)]{MS2}, the correct version of
\cite[(1.11)]{IPsum} gives
$$\frac14 N_J(\xi,\partial f)-\frac12T_{\nu}(\xi,w),   \qquad\hbox{where}\quad
T_{\nu}(\xi,w)=\{\wt\na_{\xi}\nu\}(w)+J\{\wt\na_{J\xi}\nu\}(w),$$
instead of \cite[(1.14)]{IPsum};
the correct version is consistent with \cite[(3.1.5)]{MS2}.
The reason \cite[(1.15b)]{IPsum}, with the above correction for $T_{\nu}$, is equivalent to
\cite[(3.3bc)]{IPrel}, corrected as above, is the restriction in~\eref{nuVrestr_E}. 
Other related typos in~\cite{IPsum} include
\begin{enumerate}[label={},topsep=-5pt,itemsep=-5pt,leftmargin=*]
\item \textsf{p943, (1.2):} RHS should end with $\circ d\phi$;
\item \textsf{p943, line 11:} $\Hom(\pi_1^*T\P^N,\pi_2^*TX)$ \to $\Hom(\pi_2^*T\P^N,\pi_1^*TX)$;
\item \textsf{p943, line 13:} $J_{\P^n}$ \to $J_{\P^n}v$;
\item \textsf{p943, line 17:} $|(u,v)|^2$ presumably means $|u|^2+|v|^2$, 
in contrast to $|dF|^2$ in \cite[(1.5)]{IPsum};
\item \textsf{p943, (1.5):} second half should read
$\displaystyle\int_BF^*\hat\om=\om([f])+\om_{\P^n}([\phi])$;
\item \textsf{p943, line -3:} {\it smooth} is questionable across the boundary;
\item\textsf{p945, bottom:} since $h\in H^{0,1}(TC(-\sum p_i))$,
which is a quotient, $f_*h$ is not defined;
\item\textsf{p947, lines 14,15:} $\coker D_{\bs}=0$ after restricting the range of~$D$.
\end{enumerate}
\end{rmk}

\noindent
The space $\ov\M_{g,k;\bs}^V(X,A;J,\nu)$ 
is Hausdorff and compact (if $X$ is compact) in Gromov's convergence topology.
By \cite[Lemma~7.5]{IPrel}, for a generic~$(J,\nu)$ each stratum of $\ov\M_{g,k;\bs}^V(X,A;J,\nu)$
consisting of simple maps of a fixed combinatorial type is a smooth manifold of 
the expected even dimension, which is less than the expected dimension of 
the subspace of simple maps with smooth domains (except for this subspace itself).
By \cite[Theorem~7.4]{IPrel}, the last stratum has a canonical orientation.
The multiply covered maps in $\ov\M_{g,k;\bs}^V(X,A;J,\nu)$ fall into
two (overlapping) subsets: those with a multiply covered component mapped
into~$V$ and those with a multiply covered component not contained in~$V$.
By \cite[Proposition~3.21]{RT2}, the images of first type of multiply covered strata
under the morphism
\BE{IPcyc_e}
\ev\!\times\!\ev^V\!\times\!\st\!:
\ov\M_{g,k;\bs}^V(X,A;J,\nu)\lra X^k\times V^{\ell}\times\ov\M_{g,k+\ell}\EE
are contained in images of maps from smooth even-dimensional manifolds of
dimension less than the main stratum
if $(J|_V,\nu|_V)$ is generic and $(V,\om|_V)$ is semi-positive.
By a similar dimension counting, the images of the second type of multiply covered strata
under~\eref{IPcyc_e}
are contained in images of maps from smooth even-dimensional manifolds of
dimension less than the main stratum
if $(J,\nu)$ is generic, subject to the conditions~\eref{NijenCond_e} and~\eref{nanucond_e}, 
and $(X,\om,V)$ is semi-positive.\footnote{If \eref{semipos_e1} fails,
the space of simple degree~$A$ $(J,\nu)$-maps is empty for a generic~$J$.
If \eref{relsemipos_e1} fails, the space of simple relative degree~$A$ $(J,\nu)$-maps 
with one relative marked point is empty.
Irreducible components of the domain of a map in $\ov\M_{g,k;\bs}^V(X,A;J,\nu)$
which carry at least two marked points are stable because they also
carry at least one node;
$(J,\nu)$-maps from stable components are not multiply covered for a generic~$\nu$.}
Thus, \eref{IPcyc_e} is a pseudocycle and gives rise to relative GW-invariants 
of a semi-positive triple~$(X,\om,V)$ with a semi-positive~$(V,\om|_V)$.
In the unstable range, similar reasoning applies with $\nu\!=\!0$ and yields
the same conclusion if $(X,\om,V)$ is strictly semi-positive
and $(V,\om|_V)$ is semi-positive.
One key difference in this case is that the space of multiply covered relative degree~$A$ 
$J$-holomorphic maps from smooth domains with two relative marked points
can be of the same dimension as the space of simple 
degree~$A$ $J$-holomorphic maps from smooth domains,
but is then smooth.

\begin{rmk}\label{SemiPos_rmk}
In the semi-positive case, the relative moduli space described above can be
replaced by a subspace of $\ov\M_{g,k+\ell}(X,A)$; see \cite[Section~7]{IPrel}.
There is some confusion in \cite{IPrel, IPsum} regarding the proper semi-positivity 
requirements in the relative case.
The only requirement stated in \cite[Theorem~1.8]{IPrel} is that $(X,\om)$ 
is semi-positive; \cite[Theorem~8.1]{IPrel} also requires $(V,\om|_V)$
to be semi-positive.
The only condition stated in the bottom half of page~947 in~\cite{IPsum},
in the context of disconnected GT-invariants appearing on the following page, 
is that $\lr{c_1(X),A}\!\ge\!A\!\cdot_X\!V$ whenever 
$$\lr{\om,A}>0 \qquad\hbox{and}\qquad 
\lr{c_1(X),A} \ge \max(3,A\!\cdot_X\!V\!+\!1)-n \,.$$
The domain and the target of the linearized $\dbar$-operator~$D_s^N$
are described incorrectly below \cite[(6.2)]{IPrel};
the index of the described operator is generally too small
(because $s_i(s_i\!+\!1)/2$ contact conditions on the vector fields are imposed
at each contact, but no conditions on the one-forms).
The resulting bundle section in \cite[(6.7)]{IPrel} cannot be transverse unless $s_i\!=\!1$.
However, this issue can be resolved by using the twisting down construction
of \cite[Lemma~2.4.1]{Sh}.
The observation at the end of the preceding paragraph is not made 
in \cite{IPrel, IPsum}, but it is necessary to make sense of the invariants giving rise
to the $S$-matrix in \cite[Section~11]{IPsum}; see Section~\ref{Smat_subs}.
\end{rmk}

\noindent
In order to define relative invariants without a semi-positivity assumption on $(X,\om,V)$,
it is necessary to describe neighborhoods of elements of
the relative moduli space inside of a configuration space
and to construct finite-rank vector bundles over them with certain properties.
Unlike the situation with absolute GW-invariants in~\cite{FO} and~\cite{LT},
describing such a neighborhood requires gluing maps with rubber components
which are defined only up to a $\C^*$-action on the target.
The aim of \cite[Section~4]{LR} is to justify the existence of such invariants.
However, the gluing construction in \cite[Section~4]{LR} is limited to maps with 
a single node.
Even in this very special case, the $\C^*$-action on the maps to the rubber 
($\R\!\times\!SV$ in the approach of~\cite{LR}) is not considered, and 
the target space for the resulting glued maps, described by \cite[(4.12),(4.13)]{LR}, 
is not the original space~$\oXV$,
but a manifold diffeomorphic to~$\oXV$ (and not canonically or biholomorphically).
Neither the injectivity nor surjectivity of the neighborhood 
description is even considered in~\cite{LR}.
Thus, there is not even an attempted construction of a virtual fundamental class
for $\ov\M_{g,k;\bs}^V(X,A)$ in~\cite{LR}.
Nevertheless, the suggested idea of stretching the necks on both the domain and 
the target of the maps fits naturally with the analytic problems involved 
in such a construction; we return to this point in Section~\ref{sumpf_subs}.

\begin{rmk}\label{LR4_rmk}
The formulas \cite[(4.1),(4.2)]{LR} for the linearized $\dbar$-operator are incorrect,
since $J$ is not even tamed by the metric; see \cite[Section~3.1]{MS2}.
The statement above \cite[Remark~4.1]{LR} requires a citation.
The norms on the line bundle $u^*L\!\otimes\!\la$ on page~190 in~\cite{LR} are not specified;
because of the poles at the nodes, specifying a suitable norm is not a triviality.
Furthermore, the 3-4 pages spent on this line bundle are not necessary;
it is used only to construct local finite-rank subbundles of the cokernel bundle~$\cF$.
On the other hand, the deformations constructed from this line bundle need to respect
the~$\C^*$-action on $\R\!\times\!SV$ and thus need to be pulled back from~$V$
as in~\cite{IPrel}, of which no mention is made.
The required bound on the radial component~$a$ in \cite[Lemma~4.6]{LR} and other statements
is not part of any previous statement, such as \cite[Theorem~3.7]{LR}.
In \cite[Section~4.2]{LR}, the Implicit Function Theorem in an infinite-dimensional
setting is invoked twice (middle of page~200 and bottom of page~201) without any care.
While the relevant bounds for the 0-th and 1-st order terms are at least discussed
in \cite[Section~4.1]{LR}, not a word is said about the quadratic term.
The variable~$r$ is used to denote the norm of the gluing parameter $(r)\!=\!(r,\th_0)$
in an ambiguous way.
The issue is further confused by the notation $i_r$ at the bottom of page~193 in~\cite{LR},
$I_r$ at the bottom of page~201, $(\xi_r,h_r)$ in~(4.51);
in all cases, the subscript~$r$ should be replaced by the gluing parameter~$(r)$. 
The most technical part of the paper, roughly 4~pages, concerns the variation
of various operators with respect to the norm~$r$ of~$(r)$,
which is done without explicitly identifying the domains and targets of these spaces.
This part is used only for showing that the integrals \cite[(4.50)]{LR} defining
relative invariants converge.
However, this is not necessary, since the relevant evaluation map
is a rational pseudocycle according to \cite[Proposition~4.10]{LR}.
At the end of the first part of the proof of \cite[Proposition~4.1]{LR},
it is claimed that the overlaps of the gluing maps are smooth;
no one has shown this to be the case along the lower strata.
The wording of \cite[Lemma~4.12]{LR} suggests the existence of a diffeomorphism
between an odd-dimensional manifold and an even-dimensional manifold.
The constant~$C_3$ in \cite[(4.44)]{LR} depends on~$\al$;
thus, it is unclear that $C_3|\al|$ can be made arbitrary small.
The inequality \cite[(4.57)]{LR} is not justified.
The paper does not even touch on the independence claims of \cite[Theorem~4.14]{LR}.
Other, fairly minor misstatements in \cite[Section~4]{LR} include
\begin{enumerate}[label={},topsep=-5pt,itemsep=-5pt,leftmargin=*]
\item\textsf{p188, below Rmk~4.3:} the implication goes the other way;
\item\textsf{p189, lines 10,13:} $\Si_1\!\wedge\!\Si_2$ \to  $\Si_1\!\vee\!\Si_2$;
$h_{10}\!=\!h_{20}$ \to $\hat{h}_{10}\!=\!\hat{h}_{20}$;
\item\textsf{p190, lines -7,-6:} unjustified and irrelevant statement;
\item\textsf{p192, line -2:} $x$ has not been defined; 
\item\textsf{p193, (4.16):} $\de$ as in (4.3);
\item\textsf{p193, (4.17):} $s_2\!+\!4r$ \to $s_2$;
\item\textsf{p194, (4.20):} $P$ has very different meaning in (3.44);
\item\textsf{p203, (4.60)} would be more relevant without $Q$ and $D{\mathcal S}$; 
\item\textsf{p204, (4.62):} the middle term on RHS should be dropped;
\item\textsf{p204, (4.65):} the ``other gluing parameter $v$" is denoted by $\th_0$ on p192;
\item\textsf{p205, Thm 4.14} repeats Thm C on p158 (7 lines).
\end{enumerate}
\end{rmk}

\subsection{Refined GW-invariants: \cite[Section~5]{IPrel}}
\label{RelInv_subs2}

\noindent
As emphasized in \cite[Section~5]{IPrel}, two preimages of the same point 
under the morphism
\BE{evdfn_e2} \ev^V\!: \M_{g,k;\bs}^V(X,A;J,\nu)\lra V_{\bs}\!\equiv\!V^{\ell}\EE
determine an element~of 
\BE{cRXVdfn_e0}\cR_X^V\equiv \ker\big\{\io_{X-V*}^X\!:\,H_2(X\!-\!V;\Z)\lra H_2(X;\Z)\big\},\EE
where $\io_{X-V}^X\!:X\!-\!V\!\lra\!X$ is the inclusion; 
see \cite[Section~2.1]{GWrelIP}.
The elements of~$\cR_X^V$, called \textsf{rim tori} in~\cite{IPrel},
can be represented by circle bundles over loops~$\ga$ in~$V$; 
see \cite[Section~3.1]{GWrelIP}.
By standard topological considerations,
\BE{cRXVvsH1V_e}\cR_X^V\approx H_1(V;\Z)_X\equiv 
\frac{H_1(V;\Z)}{H_X^V},
\qquad\hbox{where}\quad H_X^V\equiv \big\{A\!\cap\!V\!:\,A\!\in\!H_3(X;\Z)\big\} \,;\EE
see \cite[Corollary~3.2]{GWrelIP}.\\

\noindent
The main claim of \cite[Section~5]{IPrel} is that the above observations can be used 
to lift~\eref{evdfn_e2} over some regular (Galois), possibly disconnected (unramified) covering
\BE{IPcov_e}\pi_{X;\bs}^V\!: \cH_{X;\bs}^V\!=\!\wh{V}_{X;\bs}\lra V_{\bs};\EE
the topology of this cover is specified in \cite[Section~6.1]{GWrelIP}.
By \cite[Lemma~6.3]{GWrelIP},
\BE{evfactor_e}\ev_X^V\!=\!\pi_{X;\bs}^V\!\circ\!\wt\ev_X^V\!:\,\ov\M_{g,k;\bs}^V(X,A)\lra V_{\bs}\EE
for some morphism
\BE{evXVlift_e} \wt\ev_X^V\!:\ov\M_{g,k;\bs}^V(X,A)\lra \wh{V}_{X;\bs}\,.\EE
Thus, the numbers obtained by pulling back elements of $H^*(\wh{V}_{X;\bs};\Q)$ by~\eref{evXVlift_e},
instead of elements of $H^*(V_{\bs};\Q)$ by~\eref{evdfn_e2}, and integrating them and 
other natural classes on  $\ov\M_{g,k;\bs}^V(X,A)$ against the virtual class of 
$\ov\M_{g,k;\bs}^V(X,A)$ refine the usual GW-invariants of~$(X,V,\om_X)$.
We will call these numbers the \textsf{IP-counts for~$(X,V,\om_X)$}.
As discussed in \cite[Sections~1.1,1.2]{IPrel},
these numbers generally depend on the choice of the lift~\eref{evXVlift_e}.\\

\noindent
The lift~\eref{evXVlift_e} of~\eref{evdfn_e2} is not unique and 
involves choices of base points in various spaces.
By \cite[Theorem~6.5]{GWrelIP} and \cite[Remark~6.7]{GWrelIP},
these choices can be made in a systematic manner, consistent with 
the perspective of \cite[Section~5]{GWrelIP} and suitable for the intended applications
in the symplectic sum context of \cite[Section~10]{IPsum}; see Section~\ref{RefSymSum_subs}.
Furthermore, \eref{evXVlift_e} extends over the space of stable smooth maps 
(and $L^p_1$-maps with $p\!>\!2$) and is thus compatible with
standard virtual class constructions.
This ensures that the IP-counts for~$(X,V,\om_X)$ are independent of~$J$
and of representative~$\om_X$ in a deformation equivalence class of symplectic forms on~$(X,V)$.
However, their dependence on the choice of the lift~\eref{evXVlift_e} and the fact that
the homology of the cover~\eref{IPcov_e} is often not finitely generated
make the IP-counts of little quantitative use in practice.
On the other hand, it is possible to use them for some qualitative applications;
see \cite[Theorem~1.1]{GWrelIP} and \cite[Theorems~1.1,4.9]{GWsumIP}.\\

\noindent
In the relative ``semi-positive" case described in Section~\ref{RelInv_subs}, the morphism
\BE{IPcyc_e2}
\ev\!\times\!\wt\ev_X^V\!\times\!\st\!:
\ov\M_{g,k;\bs}^V(X,A;J,\nu)\lra X^k\times  \cH_{X;\bs}^V\times\ov\M_{g,k+\ell}\EE
is still a pseudo-cycle for generic $J$ and $\nu$ (but its target may not be compact).
By \cite[Theorem~1.1]{Z}, \eref{IPcyc_e2} determines a homology class in
the target.
It can then be used to define IP-counts for~$(X,V,\om_X)$ by intersecting with
proper immersions from oriented manifolds representing Poincare duals
of cohomology classes, similarly to Section~\ref{RelInv_subs2}.

\begin{rmk}\label{IPsec5_rmk}
Two, essentially identical (not just equivalent), descriptions of the set~$\cH_{X;\bs}^V$ 
are given in \cite[Section~5]{IPrel},
neither of which  specifies a topology on~$\cH_{X;\bs}^V$.
In particular, contrary to the sentence below \cite[(5.6)]{IPrel}, 
the topology on~$\hat{X}$ is not changed, but the inclusion map $S^*\!\lra\!\hat{X}$ 
is still continuous and induces precisely the same inclusion of chain complexes 
as in the first description of~$\cH_{X;\bs}^V$.
A hands-on description of the topology of $\cH_{X;\bs}^V$, 
focusing on the $\bs\!=\!(1)$ case, is given at the end of \cite[Section~5]{IPrel};
our definition of $\wh{V}_{X;\bs}$ in \cite[Section~6.1]{GWrelIP} is based on this description.
The group of deck transformations of the covering~\eref{IPcov_e} is 
$$\Deck\big(\pi_{X;\bs}^V\big) =\frac{\cR_X^V}{\gcd(\bs)\cR_{X;\bs}^{\,V}}
\times\gcd(\bs)\cR_{X;\bs}^{\,V}
\qquad\hbox{if}~|\pi_0(V)|\!=\!1;$$
in particular, it is usually different  from $\cR_X^V$, 
contrary to an explicit statement in \cite[Section~5]{IPrel}.
Furthermore, the IP-counts for $(\wh\P_9^2,F)$ and $(\P^1\!\times\!\T^2,\{0,\i\}\!\times\!\T^2)$
are indexed by the rim tori in \cite[Lemmas~14.5,14.8]{IPsum},
implying that the covers $\cH_{X;(1)}^V$ are $\cR_X^V\!\times\!V\!\approx\!\Z^2\!\times\!V$.
In fact, $\cH_{X;(1)}^V\!\approx\!\C$ in both cases and there is no indexing
of the IP-counts by the rim tori; see \cite[Remarks~6.5,6.8]{GWsumIP}.
The typos in \cite[Section~5]{IPrel} include:
\begin{enumerate}[label={},topsep=-5pt,itemsep=-5pt,leftmargin=*]
\item \textsf{p66, (5.2):} $\M_{g,n}^V(X,A)$ \to $\M_{g,n}^V(X)$; 
\item \textsf{p66, line -2:} $\cH$ \to $\cH_X^V$;
\item \textsf{p67, line 20:} $\cH$ is never used.
\end{enumerate}
\end{rmk}

\section{The symplectic sum formula}
\label{symplsum_sec}

\noindent
We state a version of the standard symplectic sum formula in Section~\ref{DfnStat_subs}
by  combining {\it rules of assignment} of~\cite{Jun2} with
the GT-invariants of~\cite{IPsum}.
In Section~\ref{DfnStatComp_subs}, we compare the variations of this formula appearing
in \cite{IPsum,LR,Jun2}.
In Section~\ref{RefSymSum_subs}, we comment on the refinements to this formula
suggested in~\cite{IPsum}.

\subsection{Main statement: \cite[Sections 0,1,10-13,16]{IPsum}}
\label{DfnStat_subs}

\noindent
For $g,k\!\in\!\Z^{\ge0}$ and $\chi\!\in\!\Z$, denote by $\ov\M_{g,k}$ and $\wt\M_{\chi,k}$ 
the Deligne-Mumford moduli spaces of stable nodal $k$-marked complex curves 
with connected domains of genus~$g$ and with (possibly) disconnected domains of
double holomorphic euler characteristic~$\chi$, respectively;
in the unstable range, $2g\!+\!k\!<\!3$ and $k\!-\!\chi\!<\!1$, 
we define each of these spaces to be a point.
Let
$$\ov\M=\bigsqcup_{g,k\in\Z^{\ge0}}\!\!\!\ov\M_{g,k}, \qquad
\wt\M=\bigsqcup_{\chi\in\Z,k\in\Z^{\ge0}}\!\!\!\!\!\! \wt\M_{\chi,k}\,.$$
A \textsf{rule of assignment} is a bijection 
\BE{RofAdfn_e}\vt\!: \{1\}\!\times\!\{1,\ldots,k_1\}\sqcup\{2\}\!\times\!\{1,\ldots,k_2\}
\lra \{1,\ldots,k_1\!+\!k_2\}\EE
for some $k_1,k_2\!\in\!\Z^{\ge0}$ preserving the ordering of the elements in each of the two 
subsets of the domain.
Let $\RA$ denote the set of all rules of assignment.
If in addition $\ell\!\in\!\Z^{\ge0}$, let
\BE{xivt_e} \xi_{\ell,\vt}\!: \wt\M_{\chi_1,k_1+\ell}\times \wt\M_{\chi_2,k_2+\ell}
\lra  \wt\M_{\chi_1+\chi_2-2\ell,k_1+k_2}\EE
be the morphism obtained by identifying the $(k_1\!+\!i)$-th point
on the first curve with  the $(k_2\!+\!i)$-th point on the second curve for 
$i\!=\!1,\ldots,\ell$ and ordering the remaining points by the bijection~$\vt$.\\

\noindent
Let $(X,\om)$ be a compact symplectic manifold, $V\!\subset\!X$ be a closed
symplectic hypersurface, $J$ be an $\om$-compatible almost complex structure,
such that $J(TV)\!=\!TV$, and $A\!\in\!H_2(X;\Z)$.
There are natural stabilization morphisms
\BE{stdfn_e}\st\!: \wt\M_{\chi,k}(X,A)\lra\wt\M_{\chi,k},\qquad
\st\!: \wt\M_{\chi,k;\bs}^V(X,A)\lra\wt\M_{\chi,k+\ell},\EE
forgetting the map and contracting the unstable components of the domain.
We denote the restrictions of these maps~to
$$\ov\M_{g,k}(X,A)\subset\wt\M_{2-2g,k}(X,A)
\qquad\hbox{and}\qquad
\ov\M_{g,k;\bs}^V(X,A)\subset\wt\M_{2-2g,k;\bs}^V(X,A)$$
by the same symbols.
The morphisms~\eref{evdfn_e} and~\eref{stdfn_e} give rise
to the (\textsf{absolute}) \textsf{Gromov-Witten} and \textsf{Gromov-Taubes} 
invariants of~$(X,\om_X)$ with descendants,
\begin{alignat}{2}
\label{GWgendfn_e}
\GW_{g,A}^X\!: \T^*(X)&\lra H_*(\ov\M), &\quad
\GW_{g,A}^X(\al)&=\sum_{k=0}^{\i}\st_*\big(\ev^*\al\!\cap\!\big[\ov\M_{g,k}(X,A)\big]^{\vir}\big),\\
\GT_{\chi,A}^X\!: \T^*(X)&\lra H_*(\wt\M),  &\quad
\GT_{\chi,A}^X(\al)&=\sum_{k=0}^{\i}\st_*
\big(\ev^*\al\!\cap\!\big[\wt\M_{\chi,k}(X,A)\big]^{\vir}\big),
\notag
\end{alignat}
where $H_*$ denotes the homology with $\Q$-coefficients.
They also give rise to the \textsf{relative Gromov-Witten} 
and \textsf{Gromov-Taubes} invariants of~$(X,V,\om)$,
\begin{alignat*}{2}
\GW_{g,A;\bs}^{X,V}\!: \T^*(X)&\lra H_*(\ov\M\!\times\!V_{\bs}), \quad
\GW_{g,A;\bs}^{X,V}(\al)&=\sum_{k=0}^{\i}\{\st\!\times\!\ev^V\}_*
 \big(\ev^*\al\!\cap\!\big[\ov\M_{g,k;\bs}^V(X,A)\big]^{\vir}\big),\\
\GT_{\chi,A;\bs}^{X,V}\!: \T^*(X)&\lra H_*(\wt\M\!\times\!V_{\bs}), \quad
\GT_{\chi,A;\bs}^{X,V}(\al)&=\sum_{k=0}^{\i}\{\st\!\times\!\ev^V\}_*
\big(\ev^*\al\!\cap\!\big[\wt\M_{\chi,k;\bs}^V(X,A)\big]^{\vir}\big).
\end{alignat*}
We assemble the homomorphisms $\GT_{\chi,C}^{X\#_VY}$ and $\GT_{\chi,A;\bs}^{M,V}$
into generating functions as in~\eref{GTabsser_e} and~\eref{GTrelser_e}:
\begin{alignat}{2}
\label{GTabsser_e2} 
\GT^{X\#_VY}&=\sum_{\chi\in\Z}\sum_{\eta\in H_2(X\!\#_V\!Y;\Z)/\cR_{X,Y}^V}
\sum_{C\in\eta}\GT^{X\#_VY}_{\chi,C}\,t_{\eta}\la^{\chi},\\
\label{GTrelser_e2} 
\GT^{M,V}&=\sum_{\chi\in\Z}\sum_{A\in H_2(M;\Z)}
\sum_{\ell=0}^{\i}\sum_{\begin{subarray}{c}\bs\in(\Z^+)^{\ell}\\ |\bs|=A\cdot_MV\end{subarray}}
\!\!\!\!\!\GT_{\chi,A;\bs}^{M,V}\,t_A\la^{\chi}.
\end{alignat}
The generating functions in~\eref{GTabsser_e} and~\eref{GTrelser_e}
are the sums of the terms in the generating functions 
in~\eref{GTabsser_e2} and~\eref{GTrelser_e2}, respectively, that are of $\wt\M$-degree~0.\\

\noindent
If $\vt$ is a rule of assignment as in~\eref{RofAdfn_e} and
$$\al \equiv (\al_{1;X},\al_{1;Y})\otimes\!\ldots\!\otimes(\al_{k;X},\al_{k;Y})
\in H^{2*}(X\!\sqcup_V\!Y)^{\otimes k},$$ 
we define
\begin{gather*}
\al_{\vt;X}=\al_{\vt(1,1);X}\otimes\!\ldots\!\otimes\al_{\vt(1,k_1);X}\in\T^*(X)\,,
\quad
\al_{\vt;Y}=\al_{\vt(2,1);Y}\otimes\!\ldots\!\otimes\al_{\vt(2,k_2);Y}\in\T^*(Y),\\
\hbox{and}\qquad \al_{\vt}=\al_{\vt;X}\otimes\al_{\vt;Y}\in\T^*(X)\otimes\T^*(Y)
\end{gather*}
if $k_1\!+\!k_2\!=\!k$ and $\al_{\vt}=0$ otherwise.
Using the pairing~$\star$ of~\eref{bspairing_e}, we define the pairing 
$$\star_{\vt}\!: H_*(\wt\M\!\times\!V_{\i})\otimes H_*(\wt\M\!\times\!V_{\i})
\lra  H_*(\wt\M)[\la^{-1}]$$ 
to be given by the composition
\begin{equation*}\begin{split}
H_*(\wt\M_{\chi_1,k_1+\ell(\bs)}\!\times\!V_{\bs})\otimes H_*(\wt\M_{\chi_2,k_2+\ell(\bs)}\!\times\!V_{\bs})
&=H_*\big(\wt\M_{\chi_1,k_1+\ell(\bs)}\!\times\!\wt\M_{\chi_2,k_2+\ell(\bs)}\big)
\otimes H_*(V_{\bs})\!\otimes\!H_*(V_{\bs})\\
&\stackrel{\xi_{\ell(\bs),\vt*}\otimes\,\star}{\xra{1.5}}
H_*(\wt\M)\otimes\!\Q[\la^{-1}]= H_*(\wt\M)[\la^{-1}]
\end{split}\end{equation*}
on the specified summands and be 0 on the remaining summands.
This pairing induces a pairing
\BE{thpair_e}\begin{split}
\star_{\vt}\!:\Hom\big(\T^*(X),H_*(\wt\M\!\times\!V_{\i})\big)
&\otimes \Hom\big(\T^*(Y),H_*(\wt\M\!\times\!V_{\i})\big)\\ 
&\lra
\Hom\big(\T^*(X)\!\otimes\!\T^*(Y),H_*(\wt\M)\big)\big[\la^{-1}\big]
\end{split}\EE
as in~\eref{bspairing_e1}, which we extend as in~\eref{bspairing_e2}, 
replacing~$\star$ by~$\star_{\vt}$.

\begin{thm}\label{main2_thm}
Let $(X,\om_X)$ and $(Y,\om_Y)$ be symplectic manifolds and 
$V\!\subset\!X,Y$ be a symplectic hypersurface satisfying~\eref{cNVcond_e}.
If $q_{\#}\!:X\!\#_V\!Y\!\lra\!X\!\cup_V\!Y$ is a collapsing map 
for an associated symplectic sum fibration
and $q_{\sqcup}\!:X\!\sqcup\!Y\!\lra\!X\!\cup\!_VY$ is the quotient map, then 
\BE{SympSumForm_e2}
\GT^{X\#_VY}(q_{\#}^*\al) =\sum_{\vt\in\RA}
\big\{\GT^{X,V} \star_{\vt}\GT^{Y,V}\big\}\big( (q_{\sqcup}^*\al)_{\vt}\big)\EE
for all $\al\!\in\!\T^*(X\!\cup_V\!Y)$.
\end{thm}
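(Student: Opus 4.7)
\medskip

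\noindent\textbf{Proof plan.}
The plan is to establish~\eref{SympSumForm_e2} by constructing a symplectic sum fibration $\pi\!:\cZ\!\lra\!\De$ as in Proposition~\ref{SymSum_prp}, building moduli spaces of stable maps into the total space $\cZ$ that restrict to the relative/absolute moduli spaces over $\cZ_0$ and $\cZ_{\la}$, and comparing the two sides of~\eref{SympSumForm_e2} by a limit--gluing correspondence. First I would assemble, for each $\chi\!\in\!\Z$, $k\!\in\!\Z^{\ge0}$ and class in $H_2(X;\Z)\!\times_V\!H_2(Y;\Z)$, a relative moduli space $\wt{\M}_{\chi,k}(\cZ/\De,\star)$ parametrizing stable maps into the fibers of $\pi$ (with connected or disconnected domains), where the fibers over $\De^*$ recover $\wt\M_{\chi,k}(X\!\#_V\!Y,C)$ and the fiber over $0$ is $\wt{\M}_{\chi,k}(X\!\cup_V\!Y,A_X\!\#_VA_Y)$ as in Section~\ref{RelInv_sub0b}. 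Using the $J_{\cZ}$ of Proposition~\ref{SymSum_prp}, the rubber structure, and an auxiliary Ruan--Tian perturbation $(J,\nu)$ compatible with the product structure on $\P_XV\!=\!\P_YV$, this space should carry a relative virtual fundamental class compatible with pullback to each fiber.

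Second, I would prove a precise compactness/limit statement: a sequence of $(J_{\la_n},\nu_{\la_n})$-holomorphic maps into $\cZ_{\la_n}$ with $\la_n\!\to\!0$ Gromov-converges (after passing to a subsequence and performing the SFT-style stretching of Sections~\ref{SympCut_subs}, \ref{RelInv_sub0}, \ref{RelInv_sub0b}) to a stable map into some $X\!\cup_V^m\!Y$, with matching contact orders $\bs$ at the $\ell(\bs)$ nodes joining the two sides through the rubber layers. This uses the asymptotic analysis of \cite{H,HWZ1} adapted to the symplectic cut in the style of \cite[Section~3]{LR}, replacing the flawed arguments there by the correct version of the exponential decay at punctures indicated in Section~\ref{IPconv_subs}. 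The contact orders $\bs$ are unordered on the limit, contributing the factor $1/\ell(\bs)!$; the stabilization $\st$ of the domain combined with the matching conditions at the $\ell(\bs)$ nodes organizes the two components of the limit into an element of $\wt\M_{\chi_1,k_1+\ell(\bs)}\!\times\!\wt\M_{\chi_2,k_2+\ell(\bs)}$, and the rule of assignment $\vt$ records how the $k$ marked points distribute between the two sides; this is precisely the map $\xi_{\ell(\bs),\vt}$ of~\eref{xivt_e}.

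Third, the reverse direction is a gluing theorem: for each stable map $f_0$ to $X\!\cup_V^m\!Y$ of contact order $\bs$ and gluing parameter profile at each of the $\ell(\bs)$ contact nodes, one must produce, for all sufficiently small $\la$, a family of $(J_{\la},\nu_{\la})$-holomorphic maps into $\cZ_{\la}$ limiting to $f_0$; the multiplicity count at an order-$s_i$ contact is $s_i$ (the gluing parameters form an $s_i$-fold cover), yielding the product $\lr\bs\!=\!s_1\cdots s_{\ell}$ in~\eref{bspairing_e}. The $\la^{-2\ell(\bs)}$ records the jump in $\chi$ upon smoothing the $\ell(\bs)$ nodes (two real dimensions per node) so that $\wt\M_{\chi_1,k_1+\ell}\!\times\!\wt\M_{\chi_2,k_2+\ell}\!\lra\!\wt\M_{\chi_1+\chi_2-2\ell,k_1+k_2}$. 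I would then show the resulting gluing map is a local diffeomorphism onto the complement of a compact set in $\wt\M_{\chi,k}(\cZ/\De)$ near the singular fiber, its image is essentially surjective (up to compact sets in each open stratum), and that rubber components with non-trivial $\C^*$-action on the target do not affect the count by the dimension argument indicated in Section~\ref{Smat_subs}; summing over $\vt$, $\bs$, and the matching of the $\ell(\bs)$ evaluation points produces exactly the pairing $\star_\vt$ of~\eref{thpair_e} applied to $(q_\sqcup^*\al)_\vt$.

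The main obstacle, and the reason the proof does not follow routinely from \cite{IPsum} or~\cite{LR}, is the gluing analysis at maps with non-trivial rubber components, i.e.~the issues~\ref{RG1_it}--\ref{RG2_it} discussed in Section~\ref{LR_subs}: one must set up the pregluing and Newton iteration on a thickened configuration space modulo the $(\C^*)^m$-action on the rubber levels, obtain \emph{uniform} Fredholm estimates and a uniform quadratic bound for the $\dbar$-operator as both the domain necks and the target necks degenerate (in the weighted norms adapted to the cylindrical ends of Section~\ref{SympCut_subs}), and verify bijectivity onto an open neighborhood, up to the ambiguity in how the glued target $\cZ_{\la}$ is identified across different nodes. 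Once this gluing is in place, the invariance of the resulting counts under the choice of $J$ and $\nu$ in the appropriate chamber, combined with the bookkeeping of the rules of assignment~$\vt$ on the stabilization factor $\wt\M$, yields~\eref{SympSumForm_e2} termwise in the classes $(A_X,A_Y)$ and $\bs$.
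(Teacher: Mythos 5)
Your proposal matches the paper's own strategy in Section~\ref{sumpf_subs} quite precisely: the same symplectic sum fibration setup from Proposition~\ref{SymSum_prp}, the same degeneration/compactness argument via SFT-style neck stretching producing limits in $X\!\cup_V^m\!Y$, the same sources for the combinatorial factors ($\lr\bs$ from gluing multiplicities, $1/\ell(\bs)!$ from unordered contacts, $\la^{-2\ell(\bs)}$ from the Euler characteristic jump, and the rules of assignment $\vt$ from distributing absolute marked points), the same dimension-counting dismissal of $S$-matrix contributions, and the same identification of rubber gluing---the $(\C^*)^m$-quotient on the levels and the dependence of the glued target on the gluing parameters---as the central unresolved technical difficulty. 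Be aware, though, that the paper does not actually supply a complete proof of Theorem~\ref{main2_thm}: Section~\ref{sumpf_subs} is a critical survey of the incomplete arguments in the literature together with an outline of what a correct proof would require (and it explicitly defers the full argument to a forthcoming paper), so your proposal is at exactly the same level of completeness as what the paper itself provides, stopping at the point where both you and the authors flag the rubber gluing as the essential open step.
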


\noindent
The identity \eref{SympSumForm_e2} readily extends to cover descendant invariants
($\psi$-classes).
Furthermore, it is not necessary to assume that $X$ and $Y$ are different manifolds:
the reasoning behind Theorem~\ref{main2_thm} readily applies to symplectic manifolds
obtained by gluing along two disjoint hypersurfaces~$V_1$ and~$V_2$ in~$X$
which have dual normal bundles.

\subsection{Comparison of formulations: \cite[Sections 0,1,10-13,16]{IPsum}, \cite[Section~5]{LR}}
\label{DfnStatComp_subs}

\noindent
In \cite[Section~1]{IPsum}, the absolute GW/GT-invariants of~$X$ are defined 
as cycles in a space involving a Cartesian product of copies of~$X$, 
while the relative GW/GT-invariants of~$(X,V)$ are defined as homomorphisms on~$\T^*(X)$
and~$\T^*(Y)$.
The former is inconsistent with the main symplectic sum formulas in~\cite{IPsum},
i.e.~(0.2), (10.14), and~(12.17).
The GT-invariants are formally defined as exponentials of the GW-invariants.
According to \cite[p944]{IPsum},
$$\GT^X=1+\sum_{m=1}^{\i}\frac{1}{m!}
\sum_{\begin{subarray}{c}A_1,\ldots,A_m\in H_2(X;\Z)\\ g_1,\ldots,g_m\ge0\\
k_1,\ldots,k_m\ge0 \end{subarray}}\!\!\!\!\!\!\!\!\!
\frac{\GW_{g_1,A_1,k_1}^X\!\cdot\!\ldots\!\!\cdot\!\GW_{g_m,A_m,k_m}^X}
{k_1!\ldots k_m!}t_{A_1+\ldots+A_m}\la^{2(m-g_1-\ldots-g_m)}\,,$$
where $\GW_{g,A,k}^X$ is the homomorphism corresponding to the $k$-th summand 
in~\eref{GWgendfn_e} and $\cdot$ is some (unspecified) product on 
$\Hom(\T^*(X),H_*(\wt\M))$.
The wording at the top of page~948 in~\cite{IPsum} is somewhat misleading,
as \cite[(1.24)]{IPsum} is the definition of~$\GW^{X,V}$ in~\cite{IPsum},
not a consequence of another definition.
With this interpretation, \cite[(1.25)]{IPsum} gives
\BE{IPsum124_e}\begin{split}
\GT^{X,V}&=1+\sum_{m=1}^{\i}\frac{1}{m!}
\sum_{\begin{subarray}{c}A_1,\ldots,A_m\in H_2(X;\Z)\\ 
g_1,\ldots,g_m\in\Z^{\ge0}\\ \bs_1,\ldots,\bs_N\end{subarray}}\!\!\!\!\!\!\!\!\!
\frac{\GW_{g_1,A_1;\bs_1}^{X,V}\!\cdot\!\ldots\!\!\cdot\!\GW_{g_m,A_m;\bs_m}^{X,V}}
{\ell(\bs_1)!\ldots\ell(\bs_m)!}t_{A_1+\ldots+A_m}\la^{2(m-g_1-\ldots-g_m)}\,,
\end{split}\EE
where $\cdot$ is some (unspecified) product on $\Hom(\T^*(X),H_*(\wt\M\!\times\!\cH_X^V))$ and
$$\cH_X^V=\bigsqcup_{\ell=0}^{\i}\bigsqcup_{\bs\in(\Z^+)^\ell}\cH_{X;\bs}^V\,.$$
In particular, the normalizations of~$\GT^X$ and~$\GT^{X,V}$ with respect
to the absolute marked points in~\cite{IPsum} are inconsistent.
Thus, the symplectic sum formulas of~\cite{IPsum},
even without the rim tori and the $S$-matrix features,
do not recover~\eref{SympSumForm_e}.\\

\noindent
In \cite[Section~16]{IPsum}, the usual (without rim tori refinement)
relative GT-invariants of~$(X,V)$ are  
described in terms of counts of disconnected curves.
If $\{\ga_i\}$ is a basis for $H^*(V)\!\equiv\!H^*(V;\Q)$
and $\{\ga_i^\vee\}$ is the dual basis for $H_*(V)$, then 
\begin{alignat*}{2}
C_{\bs,\bfI}&\equiv \ga_{i_1}\otimes\!\ldots\!\otimes \ga_{i_{\ell}}
\in H^*(V)^{\otimes\ell}\approx H^*(V_{\bs}), &\qquad\hbox{with}\quad
&\bfI=(i_1,\ldots,i_{\ell}),\\
C_{\bs,\bfI}^{\vee}&\equiv \ga_{i_1}^{\vee}\otimes\!\ldots\!\otimes \ga_{i_{\ell}}^{\vee}
\in H_*(V)^{\otimes\ell}\approx H_*(V_{\bs}), &\qquad\hbox{with}\quad
&\bfI=(i_1,\ldots,i_{\ell}),
\end{alignat*}
are dual bases for $H^*(V_{\bs})$ and $H_*(V_{\bs})$, respectively,
for compatible choices of the above isomorphisms.
According to \cite[(A.3)]{IPsum},
\BE{IPsumA3_e}\begin{split}
\GT^{X,V}(\ka,\al)&\equiv \ka\cap\GT^{X,V}(\al)\\
&=\sum_{A,\chi}\sum_{\ell(\bs)=\ell(\bfI)=\ell}\!\!\!\!\!
\frac{\GT_{\chi,A}^{X,V}(\ka,\al;C_{\bs,\bfI})}{\ell!}
C_{\bs,\bfI}^{\vee}t_A\la^{\chi}
\quad\forall~\ka\!\in\!H^*(\wt\M_{\chi,k+\ell}),\,
\al\!\in\!H^*(X)^{\otimes k},
\end{split}\EE
where $\al\!=\!\al_1\!\otimes\!\ldots\!\otimes\!\al_k$ and
$\GT_{\chi,A}^{X,V}(\ka,\al;C_{\bs,\bfI})$ is the number of $(J,\nu)$-holomorphic
maps~$u$, for a generic~$(J,\nu)$, from a possibly disconnected, marked
curve $(\Si,z_1,\ldots,z_{k+\ell})$ such~that
\begin{enumerate}[label=$\bullet$,leftmargin=*]
\item $(\Si,z_1,\ldots,z_{k+\ell})\!\in\!K$ for a fixed generic 
representative~$K$ for $\PD_{\wt\M_{\chi,k+\ell}}\ka$,
\item for each $i\!=\!1,\ldots,k$, $u(z_i)\!\in\!Z_i$ for a fixed generic
representative~$Z_i$ for~$\PD_X\al_i$, and
\item for each $j\!=\!1,\ldots,\ell(\bs)$, $\ord_{z_{k+j}}^Vu\!=\!s_j$ and
$u(z_{k+j})\!\in\!\Ga_j$ for a fixed generic
representative~$\Ga_j$ for~$\PD_V\ga_{i_j}$.
\end{enumerate}
A comparison of \eref{IPsum124_e} and \eref{IPsumA3_e} suggests that 
the product~$\cdot$ on $\Hom(\T^*(X),H_*(\wt\M\!\times\!V_{\i}))$ not explicitly 
specified in~\cite{IPsum} would have to involve rather elaborate coefficients
in order to obtain \cite[(A.3)]{IPsum} from \cite[(1.24)]{IPsum}.\\

\noindent
The alternative description of the relative GT generating series in
the last paragraph of \cite[Section~16]{IPsum} does not make sense 
on several levels.
Let $N$ be the dimension of $H^*(V)$, i.e.~the number of elements in the set~$\{\ga_i\}$
above.
For each 
$$\bm\equiv(m_{a,i})_{a,i}\!: \Z^+\!\times\!\{1,\ldots,N\}\lra\Z^+$$ 
with finitely many nonzero entries (such a matrix $\bm$ is called a
sequence in~\cite{IPsum}), 
let $(\bs_{\bm},\bfI_{\bm})$ be a pair of tuples
with $m_{a,i}$ entries of the form $(a,\ga_i)$ for each~$(a,i)$
and~set
$$\ell(\bm)\equiv\ell(\bs_{\bm})=\sum_{a,i}m_{a,i}\,, \quad
\bm!\equiv\bs_{\bm}!=\prod_{a,i}m_{a,i}!\,, \quad
\bfC_{\bm}=\prod_{a,i}(a,\ga_i)^{m_{a,i}}\,, \quad
\bz^{\bm}=(a,\ga_i^{\vee})^{m_{a,i}}\,.$$
According to \cite[(A.6)]{IPsum},
\begin{equation*}\begin{split}
\GT^{X,V}(\ka,\al)
=\sum_{A,\chi}\sum_{\ell(\bm)=\ell}\!\!\!
\frac{\GT_{\chi,A}^{X,V}(\ka,\al;\bfC_{\bm})}{\bm!}
\bz^{\bm}t_A\la^{\chi}
\quad\forall~\ka\!\in\!H^*(\wt\M_{\chi,k+\ell}),\,
\al\!\in\!H^*(X)^{\otimes k},
\end{split}\end{equation*}
for some unspecified numbers $\GT_{\chi,A}^{X,V}(\ka,\al;\bfC_{\bm})$.
According to~\cite{IPsum}, the collection $\{\bfC_{\bm}\}$ is 
a basis replacing the above basis~$\{C_{\bs,\bfI}\}$, but these collections are 
subsets of different vector spaces (with the former generating a symmetrization
of the vector space generated by the latter).
The formal variable $z_{a,i}\!=\!(a,\ga_i)$ is described as an element 
of {\it the dual basis}, without specifying of dual to what.
According to~\cite{IPsum}, these formal variables generate a super-commutative
polynomial algebra; presumably the same should apply to~the variables~$(a,\ga_i)$.
This makes $\bz^{\bm}$ and $\bfC_{\bm}$ undefined
if there is more than one class~$\ga_i$ of odd degree.
Even if all~$\bz^{\bm}$ are defined, they generate a symmetrization
of the vector space generated by $\{C_{\bs,\bfI}\}$.
Thus, the right-hand sides of \cite[(A.3)]{IPsum} and \cite[(A.6)]{IPsum}
lie in different vector spaces, even though both are supposed to be~$\GT^{X,V}(\ka,\al)$.
Furthermore, the numbers $\GT_{\chi,A}^{X,V}(\ka,\al;C_{\bs,\bfI})$
are symmetric in the inputs pulled back from~$V$ if $\ka$ is symmetric 
in the relative marked points, but not in general.
If there is at most one odd class~$\ga_i$ and $\ka$ is symmetric 
in the relative marked points,
\cite[(A.6)]{IPsum} can be made sense of by viewing its left-hand side 
as the projection of $\GT^{X,V}(\ka,\al)$ to the symmetrization of $H_*(V_{\i})$
over the permutations of components of each tuple~$\bs$.
Comparing with \cite[(A.3)]{IPsum} and summing over all permutations
of pairs of components of $(\bs_{\bm},\bfI_{\bm})$, we then find that 
$$\GT_{\chi,A}^{X,V}(\ka,\al;\bfC_{\bm})
=\GT_{\chi,A}^{X,V}\big(\ka,\al;C_{\bs_{\bm},\bfI_{\bm}}\big)\,.$$
However, this is inconsistent with \cite[Section~15.2]{IPsum},
in particular the equation after \cite[(15.2)]{IPsum},
in which the relative contacts are unordered.
The number $\GT_{\chi,A}^{X,V}(\ka,\al;\bfC_{\bm})$ obtained as above
from \cite[(A.3),(A.6)]{IPsum} would count curves with unordered relative contacts
if $\ell(\bs)!$ were dropped from \cite[(A.3)]{IPsum},
i.e.~with our choices of the normalizations for the GW/GT generating series.\\

\noindent
With our choices of the normalizations of the GW/GT generating functions, the relationship 
\BE{GTexpGW_e}\GT^{X,V}=\ne^{\GW_{X,V}}\,,\EE
which is not crucial for the symplectic sum formulas, 
holds for a product on the vector space $\Hom(\T^*(X),H_*(\wt\M\!\times\!V_{\i}))$
with the simplest possible coefficients.
Specifically, every pair of tuples $\bs_1$ and $\bs_2$ of nonnegative integers and
every rule of assignment
\begin{gather}
\vt\!:\{1\}\!\times\!\{1,\ldots,k_1\!+\!\ell(\bs_1)\}
\sqcup \{2\}\!\times\!\{1,\ldots,k_2\!+\!\ell(\bs_2)\}
\lra\{1,\ldots,k_1\!+\!k_2\!+\!\ell(\bs_2)\} \qquad\hbox{s.t.}\notag\\
\label{vtcond_e}\vt(i_1),\vt(i_2)\le k_1\!+\!k_2~~\forall~i_1\!\in\!
\{1\}\!\times\!\{1,\ldots,k_1\},
~i_2\!\in\!\{2\}\!\times\!\{1,\ldots,k_2\}
\end{gather}
determine a tuple $\bs_1\!\w_{\vt}\bs_2\!\in\!(\Z^+)^{\ell(\bs_1)+\ell(\bs_2)}$,
assembled from~$\bs_1$ and~$\bs_2$ according to the action of~$\vt$
on the last $\ell(\bs_1)$ points in the first tuple above and 
the last $\ell(\bs_2)$ points in the second tuple. 
Thus, $\vt$ defines an embedding
$$\wt\M_{\chi_1,k_1+\ell(\bs_1)}\!\times\!V_{\bs_1}\times
\wt\M_{\chi_2,k_2+\ell(\bs_2)}\!\times\!V_{\bs_2}
\lra \wt\M_{\chi_1+\chi_2,k_1+k_2+\ell(\bs_1\!\w_{\vt}\bs_2)}\!\times\!
V_{\bs_1\!\w_{\vt}\bs_2}.$$
We denote~by
\begin{equation*}\begin{split}
\vt_*\!: 
H_*\big(\wt\M_{\chi_1,k_1+\ell(\bs_1)}\!\times\!V_{\bs_1}\big)&\otimes\!
H_*\big(\wt\M_{\chi_2,k_2+\ell(\bs_2)}\!\times\!V_{\bs_2}\big)\approx
H_*\big(\wt\M_{\chi_1,k_1+\ell(\bs_1)}\!\times\!V_{\bs_1}\!\times\!
\wt\M_{\chi_2,k_2+\ell(\bs_2)}\!\times\!V_{\bs_2}\big)\\
&\lra
H_*\big(\wt\M_{\chi_1+\chi_2,k_1+k_2+\ell(\bs_1\!\w_{\vt}\bs_2)}\!\times\!
V_{\bs_1\!\w_{\vt}\bs_2}\big)
\subset H_*\big(\wt\M\!\times\!V_{\i}\big)
\end{split}\end{equation*}
the induced homomorphism.
If in addition $\al\!=\!\al_1\!\otimes\!\ldots\!\otimes\!\al_{k_1+k_2}\in
H^*(X)^{\otimes(k_1+k_2)}$, let
$$\al_{\vt;i}=\al_{\vt(i,1)}\!\otimes\!\ldots\!\otimes\!\al_{\vt(i,k_i)}
\in H^*(X)^{\otimes k_i}\, \qquad i=1,2.$$
For $L_i\!:H^*(X)^{\otimes k_i}\lra H_*(\wt\M_{\chi_i,k_i+\ell(\bs_i)}\!\times\!V_{\bs_i})$
with $i\!=\!1,2$, we~define 
\begin{gather*}
L_1\cdot L_2\!:  H^*(X)^{\otimes(k_1+k_2)}\lra H_*(\wt\M\!\times\!V_{\i})
\qquad\hbox{by}\\
\{L_1\cdot L_2\}(\al)
=\sum_{\vt} \vt_*\big(L_1(\al_{\vt;1})\!\otimes\!L_2(\al_{\vt;2})\big)
\quad\forall\,\al\!\in\!H^*(X)^{\otimes(k_1+k_2)},
\end{gather*}
where the sum is taken over all rules of assignment~$\vt$ satisfying~\eref{vtcond_e}.
Combining our definitions of the GW/GT generating functions with this definition,
we obtain~\eref{GTexpGW_e}.\\

\noindent
The relative invariants of \cite{IPrel, IPsum} are refinements of 
the usual relative invariants and take values in the coverings~$\cH_{X;\bs}^V$
and~$\cH_{Y;\bs}^V$ of~$V_{\bs}$ described in \cite[Section~6.1]{GWrelIP},
instead of~$V_{\bs}$.
Their use causes additional difficulty with exponentiating the GW-invariants, 
even in the case of primary constraints, since one must also specify a product
\BE{Hlift_e1}H_*(\cH_{X;\bs_1}^V)\otimes H_*(\cH_{X;\bs_2}^V)\lra H_*(\cH_{X;\bs_1\bs_2}^V)\EE
lifting the Kunneth product
\BE{Hlift_e2} H_*(V_{\bs_1})\otimes H_*(V_{\bs_2})\lra 
H_*(V_{\bs_1\bs_2})=H_*(V_{\bs_1}\!\times\!V_{\bs_2})\,.\EE
It is immediate from the definition of $\cH_{X;\bs}^V\!=\!\wh{V}_{X;\bs}$
in \cite[Section~6.1]{GWrelIP} and \cite[Lemma~79.1]{Mu} that 
the natural map 
$$V_{\bs_1} \times V_{\bs_2} \lra V_{\bs_1\bs_2}$$
lifts to a smooth map on the covering spaces.
Thus, a lift~\eref{Hlift_e1} of~\eref{Hlift_e2} exists, but it is not unique.
Choosing such a lift again requires fixing base points in various spaces; 
see \cite[Remark~6.7]{GWrelIP}.\\

\noindent
Another notable feature of the symplectic sum formulas in~\cite{IPsum} is the presence 
of the so-called $S$-matrix, which is shown to be trivial in many cases.
As we explain in Section~\ref{Smat_subs}, it appears due to an oversight in
\cite[Section~12]{IPsum} and its action is always trivial, essentially due to
the nature of this oversight.\\

\noindent
While it is not stated in the assumptions for \cite[(0.2),(10.14),(12.17)]{IPsum},
the arguments for these formulas in~\cite{IPsum} are restricted to the cases
when $(X\!\#_V\!Y,\om_{\#})$, $(X,\om_X,V)$, and~$(Y,\om_Y,V)$ satisfy suitable 
positivity conditions.
By Section~\ref{RelInv_subs}, these conditions are
\begin{enumerate}[label=(\arabic*),leftmargin=*]
\setcounter{enumi}{-1}
\item\label{sumcond_it} $(X\!\#_V\!Y,\om_{\#})$ is strongly semi-positive;
\item\label{divcond_it} $(V,\om_X|_V)\!=\!(V,\om_Y|_V)$ is semi-positive;
\item\label{relcond_it} $(X,\om_X,V)$ and $(Y,\om_Y,V)$ are strongly semi-positive.
\end{enumerate}
Condition~\ref{sumcond_it} is not implied by the other two conditions in general.
However, it can still be ignored, since it holds when restricted to the classes
$A\!\in\!\pi_2(X\!\#_V\!Y)$ which can be represented by $J_{\#}$-holomorphic curves
for an almost complex structure~$J_{\#}$ induced by generic almost complex 
structures~$J_X$ on~$(X,V)$ and~$J_Y$ on~$(Y,V)$ via the symplectic sum construction
of Section~\ref{SympSum_subs}, i.e.~an almost complex structure~$J_{\#}$ 
of the kind considered in~\cite{IPsum}; see the second identity in~\eref{c1Zla_e}.
In light of~\eref{cNVcond_e}, Condition~\ref{relcond_it} implies Condition~\ref{divcond_it}.
Thus, the setting in~\cite{IPsum} is directly applicable whenever 
Condition~\ref{relcond_it} is satisfied.

\begin{rmk}\label{GWGT3_rmk}
The meaning of $C_{\bs,\bfI}$ in \cite[(A.2)]{IPsum} is not specified.
The entire collection $\{C_{\bs,\bfI}\}$, over all pairs $(\bs,\bfI)$ of tuples
of the same length, is described as a basis for the tensor algebra
on $\bN\!\times\!H^*(V)$, which is not even a vector space over~$\R$,
while the collection $\{C_{\bs,\bfI}^{\vee}\}$ is described as the dual basis.
In fact, $\{C_{\bs,\bfI}\}$ and $\{C_{\bs,\bfI}^{\vee}\}$ are bases~for
$$\bigoplus_{\ell=0}^{\i}\bigoplus_{\bs\in(\Z^+)^{\ell}}\!\!\!H^*(V_{\bs})
\qquad\hbox{and}\qquad
H_*(V_{\i})=\bigoplus_{\ell=0}^{\i}\bigoplus_{\bs\in(\Z^+)^{\ell}}\!\!\!H_*(V_{\bs}),$$
respectively; these two vector spaces are not duals of each other.
The summation indices in \cite[(A.3)]{IPsum} are described incorrectly 
and the two appearances of~$\ov\M$ in this paragraph refer to~$\wt\M$.
The description of the number $\GT_{\chi,A}^{X,V}(\ka,\al;C_{\bs,\bfI})$ is incorrect,
even with the proper normalizations of the relevant power series,
since the $j$-th relative marked point should be mapped to a generic representative
for~$\PD_V\al_{i_j}$, not for~$\PD_V\al_j$, and these representatives should be different
for $j_1\!\neq\!j_2$, even if $i_{j_1}\!=\!i_{j_2}$.
Other, fairly minor misstatements in the related parts of~\cite{IPsum} include:
\begin{enumerate}[label={},topsep=-5pt,itemsep=-5pt,leftmargin=*]
\item \textsf{p935, middle:} the finiteness holds only under ideal circumstances;
\item \textsf{p938, top:} $x(z)$ and $y(w)$ are expansions in the normal directions to~$V$
as explained in Section~\ref{IPconv_subs};
\item \textsf{p940, bottom:} $\T^*(Z)$ is defined only on~p944;
\item \textsf{p946, (1.17),(1.18):} $\M_{\chi,n,s}^V(X,A)$ and $\ov\M_{\chi,n,s}^V(X,A)$ 
refer to disconnected domains here;
\item\textsf{p946, after (1.17):} each unstable~$\P^1$ needs to have at least one
marked point to insure compactness;
$\chi$ is twice the holomorphic Euler characteristic, not the usual EC;
\item\textsf{p994, line 13:} the domain of $g$ is the union of these $\De_s$;
\item\textsf{p994, line 15:} $\cup$ \to $\cap$; this defines LHS;
\item\textsf{p994, line 19:} $Q_{p,q}^V$ needs to be the inverse of the intersection
form for the first equality;
\item\textsf{p994, line 21:} (A.4) is not a basis for $H^*(V_{\i})$; neither is (A.2);
\item\textsf{p994, (10.7):} last product does not make sense with conventions as on~p1023;
\item\textsf{p996, (10.12):} $\oplus$ \to $\otimes$;
\item\textsf{p997, line 9:} $(\al_X,\al_Y)$ \to $\al$;
\item\textsf{p997, (10.15):} $\GT_Z(\al_X,\al_Y)$ \to $\GT_Z(\al)$;
\item\textsf{p997, line -5, and p998, line 2:}
$\GT_{\chi,A,Z}(\al_X,\al_Y)$ \to GT$_{Z,A,\chi}(\al)$;
\item\textsf{p997, line -4:} $\GT_{\chi_2,A_2,Y}^V(C_{\bm^*};\al_Y)$ \to
$\GT_{Y,A_2,\chi_2}^V(\al_Y;C_{\bm^*})$; 
\item\textsf{p998, line 1:} (A.6) also involves $\kappa$;
\item\textsf{p998, line 3:} it is unclear how the relative constraints enter in
the notation;
\item\textsf{p1024, (A.6):} $g$ \to $\chi$; same on line~6 (twice).\\
\end{enumerate}
\end{rmk}

\noindent
The intended symplectic sum formula for primary invariants in~\cite{LR}, 
i.e.~as in Theorem~\ref{main_thm} in this paper,
is split between equations~(5.4), (5.7), and~(5.9).
The first of these is vague on the set~$\cC_{g,m}^{J,[A]}$ indexing the summands,
while the last is vague on the relation between~$\al$ and~$\al^{\pm}$.
The key set~$\cC_{g,m}^{J,[A]}$ is independent of~$J$, but is generally infinite, 
contrary to \cite[Lemma~5.4]{LR}, in part because its elements are not restricted
to the classes that can be represented by $J$-holomorphic maps.
Taken together, the three formulas are at least missing the factor of $\ell({\bf k})!$
in the denominator corresponding to the reorderings of the contact points.\\

\noindent
The symplectic sum formulas in~\cite{Jun2}, in the bottom half of page~201, 
involve triples $\Ga$
{\it consisting of the genus, the number of marked points and
the degree of the stable morphisms}; see \cite[p200, middle]{Jun2}).
This is written as $\Ga=(g,k,A)$ at the bottom of page~200, suggesting that~$A$
is a second homology class. 
The degree becomes~$d$ at the bottom of page~202, suggesting that $d\!\in\!\Z$ is 
the degree with respect to some  ample line bundle,
as at the bottom of page~547 in~\cite{Jun1};
the line bundle is finally mentioned as being implicitly chosen at the bottom of 
page~226 in~\cite{Jun2}.
On the other hand, $A$ becomes $b$ at the top of page~215, 
suggesting again that this is a second homology class,
as in the middle of page~512 in~\cite{Jun1}.
The correct interpretation of~$A$ for the purposes of these formulas is 
that it is the degree with respect to an ample line bundle~$\cL$ over 
the total space $\cZ\!\lra\!\De$. 
Thus, the set~$\cR_{X,Y}^V$ in~\eref{GTabsser_e2} is essentially replaced
in~\cite{Jun2} by the (generally) larger subset of  second homology classes
of $X\!\#_V\!Y$ vanishing on the first chern class of~$\cL$.
Different ample line bundles~$\cL$ give different formulas;
so effectively, the approach of~\cite{Jun2} replaces~$\cR_{X,Y}^V$ in~\eref{GTabsser_e2}
by the set of  second homology classes
of $X\!\#_V\!Y$ vanishing on the first chern classes of all ample line bundles~$\cL$.
The last set can still be larger than~$\cR_{X,Y}^V$,
since the chern class of every ample line bundle vanishes over torsion classes.
Thus, the numerical decomposition formula for primary invariants on 
page~201 of~\cite{Jun2} is weaker than
Theorem~\ref{main_thm}, even when restricting to the algebraic category.
This weakness is fully addressed in~\cite{AF}, according to the authors.\\

\noindent
The analogue of~\eref{SympSumForm_e2} in~\cite{Jun2} is 
an immediate consequence of the decomposition formula for 
virtual fundamental classes (VFCs) at the bottom of page~201 in~\cite{Jun2}.
The latter requires constructing a VFC for 
(absolute) stable maps to the singular target $X\!\cup_V\!Y$, 
showing that it equals to the VFCs for stable maps to~$X\!\#_V\!Y$
in a suitable sense (a priori they lie in homology groups of different spaces),
and decomposing the former into VFCs for relative maps into~$(X,V)$ and~$(Y,V)$.
The last step in particular is not even a priori intuitive because 
the stable maps into $X\!\cup_V\!Y$ generally do not split uniquely into
relative maps to~$(X,V)$ and to~$(Y,V)$; see Section~\ref{RelInv_sub0b}.
As pointed out in \cite[Remark~3.2.11]{AF}, the constructions in \cite{Jun1,Jun2} involve
some delicate issues;
these are further elaborated on in \cite{GS,Chen}.\\

\noindent
The argument in~\cite{LR} considers only primary insertions, as in Theorem~\ref{main_thm}, 
while the argument in~\cite{IPsum} considers only primary insertions and constraints
that are pulled back from the Deligne-Mumford space, as in Theorem~\ref{main2_thm}.
There are brief statements in both papers that the arguments apply to descendants 
($\psi$-classes),
but neither paper contains a symplectic sum formula involving descendants.
As illustrated by the appearance of rules of assignment in the symplectic sum formula
in~\cite{Jun2}, stating such a formula requires  a bit of care.
Furthermore, descendants do not even fit with the approach in \cite{IPrel,IPsum},
as it is based on defining invariants by intersecting with classes in~$X^k$ and 
the Deligne-Mumford space (such intersections do not directly cover the $\psi$-classes).

\subsection{Refining the symplectic sum formula}
\label{RefSymSum_subs}

\noindent
We now describe two refinements to the usual symplectic sum formula suggested 
in~\cite{IPsum}.
The first one concerns differentiating between GW-invariants of $X\!\#_V\!Y$ in classes
differing by vanishing cycles.
It works partially at least on the conceptual level and can sometimes be used to obtain
qualitative information about GW-invariants of $X\!\#_V\!Y$;
see \cite[Theorems~1.1,4.9]{GWsumIP}.
The second suggestion aims to replace $q_{\#}^*\al$ on the left-hand sides of~\eref{SympSumForm_e}
and~\eref{SympSumForm_e2} by arbitrary cohomology insertions from $X\!\#_V\!Y$;
this suggestion does not appear to make sense at~all.\\

\noindent
An unfortunate deficiency of the symplectic sum formulas of 
Theorems~\ref{main_thm} and~\ref{main2_thm} is that generally they describe 
combinations of GW-invariants, rather than individual GW-invariants, 
of a symplectic sum $(X\!\#_V\!Y,\om_{\#})$ of~$(X,\om_X)$ and~$(Y,\om_Y)$
in terms~of relative GW-invariants of~$(X,\om_X,V)$ and~$(Y,\om_Y,V)$. 
The aim of the rim tori refinement of~\cite{IPrel} of the usual relative invariants 
is to resolve this deficiency in~\cite{IPsum}.\\

\noindent
With  $\De_{\bs}^V\!\subset\!V_{\bs}\!\times\!V_{\bs}$ denoting the diagonal, 
let
$$\wh{V}_{X,Y;\bs}= \cH_{X;\bs}^V\times_{V_{\bs}}\cH_{Y;\bs}^V
\equiv
\big\{\pi_{X;\bs}^V\!\times\!\pi_{Y;\bs}^V\big\}^{-1}\big(\De_{\bs}^V\big).$$
Given an element $(A_X,A_Y)$ of~\eref{H2XYV_e}, define 
\BE{Xdiagdfn_e}\begin{split}
\wt\M_{\chi_X,k_X;\bs}^V(X,A_X)\times_{V_{\bs}}
\wt\M_{\chi_Y,k_Y;\bs}^V(Y,A_Y)
&=\big\{\wt\ev_{X;V}\!\times\!\wt\ev_{Y;V}\big\}^{-1}\big(\wh{V}_{X,Y;\bs}\big)\\
&=\big\{\ev_{X;V}\!\times\!\ev_{Y;V}\big\}^{-1}\big(\De_{\bs}^V\big)\,,
\end{split}\EE
with $\wt\ev_X^V$ and $\wt\ev_Y^V$ as in~\eref{evXVlift_e}.
The idea of~\cite{IPsum} is that there is a continuous map
\BE{IPdegmap_e} g_{A_X,A_Y}\!:\wh{V}_{X,Y;\bs}\lra A_X\!\#_V\!A_Y\subset H_2(X\!\#_V\!Y;\Z)\EE
such that its composition with the restriction of
\BE{LfProdEv_e}\wt\ev_X^V\!\times\!\wt\ev_Y^V\!: 
\wt\M_{\chi_X,k_X;\bs}^V(X,A_X)\!\times\!\wt\M_{\chi_Y,k_Y;\bs}^V(Y,A_Y)   
\lra  \wh{V}_{X;\bs}\!\times\! \wh{V}_{Y;\bs}\EE
to the subspace~\eref{Xdiagdfn_e} is the homology degree of the glued map into~$X\!\#_V\!Y$;
see \cite[Figure~2]{GWrelIP}.
By \cite[Proposition~4.2]{GWsumIP}, such a map~\eref{IPdegmap_e} indeed exists 
if the lifted evaluation maps~\eref{evXVlift_e} are chosen systematically
in the sense of \cite[Theorem~6.5]{GWrelIP}.
It again depends on the choices of base points in certain spaces.\\

\noindent
By the previous paragraph, the space of maps into $X\!\cup_V\!\!Y$ contributing to
the GW-invariant of $X\!\#_V\!Y$ of a degree $A\!\in\!A_X\!\#_V\!A_Y$
is the preimage of 
\BE{cHXYVC_e}\wh{V}_{X,Y;\bs}^A\equiv g_{A_X,A_Y}^{-1}(A)\EE
under the morphism~\eref{LfProdEv_e}.
Each $\wh{V}_{X,Y;\bs}^A$ is a closed oriented submanifold of $\wh{V}_{X;\bs}\!\times\!\wh{V}_{Y;\bs}$
and determines an intersection homomorphism and thus a class 
\BE{PDcXY_e} \PD_{X,Y;\bs}^{V,A}\De\in 
H^*\big(\wh{V}_{X;\bs}\!\times\!\wh{V}_{Y;\bs};\Q\big),\EE 
as suggested by \cite[Definition~10.2]{IPsum}; see \cite[Section~3.1]{GWsumIP}.\\

\noindent
The intersection product $\cdot_{V_{\bs}}$ in~\eref{bspairing_e} is equivalent 
to intersecting $Z_X\!\times\!Z_Y$ with $\De_{\bs}^V$ in $V_{\bs}\!\times\!V_{\bs}$.
Replacing this intersection in~\eref{thpair_e}
by the intersection with the closed submanifold~\eref{cHXYVC_e}, we obtain a pairing 
\BE{thpair_e2}\begin{split}
\wt\star_{A;\vt}\!:\Hom\big(\T^*(X),H_*(\wt\M\!\times\!\wh{V}_X)\big)
&\otimes \Hom\big(\T^*(Y),H_*(\wt\M\!\times\!\wh{V}_Y)\big)\\ 
&\lra
\Hom\big(\T^*(X)\!\otimes\!\T^*(Y),H_*(\wt\M)\big)\big[\la^{-1}\big],
\end{split}\EE
where 
$$\wh{V}_X=\bigsqcup_{\ell=0}^{\i}\bigsqcup_{\bs\in(\Z^+)^{\ell}}\!\!\!\!\!\wh{V}_{X;\bs},
\qquad
\wh{V}_Y=\bigsqcup_{\ell=0}^{\i}\bigsqcup_{\bs\in(\Z^+)^{\ell}}\!\!\!\!\!\wh{V}_{Y;\bs}\,.$$
We extend this pairing as in~\eref{bspairing_e2}, replacing $t_{A_X\#_V A_Y}$
by~$t_A$, and denote the result by~$\wt\star_{\vt}$.
The same gluing/deformation arguments that yield~\eref{SympSumForm_e2} then give
\BE{SympSumForm_e3}
\wt\GT^{X\#_VY}(q_{\#}^*\al) =\sum_{\vt\in\RA}
\big\{\wt\GT^{X,V} \wt\star_{\vt}\wt\GT^{Y,V}\big\}\big( (q_{\sqcup}^*\al)_{\vt}\big)
\qquad\forall\,\al\!\in\!\T^*(X\!\cup_V\!Y),\EE
where
$$\wt\GT^{X\#_VY}=\sum_{\chi\in\Z}\sum_{A\in H_2(X\!\#_V\!Y;\Z)} \!\!\!\!\!\!\!\!\!\!\!
\GT^{X\#_VY}_{\chi,A}t_A\la^{\chi}, \quad
\wt\GT^{M,V}=\sum_{\chi\in\Z}\sum_{A\in H_2(M;\Z)}
\sum_{\ell=0}^{\i}\sum_{\begin{subarray}{c}\bs\in(\Z^+)^{\ell}\\ |\bs|=A\cdot_MV\end{subarray}}
\!\!\!\!\!\!\wt\GT_{\chi,A;\bs}^{M,V}\,t_A\la^{\chi},$$
and 
$$\wt\GT_{\chi,A;\bs}^{M,V}\!: \T^*(M)\lra H_*(\wt\M\!\times\!\wh{V}_{M;\bs}), \quad
\GT_{\chi,A;\bs}^{X,V}(\al)=\sum_{k=0}^{\i}\{\st\!\times\!\wt\ev_M^V\}_*
\big(\ev^*\al\!\cap\!\big[\wt\M_{\chi,k;\bs}^V(M,A)\big]^{\vir}\big).$$\\

\noindent
The formulas~\eref{SympSumForm_e2} and~\eref{SympSumForm_e3} appear to
express the GW-invariants of $(X\!\#_V\!Y,\om_{\#})$ in terms
of the GW-invariants of~$(X,V)$ and~$(Y,V)$.
The intersection product in~\eref{SympSumForm_e2} corresponds to pulling back
$\PD_{V_{\bs}^2}\De_{\bs}^V$ by the morphism
$$\ev_X^V\!\times\!\ev_Y^V\!: 
\wt\M_{\chi_X,k_X;\bs}^V(X,A_X)\!\times\!\wt\M_{\chi_Y,k_Y}^V(Y,A_Y)   
\lra  V_{\bs}\!\times\! V_{\bs}$$
and capping the result with the virtual fundamental class of the domain.
Since $H_*(V_{\bs};\Q)$ is finitely generated,
\BE{KunnDecomp_e} \PD_{\bs}^V\De=\sum_{i=1}^N\ka_{X;i}\!\otimes\!\ka_{Y;i}\in H^{(n-1)\ell}(V_{\bs}^2;\Q)\EE
for some $\ka_{X;i},\ka_{Y;i}\!\in\!H^*(V_{\bs};\Q)$;
see \cite[Theorem~60.6]{Mu2}.
Thus, the coefficients on the right-hand side of~\eref{SympSumForm_e2} decompose into
products of the relative GW-invariants of $(X,V)$ and~$(Y,V)$.\\

\noindent
The intersection product in~\eref{SympSumForm_e3} similarly corresponds to pulling back
the class~\eref{PDcXY_e} by the lifted morphism~\eref{LfProdEv_e}.
If the $\Q$-homology of either $\wh{V}_{X;\bs}$ or $\wh{V}_{Y;\bs}$ is finitely generated,
then
\BE{KunnDecomp_e2}
\PD_{X,Y;\bs}^{V,A}\De=\sum_{i=1}^N\wt\ka_{X;i}\!\otimes\!\wt\ka_{Y;i}\in 
H^{(n-1)\ell}(\wh{V}_{X;\bs}\!\times\!\wh{V}_{Y;\bs};\Q)\EE
for some $\wt\ka_{X;i}\!\in\!H^*(\wh{V}_{X;\bs};\Q)$ and $\wt\ka_{Y;i}\!\in\!H^*(\wh{V}_{Y;\bs};\Q)$;
see \cite[Theorem~60.6]{Mu2}.
This is also the case if the submodule~$\cR_{X,Y}^V$ of $H_1(X\!\#_V\!Y;\Z)$ is finite;
see \cite[Corollary~4.3]{GWsumIP}.
In such cases, the approach of~\cite{IPsum} provides a refined 
{\it decomposition} formula for GW-invariants of $X\!\#_V\!Y$ in terms of IP-counts
for $(X,V)$ and~$(Y,V)$.
However, in general  the homologies of $\wh{V}_{X;\bs}$ and $\wh{V}_{Y;\bs}$ are not finitely generated 
and a Kunneth decomposition~\eref{KunnDecomp_e2} need not exist; see 
\cite[Example~3.7]{GWsumIP}.
In these cases, the approach of~\cite{IPsum} does not provide a decomposition formula 
for GW-invariants of $X\!\#_V\!Y$ in terms of any kind of invariants of~$(X,V)$ and~$(Y,V)$.

\begin{rmk}\label{RimTori_rmk}
The map \cite[(3.10)]{IPsum} is not specified.
The typos in the related part of \cite[Section~10]{IPsum} include
\begin{enumerate}[label={},topsep=-5pt,itemsep=-5pt,leftmargin=*]
\item \textsf{p992, after (10.5):} (10.5) already involves disconnected domains;
\item \textsf{p993, line 13:} $\cH_Y^V\times\cH_Y^V$ \to  $\cH_X^V\times\cH_Y^V$;
\item \textsf{p994, line 7:} $\sum\sum$ \to $\bigoplus\bigoplus$.\\
\end{enumerate}
\end{rmk}

\noindent
The stated symplectic sum formulas of \cite{Jun2,IPsum,LR} involve 
cohomology insertions of the form $q_{\#}^*\al$, as in Theorem~\ref{main2_thm}.
Arbitrary cohomology insertions are considered in \cite[Section~13]{IPsum},
as follows.
Let $\wh{X}$ be the compactification of $X\!-\!V$ obtained by removing
an open tubular neighborhood of~$V$ or equivalently by replacing~$V$
with~$SV$ in~$X$; see \cite[Section~5]{IPrel} and 
the end of Section~\ref{SympCut_subs}.
Let
$$q\!:(\wh{X},\prt\wh{X})\lra (X,V)$$
be the natural projection map.
Given a pseudocycle representative \hbox{$\phi\!:(P,\prt P)\!\lra\!(\wh{X},SV)$}
for a class $B\!\in\!H_*(\wh{X},\prt\wh{X})$ and $i\!=\!1,\ldots,k$, let
$$\wt\M_{\chi,k;\bs}(X,A)\!\times_i\!\phi
=\big\{(u,x)\!\in\!\wt\M_{\chi,k;\bs}(X,A)\!\times\!P\!:\,
\ev_i(u)\!=\!q(\phi(x))\big\}\,.$$
Intersecting with other pseudocycle representatives in a similar way,
we obtain a virtual orbifold with boundary and evaluation map~$\ev^V$ to~$V_{\bs}$,
which can then be used to define ``extended" relative counts for~$(X,V)$.\\

\noindent
In general, these counts depend on the choice of the almost complex structure~$J$,
deformation~$\nu$, and the representatives~$\phi$ for classes~$B$.
By \cite[Lemma~13.1]{IPsum}, this dependence disappears whenever
\BE{Bclass_e} \prt B\in\ker\big\{q_{V*}\!: H_{*-1}(SV)\lra H_{*-1}(V)\big\}\,.\EE
By \cite[Corollary~4.12]{GWrelIP}, these are precisely the cases obtained
from cutting a Poincare dual in $X\!\#_V\!Y$ of a cohomology insertion as
in Theorem~\ref{main2_thm}.
The dependence on the representative~$\phi$ for~$B$, but not on~$(J,\nu)$, 
is analyzed in \cite[Lemma~13.2]{IPsum}.
Unfortunately, the intended meaning of \cite[(13.4)]{IPsum} is unclear: 
it involves~$\GT_F^{VV}(\phi')$, which is not defined, as well as 
some convolution product of $\GT_X^V$ and~$\GT_F^{VV}(\phi')$;
no proof of this lemma is provided either.
The intention of \cite[Lemma~13.2]{IPsum}
is to extend the definition of relative invariants 
to homology insertions $B\!\in\!H_*(\wh{X},\prt\wh{X})$ by defining such numbers
for a fixed $J$, $\nu$, and~$\phi$ and then to use them in an extended symplectic 
sum formula, which is not stated.
Even if this were possible to~do, it is not apparent that the resulting relative ``invariants'' 
could be readily computed, especially given their dependence on $J$, $\nu$, and~$\phi$;
so such an extended symplectic sum formula may not be  useful.

\begin{rmk}\label{chap13_rmk}
The definition of $\wt\M_{\chi,k;\bs}(X,A)\!\times_i\!\phi$ in 
the displayed equation above \cite[(13.1)]{IPsum} as an intersection does not make
sense, since the two sets being intersected lie in different spaces.
By \cite[Lemma~3.1]{GWrelIP}, every class~$B$ as in~\eref{Bclass_e}
is the boundary of a pseudocycle into a closed tubular neighborhood of~$V$ in~$X$.
Thus, for such a class~$B$, the cut-down moduli space $\wt\M_{\chi,k;\bs}(X,A)\!\times_i\!\phi$
is a boundary as well.
This implies that the $\GT$-invariants for classes $B\!\in\!H_*(X\!-\!V)$ 
depend only on their images in~$H_*(X)$,
contrary to the suggestion at the top of \cite[p1006]{IPsum}.
The conclusion after \cite[Lemma~13.2]{IPsum} is that extended relative invariants can
be defined by choosing pseudocycle representative~$\phi_{\be}$ as above for each 
$$\be\in\ker\big\{H_{*-1}(SV)\lra H_{*-1}(X)\big\}$$
such that $[\prt\phi]\!=\!\be$; as just indicated, this would not provide any additional
information.
If the intended meaning in~\cite{IPsum} were to fix a representative~$\phi_B$
for each $B\!\in\!H_*(\hat{X},X)$ as in~\eref{Bclass_e},
this would still cover only the insertions of Theorem~\ref{main2_thm}.
Other, fairly minor misstatements in \cite[Section~13]{IPsum} include
\begin{enumerate}[label={},topsep=-5pt,itemsep=-5pt,leftmargin=*]
\item \textsf{p1005, Section 13, line 3:} constraints not of the form 
$q_{\#}^*\al_{\cup}$ as in \cite[Corollary~4.12]{GWrelIP};
\item \textsf{p1005, Section 13, line 15:} $g$ \to $f$;
\item \textsf{p1005, Section 13, line 17:} $\phi\!:P\lra\wh{X}$;
\item \textsf{p1006, line 9:} real codimension one in cut-down of (13.2);
\item \textsf{p1006, Lemma~13.1, line 1:} $\GT_{X,A,\bs}^V(\phi)$ in
\cite[(13.1)]{IPsum} is not a number;
\item \textsf{p1006, Lemma~13.2, line 2:} $\PD$ is not defined;
\item \textsf{p1006, Lemma~13.2, line 5; p1006, line -2}: $H_*(X,V)$ \to $H_*(\wh{X},\prt\wh{X})$.
\end{enumerate}
\end{rmk}

\section{On the proof of Theorem~\ref{main2_thm}}
\label{sumpf_subs}

\noindent
The analytic steps needed to establish Theorem~\ref{main2_thm} 
can be roughly split into four parts:
\'a priori estimates on convergence and on stable maps to~$X\!\cup_V\!Y$,
a pregluing construction, uniform elliptic estimates,  and a gluing construction;
we review them below.
While some statements in~\cite{IPsum} implicitly assume suitable positivity
conditions on~$(X\!\#_V\!Y,\om_{\#})$, $(X,V)$, and~$(Y,V)$,
the approach described in~\cite{IPsum} to comparing numerical GW-invariants 
should fit with all natural VFC constructions, such as in~\cite{FO,LT}, 
once they are shown to apply to relative invariants.
However, the analytic issues required for constructing and comparing 
the relevant VFCs  appear to be much harder to deal with
in the approach of~\cite{IPsum} than of~\cite{LR}.

\subsection{\'A priori estimates: \cite[Sections 3-5]{IPsum}, \cite[Section 3.1]{LR}}
\label{IPconv_subs}

\noindent
Let $V\!\subset\!X$ be a submanifold of real codimension two and
$J$ be an almost complex structure on~$X$ such that $J(TV)\!=\!TV$.
Suppose $(\Si,\fj)$ is a smooth Riemann surface,
$$\nu\in\Ga\big(\Si\!\times\!X,T^*\Si^{0,1}\!\otimes_{\C}\!TX\big)
\qquad\hbox{s.t.}\quad
\nu|_{\Si\times V}\in\Ga\big(\Si\!\times\!V,T^*\Si^{0,1}\!\otimes_{\C}\!TV\big),$$
and $z$ is a complex coordinate on a neighborhood~$\Si_{u;z_0}$ of~$z_0$ with $z(z_0)\!=\!0$.
Let $u\!:\Si\!\lra\!X$ be a smooth map such~that $u^{-1}(V)\!=\!\{z_0\}$ and
$$\dbar_{J,\fj}u|_z\equiv\frac12\big(\nd_z u+J(u(z))\circ\nd_zu\circ\fj_z\big)
=\nu\big(z,u(z)\big) \qquad\forall~z\in\Si.$$
If $\cN_XV|_{W_{u(z_0)}}\!\approx\!W_{u(z_0)}\!\times\!\cN_XV|_{u(z_0)}$ 
is a trivialization of~$\cN_XV$ 
over a neighborhood~$W_{u(z_0)}$ of~$u(z_0)$ in~$V$, then there exist
\begin{enumerate}[label=$\bullet$,leftmargin=*]
\item a neighborhood~$\Si_{u;z_0}'$ of~$z_0$ in~$u^{-1}(\cN_XV|_{W_{u(z_0)}})\!\cap\!\Si_{u;z_0}$ and
\item $\Phi\!\in\!L_1^p(\Si_{u;z_0}';\cN_XV|_{u(z_0)}\!-\!0)$, for any $p\!>\!2$, such that 
\BE{FHS_e} 
\pi_2(u(z))=\Phi(z) z^{\ord_{z_0}^V(u)}~~\forall\,z\!\in\!\Si_{u;z_0}'\,;\EE
\end{enumerate}
see  \cite[Theorem~2.2]{FHS}.\\

\noindent
Let $\pi\!: \cZ\!\lra\!\De$ be a symplectic fibration associated with 
the symplectic sum $(X\!\#_V\!Y,\om_{\#})$ as in Proposition~\ref{SymSum_prp}
and $J_{\cZ}$ be an $\om_{\cZ}$-compatible almost complex structure on~$\cZ$
as before.
By Gromov's Compactness Theorem \cite[Proposition~3.1]{RT1},
a sequence of $(J_{\cZ},\fj_k)$-holomorphic maps 
$u_k\!:\Si\!\lra\!\cZ_{\la_k}$, with $\la_k\!\in\!\De^*$ and $\la_k\!\lra\!0$,
has a subsequence converging to a $(J_{\cZ},\fj)$-holomorphic map
$u\!:\Si'\!\lra\!\cZ_0$.
By~the previous paragraph, 
$$\Si'=\Si_X'\cup\Si_V'\cup\Si_Y'\,,$$
where $\Si_V'$ is the union of irreducible components of~$\Si'$ mapped into~$V$,
$\Si_X'$ is the union of irreducible components mapped into~$X\!-\!V$ outside
of finitely many points $x_1,\ldots,x_{\ell}$, and
$\Si_Y'$ is the union of irreducible components mapped into~$Y\!-\!V$ outside
of finitely many points $x_1',\ldots,x_{\ell'}'$.
By \cite[Lemma~3.3]{IPsum}, which is the main statement of \cite[Section~3]{IPsum},
if $\Si_V'\!=\!\eset$, then $\ell\!=\!\ell'$ and 
\BE{}\big(\ord_{x_1'}^Vu,u(x_1')\big)=\big(\ord_{x_{\tau(1)}}^Vu,u(x_{\tau(1)})\big),
\quad\ldots\quad
\big(\ord_{x_{\ell}'}^Vu,u(x_{\ell}')\big)=\big(\ord_{x_{\tau(\ell)}}^Vu,u(x_{\tau(\ell)})\big)\EE
for some automorphism $\tau\!\in\!S_{\ell}$ of $\{1,\ldots,\ell\}$.
This conclusion also holds for sequences of $(J_{\cZ},\nu)$-holomorphic maps
with 
$$\nu|_V\in \Ga_{g,k}(V,J_{\cZ}|_V),$$
similarly to~\eref{nuVrestr_E}.

\begin{rmk}\label{IPsumSec3_rmk}
The expansion \cite[(5.5)]{IPsum}, based on \cite[Lemma~3.4]{IPrel},
corresponds to $\Phi$ above being differentiable at $z\!=\!0$.
As can be seen from~\eref{FHS_e}, this is indeed the case if $u$ is smooth.
The proof of \cite[Lemma~3.3]{IPsum} is purely topological and applies to 
convergent sequences of continuous maps.
An explicit condition, called \textsf{$\de$-flatness}, insuring that 
$\Si_V'\!=\!\eset$ above is described in \cite[Section~3]{IPsum}.
Contrary to the suggestion after \cite[Definition~3.1]{IPsum},
the $\de$-flatness condition does not prevent the marked points from being sent
into~$V$ and thus a $\de$-flat $J$-holomorphic map into $\cZ_0$
need not be $V$-regular in the sense of \cite[Definition~4.1]{IPrel}.
Other, fairly minor misstatements in \cite[Section~3]{IPsum} include
\begin{enumerate}[label={},topsep=-5pt,itemsep=-5pt,leftmargin=*]
\item \textsf{p954, (3.5):} the limit is over $\la\neq0$;
\item \textsf{p954, line -12:} there is no \cite[Lemma~3.2]{IPrel};
\cite[Lemma~3.4]{IPrel} alone suffices;
\item \textsf{p955, Lemma 3.3, proof:} $f_k$ is an element of a sequence, but
$f_1,f_2$ are parts of a limiting~map;
\item \textsf{p956, lines 7-8}: stabilization does not fit with this map and
there is no need for it, since $\wt\M_{\chi,n}$ consists of curves with finitely 
many components, not necessarily stable ones, according to p946, line~-6;
\item \textsf{p957, (3.11); p957, (3.12):}
the fiber products should be quotiented by $S_{\ell(s)}$.\\
\end{enumerate}
\end{rmk}

\noindent
A node of the limiting map~$u$ as in \cite[Lemma~3.3]{IPsum} corresponds
to special points $z_0\!\in\!\Si_X'$ and $w_0\!\in\!\Si_Y'$ with 
$$u(z_0)=u(w_0)=q \qquad\hbox{and}\qquad \ord_{z_0}^Vu=\ord_{w_0}^Vu=s$$
for some $q\!\in\!V$ and $s\!\in\!\Z^+$.
A neighborhood of this node in the total space of a versal family of deformations of~$\Si'$
is given~by
\BE{VersDef_e} 
U=\big\{(\mu',\mu,z,w)\!\in\!\C^{\ell-1}\!\times\!\C^3\!:\,zw\!=\!\mu\big\},\EE
with $\Si'$ corresponding to $(\mu,\mu')\!=\!0$ and the node at $(z,w)\!=\!0$.
Let
$$U_{\mu',\mu;\ep}=
\big\{(\mu',\mu,z,w)\!\in\!U\!:\,|z|,|w|\!<\!\ep^{1/2}\big\}\,,\qquad
\vr(\mu',\mu,z,w)=\sqrt{|z|^2\!+\!|w|^2}\,.$$
Denote~by
$$x\!:\cN_XV|_{W_q}\lra \cN_XV|_q \qquad\hbox{and}\qquad
y\!:\cN_YV|_{W_q}\lra \cN_YV|_q$$
the projections induced by dual trivializations of~$\cN_X$ and~$\cN_Y$ over 
a neighborhood~$W_q$ of~$q$ in~$V$.
Below we will assume that $W_q$ is identified with a ball in~$\R^{2n}$ using geodesics from~$q$.\\

\noindent
Let $(\mu_k,\mu_k')\!\in\!\C\!\times\!\C^{\ell-1}$ be the parameters corresponding
to~$\Si_k$, the domain of~$u_k$.
For each $\ep\!<\!\ep_k$ such that $u_k(U_{\mu_k',\mu_k;\ep})\!\subset\!\cZ_{\neck}|_{W_q}$, 
let $\bar{u}_{k;\ep}^V\!\in\!W_q$ denote the average value of 
$\pi_V\!\circ\!u_k|_{U_{\mu_k',\mu_k;\ep}}$
with respect to the cylindrical metric on~$U_{\mu',\mu;\ep}$
and
$$\wt{u}_{k;\ep}^V(z)=\pi_V\!\circ\!u_k(z)-\bar{u}_{k;\ep}^V\in W_q
 \qquad\forall~z\!\in\!U_{\mu_k',\mu_k;\ep}.$$
Under the assumptions of the paragraph above Remark~\ref{IPsumSec3_rmk},
$$x(u_k(z))\cdot y(u_k(z))=\la_k \qquad\forall~z\!\in\!\Si_k\,.$$
By \cite[Theorem~2.2]{FHS}, 
\BE{NdExp_e}\lim_{z\lra0}\frac{x(u(z))}{az^s}=1 \qquad\hbox{and}\qquad 
\lim_{z\lra0}\frac{y(u(w))}{bw^s}=1\EE
for some $a\!\in\!\cN_XV|_q\!-\!0$ and $b\!\in\!\cN_YV|_q\!-\!0$.
By \cite[Lemma~5.3]{IPsum},
\BE{mula_e} \lim_{k\lra\i}\frac{\la_k}{ab\mu_k^s}=1.\EE
The factor of~$\lr\bs$ in~\eref{bspairing_e} is a reflection of this statement
and takes into account the number of solutions~$\mu_k$ of 
the equation $ab\mu_k^s\!=\!\la_k$ for a fixed~$\la_k$.
By \cite[Lemma~5.4]{IPsum},
\BE{NdExp_e2}\begin{split}
\int_{U_{\mu_k',\mu_k;\ep}}\!\!\!\!
\Big(|\wt{u}_{k;\ep}|^p\!+\!|\nd\wt{u}_{k;\ep}|^p
&+|\vr^{1-s}x\!\circ\!u_k|^p+|\vr^{1-s}\nd(x\!\circ\!u_k)|^p\\
&+|\vr^{1-s}y\!\circ\!u_k|^p+|\vr^{1-s}\nd(y\!\circ\!u_k)|^p\Big)\vr^{-p\de'}
\le C_p\ep^{p/3}
\end{split}\EE
for all $p\!\ge\!2$, $\de',\ep\!\in\!\R^+$ sufficiently small, $k\!\in\!\Z^+$
sufficiently large,
and for some $C_p\!\in\!\R^+$ (dependent of the sequence~$\{u_k\}$ only);
the norms in~\eref{NdExp_e2} are defined using the cylindrical metric on~$U_{\mu_k',\mu_k;\ep}$
and the metric~$g_{\cZ}$ on~$\cZ$.
Both statements, \eref{mula_e} and~\eref{NdExp_e2}, make use of \cite[Lemma~5.1]{IPsum},
which is a version of the standard exponential decay of the energy of a $J$-holomorphic map
in the ``middle" of a long cylinder; see \cite[Lemma~4.7.3]{MS2}.

\begin{rmk}\label{IPsumSec4_rmk}
The proof of \cite[Lemma~5.3]{IPsum} uses a complete metric 
on the universal curve $\cU_{g,n}$ over the moduli space~$\M_{g,n}$ of 
smooth $n$-marked genus~$g$ curves (with Prym structures) constructed in 
\cite[Section~4]{IPsum} by re-scaling a Kahler metric~$g_{\cU}$ on~$\ov\cU_{g,n}$ 
along the nodal strata~$\cN$.
The apparent, implicit intention is to take the metric~$g_{\cU}$ in \cite[(4.1)]{IPsum}
so that it satisfies \cite[(4.4)]{IPsum}.
As the various local metrics are patched together, 
the resulting global metric is not of the form \cite[(4.10)]{IPsum} everywhere near~$\cN$.
This section also does not yield a compactification of~$\M_{g,n}$ as described
in the last paragraph, because it is unclear how the different tori fit together and 
because \cite[(4.3)]{IPsum} describes the normal bundle to a certain immersion,
not to a submanifold of~$\ov\M_{g,n}$.
Even outside of the singular locus of the immersion, this normal bundle 
may not be biholomorphic to a neighborhood;
in particular, the construction described above \cite[Remark~4.1]{IPsum}
 need not extend outside of the open strata~$\cN_{\ell}$ of curves with 
 precisely~$\ell$ nodes.
For a related reason, 
the construction in this section does not lead to uniform estimates in the following
sections, only fiber-uniform ones,
contrary to a claim at the top of \cite[p960]{IPsum}.
The second sentence of  \cite[Remark~4.1]{IPsum} has no connection with the first.
However, none of these additional statements is necessary for the purposes of~\cite{IPsum}.
Other, fairly minor misstatements in \cite[Section~4]{IPsum} include
\begin{enumerate}[label={},topsep=-5pt,itemsep=-5pt,leftmargin=*]
\item\textsf{p958, line -2:} separated by a minimum distance {\it in each fiber};
\item\textsf{p959, line 1:} distance to the nodal set; this is not a smooth function;
\item\textsf{p959, above Remark 4.1:} all curves have Prym structures by assumption;
\item\textsf{p959, line -1:} $T_k$ \to $T_k(\mu)$;
\item\textsf{p960, line 1:} $T_k=\log(2/2\sqrt{|\mu_k|})$ \to
$T_k(\mu)=\frac12\cosh^{-1}(1/|\mu_k|)$;
\item\textsf{p960, line 10:} the restriction of (4.1) to a fiber agrees with (4.5);
\item\textsf{p960, line 14:} no change of constants needed given conformal invariance;
\item\textsf{p960, last 2 paragraphs:} issues similar to p960, line~1;
\item\textsf{p960, line -9:} the two fractions should be $\mu_k'/\mu_k$; 
the map is defined only near each~$\mu$;
\item\textsf{p960, line -3:} this is for $\ti{v}_k$ on each $B_k(2)$ and there should be no sum;
\item\textsf{p961, line 2:} $j$ restricted to each fiber;
\item\textsf{p961, line -1:} $\Re\,(d\mu_k)^2$ should not be here;
\item\textsf{p962, line 2:} distance between $\mu$ and $\mu'$ is the sum of the logs, but
$\mu_k=e^{t+i\th}$ and $\mu_k'=e^{t'+i\th'}$.
\end{enumerate}
\end{rmk}

\begin{rmk}\label{IPsumSec5_rmk}
The statement of \cite[Lemma~5.4]{IPsum} is not carefully formulated.
In particular, $\bar{v}_n$ ($\bar{u}_{k;\ep}$ in our notation above)
and $\na$ are not defined.
Based on the proof, the latter denotes a connection on~$\cZ$, not on~$\cZ_{\la}$.
Since  $\rho$ ($\vr$ in our notation) is bounded above, 
the intended statement of \cite[Lemma~5.4]{IPsum}
is equivalent to the $\de\!=\!1/3$ case;
this~$\de$ has no connection to the~$\de$ used to construct the symplectic sum~$(\cZ,\om_{\cZ})$
in \cite[Section~2]{IPsum}.
Other, fairly minor misstatements in \cite[Section~5]{IPsum} include
\begin{enumerate}[label={},topsep=-5pt,itemsep=-5pt,leftmargin=*]
\item\textsf{p962, Section 5, line 6:}
$C^{\infty}$-convergence on compact sets implies $L^{1,2}$ and 
$C^0$-convergence on the same sets; $L^{1,2}$ and 
$C^0$-convergence on entire domain does not make sense;
\item\textsf{p962, line -4:} in (3.11), $\cK_{\de}$ is contained in a different space;
\item\textsf{p963, line 1:} (4.4) \to (4.2);
\item\textsf{p963, lines 2-7:} $n$ here is $k$ in Section~4 and different from the subscript
in~$f_n$ on line~9 and the superscripts on line~12;
\item\textsf{p963, lines 7:} $|\log(2/2\sqrt{|\mu_n|})|$ \to
$\frac12\cosh^{-1}(1/|\mu_n|)$;
\item\textsf{p963, line -12:} graph of \to locus;
\item\textsf{pp963-964, Lemma 5.1:} $c_1\!=\!1$; $Z$\to$Z\!\times\!\ov\cU$;
$A_0\!=\![-T,T]\!\times\!S^1$; $\rho(t)^2\!=\!2|\mu_n|\cosh(2t)$, $C\!=\!\rho(T)^{-\frac23}$;
\item\textsf{p964, line 10:} $\dbar F$ \to $2\dbar F$; 
\item\textsf{p964, lines 11,12:} $J$ \to $\hat{J}$;
\item\textsf{p964, line 15:} $\dbar F\equiv F_t+iF_{\th}$ in the rest of the proof;
\item\textsf{p964, line -2:} $E(t)$ \to $E(F,t)$;
\item\textsf{p965, line 1:} $[-T,T]$ \to $[-T/2,T/2]$;
\item\textsf{p965, Definition 5.2:} inconsistency in the definition of $\hat{f}_n$;
$|\mu_n|$ \to $2|\mu_n|$;
\item\textsf{p966, line 6:} near, not along, $V$; $J\!-\!J_0$ is~$O(R)$;
\item\textsf{p966, line 7:} this long undisplayed expression has 3 typos,
and the first inequality need not hold;
\item\textsf{p966, (5.13):} $\nd x_n$ \to $\nd x$, twice;
\item\textsf{p966, line -8:} $\sqrt\mu$ \to $\sqrt{2|\mu|}$;
\item\textsf{p966, line -1:} $|\bar{G}_n(\sqrt{|\mu_n|})|$ \to $2|\bar{G}_n(\sqrt{|\mu_n|})|$;
\item\textsf{p967, line 10:} $[-T,T]$ \to $[-T_n,T_n]$;
\item\textsf{p967, Lemma~5.4, proof:} $G$ is $G_n$ of the proof of Lemma~5.3;
\item\textsf{p967, displayed equation after (5.17)} is not any of
 the CZ inequalities in the 190-page~[IS].
\end{enumerate}
\end{rmk}

\noindent
The analytic approach of~\cite{LR} is motivated by the SFT type constructions
of \cite{H,HWZ1} involving $J$-holomorphic curves on infinite ``cylinders".
Let $SV$, $\al$, and~$\oJ$ be as in Section~\ref{SympCut_subs} and
at the end of Sections~\ref{RelInv_sub0}.
For $\ell_1,\ell_2\!\in\!\R^+$ with $\ell_1\!<\!\ell_2$, 
denote by $\Phi_{\ell_1,\ell_2}$ the set of orientation-preserving 
diffeomorphisms $\phi\!:\R\!\lra\!(\ell_1,\ell_2)$.
For each $\phi\!\in\!\Phi_{\ell_1,\ell_2}$, 
$$\wt\om_{\phi}\equiv \pi^*\om_V+\nd(\phi\al)$$
is a closed two-form on $\R\!\times\!SV$;
it is symplectic and tames~$\oJ$ if $|\ell_1|,|\ell_2|$ are sufficiently small.
With such $\ell_1,\ell_2$ fixed,
for any $(\oJ,\fj)$-holomorphic map $u\!:\Si\!\lra\!\R\!\times\!SV$
from a (not necessarily compact) Riemann surface~$(\Si,\fj)$, let
\BE{LRener_e}E_{\ell_1,\ell_2}(u)=
\sup_{\phi\in\Phi_{\ell_1,\ell_2}}\int_{\Si}u^*\wt\om_{\phi}\,,
\qquad 
E_V(u)=\int_{\Si}u^*\pi^*\om_V;\EE
these numbers may not be finite.
Let $\bD\!\subset\C$ denote the closed unit ball and $\bD^*\!=\!\bD\!-\!\{0\}$.

\begin{lmm}[{\cite[Lemma~3.5]{LR}}]\label{LRconv_lmm1}
\begin{enumerate}[label=(\arabic*),leftmargin=*]
\item Let $u\!:\C\!\lra\!\R\!\times\!SV$ be a $\oJ$-holomorphic map
such that $E_{\ell_1,\ell_2}(u)$ is finite.
If $E_V(u)\!=\!0$, then $u$ is constant.
\item Let $u\!:\R\!\times\!S^1\!\lra\!\R\!\times\!SV$ be a $\oJ$-holomorphic map
such that $E_{\ell_1,\ell_2}(u)$ is finite.
If $E_V(u)\!=\!0$, then there exist $s\!\in\!\Z$, $r_0\!\in\!\R$, and
a  1-periodic orbit $\ga\!:S^1\!\lra\!SV$ of the Hamiltonian~$H$ 
such~that 
$$u\big(r,\ne^{\fI\th}\big)=\big(sr\!+\!r_0,\ga(\ne^{\fI s\th})\big) 
\qquad\forall~\big(r,\ne^{\fI\th}\big)\in \R\!\times\!S^1.$$
\end{enumerate}
\end{lmm}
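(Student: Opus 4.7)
The plan is to reduce both assertions to a purely complex-analytic statement about finite-energy holomorphic maps into~$\C^*$. First, since $\oJ$ preserves $\ker\al$, projects along $\nd\pi$ to~$J_V$, and sends $\partial_{a_\R}$ to the vertical vector field~$\ze_H$, the bundle projection $\pi\!:\R\!\times\!SV\!\lra\!V$ is $(\oJ,J_V)$-holomorphic. Hence $\pi\!\circ\!u$ is a $(J_V,\fj)$-holomorphic map with $\om_V$-energy $E_V(u)\!=\!0$, so it must be constant, say equal to some $x\!\in\!V$, and $u$ factors through the fiber $\R\!\times\!q_V^{-1}(x)$. Fixing a base point $v_x\!\in\!q_V^{-1}(x)$, the map $(r,\ne^{\fI\th}\!\cdot\!v_x)\!\llra\!\ne^{r+\fI\th}$ identifies this fiber with~$\C^*$; under this identification $\oJ\partial_r\!=\!\ze_H\!=\!\partial_\th$, so $\oJ$ becomes the standard complex structure on~$\C^*$, while $\al\!=\!\nd\th$ and $\pi^*\om_V\!=\!0$ give $\wt\om_\phi\!=\!\phi'(r)\,\nd r\!\w\!\nd\th$.

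For part~(1), I would then treat the induced $u\!:\C\!\lra\!\C^*$ as a standard holomorphic map. Since $\C$ is simply connected, $u\!=\!\exp\!\circ\,\widetilde{u}$ for some entire $\widetilde{u}$. If $\widetilde{u}$ were non-constant, Picard's little theorem would give $\widetilde{u}(\C)\!\supseteq\!\C\!-\!\{w_0\}$, so $\widetilde{u}(\C)\!\cap\!(w\!+\!2\pi\fI\Z)$ would be infinite for every $w\!\in\!\C$, and the multiplicity $N(w')\!\equiv\!|u^{-1}(w')|$ would be infinite for a.e.~$w'\!\in\!\C^*$. Combined with $u^*(\nd r\!\w\!\nd\th)\!=\!(|u'|^2/|u|^2)\,\nd A_\C$ and the change of variables formula, this would yield, for every $M\!>\!0$,
\[
E_{\ell_1,\ell_2}(u)\;=\;\sup_\phi\int_{\C^*}\phi'(\log|w|)\,N(w)\,\nd r\!\w\!\nd\th\;\ge\;M(\ell_2\!-\!\ell_1)\cdot 2\pi,
\]
contradicting finiteness of the left-hand side. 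Hence $\widetilde{u}$, and therefore $u$, must be constant.

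For part~(2), the same reduction produces a standard holomorphic map $u\!:\C^*\!\lra\!\C^*$. Let $s\!\in\!\Z$ be the winding number of $u|_{\{|z|=1\}}$; then $z\!\mapsto\!u(z)/z^s$ has trivial winding and admits a holomorphic logarithm, giving $u(z)\!=\!z^s\ne^{v(z)}$ for some holomorphic $v\!:\C^*\!\lra\!\C$. If $v$ were non-constant, it would have a non-removable singularity at $0$ or at~$\i$, $\Re v$ would be unbounded along some sequence converging to that point, and hence $u$ would have an essential singularity there. Picard's great theorem would then force $u$ to take every value in~$\C^*$, with at most one exception, infinitely often in every punctured neighborhood of that singularity, and the multiplicity estimate of part~(1) would give $E_{\ell_1,\ell_2}(u)\!=\!\i$. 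So $v\!\equiv\!r_0\!+\!\fI\th_0$ must be constant and $u(z)\!=\!\ne^{r_0+\fI\th_0}z^s$; unwinding $z\!=\!\ne^{r+\fI\th}$ then produces $u(r,\ne^{\fI\th}\!\cdot\!v_x)\!=\!(sr\!+\!r_0,\ga(\ne^{\fI s\th}))$ with $\ga(\zeta)\!\equiv\!\ne^{\fI\th_0}\zeta\!\cdot\!v_x$, as required.

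The main obstacle I foresee is justifying the change of variables in the energy integral; this is routine since the critical set of a non-constant holomorphic map is discrete and the integrand is non-negative, and Picard's theorems then entirely sidestep the elaborate sup-energy machinery of~\cite{H}.
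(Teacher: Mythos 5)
Your proof is correct, and it takes a genuinely different route from the paper. Both you and the paper begin with the same crucial reduction: since $E_V(u)=0$ and $\pi$ is $(\oJ,J_V)$-holomorphic, $\pi\circ u$ is constant and $u$ factors through a single fiber $\R\times S_xV$. At that point the paper simply observes that Hofer's \cite[Lemma~28, Theorem~31]{H} for finite-energy maps into the symplectization $\R\times S^1$ now apply directly, and cites them. You instead identify the fiber with $\C^*$ with its standard complex structure and finish with classical function theory: Picard's little theorem in (1), and the winding-number factorization $u(z)=z^s\ne^{v(z)}$ plus Picard's great theorem in (2), combined with the area/change-of-variables formula to translate infinite a.e.\ multiplicity into infinite $\sup$-energy. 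This replaces Hofer's rescaling and removal-of-singularities analysis with elementary complex analysis, which is arguably cleaner in this $S^1$-equivariant setting; what it costs is that it leans specifically on the holomorphic rigidity of the $\C^*$ picture and would not adapt to the non-integrable Reeb settings that Hofer's argument is designed for. Two minor points worth tightening: in (2), the step ``$\Re v$ unbounded $\Rightarrow$ $u$ has an essential singularity'' requires noting that $\Re v$ is unbounded in \emph{both} directions near a non-removable isolated singularity of $v$ (so $|e^v|$ neither stays bounded nor tends to $\infty$, ruling out both a removable singularity and a pole); and the displayed lower bound $E_{\ell_1,\ell_2}(u)\ge 2\pi M(\ell_2-\ell_1)$ really comes from restricting the energy identity to a region where $N\ge M$ and using $\int_\R\phi'=\ell_2-\ell_1$, which is the right idea but would be clearer spelled out, since as written it looks like a single sharp estimate rather than an unboundedness argument.
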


\begin{crl}\label{LRconv_crl}
If $u\!:\bD^*\!\lra\!\R\!\times\!SV$ is a $\oJ$-holomorphic map
such that $E_{\ell_1,\ell_2}(u)$ is finite, then 
$$\big|\prt_t u(\ne^{t+\fI\th})\big|,
\big|\prt_t u(\ne^{t+\fI\th})\big| \le C_u$$
for some $C_u\!\in\!\R$. 
\end{crl}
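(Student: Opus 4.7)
The plan is a standard Hofer-style bubbling/rescaling argument applied in cylindrical coordinates $z=e^{t+\fI\th}$. Writing $|du|_{\cyl}$ for the gradient norm with respect to $dt^2+d\th^2$ on the source and $g_{\cyl}$ on $\R\!\times\!SV$, the claim is that $|du|_{\cyl}$ is uniformly bounded on $\bD^*$. Smoothness of $u$ on $\bD^*$ bounds $|du|_{\cyl}$ on every set $\{|z|\!\ge\!r_0\}$, so the only possible blow-up locus is the puncture $t\!=\!-\i$. I would argue by contradiction: assume a sequence $z_n\!=\!\ne^{t_n+\fI\th_n}$ with $t_n\!\to\!-\i$ and $R_n\!:=\!|du(z_n)|_{\cyl}\!\to\!\i$.

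Apply Hofer's lemma to replace the $z_n$ by a nearby sequence on which $R_n\!\to\!\i$ still holds while $|du|_{\cyl}\!\le\!2R_n$ on a cylindrical disk around $z_n$ of radius $\vr_n$ with $\vr_nR_n\!\to\!\i$. Because the target end $\R\!\times\!SV$ and the almost complex structure $\oJ$ (constructed in Section~\ref{SympCut_subs}) are invariant under $\R$-translation, and because $\Phi_{\ell_1,\ell_2}$ is invariant under pre-composition with such translations, both $\oJ$-holomorphicity and the functional $E_{\ell_1,\ell_2}$ are preserved by translating in the $\R$-factor. Thus setting
\[
\wt u_n(s,\vph)=\bigl(a(\ne^{t_n+s/R_n+\fI(\th_n+\vph/R_n)})-a(z_n),\,\pi_{SV}\!\circ\!u(\ne^{t_n+s/R_n+\fI(\th_n+\vph/R_n)})\bigr)
\]
on $B_{\vr_nR_n}(0)\!\subset\!\C$ produces $\oJ$-holomorphic maps with $|d\wt u_n|_{\cyl}\!\le\!2$, $|d\wt u_n(0)|_{\cyl}\!=\!1$, and $E_{\ell_1,\ell_2}(\wt u_n)\!\le\!E_{\ell_1,\ell_2}(u)\!<\!\i$. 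Standard elliptic regularity together with Arzela-Ascoli yields a subsequential $C^{\i}_{\loc}$-limit $u_{\i}\!:\C\!\lra\!\R\!\times\!SV$ that is $\oJ$-holomorphic, non-constant, and has $E_{\ell_1,\ell_2}(u_{\i})\!<\!\i$.

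By Lemma~\ref{LRconv_lmm1}(1), a finite-energy $\oJ$-holomorphic plane with $E_V\!=\!0$ is constant; hence $E_V(u_{\i})\!>\!0$, so there exist $R,c\!>\!0$ with $\int_{B_R(0)}\!u_{\i}^*\pi^*\om_V\!\ge\!c$. By $C^{\i}_{\loc}$-convergence, for $n$ large the rescaled integral exceeds $c/2$, and unrescaling gives $\int_{B_{R/R_n}^{\cyl}(t_n,\th_n)}\!u^*\pi^*\om_V\!\ge\!c/2$, where the domain of integration is the cylindrical disk of radius $R/R_n$. The inequality $E_V(u)\!\le\!E_{\ell_1,\ell_2}(u)\!<\!\i$ holds because, on a $\oJ$-holomorphic map, $u^*\nd(\phi\al)\!\ge\!0$ pointwise for $\phi\!\in\!\Phi_{\ell_1,\ell_2}$. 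Passing to a subsequence with $|t_n\!-\!t_m|$ large enough to separate the cylindrical disks of radii $R/R_n\!\to\!0$, we obtain infinitely many pairwise disjoint regions each carrying at least $c/2$ of $\pi^*\om_V$-energy, contradicting finiteness of $E_V(u)$.

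The hardest part is not the bubbling itself but verifying the finite-energy invariance used above: one must check that the target translation symmetry of $(\R\!\times\!SV,\oJ,\wt\om_\phi)$ is genuine for the almost complex structure constructed at the end of Section~\ref{SympCut_subs}, that the sup defining $E_{\ell_1,\ell_2}$ is invariant under pre-composition of $\phi$ by $\R$-translations, and that the $C^{\i}_{\loc}$-limit $u_{\i}$ inherits the finite-energy bound from Fatou applied to the densities $u_n^*\wt\om_\phi\!\ge\!0$. Once these invariances are recorded, the remainder of the argument is the standard Hofer bubble/neck-length bookkeeping and requires no estimates beyond Lemma~\ref{LRconv_lmm1} and elliptic regularity.
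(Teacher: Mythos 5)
Your overall route is the Hofer rescaling/bubbling strategy, which is the same one the paper invokes by citing the proof of \cite[Proposition~27]{H}, but you diverge at the step where Lemma~\ref{LRconv_lmm1}(1) enters, and this is a legitimate variant worth flagging. The paper first applies Gromov's Removable Singularity Theorem to the $J_V$-holomorphic map $\pi\circ u\!:\bD^*\!\lra\!V$, so that $|d(\pi\circ u)|$ is bounded near the puncture; under rescaling the bubble $u_\i$ then satisfies $d(\pi\circ u_\i)\!=\!0$, hence $E_V(u_\i)\!=\!0$, and Lemma~\ref{LRconv_lmm1}(1) kills it outright. You run the lemma in the contrapositive to get $E_V(u_\i)\!>\!0$ and derive the contradiction by exhibiting infinitely many pairwise disjoint cylindrical disks each carrying a definite amount of $\pi^*\om_V$-energy. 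Both paths need $E_V(u)\!<\!\i$: the paper needs it as the hypothesis of the removable singularity theorem, you need it for the final counting.

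The genuine gap is in your justification of $E_V(u)\!\le\!E_{\ell_1,\ell_2}(u)$. The asserted pointwise inequality $u^*\nd(\phi\al)\!\ge\!0$ is the contact-geometric statement, valid when $\al$ is a contact form and $\nd\al$ restricts to a $J$-compatible symplectic form on $\ker\al$. Here $\al$ is a connection $1$-form on the circle bundle $SV\!\lra\!V$, so $\nd\al$ is the pullback of the curvature $2$-form $D\al$ from~$V$, which has no definite sign; writing $\nd(\phi\al)=\phi'\,\nd a\wedge\al+\phi\,\nd\al$, only the first piece is pointwise nonnegative under $\oJ$-holomorphicity. So the pointwise inequality fails, and with it your one-line derivation of $E_V(u)\!<\!\i$. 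The fact itself is true and is also needed (silently) for the paper's removable-singularity step, but it requires a different argument — for instance an annular Stokes computation comparing two choices of $\phi\!\in\!\Phi_{\ell_1,\ell_2}$, or a reduction to the single taming symplectic form $\wh\om_\ep$ on $\P_XV$ discussed at the end of Section~\ref{IPconv_subs}, which dominates $\pi^*\om_V$ uniformly on $\R\!\times\!SV$. The remaining steps — Hofer's rescaling lemma, elliptic regularity and $C^{\i}$-convergence on compact sets, Fatou for the energy bound on $u_\i$, and the $\R$-translation invariance of $\oJ$ and of $E_{\ell_1,\ell_2}$ — are sound, and contrary to your closing remark are the routine part; the finiteness of $E_V(u)$ is where the real content lies.
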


\noindent
The justification provided for \cite[Lemma~3.5]{LR} is that it can be obtained
{\it using the same method as in~\cite{H}}, which treats the case 
when $(SV,\al)$ is contact, but the flow of the Reeb vector field~$\ze_H$ 
does not necessarily generate an $S^1$-action.
In fact, the assumption $E_V(u)\!=\!0$ in this case implies that the image 
of~$u$ lies in $\R\!\times\!S_xV$ for some $x\!\in\!V$ and so the situation in~\cite{H}
is directly applicable.
The two statements of Lemma~\ref{LRconv_lmm1} are thus immediately implied by 
the statement of \cite[Lemma~28]{H} and by the proof of \cite[Theorem~31]{H}
in the bottom half of page~538, respectively.\\

\noindent
Let $u$ be as in the statement of Corollary~\ref{LRconv_crl}.
By Gromov's Removable Singularity Theorem \cite[Theorem~4.1.2]{MS2},
the $J_V$-holomorphic map $\pi\!\circ\!u\!:\bD^*\!\lra\!V$ extends
to a $J_V$-holomorphic map $u_V\!:\bD\!\lra\!V$.
The proof of \cite[Proposition~27]{H}, 
which uses the standard rescaling argument to construct 
a $\oJ$-holomorphic map $f\!:\C\!\lra\!\R\!\times\!SV$ out 
of a sequence with increasing derivatives, and 
Lemma~\ref{LRconv_lmm1}(1) then yield Corollary~\ref{LRconv_crl}.

\begin{lmm}\label{LRconv_lmm2}
For every $\oJ$-holomorphic map $u\!=\!(u_{\R},u_{SV})\!:\bD^*\!\lra\!\R\!\times\!SV$ 
such that $E_{\ell_1,\ell_2}(u)$ is finite, there exist $s\!\in\!\Z$ and
a  1-periodic orbit $\ga\!:S^1\!\lra\!SV$ of the Hamiltonian~$H$ 
with the following properties.
If $r_i\!\in\!\R^+$ is a sequence with $r_i\!\lra\!0$,
there exist a subsequence, still denoted by~$r_i$, and $\th_0\!\in\!\R$  such~that 
$$\lim_{i\lra\i}u_{SV}\big(r_i\ne^{\fI t}\big)=
\ga\big(\ne^{\fI s\th+\th_0}\big)$$
in $C^{\i}(S^1,SV)$.
Furthermore, the function $u_{\R}$ is bounded if and only if $s\!=\!0$,
and $u_{\R}(r\ne^{\fI\th})\!\lra\!\mp\i$ as $r\!\lra\!0$ if and only if $s\!\in\!\Z^{\pm}$.
\end{lmm}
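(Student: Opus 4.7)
The plan is to reduce Lemma~\ref{LRconv_lmm2} to the cylindrical case Lemma~\ref{LRconv_lmm1}(2) via the standard change of coordinates $z\!=\!\ne^{-t+\fI\th}$, which converts a punctured-disk end at $r\!=\!0$ into a cylindrical end $[T_0,\i)\!\times\!S^1$. Under this change $u$ becomes a $\oJ$-holomorphic map from a half-cylinder with $E_{\ell_1,\ell_2}(u)\!<\!\i$, and Corollary~\ref{LRconv_crl} promotes to a uniform bound on $|\prt_tu|$ and $|\prt_\th u|$ on the entire half-cylinder.

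First I would extract a bubble-off limit at infinity. Given any sequence $t_i\!\to\!\i$, consider the translated maps
$$u_i(t,\th)\equiv \big(u_{\R}(t\!+\!t_i,\th)\!-\!u_{\R}(t_i,0),\,u_{SV}(t\!+\!t_i,\th)\big)
\!:\,[-t_i\!+\!T_0,\i)\!\times\!S^1\lra\R\!\times\!SV;$$
each $u_i$ is still $\oJ$-holomorphic because $\oJ$ is invariant under $\R$-translation in the target. The uniform gradient bound, together with the normalization $u_i(0,0)\!\in\!\{0\}\!\times\!SV$, yields $C^{1}$-bounded maps on every fixed compact set of $\R\!\times\!S^1$. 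Standard elliptic bootstrapping (equivalent to \cite[Theorem~B.4.2]{MS2}) produces a $C^{\i}_{\tn{loc}}$-convergent subsequence with limit $u_{\i}\!:\R\!\times\!S^1\!\lra\!\R\!\times\!SV$. Since $\int_{\bD^*}u^*\pi^*\om_V$ is finite and the integrals over increasing collar annuli form a tail that vanishes, $E_V(u_{\i})\!=\!0$, and $E_{\ell_1,\ell_2}(u_{\i})\!\le\!E_{\ell_1,\ell_2}(u)\!<\!\i$. Lemma~\ref{LRconv_lmm1}(2) then forces $u_\i(r,\ne^{\fI\th})\!=\!(sr\!+\!r_0,\ga(\ne^{\fI s\th}))$ for some $s\!\in\!\Z$, $r_0\!\in\!\R$, and $1$-periodic orbit~$\ga$ of~$H$.

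Next I would show $s$ and the orbit $\ga$ (as opposed to its phase~$\th_0$) depend only on $u$ and not on the subsequence. For $s$: once $t_i$ is large, $u_{SV}(t_i,\cdot)\!:\,S^1\!\lra\!SV$ is $C^0$-close to $\ga(\ne^{\fI s\cdot})$ and hence is a loop of degree~$s$ over its image in~$V$; but the degree of $u_{SV}(t,\cdot)$ with respect to the $S^1$-fibration $SV\!\lra\!V$ is a continuous, hence locally constant, function of~$t$ for $t$ large, so $s$ is independent of the subsequence. For~$\ga$: let $u_V\!:\bD\!\lra\!V$ be the $J_V$-holomorphic extension of $\pi\!\circ\!u\!:\bD^*\!\lra\!V$ provided by \cite[Theorem~4.1.2]{MS2}; then any subsequential limit orbit lies in the fiber $S_{u_V(0)}V$, because $\pi\!\circ\!u_i(0,\th)\!\lra\!u_V(0)$. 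Since the Reeb vector field~$\ze_H$ generates a free $S^1$-action on~$SV$, the $1$-periodic orbits through $S_{u_V(0)}V$ of minimal period dividing~$s$ form a single $S^1$-family given by rotating $\ga$; thus only the phase~$\th_0$ varies with the subsequence.

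Finally I would determine the behavior of $u_{\R}$. The almost complex structure condition $\oJ\frac{\prt}{\prt a_\R}\!=\!\ze_H$ translates, on the half-cylinder, into the identity
$$\prt_tu_{\R}=\al\!\big(\prt_\th u_{SV}\big)\,;$$
integrating over $\{t\}\!\times\!S^1$ gives $\prt_t\!\int_{S^1}u_{\R}(t,\th)\nd\th\!=\!\int_{S^1}\al(\prt_\th u_{SV})\nd\th$, and the right-hand side converges to $2\pi s$ as $t\!\to\!\i$ by $C^\infty_{\tn{loc}}$-convergence of $u_i$ to the cylinder map. Hence the mean value of $u_{\R}(t,\cdot)$ grows linearly with slope $s$, so $u_{\R}$ is bounded iff $s\!=\!0$ and $u_{\R}(r\ne^{\fI\th})\!\to\!\mp\i$ as $r\!\to\!0$ precisely when $s\!>\!0$ respectively $s\!<\!0$ (recalling that $r\!=\!\ne^{-t}$ reverses orientation). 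The main obstacle is the uniqueness of $s$ and~$\ga$; it is this step, rather than the production of subsequential limits, that requires the $S^1$-invariance of the Reeb flow and distinguishes the argument from the generic contact setting considered in~\cite{H}.
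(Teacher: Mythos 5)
Your proposal is correct and follows essentially the same route as the paper's justification, which is terse because it mainly cites the proof of Hofer's Theorem~31 and then records the two places where the argument has to be supplemented or modified in the present setting.  The only real distinction is one of presentation: the paper invokes the removable singularity of $\pi\circ u_{SV}$ and then points to Hofer for the translate-and-bubble argument and for the behavior of~$u_{\R}$, whereas you reconstruct that bubbling argument explicitly (normalize $u_i$ by translating in the $\R$-factor, extract a $C^{\i}_{\mathrm{loc}}$ limit with zero $\om_V$-energy, apply Lemma~\ref{LRconv_lmm1}(2)), and you derive the behavior of $u_{\R}$ by integrating $\prt_t u_{\R}=\al(\prt_\th u_{SV})$ over circles rather than citing it.  Both proofs pin down the subsequence-independence of~$s$ via the same homological/degree argument in a tubular neighborhood of the limiting fiber $S_{u_V(0)}V$, and both use the freeness of the $S^1$-action to identify~$\ga$ up to phase; the paper additionally flags (as a caution for the reader) that Hofer's energy--period identity is unavailable here because $\al$ is unrelated to $\pi^*\om_V$, a point your argument correctly sidesteps since it never needs that identity.
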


\noindent
This lemma corrects, refines, and generalizes the statement of \cite[Lemma~3.6]{LR};
the wording and the usage of the latter suggest that $s\!\in\!\Z^+$.
By Gromov's Removable Singularity Theorem \cite[Theorem~4.1.2]{MS2},
the $J_V$-holomorphic map $\pi\!\circ\!u_{SV}\!:\bD^*\!\lra\!V$ extends
to a $J_V$-holomorphic map $u_V\!:\bD\!\lra\!V$.
Thus, the image~of 
\BE{tivS1_e} S^1\lra SV, \qquad \ne^{\fI\th}\lra u_{SV}(r\ne^{\fI\th}),\EE
approaches $S_{u_V(0)}V$ as $r\!\lra\!0$.
Let $\ga\!:S^1\!\lra\!SV$ be a 1-periodic orbit parametrizing~$S_{u_V(0)}V$.
The claims concerning the sequence, with some choice of~$s$ and~$\th_0$, 
and the relation between the sign of~$s$ and the behavior of~$u_{\R}$
follow from the proof of \cite[Theorem~31]{H},
where the functions~$v$ and~$w$ are used interchangeably and $f\!+\!\fI b$ 
should be replaced by $f\!-\!\fI b$.
However, in the present situation, $\al$ (denoted by~$\la$ in~\cite{LR})
has no relation to~$\pi^*\om_V$.
Thus, the first equation in the second row of \cite[(54)]{H},
the third equation on the first line of \cite[(55)]{H}, and \cite[(56)]{H}
no longer apply, and
the long equation at the end of the proof can no longer be used to relate the period~$s$ 
(denoted by~$k$ in~\cite{LR} and by~$c$ in~\cite{H}) to the energy of~$u_V$.
The independence of~$s$ of the subsequence~$r_i$ follows from the fact that 
$u_{SV}(r\ne^{\fI \th})$ is contained in a tubular neighborhood of 
$S_{u_V(0)}V\!\approx\!S^1$ for all~$r$ sufficiently small
and thus the homology class of~\eref{tivS1_e} is independent of~$s$.

\begin{rmk}\label{LRthm37_rmk}
A completely different approach to the independence of~$\ga$ and~$s$ of 
the subsequence in the statement of Lemma~\ref{LRconv_lmm2}
appears in the proof of \cite[Theorem~3.7]{LR}.
However, the argument in~\cite{LR} is incorrect (or at least incomplete).
In particular, it {\it presupposes} that there exist $r_0\!\in\!\R^+$ and 
a periodic orbit $\ga\!:S^1\!\lra\!SV$  such that 
the images of the maps~\eref{tivS1_e} are contained in a small
neighborhood~$\cO_{\ga,\ep}$ of $\ga$ for all $r\!<\!r_0$; see the top of 
page~175 in~\cite{LR}.
Without this assumption, the key action functional $\cA\!=\!\cA_{\ga}$ is not even defined
in~\cite{LR}.
Most of the remainder of this argument is dedicated to using this~$\cA$ to 
show that such~$\cO_{\ga,\ep}$ can be chosen arbitrarily small, but 
it was arbitrarily small to begin with.
It is actually possible to define~$\cA$ on a neighborhood of the entire space~$\cO_s$
of periodic orbits of period $s\!\in\!\Z$, but 
this cannot be used to show that~$s$ in Lemma~\ref{LRconv_lmm2} is 
independent of the subsequence (as attempted in~\cite{LR}).
The proof of \cite[Theorem~3.7]{LR} also makes use of \cite[Proposition~3.4]{LR};
the proof of the latter is based on an infinite-dimensional version of the Morse lemma,
for which no justification or citation is provided.
The desired conclusion of this Morse lemma involves the inner-product \cite[(3.14)]{LR}
with respect to which the domain $W^1_r(S^1,SV)$ is not even complete.  
The second equality in~\cite[(3.25]{LR} does not appear obvious either.
\end{rmk}

\begin{prp}\label{LRconv_prp}
Let $u\!=\!(u_{\R},u_{SV})\!:\bD^*\!\lra\!\R\!\times\!SV$ be a $\oJ$-holomorphic map.
If $E_{\ell_1,\ell_2}(u)$ is finite, then  there exist $s\!\in\!\Z$,
a 1-periodic orbit $\ga\!:S^1\!\lra\!SV$ of the Hamiltonian~$H$, $r_0\!\in\!\R$,
and $C_u\!\in\!\R^+$  such~that 
\begin{alignat}{2}
\label{LRconv_e1}
\big|u_{\R}\big(\ne^{t+\fI\th}\big)-(st\!+\!r_0)\big|,
d_{SV}\big(u_{SV}(\ne^{t+\fI\th}),\ga(\ne^{\fI s\th})\big)&\le C_u\ne^t\,
&\quad&\forall~(t,\th)\!\in\!(-\i,-1)\!\times\!S^1\,,\\
\label{LRconv_e2}
\big|\nd u_{\R}\big(\ne^{t+\fI\th}\big)-s\,\nd t\big|,
d_{SV}\big(\nd u_{SV}(\ne^{t+\fI\th}),\nd\ga(\ne^{\fI s\th})\big)
&\le C_u\ne^t 
&\quad&\forall~(t,\th)\!\in\!(-\i,-1)\!\times\!S^1\,.
\end{alignat}
Furthermore, the function $u_{\R}$ is bounded if and only if $s\!=\!0$,
and $u_{\R}(r\ne^{\fI\th})\!\lra\!\mp\i$ as $r\!\lra\!0$ if and only if $s\!\in\!\Z^{\pm}$.
\end{prp}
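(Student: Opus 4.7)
The plan is to reduce the asymptotic analysis to a direct application of the factorization theorem \cite[Theorem~2.2]{FHS} for $J$-holomorphic maps meeting a divisor at an isolated point. Assume first $s\!\ge\!0$. Under the fiberwise identification
$$\R\!\times\!SV\lra\cN_XV\!-\!V, \qquad (t,v)\lra \ne^tv,$$
the vector field $\prt/\prt a_{\R}$ corresponds to the real radial direction, the characteristic vector field $\ze_H$ to the $S^1$-action given by complex multiplication on the fibers of $\cN_XV$, and, by the construction at the end of Section~\ref{SympSum_subs} together with the description of $J_{X,V}$ in Section~\ref{RelInv_sub0}, the almost complex structure $\oJ$ corresponds to the restriction of $J_{X,V}$ to $\cN_XV\!-\!V$. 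Thus $u$ pulls back to a $(J_{X,V},\fj)$-holomorphic map $\wt{u}\!:\bD^*\!\lra\!\cN_XV\!-\!V$, and the problem becomes one of analyzing the behavior of $\wt{u}$ as $z\!\lra\!0$.

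The first step is to show that $\wt{u}$ extends continuously to a $(J_{X,V},\fj)$-holomorphic map $\ov{u}\!:\bD\!\lra\!\cN_XV$. The projection $v\!=\!\pi_V\!\circ\!\wt{u}\!:\bD^*\!\lra\!V$ is $(J_V,\fj)$-holomorphic, and the finiteness of $E_V(u)$ gives it finite area; by Gromov's Removable Singularity Theorem \cite[Theorem~4.1.2]{MS2}, it extends to a smooth map $\ov{v}\!:\bD\!\lra\!V$. In a local holomorphic trivialization $\cN_XV|_W\!\approx\!W\!\times\!\C$ over a neighborhood $W$ of $\ov{v}(0)$, Lemma~\ref{LRconv_lmm2} provides a subsequence along which the fiberwise part $\pi_2(\wt{u})(r\ne^{\fI\th})$ behaves like $|\Phi_0|\ne^{\fI(s\th+\th_0)}r^s$ for some $\Phi_0\!\in\!\C^*$; combined with the uniform derivative bounds in cylindrical coordinates from Corollary~\ref{LRconv_crl}, this suffices to deduce that $\wt{u}$ is bounded on $\bD^*$ and hence extends continuously. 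If $s\!=\!0$ and $\ov{u}(0)\!\notin\!V$, standard interior elliptic regularity gives $\ov{u}$ smooth with a uniform Taylor expansion, yielding the estimates~\eref{LRconv_e1}--\eref{LRconv_e2} with $r_0\!=\!\log|\ov{u}(0)|$ and $\ga$ the constant parametrization of $S^1\!\ni\!\ov{u}(0)/|\ov{u}(0)|$. Otherwise $\ov{u}(0)\!\in\!V$, and \cite[Theorem~2.2]{FHS} applies: shrinking $\bD$ if needed, in the trivialization above,
$$\pi_2(\ov{u})(z)=\Phi(z)z^s, \qquad \Phi\!\in\!L_1^p(\bD;\C\!-\!0), \quad \Phi(0)\!\neq\!0,$$
where $s\!=\!\ord_0^V(\ov{u})\!\in\!\Z^+$. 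The Cauchy-Riemann equation for $\ov{u}$ rewrites as a nonlinear elliptic system for $(\ov{v},\Phi)$ with smooth coefficients and no degeneracy at $z\!=\!0$, so standard bootstrap upgrades $\Phi$ to $C^{\i}$ in a neighborhood of the origin.

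The final step is to translate the smooth factorization back to cylindrical coordinates. In $z\!=\!\ne^{t+\fI\th}$ on $(-\i,-1)\!\times\!S^1$,
$$\pi_2(\ov{u})\big(\ne^{t+\fI\th}\big)=\Phi\big(\ne^{t+\fI\th}\big)\,\ne^{s(t+\fI\th)},$$
and writing $\Phi(0)\!=\!|\Phi(0)|\ne^{\fI\th_0}$ with $\Phi(\ne^{t+\fI\th})\!=\!\Phi(0)\!+\!O(\ne^t)$ together with the corresponding $C^1$ estimate yields
\begin{gather*}
u_{\R}(t,\th)=\ln\big|\pi_2(\ov{u})\big|=st+\ln|\Phi(0)|+O(\ne^t),\\
u_{SV}(t,\th)=\big(\ov{v}(\ne^{t+\fI\th}),\ne^{\fI(s\th+\th_0)}\!+\!O(\ne^t)\big),
\end{gather*}
with the analogous bounds on $\nd u_{\R}$ and $\nd u_{SV}$. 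Setting $r_0\!=\!\ln|\Phi(0)|$ and letting $\ga\!:S^1\!\lra\!SV$ be the parametrization of the circle $S_{\ov{v}(0)}V$ sending $\ne^{\fI\phi}$ to the point corresponding to $\ne^{\fI(\phi+\th_0)}$ in the chosen trivialization gives~\eref{LRconv_e1} and~\eref{LRconv_e2}. The case $s\!<\!0$ is handled identically via the dual identification $\R\!\times\!SV\!\approx\!\cN_YV\!-\!V$ coming from~\eref{SVident_e}, which exchanges the roles of $t\!\lra\!\pm\i$; this yields the sign characterization of $u_{\R}$ claimed in the last sentence of the proposition. The main technical obstacle is the elliptic bootstrap for $\Phi$: the factor $z^s$ is singular at the origin in the sense that $\ov{u}$ itself is only $L_1^p$ there, so one must argue at the level of the system for $(\ov{v},\Phi)$ rather than for $\ov{u}$ directly; once this is done, all the asserted estimates follow by elementary calculus on the explicit factorization.
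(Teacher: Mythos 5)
Your proposal takes a genuinely different route from the paper's. The paper's sketch proceeds along the Hofer/HWZ line: removable singularity plus exponential decay for the horizontal part $\pi\circ u_{SV}$, then the Cauchy--Riemann relations and ellipticity propagate the decay to the vertical direction, with the estimates patterned on \cite[Lemma~5.1]{IPsum}. You instead transfer $u$ to a map into $\cN_XV\!-\!V$ (resp.\ $\cN_YV\!-\!V$ for $s\!<\!0$) via the cylindrical identification, extend across the puncture, and invoke the FHS factorization theorem; the asymptotics then drop out of the explicit normal form $\pi_2(\ov{u})=\Phi(z)z^s$. This is a clean and workable alternative that replaces the asymptotic decay machinery with the algebraic structure of a $J$-holomorphic disk near a $J$-invariant divisor, and it is very much in the spirit of the paper's remark that the sup-energy apparatus of \cite{H} is unnecessary here precisely because of the correspondence with maps to $\P_XV$.

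There is, however, a genuine gap in the final step. You claim a ``standard bootstrap'' upgrades $\Phi$ to $C^\i$, attributing the obstruction to $\ov{u}$ being ``only $L_1^p$'' at $0$. This mislocates the difficulty. Once $\wt{u}$ extends continuously (which itself requires finite energy of $\wt{u}$ in a metric bounded near $V\!\subset\!\cN_XV$, not just bounded image — a point you elide and that the paper flags as unjustified in~\cite{LR}), the extension is $J$-holomorphic, hence $C^\i$ on the full disk by elliptic regularity; there is nothing to bootstrap about $\ov{u}$. The actual issue is that $\Phi=\pi_2(\ov{u})/z^s$ is in general \emph{not} $C^\i$ at $0$ when $J_{X,V}$ is non-integrable: the degree-$(s\!+\!1)$ Taylor term of $\pi_2(\ov{u})$ contains $\bar{z}$-monomials, producing contributions like $\bar{z}^{s+1}/z^s$ in $\Phi$ whose first derivatives are bounded but have no limit at $0$. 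What is true, and what \eref{LRconv_e1}--\eref{LRconv_e2} actually require, is $\Phi-\Phi(0)=O(|z|)$ and $\nabla\Phi=O(1)$. These follow from the smoothness of $\pi_2(\ov{u})$ together with the fact that its leading Taylor coefficient at $0$ is the holomorphic monomial $cz^s$ (the usual local structure of $J$-holomorphic curves), so that dividing by $z^s$ leaves a remainder with the stated bounds. Replace the bootstrap claim with this Taylor argument and the proof closes.
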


\noindent
This proposition corrects, refines, and generalizes the statement of \cite[Theorem~3.7]{LR},
the main conclusion of \cite[Section~3.1]{LR}.
The contrast of the second bound in~\eref{LRconv_e1}
with the first statement of Lemma~\ref{LRconv_lmm2} is that~$\th_0$
is now independent of the choice of the sequence.
The convergence property for $\pi\!\circ\!u_{SV}$ is standard;
see \cite[Lemmas~4.3.1,4.7.3]{MS2}.
Along with \cite[(3.33),(3.34)]{LR} and the ellipticity of the $\dbar$-operator,
this implies the convergence statements for the vertical direction;
see \cite[Lemma~4.1]{HWZ1}.
The convergence estimates~\eref{LRconv_e1} and~\eref{LRconv_e2},
formulated in the cylindrical metric on the target,
are analogous to the estimates in \cite[Lemma~5.1]{IPsum}
and on $\hat{x}_n,\hat{y}_n$ in the proof of \cite[Lemma~5.3]{IPsum}.\\

\noindent
Proposition~\ref{LRconv_prp} is needed for the convergence arguments of 
\cite[Section~3.2]{LR}; 
\cite[Theorem~3.7]{LR}, which is a similar statement with $\bD^*$ replaced by~$\C$, 
does not suffice for these purposes.
The topological reasoning in the paragraph above Remark~\ref{LRthm37_rmk}
also implies that the ends of the components of broken limits of $J$-holomorphic maps
have matching orders, as described by~\eref{cPcond_e}
and the last bullet above Remark~\ref{LR33b_rmk}.
The proof of this statement in \cite[Lemma~3.11(3)]{LR} is incorrect,
as explained in Remark~\ref{LR33c_rmk}.

\begin{rmk}\label{LR3_rmk}
For \cite[(3.18),(3.20),(3.22)]{LR} to hold, the sign in the definition of the operator~$S$
above \cite[(3.18)]{LR} should be reversed.
The symmetry of \cite[(3.18)]{LR} in~$\ze$ and~$\eta$ is not obvious.
It follows from 
$$\blr{\na_vX_H,w}=\frac12\vp(v,w), \quad
\blr{(\na_{X_H}J)v,w}=\frac12\big(\vp(v,Jw)+\vp(Jv,w)\big)
\qquad\forall~v,w\in\ker\,\la\,,$$
where $\nd\la\!=\!\pi^*\vp$.
For the statement of \cite[Proposition~3.4]{LR} to make sense,
it needs to be shown that~$\cA$ is well-defined on~$\cO$.
Equation~(3.22) should read
$$\|\nd_{\ga}\cA\|_{L^2(S^1)}\ge C|\cA(\ga)|^{\frac12}\qquad \forall\,\ga\!\in\!\cO\,,$$
and $\|\nd_{\ga}\cA\|_{L^2(S^1)}$ needs to be defined.
The second displayed equation in the proof of this proposition should read
$$\|\nd_{\ga}\cA\|_{L^2(S^1)}\ge
\big\|\nd_y\cA(P(x)y/(P(x)y,P(x)y)^{1/2})\big\|_{L^2(S^1)}\,.$$ 
The statement after the proof of \cite[Theorem~3.7]{LR} does not make sense,
because the constants there depend on the map $\C\!\lra\!\R\!\times\!SV$.  
Other, fairly minor misstatements in \cite[Section~3.1]{LR} include
\begin{enumerate}[label={},topsep=-5pt,itemsep=-5pt,leftmargin=*]
\item\textsf{p172, lines 6,-2:} dfn of $T_{\ga}^{\perp}$, $T_{x_k}^{\perp}$ 
should involve pointwise inner-products;
\item\textsf{p172, (3.18):} $\Pi$ not necessary by the previous equation;  
\item\textsf{p172, above Rmk 3.1:} accumulate {\it only} at;
\item\textsf{p172, Rmk 3.1} is meaningless, since (3.18) is derived for
any $\ga$ in a fiber of~$\pi$;
\item\textsf{p173, Rmk 3.3} is irrelevant and debatable;
\item\textsf{p173, Prop~3.4:} $x\!\in\!S_k$; 
\item\textsf{p173, line -10:} no need to introduce $P'$;
\item\textsf{p175, (3.29):}  $\ti{d}(\ti{u}(s,t),\ti{u}(s_i,t))$ \to 
$\ti{d}(\pi(\ti{u}(s,t)),\pi(\ti{u}(s_i,t)))$;
\item\textsf{p175, line -6:} Lemma (3.6) \to Lemma 3.6;
\item\textsf{p176, line 1:} defined just above;
\item\textsf{p177, (3.39)-(3.41)} do not make sense, given the definition of~$r$.\\
\end{enumerate}
\end{rmk}

\noindent
The use of the $\sup$-energy~\eref{LRener_e} introduced in~\cite{H} is not necessary
in the setting of~\cite{LR}.
It can be replaced by the energy with respect to the restriction to $\R\!\times\!SV$
of the symplectic form on
$$\P\big((SV\!\times_{S^1}\!\C)\times\C)\approx\P_XV$$
given~by
$$\wh\om_{\ep}=\pi_V^*\om- \ep\,\nd\bigg(\frac{\al}{1\!+\!\rho^2}\bigg),
\qquad\hbox{where}\quad
\qquad \rho\big(\big[[x,c_1],c_2\big]\big)=\big|c_1/c_2\big|^2\,,$$
with $\ep\!>\!0$ small
(if $\ep$ is not sufficiently small, $\widehat\om_{\ep}$ may be degenerate).
If the target is~$\oXV$ or~$\oYV$, instead of $\R\!\times\!SV$,
the restrictions of the symplectic forms~$\om_X$ and~$\om_Y$ can be used.
This is also related to the reason why the sup energy of the maps appearing
in~\cite{LR} is finite (for which no explanation is provided).\\

\noindent
The convergence topology arising from \cite[Section~3.2]{LR} involves 
pulling the domains of the stable maps apart via long cylinders
on which an $\widehat\om_{\ep}$-type energy disappears.
Along with~\eref{LRconv_e1} and~\eref{LRconv_e2}, this leads to analogues 
of~\eref{mula_e} and~\eref{NdExp_e2}.
The gluing construction on the domains in~\cite{LR} is the same as on the target
in~\eref{cZlaLR_e}
and is parametrized by pairs $(r,\th)\!\in\!\R^+\!\times\!S^1$ at each node
with $r\!\lra\!\i$ with $\mu\!=\!\ne^{-r-\fI\th}$.
In the notation around Remark~\ref{IPsumSec3_rmk}, if
$$x\big(u(\ne^{t+\fI\th'})\big)\approx \ne^{t-r_X+\fI(\th'-\th_X)}
\quad\hbox{as}~~t\lra-\i,\quad
y\big(u(\ne^{-t+\fI\th'})\big)\approx \ne^{-(t-r_Y)+\fI(\th'-\th_Y)}
\quad\hbox{as}~~t\lra\i,$$
then the relation between the gluing parameters for the target~$(a_k,\vt_k)$ 
in~\eref{cZlaLR_e} and the domains of the converging maps is described~by
\BE{GlueCOnd_e}\lim_{k\lra\i}\big((a_k\!+\!\fI\vt_k)-s(r_k\!+\!\fI\th_k)\big)
=r_X+r_Y+\fI(\th_X\!+\!\th_Y)\in \C/2\pi\fI\Z.\EE
This is the analogue of~\eref{mula_e} in the setup of~\cite{LR}.\\

\noindent
In both approaches, it is necessary to consider sequences  $u_k\!:\Si\!\lra\!\cZ_{\la_k}$
that limit to maps $u\!:\Si'\!\lra\!\cZ_0$ with $\Si_V'\!\neq\!\eset$;
see the notation above Remark~\ref{IPsumSec3_rmk}.
Such limits are considered briefly at the top of page~1003 in~\cite{IPsum},
with an incorrect conclusion;
see Section~\ref{Smat_subs} for more details.
On the other hand, the approach of \cite[Section~3.2]{LR} can be corrected 
to show that any such limit lies in a moduli space 
$\ov\M_{g,k}(X\!\cup_V\!Y,A)$ defined in Section~\ref{RelInv_sub0b},
whenever the almost complex structures~$J_{\la}$ satisfy the more restrictive conditions
of~\cite{LR}.
The condition~\eref{GlueCOnd_e} then extends as a relation between smoothing parameters
for the target and the domain at each transition between different levels of 
the target space; see Section~\ref{preglue_sec}.

\subsection{Pregluing: \cite[Section~6]{IPsum}, \cite[Section~4.2]{LR}}
\label{preglue_sec}

\noindent
The pregluing steps of gluing constructions typically involve constructing
approximately $J$-holomorphic maps and defining Sobolev spaces
suitable for studying their deformations.
The former is done in essentially the same way in~\cite{IPsum} and~\cite{LR};
the latter is done very differently.\\

\noindent
For $A\!\in\!H_2(X;\Z)$, $\chi\!\in\!\Z$, $k,\ell\!\in\!\Z^{\ge0}$, and  
a tuple $\bs\!=\!(s_1,\ldots,s_{\ell})\!\in\!(\Z^+)^{\ell}$ 
 satisfying~\eref{bsumcond_e}, let
$$\M_{\chi,k;\bs}^{V*}(X,A)\subset \wt\M_{\chi,k;\bs}^V(X,A)$$
denote the subspace of morphisms  with all components contained in~$X$.
For each $i\!=\!1,\ldots,\ell$, let 
$$L_i\lra \wt\M_{\chi,k;\bs}^V(X,A)$$
be the universal tangent line bundle at the $i$-th relative marked point 
(i.e.~$(k\!+\!i)$-th marked point overall).
By~\eref{FHS_e}, every marked map representing an element of 
$\M_{\chi,k;\bs}^{V*}(X,A)$
has a well-defined $s_i$-th derivative in the normal direction to~$V$
at the $i$-th relative marked point.
By~\eref{NdExp_e}, these derivatives induce a nowhere zero section
of the line bundle
$$ L_i^{*\otimes s_i}\otimes \ev_i^{V*}\cN_XV \lra \M_{\chi,k;\bs}^{V*}(X,A)\,,$$
which we denote by~$\fD_X^{(s_i)}$.\\

\noindent
If \hbox{$u\!:\Si'\!\lra\!\cZ_0$} is the limit of a sequence 
of $(J_{\cZ},\fj)$-holomorphic maps $u_k\!:\Si\!\lra\!\cZ_{\la_k}$, with $\la_k\!\in\!\De^*$,
and has no component mapped into~$V$, then $u$ determines an element~of
\begin{equation*}\begin{split}
\M_{\chi,k;\bs}^{V*}(\cZ_0,C) \equiv
\bigsqcup_{\begin{subarray}{c}\chi_X+\chi_Y=\chi\\ k_X+k_Y=k\end{subarray}} 
\bigsqcup_{A_X\#_VA_Y=C} \hspace{-.2in}
 \big\{(u_X,u_Y)\!\in\!
\M_{\chi_X,k_X;\bs}^{V*}(X,A_X)\!\times\!\M_{\chi_Y,k_Y;\bs}^{V*}(Y,A_Y)\!:\,&\\
\ev^V(u_X)\!=\!\ev^V(u_Y)&\big\}
\end{split}\end{equation*}
for some $C\!\in\!H_2(X\!\#_V\!Y;\Z)/\cR_{X,Y}^V$, 
$\bs\!=\!(s_1,\ldots,s_{\ell})$, and $\ell\!\in\!\Z^{\ge0}$.
Denote by
$$\pi_X,\pi_Y\!: \M_{\chi,k;\bs}^{V*}(\cZ_0,C) \lra
\bigsqcup_{\chi_X,k_X,A_X}\!\!\!\!\!\!\!\M_{\chi_X,k_X;\bs}^{V*}(X,A_X)\,,
\bigsqcup_{\chi_Y,k_Y,A_Y}\!\!\!\!\!\!\!\M_{\chi_Y,k_Y;\bs}^{V*}(X,A_Y)$$
the projection maps.
In \cite[Sections~6-9]{IPsum}, a gluing construction is carried out on 
the $\lr\bs$-fold cover 
\BE{wtMgluedfn_e}\wt\M_{\chi,k;\bs}^{V*}(\cZ_0,C)_{\la}
\equiv\big\{(\mu_{X;i}\!\otimes\!\mu_{Y;i})_{i=1,\ldots,\ell}
\in\bigoplus_{i=1}^{\ell}\pi_X^*L_i\!\otimes\!\pi_Y^*L_i\!:~
\fD_X^{(s_i)}\mu_{X;i}^{\otimes s_i}\otimes 
           \fD_Y^{(s_i)}\mu_{Y;i}^{\otimes s_i}=\la~\forall\,i\big\}\EE
of $\M_{\chi,k;\bs}^{V*}(\cZ_0,C)$, 
with the last equality viewed via the identification~\eref{cNpair_e}.
This cover accounts for the convergence property~\eref{mula_e}.\\ 

\noindent
Fix  a smooth map $\be\!:\R\!\lra\![0,1]$ so that 
$$\be(r)=\begin{cases}1,&\hbox{if}~r\!\le\!1;\\
0,&\hbox{if}~r\!\ge\!2.
\end{cases}$$
For each $\ep\!>\!0$, let $\be_{\ep}(r)\!=\!\be(\ep^{-1}r)$. 
Denote by $\na$ the Levi-Civita connection of the metric 
\hbox{$g_{\cZ}\!=\!\om_{\cZ}(\cdot,J_{\cZ}\cdot)$}
and by~$\na^{\C}$ the associated $J_{\cZ}$-linear connection.
Using the $\na$-geodesics from~$q$, we identify the ball of injectivity 
radius of~$g_{\cZ}|_V$ in~$T_qV$ with a neighborhood~$W_q$ of~$q$ in~$V$. 
Using the parallel transport with respect to~$\na^{\C}$ along the 
$\na$-geodesics from~$q$, we identify $\cN_XV$ and~$\cN_YV$ 
with $W_q\!\times\!\cN_XV|_q$ and $W_q\!\times\!\cN_YV|_q$, respectively.
The proof of \cite[Theorem~2.2]{FHS} then ensures that the map~$\Phi$
in~\eref{FHS_e} can be chosen to depend smoothly on~$u$.\\

\noindent
For $\mu\!\in\!\wt\M_{\chi,k;\bs}^{V*}(\cZ_0,C)_{\la}$, an approximately
$(J_{\cZ},\nu)$-holomorphic map $u_{\mu}\!:\Si_{\mu}\!\lra\!\cZ_{\la}$ 
can be constructed as follows.
Given an element $([u_X,u_Y])$ of $\M_{\chi,k;\bs}^{V*}(\cZ_0,C)$, with 
$u_X\!:\Si_X\!\lra\!X$ and $u_Y\!:\Si_Y\!\lra\!Y$, 
denote by~$\Si_0$ the Riemann surface obtained by identifying 
the $i$-th relative marked point $z_i\!\in\!\Si_X$ with
the $i$-th relative marked point $w_i\!\in\!\Si_Y$ for all $i\!=\!1,\ldots,\ell$
and by $\Si_0^*\!\subset\!\Si_0$ the complement of the nodes.
Define
$$u_0\!:\Si_0\lra \cZ_0 \qquad\hbox{by}\quad
u_0(z)=\begin{cases}u_X(z),&\hbox{if}~z\!\in\!\Si_X;\\
u_Y(z),&\hbox{if}~z\!\in\!\Si_Y\,.
\end{cases}$$
Given $i\!=\!1,\ldots,\ell$,  let~$z$ and~$w$ be coordinates on 
$\Si_{X;i}\!\subset\!\Si_X$ and~$\Si_{Y;i}\!\subset\!\Si_Y$
centered at~$z_i$ and~$w_i$, respectively, and covering the unit ball in~$\C$.
For each sufficiently small $\mu\!\equiv\!(\mu_i)_{i=1,\ldots,\ell}$ in~$\C^{\ell}$, 
we define 
\begin{gather*}
\Si_{\mu;i}\equiv\big\{(z,w)\!\in\!\C^2\!:\,zw\!=\!\mu_i,~|z|,|w|\!<\!1\big\} 
\quad\forall~i=1,\ldots,\ell, \\
\Si_0^*(\mu)=\Si_0-\bigcup_{i=1}^{\ell}\big(
\{z_i\!\in\!\Si_X\!:\,|z|\!\le\!|\mu_i|^{\frac12}\}
\cup\{w_i\!\in\!\Si_Y\!:\,|w|\!\le\!|\mu_i|^{\frac12}\}\big),\\
\Si_{\mu}=\bigg(\Si_0^*(\mu)\sqcup \bigsqcup_{i=1}^{\ell}\Si_{\mu;i}\bigg)\!\!\Big/\!\!\sim\,
\quad
(z,w)\sim\begin{cases}
z\in\Si_X,&\hbox{if}~|z|>|w|;\\
w\in\Si_Y,&\hbox{if}~|z|<|w|;
\end{cases}
~~\begin{aligned}
\forall~&(z,w)\in\Si_{\mu;i},\\
&i=1,\ldots,\ell.
\end{aligned}
\end{gather*}
For each $i\!=\!1,\ldots,\ell$ and $\ep\!>\!0$, we also define
\begin{gather*}
\vr_{\mu;i},\be_{\mu;i}\!:\Si_{\mu;i}\lra\R \quad\hbox{by}\quad
\vr_{\mu;i}(z,w)=\sqrt{|z|^2\!+\!|w|^2}, \quad
\be_{\mu;i}(z,w)=\be_{|\mu_i|^{\frac14}}\big(\vr_{\mu;i}(z,w)\big)\,;\\
\Si_{\mu;i}(\ep)=\big\{(z,w)\!\in\!\Si_{\mu;i}\!:\,
\vr_{\mu;i}(z,w)<\ep\big\}.
\end{gather*}
Let $\ep\!>\!0$ be such that the restrictions of $u_X$ and $u_Y$ to
$$\Si_{X;i}(\ep)\equiv\big\{z\!\in\!\Si_{X;i}\!:\,|z|\!<\!\ep\big\}
\qquad\hbox{and}\qquad
\Si_{Y;i}(\ep)\equiv\big\{w\!\in\!\Si_{Y;i}\!:\,|w|\!<\!\ep\big\}$$
respectively, satisfy~\eref{FHS_e} for some $\Phi\!=\!\Phi_{X;i},\Phi_{Y;i}$.
In particular, $u_X(\Si_{X;i}(\ep))$ and $u_Y(\Si_{Y;i}(\ep))$ are contained
in the open subset~$\cZ_V$ of~$\cZ$ defined in~\eref{cZdfn_e}
and in the total spaces of~$\cN_XV$ and~$\cN_YV$
over the geodesics ball~$W_{q_i}$, where $q_i\!=\!u_X(z_i)\!=\!u_Y(w_i)$.
Thus, there exist smooth functions
\begin{gather*}
u_{X;i}\!:\Si_{X;i}(\ep)\lra T_{q_i}V \quad\hbox{and}\quad 
u_{Y;i}\!:\Si_{Y;i}(\ep)\lra T_{q_i}V \qquad\hbox{s.t.}\\
u_X(z)=\big(u_{X;i}(z),\Phi_{X;i}(z)z^{s_i}\big) ~~~\forall\,z\!\in\!\Si_{X;i}(\ep)\,,
\quad 
u_Y(w)=\big(u_{Y;i}(w),\Phi_{Y;i}(w)w^{s_i}\big)  ~~~\forall\,w\!\in\!\Si_{Y;i}(\ep),
\end{gather*}
under the identifications of the previous paragraph.\\

\noindent
For any $\mu\!\in\!\C^{\ell}$ sufficiently small, let  
\begin{alignat*}{2}
\Phi_{\mu;X;i}\!:\Si_{\mu;i}(\ep)&\lra\cN_XV|_{q_i}\,, &\quad
\Phi_{\mu;X;i}(z)&=\Phi_{X;i}(0)\bigg(\be_{\mu;i}(z,w)
+(1\!-\!\be_{\mu;i}(z,w))
\frac{\Phi_{X;i}(z)}{\Phi_{X;i}(0)}\bigg)z^{s_i}\,,\\
\Phi_{\mu;Y;i}\!:\Si_{\mu;i}(\ep)&\lra\cN_YV|_{q_i}\,, &\quad
\Phi_{\mu;Y;i}(z)&=\Phi_{Y;i}(0)\bigg(\be_{\mu;i}(z,w)
+(1\!-\!\be_{\mu;i}(z,w))
\frac{\Phi_{Y;i}(w)}{\Phi_{Y;i}(0)}\bigg)w^{s_i}\,.
\end{alignat*}
With $\la\!=\!\mu^{s_i}\Phi_{X;i}(0)\Phi_{Y;i}(0)$,
we define $u_{\mu}\!:\Si_{\mu}\!\lra\!\cZ_{\la}$~by requiring that 
\BE{umudfn_e}u_{\mu}(z,w)=\begin{cases}
\big((1\!-\!\be_{\mu;i}(z,w))u_{X;i}(z),\Phi_{\mu;X;i}(z),\frac{\la}{\Phi_{\mu;X;i}(z)}\big),
&\hbox{if}~|z|\!\ge\!|w|;\\
\big((1\!-\!\be_{\mu;i}(z,w))u_{Y;i}(w),\frac{\la}{\Phi_{\mu;Y;i}(w)},\Phi_{\mu;Y;i}(w)\big),
&\hbox{if}~|z|\!\le\!|w|;
\end{cases}\EE
for all $(z,w)\!\in\!\Si_{\mu;i}(\ep)$ and $i\!=\!1,\ldots,\ell$
and extending as~$u$ over the complement of~$\Si_0(\ep/2)$ in~$\Si_0^*$.\\

\noindent
The relevant Sobolev norms for sections of $u_{\mu}^*T\cZ_{\la}$ and for $(0,1)$-forms  
with values in $u_{\mu}^*T\cZ_{\la}$ are defined by 
the $m\!=\!1$ case of \cite[(6.10)]{IPsum} and
the $m\!=\!0$ case of \cite[(6.11)]{IPsum}, respectively,
with $p\!>\!2$ in \cite[(6.9)]{IPsum}.
The failure of the map $u_{\mu}\!:\Si_{\mu}\!\lra\!\cZ_{\la}$ to be 
$(J_{\cZ},\nu)$-holomorphic is described~by
\BE{nonJnu_e} \big\|\{\dbar_{J_{\cZ}}\!-\!\nu\}(u_{\mu})\big\|_{\mu,0}
\le C|\mu|^{\frac16}\le C|\la|^{\frac{1}{6|\bs|}}\,,\EE
with $C$ independent of $\mu$, but depending continuously on the projection of~$\mu$
to $\M_{\chi,k;\bs}^{V*}(\cZ_0,C)$;
this can be deduced from the proof of \cite[Lemma~6.9]{IPsum}.

\begin{rmk}\label{IPsumSec6_rmk}
The pregluing construction done in the first half of \cite[Section~6]{IPsum}
is not needed for the purposes of \cite[Lemma~6.8(a)]{IPsum}, 
which is about properties of moduli spaces of maps into the singular fiber~$\cZ_0$.
Based on the proof, the wording of \cite[Lemma~6.8(a)]{IPsum} is incorrect:
{\it for every $(f,C)\!\in\!\cK_{\de}$} should appear after $\!\le\!\ep$
and again after $\!\ge\!c$
(so that $\rho_0$ in the first part and $c$ in the second part are independent
of~$(f,C)$); there is a similar problem with the wording of \cite[Lemma~6.8(d)]{IPsum}.
\cite[Lemma~6.8(a)]{IPsum} also has nothing to do with $c_i,c_i'$.
The proof of the first part of \cite[Lemma~6.8(b)]{IPsum} is not complete because
\cite[Lemma~5.1]{IPsum} is about finite cylinders, not wedges of disks.
The pregluing setup in \cite[Section~6]{IPsum} implicitly assumes that 
the domains of the nodal maps are stable, since it is based on \cite[Section~4]{IPsum}.
The stability assumption need not hold in general; it is not necessary though.
The domains can be stabilized as in \cite[Remark~1.1]{IPsum}, but not across 
an entire stratum of maps;
in particular, \cite[Observation~6.7]{IPsum} may not always apply.
The definition of the norms in \cite[Section~6]{IPsum} makes no mention that $p\!>\!2$,
which is necessary for the control of the $C^0$-norm.
The statement about uniform $C^0$-bound in \cite[Remark~6.6]{IPsum} needs a justification, 
since the domains~$C_{\mu}$ change (which is standard)
and the metric on the targets~$\cZ_{\la}$ collapses (which is not standard).
Without a local trivialization of the normal bundle, 
the formula \cite[(6.4)]{IPsum} does not make sense.
The crucial bound of \cite[Lemma~6.9]{IPsum} is incorrect.
Its proof neglects to consider the first two components of $F\!-\!f$ 
with respect to the decomposition in \cite[(6.14)]{IPsum};
contrary to the statement immediately after \cite[(6.14)]{IPsum},
these two components are not zero, as $f$ does not involve~$\be$.
However, the weaker bound of~\eref{nonJnu_e} suffices.
Other, fairly minor misstatements in \cite[Section~6]{IPsum} include
\begin{enumerate}[label={},topsep=-5pt,itemsep=-5pt,leftmargin=*]
\item\textsf{p968, above (6.1):} if $f_0$ is in the limit of a sequence, then (6.1) holds;
\item\textsf{p968, line -4:} $C$ \to $C_1$, with the notation as before;
\item\textsf{p969, line 2:} $\fL_k$ is used for $L_k$ in (4.3);
\item\textsf{p969, par.~above Dfn 6.2:} not {\it from} (4.2) and (5.4); {\it as in} (5.11);
\item\textsf{p970, 1st par.:} there are no (a) and (b) in (2.6) or (6.4);
\item\textsf{p970, after (6.5):} were \to where;
\item\textsf{p971, line 11:} $\sqrt{|\mu_k|}$ \to $\sqrt{2|\mu_k|}$;
\item\textsf{p971, above (6.8):} geodesics and parallel transport with respect to what connection?
\item\textsf{p971, below (6.8):}  average value zero only for the horizontal part~$\xi^V$;
{\it as in} (5.11);
\item\textsf{p971, (6.9):} $k\!=\!m$ below; only $k\!=\!1$ is used; 2 can be absorbed into~$\de$;
\item\textsf{p971, below (6.9):} there are no {\it coordinates} in (5.3); 
(4.5) is closer;
\item\textsf{p971, Dfn 6.5:} there is no triple in (6.8);
\item\textsf{p972, top:} $k$ \to $h$; not just {\it Finsler} metric;
\item\textsf{p972, Lemma 6.8(a), line 1:} dist$(f(A(\rho_0)),V)$ 
\to $\max_{z\in A(\rho_0)}$dist$(f(z),V)$;
\item\textsf{p973, lines 3,8:} $p_n\!\in\!C_n$; $p_n\!\in\!C_n\setminus A(\rho_0)$;
\item\textsf{p974, (6.12):}  $|\nu_F\!-\!\nu_f|$ should not be multiplied by $|dF|$, 
similarly to~(6.15);
\item\textsf{p974, (6.14):} $\be$ \to $\be_{\mu}$;
\item\textsf{p974, below (6.14):} $(J_F\!-\!J_f)\!\circ\!dF$ \to $(J_F\!-\!J_f)\!\circ\!df$.\\
\end{enumerate}
\end{rmk}

\noindent
The approximately $J$-holomorphic map~$u_{\mu}$ in~\eref{umudfn_e}
is constructed in the same way at the bottom of page~192 in~\cite{LR}.
Because of the regular nature of the almost complex structures~$J_X$ and~$J_Y$ 
used in~\cite{LR} on neighborhoods of~$V$ in~$X$ and~$Y$,
the gluing approach of~\cite{LR} extends to maps into $\oXYm$ with $m\!\ge\!1$.
As  explained at the end of this section, 
the gluing of the target spaces in~\eref{cZlaLR_e} extends to~$\oXYm$ as~well.
This extension is parametrized by the tuples 
\BE{ufauvt_e}(\ufa,\uvt)\equiv 
\big(\fa_0,\ldots,\fa_m,\vt_0,\ldots,\vt_m\big)
\in (\R^+)^{m+1}\times (\R/2\pi\Z)^{m+1}\EE
so that $\cZ_{\ufa,\uvt}\!=\!\cZ_{|\ufa|,|\uvt|}$ as far as the almost complex structures
are concerned, where 
$$|\ufa|=\fa_0+\ldots+\fa_m\,, \qquad
|\uvt|=\vt_0,\ldots,\vt_m\,.$$
In the next paragraph, we define the space of gluing parameters,
generalizing~\eref{wtMgluedfn_e} from the $m\!=\!0$ case.\\

\noindent
Given $m\!\in\!\Z^{\ge0}$, let $\C_{m+1}$ denote the quotient of~$\C^{m+1}$ 
by the $(\C^*)^m$-action
$$(c_1,\ldots,c_m)\cdot(\la_0,\ldots,\la_m)
=\big(c_1^{-1}\la_0,c_1c_2^{-1}\la_1,\ldots,c_{m-1}c_m^{-1}\la_{m-1},c_m\la_m\big).$$
The map $(\la_0,\ldots,\la_m)\!\lra\!\la_0\!\ldots\!\la_m$ then descends to~$\C_{m+1}$.
For each $\la\!\in\!\C$, let $\C_{m+1;\la}\!\subset\!\C_{m+1}$ be preimage of~$\la$.
Let $u\!:\Si\!\lra\!\XYm$ be a representative of an element of $\ov\M_{g,k}(X\!\cup_V\!Y,A)$ 
and $i\!=\!1,\ldots,\ell$ be an index set for its nodes on the divisors 
$$V\subset X,Y  \qquad\hbox{and}\qquad 
\{r\}\!\times\!\P_{X,0}V,\{r\}\!\times\!\P_{X,\i}V\subset\{r\}\times\P_XV\,.$$
For each such $i$, let $|i|\!=\!0$ if the node lies on $V\!\subset\!X$
and $|i|\!=\!r$ if it lies on $\{r\}\!\times\!\P_{X,0}V$.
Denote by $s_i\!\in\!\Z^+$ the order of contact with the divisor 
of either of the two branches at the $i$-th node,
by $L_{u;i}$ the line of smoothings of this node 
(denoted by $\pi_X^*L_i\!\otimes\!\pi_Y^*L_i$ in~\eref{wtMgluedfn_e}),
and by $\fD_i^{(s)}\!\in\!L_{u;i}^*$ the $s_i$-th derivative
(denoted by $\fD_X^{(s_i)}\!\otimes\!\fD_Y^{(s_i)}$ in~\eref{wtMgluedfn_e}).
The admissible relative smoothing parameters at~$u$ for maps to~$\cZ_{\la}$ 
are the elements of the space
\begin{equation*}\begin{split}
L_{u;\la}=\big\{(\mu_i)_{i=1,\ldots,\ell}\!\in\!
\bigoplus_{i=1}^{\ell}L_{u;i}\!:~&
\exists[\la_0,\ldots,\la_m]\!\in\!\C_{m+1;\la}
~\hbox{s.t.}~\fD_i^{(s_i)}(\mu_i)=\la_{|i|}~\forall\,i\big\}.
\end{split}\end{equation*}
While $\fD_i^{(s_i)}$ depends on the choice of representative~$u$ for 
$[u]\!\in\!\ov\M_{g,k}(X\!\cup_V\!Y,A)$,
$L_{u;\la}$ is determined by~$[u]$ and the choice of ordering of the relative nodes of~$u$,
since the action of~$(\C^*)^m$ on~$\C^{m+1}$ defined above corresponds to the action
of~$(\C^*)^m$ on~$X\!\cup_V^mY$.\\

\noindent
We now define the spaces $\cZ_{\ufa,\uvt}$, with $(\ufa,\uvt)$ as in~\eref{ufauvt_e}
and $\fa_0$ and $\fa_m$ sufficiently large, and identify them with 
$\cZ_{|\ufa|,|\uvt|}$; see~\eref{cZlaLR_e}.
For each $r\!=\!1,\ldots,m$, let
\begin{alignat*}{2}
|\fa|_r^-&=\fa_0\!+\!\ldots\!+\!\fa_{r-1}, &\qquad 
|\fa|_r^+&=\fa_r\!+\!\ldots\!+\!\fa_m\,,\\
|\vt|_r^-&=\vt_0\!+\!\ldots\!+\!\vt_{r-1}, &\qquad 
|\vt|_r^+&=\vt_r\!+\!\ldots\!+\!\vt_m\,.
\end{alignat*}
We assume that $m\!\in\!\Z^+$.
Let
$$\cZ_{\ufa,\uvt}=\bigg(X_{\fa_0}\sqcup
\bigsqcup_{r=1}^m\{r\}\!\times\![-\frac34\fa_r,\frac34\fa_{r-1}]\!\times\!SV
\sqcup Y_{\fa_m}\bigg)\Big/\sim\,,$$
with the equivalence relation defined by
\begin{alignat*}{2}
(1,a,x)&\sim \big(a\!-\!\fa_0,\ne^{-\fI\vt_0}x\big)\subset X_{\fa_0}
&\qquad&\forall~4a\in(\fa_0,3\fa_0),\\
(r,a,x)&\sim \big(r\!+\!1,a\!+\!\fa_r,\ne^{\fI\vt_r}x)
&\qquad&\forall~4a\in(-\fa_r,-3\fa_r),~r\!=\!1,\ldots,m\!-\!1,\\
(m,a,x)&\sim \big(a\!+\!\fa_m,\ne^{\fI\vt_m}x)\subset Y_{\fa_m}
&\qquad&\forall~4a\in(-\fa_m,-3\fa_m)\,.
\end{alignat*}
These identifications respect the almost complex structure $\oJ$
and thus induce an almost complex structure on~$\cZ_{\ufa,\uvt}$.
The bijection $\cZ_{\ufa,\uvt}\!\lra\!\cZ_{|\ufa|,|\uvt|}$ given~by
\begin{gather*}
x\lra \begin{cases}
x\!\in\!X_{|\fa|},&\hbox{if}~x\!\in\!X_{\fa_0};\\
x\!\in\!Y_{|\fa|},&\hbox{if}~x\!\in\!Y_{\fa_m}; 
\end{cases} \quad
(r,a,x)\lra \begin{cases}
(a\!-\!|\fa|_r^-,\ne^{-\fI|\vt|_r^-}x)\in X_{|\fa|},
&\hbox{if}~4a\ge|\fa|_r^-\!-\!3|\fa|_r^+;\\
(a\!+\!|\fa|_r^+,\ne^{\fI|\vt|_r^+}x)\in Y_{|\fa|},
&\hbox{if}~4a\le3|\fa|_r^-\!-\!|\fa|_r^+;
\end{cases} 
\end{gather*}
is well-defined on the overlaps and identifies the two spaces with 
their almost complex structures, as needed for the general gluing construction.
However, the just described construction and identification do not fit 
with the more general almost complex structures of~\cite{IPsum},
as they are not regularized on neighborhoods of~$V$ in~$X$ and~$Y$. 

\begin{rmk}\label{LRglue_rmk}
The only gluing constructions described in \cite{LR} involve smoothing a single node.
In particular, there is no mention of the above identification
$\cZ_{\ufa,\uvt}\!=\!\cZ_{|\ufa|,|\uvt|}$, which is needed to make sense of
the target of the smoothed out maps, or of 
the space~$L_{u;\la}$ of admissible smoothings.
\end{rmk}

\subsection{Uniform estimates: \cite[Sections 7,8]{IPsum}, \cite[Section 4.2]{LR}}
\label{IPunif_subs}

\noindent
Gluing constructions in GW-theory typically require 
defining linearizations~${\bf D}_{u_{\mu}}$ of the $\dbar$-operator 
at the approximately $J$-holomorphic maps~$u_{\mu}$
(these are not unique away from $J$-holomorphic maps)
and establishing  uniform bounds on these linearizations and their right inverses.
Establishing the former is typically fairly straightforward,
with appropriate choices of the linearizations and the Sobolev norms
on their domains and targets.
Uniform bounds on the right inverses can be obtained either 
by bounding the eigenvalues of the Laplacians ${\bf D}_{u_{\mu}}{\bf D}_{u_{\mu}}^*$ 
from below, 
by a direct computation for explicit right inverses, 
or by establishing a uniform elliptic estimate on~${\bf D}_{u_{\mu}}$
with suitable Sobolev norms.
As stated at the beginning of \cite[Section~8]{IPsum}, 
such uniform Fredholm bounds are the key analytic step in the proof.
As we explain below, the argument in~\cite{IPsum} has several material, consecutive errors, 
i.e.~with each sufficient to break~it.\\

\noindent
The approach taken in \cite[Sections~7,8]{IPsum} is to bound 
the eigenvalues of the Laplacians ${\bf D}_{u_{\mu}}{\bf D}_{u_{\mu}}^*$ from below. 
With the definitions at the beginning of \cite[Section~7]{IPsum},
the index of~${\bf D}_u$ (denoted by~${\bf D}_f$ in~\cite{IPsum})
is generally larger than the index of~${\bf D}_{u_{\mu}}$ (denoted by ${\bf D}_F$),
as the former does not see the order of contact.
In particular, ${\bf D}_u$ does not fit into any kind of continuous Fredholm setup, though 
by itself this issue need not be material as far as 
the estimates on~${\bf D}_{u_{\mu}}$ are concerned.\\

\noindent
In the displayed expression above \cite[(7.5)]{IPsum},
$\lr{\ze_1,\ze_2}$ has two different meanings in the same equation.
This equation defines an inner-product only on the first part of 
the domain of ${\bf D}_{u_{\mu}}\!=\!{\bf D}_F$ and so does not define
${\bf D}_{u_{\mu}}^*$.
The explicit formula for the first component of~${\bf D}_F^*$ in \cite[(7.5)]{IPsum}
cannot be correct because it does not satisfy the average value condition 
on the elements of~$L_{1;\bs;0}$ for $F\!=\!u_{\mu}$ and even more conditions for $f\!=\!u$
(the average value condition is described above \cite[(7.1)]{IPsum}).
This formula has to be corrected by an element of the $L^2$-orthogonal complement 
of $L_{1;\bs,0}(u_{\mu}^*T\cZ_{\la})$ in $L_{1;\bs}(u_{\mu}^*T\cZ_{\la})$;
unfortunately, the orthogonal complement does not lie in $L_{1;\bs}(u_{\mu}^*T\cZ_{\la})$. 
Thus, \cite[Proposition~7.3]{IPsum} says {\it nothing} about the uniform boundness 
of ${\bf D}_F^*\!=\!{\bf D}_{u_{\mu}}^*$.
Without taking out the average, the norms of \cite[Definition~6.5]{IPsum}
would not be finite over~$f$, as used in~\cite{IPsum} to obtain uniform bounds over~$F$.

\begin{rmk}\label{IPsumSec7_rmk}
The crucial Sections~7 and~8 in~\cite{IPsum} are written in a confusing way
with the same notation used for different objects, including in the same
equation at times.
With the definition as in \cite[(1.11),(7.2)]{IPsum},
the image of the operator in \cite[(7.4)]{IPsum} would not be in
the $(0,1)$-forms because of the $F_*h$ term (which is not a $(0,1)$-form
if $F$ is not $J$-holomorphic; $F_*h$ needs to be replaced by $\prt F\!\circ\!h$).
Since $F$ is defined on a smooth domain, the operators in \cite[(7.4),(7.6)]{IPsum}
are Fredholm because they differ from real Cauchy-Riemann operators
by finite-dimensional pieces; 
uniform boundness in $\mu$ as in \cite[Proposition~7.3]{IPsum} is a separate issue. 
With a reasonable interpretation of the inner-product above \cite[(7.5)]{IPsum}, 
the last component of~${\bf D}_F^*$ in \cite[(7.5)]{IPsum} is missing~$\frac12$.
The expression for~$A\eta$ in \cite[(7.5)]{IPsum} cannot be correct either,
since it should produce a tuple indexed by the relative marked points, not a sum.
Furthermore, this expression should have more terms, 
as the proof of \cite[Proposition~7.3]{IPsum} suggests,
and should depend on the vertical part of~$\eta$ as well.
However, the exact forms of the second and last components of~${\bf D}_F^*$
do not matter as long as they are uniformly bounded;
this is the case because the restrictions of~${\bf D}_F$
to the second and last components in \cite[(7.4)]{IPsum} are uniformly bounded.
The bound on $\na\nu$ at the beginning of the proof of \cite[Proposition~7.3]{IPsum} is not obvious,
because $\na$ there denotes the Levi-Civita connection with respect to
the metric on~$\cZ_{\la}$, which degenerates as $\la\!\lra\!0$;
this bound depends on the requirement on the second fundamental form
in \cite[Definition~2.2]{IPsum}. 
Other, fairly minor misstatements in \cite[Section~7]{IPsum} include
\begin{enumerate}[label={},topsep=-5pt,itemsep=-5pt,leftmargin=*]
\item\textsf{p975, Section 7, line 2:} there are no Sobolev spaces in Definition 6.5;
\item\textsf{p975, line -5:}  Lemma~7.3 \to Proposition~7.3; same on p976, line~5;
\item\textsf{p976, 2nd paragraph:} there is nothing about generic $\de$ or Fredholm
in Proposition~7.3; there seems to be no connection with Lemma~3.4 at~all;
\item\textsf{p976, line -3:} no such verification in Lemma 3.4;
\item\textsf{p977, lines 1,2:} there is no stabilization in Observation 6.7;
\item\textsf{p977, lines 4,6:} $ev$ \to ev;
\item\textsf{p977, Lemma~7.2:}  $\ze$ should be a vector field along~$F$, not on a chart;
\item\textsf{p978, line 7:} $X$ already denotes a symplectic manifold; 
\item\textsf{p978, line -10:} $L$ \to $L_F$;
\item\textsf{p978, line -9:} no use of Lemma~7.2 in addition to (7.7);
\item\textsf{p978,  line -7:} with this description,  
$\wt\na$ and $\na$ are connections in different spaces;
\item\textsf{p979,  line 5:} there is no $h^v$ or $\ti{x}$ in Definition~6.4.
\end{enumerate}
\end{rmk}

\noindent
There is a {\it crucial} sign error in the proof of \cite[Proposition~8.2]{IPsum}:
the two terms on the second line of \cite[(8.7)]{IPsum},
a Gauss curvature equation written in a rather unusual way, should have 
the opposite signs;
see \cite[Theorem~13.38]{Lee}, which uses
the same (more standard) sign convention for the curvature tensor~$R$ 
(defined at the beginning of \cite[Section~13.2]{Lee}).
Thus, the minus sign in \cite[(8.8)]{IPsum} should be a plus, which destroys the argument.
Conceptually, it seems implausible to have a negative sign in  \cite[(8.8)]{IPsum},
because it should allow to make the right-hand side of  \cite[(8.6)]{IPsum} negative
by taking a local solution of $L_F^*$ and sending~$\mu$ and~$\la$ to~0.\\

\noindent
The proof of \cite[Lemma~8.4]{IPsum} is also incomplete.
At the very bottom of page~984 in~\cite{IPsum}, it is stated that 
${\bf D}_0^*\eta\!=\!{\bf D}_u^*\eta$ lies in the image of 
the map~${\bf D}_0^*$ in \cite[(7.6)]{IPsum}.
However, it had not been shown that the limiting $(0,1)$-form~$\eta$
lies in the domain of~${\bf D}_0^*$, which involves bounding the first derivative
over the entire domain.
The preceding argument shows that the $L^2_1$-norm of~$\eta$ outside of the nodes
of the domain is bounded, but that does not imply that the $L^2_1$-norm of~$\eta$
is bounded everywhere.
Furthermore, since the metrics on the targets~$\cZ_{\la}$ degenerate, 
a proof is needed to show that the elliptic estimate used in 
the proof of \cite[Lemma~8.5]{IPsum} is uniform;
it is not so clear that it~is.

\begin{rmk}\label{IPsumSec8_rmk}
The bound on $\na J$ on line~10 on page~981 of \cite{IPsum} is not obvious,
because $\na$ there denotes the Levi-Civita connection with respect to
the metric on~$\cZ_{\la}$, which degenerates as $\la\!\lra\!0$;
this bound depends on the requirement on the second fundamental form
in \cite[Definition~2.2]{IPsum}. 
Since the metric on the horizontal tangent space of $\cN_{\cZ}V$ varies 
in the normal direction (according to the bottom half on~p951), 
the formula for~$g_{\la}$ on line~-5 on page~982 cannot be precisely correct;
this has an effect on the formulas for Christoffel symbols on the last line on this page
(though this gets absorbed into the error term in the next sentence, which should include
$s_k$ in front of $\tanh$).
There is a similar issue with the statement concerning the independence of~$F^*g_{\la}$. 
Other, fairly minor misstatements in \cite[Section~8]{IPsum} include
\begin{enumerate}[label={},topsep=-5pt,itemsep=-5pt,leftmargin=*]
\item\textsf{p980, line 9:} (1.4) \to (1.5);
\item\textsf{p981, line 15:} $\om$ already denotes a symplectic form;
\item\textsf{p981, (8.6):} $-d(\rho^{\de})\!\wedge\!\om$ is part of the first integrand on RHS;
\item\textsf{p981, line -6:} this has nothing to do with the connection on the domain
(which is also not~flat);
\item\textsf{p981, line -5:} $V$ already denotes the symplectic divisor;
\item\textsf{p982, line 5:} $A_k$ as defined in the proof of Lemma~6.9
is a subset of~$C_{\mu}$, not of~$\cZ_{\la}$; 
\item\textsf{p982, line 7:} $\nu$ already denotes the key $(0,1)$-form; 
missing $\nu$ at the~end;
\item\textsf{p982, line 17:} first inequality does not hold because of $z^s$ in (6.14);
\item\textsf{p982, line 18:} there is no bound on $|\nu^N|$ in the sentence preceding~(6.17);
\item\textsf{p982, line 20:} $U\!-\!JV$ \to $V\!-\!JU$, twice;  
\item\textsf{p982, line 21:} no connection to the preceding statement;
\item\textsf{p982, line -6:} $\th$ \to $\Th$;
\item\textsf{p982, line -3:} $F_*\prt_{\th}$ also involves a $V$-component; 
\item\textsf{p983, lines 15,16:} {\it multiply} and {\it adding} do not help here; 
\item\textsf{p984, top:} $\de$ generic does not appear in this section again;
\item\textsf{p984, lines 13,14:} by definition of $\{F_n\}$, not
{\it Bubble Tree Convergence Theorem}; 
\item\textsf{p984, line 21:} $N=\{\rho\le\de\}$, and 
this $\de$ is different from the $\de$ in the norm;
\item\textsf{p984, bottom third:} $X$ already denotes a symplectic manifold; 
\item\textsf{p984, line -9:} $\be h$ is not in $T_{C_0}\M$.
\end{enumerate}
\end{rmk}

\noindent
In the approach of~\cite{LR}, the metrics on the targets do not collapse.
A family of uniformly bounded right inverses for the linearized 
operators~${\bf D}_{u_{\mu}}$ is constructed in the proof of \cite[Lemma~4.8]{LR}
directly via the approach of \cite[Section~10.5]{MS2}.
Conceptually, the existence of such inverses follows from uniform elliptic estimates
in the metrics of~\cite{LR} on the target;
see the proofs of \cite[Lemmas~3.9,3.10]{LT}.

\subsection{Gluing: \cite[Sections~9,10]{IPsum}, \cite[Sections~4.2,5]{LR}}
\label{glue_sec}

\noindent
The final step in gluing constructions involves showing that every 
approximately $J$-holomorphic map~$u_{\mu}$ can be perturbed to 
an actual $J$-holomorphic map, in a unique way subject to suitable restrictions,
and that every nearby $J$-holomorphic map can be obtained in such a way.
The last part is often established by showing that all nearby maps,
$J$-holomorphic or not, are of the form $\exp_{u_{\mu}}\xi$ with $\xi$ small.
The uniqueness part can be established by showing that each nearby map can 
be written uniquely in
the form $\exp_{u_{\mu}}\xi$, subject to suitable conditions on~$\xi$.
The nearby solutions of the $\dbar$-equations are then determined
by locally trivializing the bundle of~$(0,1)$-forms and expanding 
the $\dbar$-equation~as
\BE{dbarexp_e}\dbar \exp_{u_{\mu}}\xi=\dbar u_{\mu}+{\bf D}_{u_{\mu}}\xi+Q_{u_{\mu}}(\xi)\,,\EE
where ${\bf D}_{u_{\mu}}$ is the linearization of the $\dbar$-operator determined 
by the given trivialization and $Q_{u_{\mu}}(\xi)$ is the error term, quadratic in~$\xi$.
The equation~\eref{dbarexp_e} can be solved for all $\mu$ sufficiently small
if the norm of~$\dbar u_{\mu}$ tends to~$0$ as $\mu\!\lra\!0$,
${\bf D}_{u_{\mu}}$ admits a right inverse which is uniformly bounded in~$\mu$,
and the error term~$Q_{u_{\mu}}$ is also uniformly bounded in~$\mu$.\\

\noindent
The bijectivity of the gluing map is the subject  of \cite[Proposition~9.1]{IPsum},
though its wording is not quite correct.
Based on the proof and the usage, the intended wording is that there exist~$\ve_0,c\!>\!0$
such that the map~$\Phi_{\la}$ is a diffeomorphism as described whenever $\ve,|\la|\!<\!\ve_0$.
The proof of \cite[Proposition~9.1]{IPsum} is incorrect at the end of the injectivity argument, 
even ignoring the problems with the prerequisite statements:
even if $(f_n,C_{0,n},\mu_n)\!=\!(f_n',C_{0,n}',\mu_n')$,
$\eta_n$ and $\eta_n'$ need not lie in the injectivity radius of~$\Phi_{\la_n}$
for $n$ large, as this radius likely collapses as $n\!\lra\!\i$,
because the injectivity radius of the metric~$g_{\la}$ collapses as $\la\!\lra\!0$
and the norms are not scaled to address this.
In order to show that the injectivity radius of~$\Phi_{\la}$ does not collapse,
one needs to show that the vertical part of~$P_F\eta$ on suitable necks
is bounded by something like $|\la|^{\frac{1}{2}}\|P_F\eta\|$.
In light of~\eref{NdExp_e2}, 
this appears plausible for the nearby $J$-holomorphic maps, but less so for
arbitrary nearby maps.
It thus seems quite possible that the injectivity part of the intended  statement 
of \cite[Proposition~9.1]{IPsum} is not correct with the norms 
of \cite[Definition~6.5]{IPsum}, which impose a rather mild weight in the collapsing direction.\\

\noindent
The proof of \cite[Proposition~9.4]{IPsum} is incomplete,
as a justification is required for why the constant~$C$ 
in the bound \cite[(9.11)]{IPsum} on the quadratic error term in~\eref{dbarexp_e}
is uniform in~$\mu$.
This is not obvious in this case, since the metrics on~$\cZ_{\la}$ degenerate
and the constant~$C$ depends on the curvature of the metric;
see \cite[Section~3]{anal}.
Thus, this is also a significant issue in the approach of~\cite{IPsum}.

\begin{rmk}\label{IPsumSec9_rmk}
The proof of  \cite[Lemma~9.2]{IPsum} ignores the regions 
$|\mu_k|^{\frac14}\!\le\!\rho\!\le\!2|\mu_k|^{\frac14}$.
The statement of \cite[Proposition~9.3]{IPsum} is essentially correct, but 
the last part of its proof does not make sense.
For example, since $f_0$ is a map from a wedge of two disks and $f_n$ is a map from
a cylinder, $f_0\!-\!f_n$ is not defined.
Furthermore, the equations $F_n\!-\!f_n\!=\!(\hat\ze_n,\bar\xi_n)$,
$\hat\ze_n\!=\!\ze_n\!+\!(F_n\!-\!f_0)$, and $\ze_n\!=\!f_0\!-\!f_n$ are inconsistent.
Other, fairly minor misstatements in \cite[Section~9]{IPsum} 
and in the first part of \cite[Section~10]{IPsum} include
\begin{enumerate}[label={},topsep=-5pt,itemsep=-5pt,leftmargin=*]
\item\textsf{p986, above (9.2):} determined by \to related to;
\item\textsf{p986, below (9.2):} $\Phi_{\la}$ is defined everywhere and 
is the identity along the zero section;
\item\textsf{p986, (9.3):} it is only an isomorphism, since the first summand on RHS 
is  not a subspace of LHS;
\item\textsf{p986, below (9.3):} Lemma 5.3 is not needed here;
\item\textsf{p986, line -3:} the image of $F_0$ in $TZ_{\la}$ \to $F_0$;
\item\textsf{p986, line -1:} 
RHS describes only the vector field component of LHS and only for $\eta_0\!=\!0$;  
\item\textsf{p987, line 13:} $B$ is the two-dimensional manifold underlying $C_0$ and $C_0'$;
\item\textsf{p987, line -4:} there is no such extension in Section~4;
\item\textsf{p987, bottom:} $h_1$ is a variation of $\mu$, which is basically fixed;
\item\textsf{p988, lines 4,5:} not extended over $Z_{\la}$;
\item\textsf{p988, line 6:} $\xi_0$ has not been defined; 
\item\textsf{p988, (9.5):} second line is missing $\frac12$;
\item\textsf{p988, line 13:} $\rho^{-|s|}$, not  $\rho^{1-|s|}$, 
according to (6.15), which is still good enough;
\item\textsf{p988, after (9.6):} the estimates in the proof of Proposition 7.3;
\item\textsf{p988,  after (9.7):} there is no equation (6.4a);
\item\textsf{p989, line 9:} (9.8) \to (9.6);
\item\textsf{p989, line -3:} Lemma 5.4 does not say this;
\item\textsf{p990, (9.10)} holds only after some identifications;
\item\textsf{p991, below (10.2):} this sentence does not make sense;
\item\textsf{p991, (10.3):} since $s$ is fixed, there should be no $\bigsqcup$;
\item\textsf{p992, lines 3,4:} $\Phi_{\la}^1$  maps into $\M_s^{V,\de}(Z_{\la})$
according~(10.3);
\item\textsf{p992, lines 16,17:} this sentence makes no sense.
\end{enumerate}
\end{rmk}

\noindent
The correspondence between approximately $J$-holomorphic maps and 
actual $J$-holomorphic maps in~\cite{LR} is the subject of
Proposition~4.10.
The expansion~\eref{dbarexp_e} does not even appear in its proof,
with the Implicit Function Theorem applied in an infinite-dimensional setting
without any justification.
On the other hand, the above issues with the collapsing metric do not arise
in the setting of~\cite{LR}, and so the required uniform estimates 
are fairly straightforward to obtain.

\begin{rmk}\label{LR5_rmk}
The approach of \cite[Section~5]{LR} to the symplectic sum formula
involves the existence of a virtual fundamental class for 
$\ov\M_{g,k}(X\!\cup_V\!Y,A)$.
The justification for its existence consists of a few lines after \cite[Lemma~5.4]{LR},
which is far from even mentioning all the required issues.
The comparison of GW-invariants for $X\!\cup_V\!Y$ and $X\!\#_V\!Y$ 
in \cite[Section~5]{LR} again involves integration instead of pseudocycles 
(top of p208 and p209), and does not explain the key multiplicity 
factor~$k$ in \cite[Theorem~5.7]{LR}.
The top of page~209 again suggests an isomorphism between an even and odd-dimensional manifolds. 
The index formula \cite[(5.1)]{LR} cannot possibly follow from the proof 
of \cite[Lemma~4.9]{LR},
as the latter has no numerical expressions for the index.
Since this index also depends on~$\al$ (according to \cite[Remark~4.1]{LR}),
how can there be a natural correspondence between the domains and targets
of the operators~$D_u$ and~$D_{\bar{u}}$ in \cite[Remark~5.2]{LR}?
With the definitions in \cite[Section~4]{LR}, the dimension of $\ker L_{\i}$
is~$2n$, not $2n\!+\!2$, as stated after \cite[(5.1)]{LR}.
Mayer-Vietoris has nothing to do with a pseudoholomorphic map defining
a homology class at the bottom of page~206 in~\cite{LR}.
\cite[Remark~5.9]{LR} is irrelevant, since there had been no assumption that the divisor is connected.
Our Remark~\ref{LR4_rmk} contains additional related comments.
\end{rmk}

\noindent
In general, one has to consider smoothings of nodes that do not map to the junctions
between the smooth pieces of~$X\!\cup_V^mY$.
However, such nodes can be handled in a standard way, such as in \cite[Section~3]{LT},
as mentioned in \cite[Remark~6.3]{IPsum}.

\subsection{The $S$-matrix: \cite[Sections 11,12]{IPsum}}
\label{Smat_subs}

\noindent
The symplectic sum formula of~\cite{IPsum} contains two features
not present in the formulas of~\cite{Jun2} and~\cite{LR}:
a rim tori refinement of relative invariants and the so-called $S$-matrix.
This section explains why the second feature should not appear. 
We also show that in fact the $S$-matrix does not matter because it {\it acts as}
the identity in all cases and not just in the cases considered in
\cite[Sections~14,15]{IPsum}, when the $S$-matrix {\it is} the identity.
The fundamental reason for the latter is the same as for the former:
a  group action is forgotten in~\cite{IPsum}.\\

\noindent
By Gromov's Compactness Theorem \cite[Proposition~3.1]{RT1},
a sequence of $(J_{\cZ},\fj_k)$-holomorphic maps 
$u_k\!:\Si\!\lra\!\cZ_{\la_k}$, with $\la_k\!\in\!\De^*$ and $\la_k\!\lra\!0$,
has a subsequence converging to a $(J_{\cZ},\fj)$-holomorphic map $u\!:\Si'\!\lra\!\cZ_0$.
As explained in Section~\ref{IPconv_subs}, 
$$\Si'=\Si_X'\cup\Si_V'\cup\Si_Y'\,,$$
where $\Si_V'$ is the union of irreducible components of~$\Si'$ mapped into~$V$,
$\Si_X'$ is the union of irreducible components mapped into~$X\!-\!V$ outside
of finitely many points $x_1,\ldots,x_{\ell}$, and
$\Si_Y'$ is the union of irreducible components mapped into~$Y\!-\!V$ outside
of finitely many points $x_1',\ldots,x_{\ell'}'$.
The symplectic sum formulas of \cite{Jun2} and~\cite{LR} arise only from
the limits with~$\Si_V'\!=\!\eset$;
these are also the limits considered in \cite[Sections~6-10]{IPsum}.\\

\noindent
The $S$-matrix arises at the top of page~1003 in~\cite{IPsum} 
from the consideration of limits with $\Si_V'\!\neq\!\eset$.
Such maps are interpreted as maps to the singular spaces~$\XYm$, with $m\!\in\!\Z^+$,
defined in~\eref{XnVYdfn_e}.
This interpretation is obtained by viewing the sequences of maps which give rise to
such limits as having their images inside the total space~$\cZ_m$ of an $(m\!+\!1)$-dimensional 
family of smoothings of~$\XYm$, instead of the total space~$\cZ$
of a one-dimensional family of smoothing of~$X\!\cup_V\!Y$.
However, it is not possible to associate a sequence of maps to~$\cZ_m$
to a sequence of maps to~$\cZ$ in a systematic way which is consistent 
with the aims of \cite[Section~12]{IPsum}.
Contrary to the implicit view in \cite[Section~12]{IPsum}, 
the resulting limiting map to~$\XYm$ is well-defined by the original sequence
of maps to~$\cZ$ not up to a finite number of ambiguities, but 
up to an action of $m$~copies of~$\C^*$ on the target.
Furthermore, the entire setup at the top of page~1003 in~\cite{IPsum} 
is incorrect because the almost complex structure on~$\cZ_{\la}$ viewed as a fiber
of~$\cZ$ is different from what it would have been as a fiber in~$\cZ_m$
(the latter would be effected by $m\!+\!1$ copies of~$V$).
However, these almost complex structure would be the same in the case of
the more restricted almost complex structures of~\cite{LR}.\\

\noindent 
The situation is nearly identical to \cite[Sections~6,7]{IPrel}, 
where limits of sequences of relative maps into~$(X,V)$ are described as
 maps to~$X_m^V$ {\it up to} a natural $(\C^*)^m$-action;
see our Section~\ref{RelInv_subs}. 
The same reasoning implies that 
limits of sequences of  maps into~$\cZ$ correspond to maps to~$\XYm$
{\it up to} a natural $(\C^*)^m$-action.
\\

\noindent
As in the situation in Section~\ref{RelInv_subs}, which reviews \cite[Sections~6,7]{IPrel},
the virtual dimension of the spaces of morphisms into~$\XYm$ is $2m$ less than
the expected dimension of the corresponding spaces of morphisms into~$X\!\cup_V\!Y$
(with the matching conditions imposed) and into~$\cZ_{\la}$.
Thus, the spaces of morphisms into~$\XYm$ with $m\!\ge\!1$ have no effect on
the symplectic sum formula.
The $S$-matrix, which takes such spaces into account, enters at the top of page~1003
in~\cite{IPsum} because the spaces of such morphisms are mistakenly not quotiented out
by~$(\C^*)^m$; this is done in~\cite{Jun2} and in~\cite{LR}.\\

\noindent
While the $S$-matrix is generally not the identity, it acts as the identity 
in the symplectic sum formulas of~\cite{IPsum},
i.e.~in equations~(0.2) and~(12.7) in~\cite{IPsum}, for the following reason.
For all
$$\chi\in\Z, \qquad (A,B)\in H_2(X;\Z)\times_V H_2(Y;\Z)\,,$$
and a generic collection of constraints of appropriate total codimension,
the symplectic sum formula presents the corresponding GT-invariant of $X\!\#_V\!Y$ 
as the sum of weighted cardinalities of finitely many finite sets enumerating
morphisms into~$\XYm$, with $m\!\ge\!0$, meeting the constraints.
The group~$(\C^*)^m$ acts on the set of such morphisms with at most finite stabilizers
(the constraints inside each $\{r\}\!\times\!\P_VX$ are pull-backs from~$V$).  
Thus, the sets with $m\!\ge\!1$ are empty, i.e.~there is no contribution 
to the symplectic sum formula from morphisms to~$\XYm$ with~$m\!\ge\!1$.
Since these are the morphisms that make up the difference between the $S$-matrix 
and the identity, 
the $S$-matrix  acts as the identity in the symplectic sum formulas of~\cite{IPsum}.\\

\noindent
The next observation illustrates one of the problems with the normalizations of
generating functions in \cite[Section~1]{IPsum} and thus
another problem with the symplectic sum formulas of~\cite{IPsum}.
The last statement of \cite[Lemma~11.2(a)]{IPsum} is key to even making
sense of the action of the $S$-matrix. 
However, it does  not hold with the definitions in the paper.
By \cite[(1.24)]{IPsum}, the $\M_{\bI}$-part of $\GW^{\P_V,V_{\i}\sqcup V_0}(1)$ is given~by
\begin{equation*}\begin{split}
\GW^{\P_V,V_{\i}\sqcup V_0}(1)_{\M_{\bI}}
&=\sum_{d=1}^{\i}\big[\wt\ev_1\!\times\!\wt\ev_2\!:\ov\M_{0,2;(d,d)}^{V_{\i}\sqcup V_0}(\P_V,d)
\lra\cH_{\P_V;(d,d)}^{V_{\i}\sqcup V_0} \big]t_{dF}\la^{-2}\\
&=\sum_{d=1}^{\i}\frac{1}{d}\De_{(d,d)}t_{dF}\la^{-2},
\end{split}\end{equation*}
where $\De_{(d,d)}\subset \cH_{\P_V;(d,d)}^{V_{\i}\sqcup V_0}$ is the preimage
of the diagonal in $V_{\i}\!\times\!V_0\!=\!V\!\times\!V$.
The exponential of an element of $H_*(\wt\M\!\times\!\cH_X^V)$ and the product of two
elements of $H_*(\cH_X^V)$ are never defined, but under reasonable definitions
\begin{equation*}\begin{split}
\GT^{\P_V,V_{\i}\sqcup V_0}(1)_{\M_{\bI}}
&\equiv \ne^{\GW^{\P_V,V_{\i}\sqcup V_0}(1)_{\M_{\bI}}}
=\sum_{\ell=0}^{\i}\frac{1}{\ell!}\big(\GW^{\P_V,V_{\i}\sqcup V_0}(1)_{\M_{\bI}}\big)^{\ell}\\
&=1+\sum_{\ell=1}^{\i}\sum_{d_1,\ldots,d_{\ell}>0}\frac{1}{\ell!}\,\frac{1}{d_1\ldots d_{\ell}}
\De_{(d_1,d_1)}\!\times\!\!\ldots\!\times\!\De_{(d_{\ell},d_{\ell})}\,
t_{(d_1+\ldots+d_{\ell})F}\la^{-2\ell}
\,;
\end{split}\end{equation*}
this definition seems to be consistent with \cite[(A.3)]{IPsum} and the description of the coefficients
in the following paragraph.
Let $\eta\!\in\!\cH_{\P_V;(s_1,\ldots,s_m)}^{V_0}t_B$. 
By \cite[(10.6)]{IPsum}, the only nonzero term in 
$\eta\!*\!\GT^{\P_V,V_{\i}\sqcup V_0}(1)_{\M_{\bI}}$ arises from the summand $\ell\!=\!m$ and
$(d_1,\ldots,d_{\ell})\!=\!(s_1,\ldots,s_m)$ and equals
$$\frac{s_1\ldots s_m}{m!}\la^{2m}\eta\cdot\frac{1}{\ell!}\,\frac{1}{d_1\ldots d_{\ell}}\la^{-2\ell}
=\frac{1}{m!m!}\eta\neq\eta\quad\hbox{if}~~m>1.$$
The proof of the symplectic sum formula,  \cite[(12.7)]{IPsum}, makes use of~(11.3); 
otherwise, there would be dependence on~$N$.

\begin{rmk}\label{IPsumSec1112_rmk}
Other, fairly minor misstatements in \cite[Sections~11,12]{IPsum} include
\begin{enumerate}[label={},topsep=-5pt,itemsep=-5pt,leftmargin=*]
\item\textsf{p998, (11.1):} no need for square brackets; the superscripts on~$\cH$ 
should be the same;
\item\textsf{p998, line -6:} before (1.4) \to after (1.5);
\item\textsf{p999, line 1:} the {\it irreducible} $\P_V$-trivial;
\item\textsf{p999, (11.3),(11.4):} LHS missing $*$; $R^{V_{\i},V_0}\!\lra\!R$ in the notation below;
\item\textsf{p999, line -3:} $(J,\nu)$ \to $(A,n,\chi)$;
\item\textsf{p1000, Dfn 11.3:} there is no dependence on $(J,\nu)$;
\item\textsf{p1000, bottom:} this sentence does not make sense;  
\item\textsf{p1001, lines -13,-9:} $2N$ \to $2N\!-\!1$;
\item\textsf{p1001, line -4:} both identities are incorrect;
\item\textsf{p1002, line 6:} there is no $t$ in (2.6);
\item\textsf{p1002, below (12.2):} $\mu$ is on the domain, $\la$ is on the target;
\item\textsf{p1002, line 18:} $\ve=\al_V$;
\item\textsf{p1002, line 22:} {\it nonempty} subset; 
\item\textsf{p1003, Thm 12.3, line 5:} (11.3) \to  of Definition 11.3.
\end{enumerate}
\end{rmk}

\section{Applications}
\label{appl_sec}

\noindent
The purpose of \cite[Sections 14,15]{IPsum} is to give three powerful applications of 
the (standard) symplectic sum formula.
The authors make clear what geometric reasoning should lead to the three main formulas.
Fully implementing their ideas  leads to quick proofs of these formulas,
which had been previously established through significantly more complicated arguments.
Unfortunately, the arguments in~\cite{IPsum} are not completely precise and contain multiple,
sometimes self-canceling, errors (as well as typos), and 
none of the three formulas is stated correctly.
In order to illustrate the beauty of the intended arguments in this part of~\cite{IPsum},
we reproduce two of them, for counts of plane curves and for Hurwitz numbers, 
completely below, but with all the details and without the errors,
and then list the errors and typos made in~\cite{IPsum};
the substance and organization of the proofs come entirely from~\cite{IPsum}.
As noted at the beginning of~\cite{IPsum}, the applications of the symplectic sum formula
in these two cases essentially capture the original proofs.
In Section~\ref{LRapp_subs}, we briefly comment on the applications appearing in~\cite{LR}.
\\

\noindent
The argument in \cite[Sections 14,15]{IPsum} for the third application,
an enumeration of curves on the rational elliptic surface, is fundamentally different
from the original proof in~\cite{BL}.
It also contains the most serious gaps.
We describe and streamline this argument in \cite[Section~6]{GWsumIP}.
In the process, we illustrate some qualitative applications of 
the refinements to the standard relative GW-invariants and the usual symplectic
sum formula suggested in~\cite{IPrel,IPsum};
this is not done in~\cite{IPrel,IPsum}.

\subsection{Invariants of $\P^1$ and $\T^2$: \cite[Section 14.1]{IPsum}}
\label{P1T2_subs}

\noindent
This section computes some relative GW-invariants of $\P^1$ and $\T^2$.
If $V_1,V_2\!\subset\!X$ are two disjoint symplectic divisors,
we will denote by $\GW_{g,A;\bs_1,\bs_2}^{X,V_1\sqcup V_2}$ the relative GW-invariants
of $(X,V_1\!\cup\!V_2)$ with the contacts with~$V_1$ and~$V_2$ described by~$\bs_1$
and~$\bs_2$, respectively.
We will use similar notation for the disconnected GT-invariants and for the moduli spaces.

\begin{lmm}[{\cite[Lemma 14.1]{IPsum}}]
\label{P1_lmm1}
Let $0,\i$ denote two distinct points in~$\P^1$, $V\!=\!\{0,\i\}$, and $d\!\in\!\Z^+$.
The relative degree~$d$ GW-invariants of $(\P^1,V)$ with no constraints from $\P^1$ 
or $\ov\M$ are given~by
$$\GW_{g,d;\bs_0,\bs_{\i}}^{\P^1,\{0,\i\}}()=
\begin{cases} 1/d,&\tn{if}~g\!=\!0,~\bs_0,\bs_{\i}\!=\!(d);\\
0,&\tn{otherwise}.
\end{cases}$$
\end{lmm}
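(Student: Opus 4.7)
\medskip

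The plan is to reduce to a dimension count followed by an explicit identification of the (zero-dimensional) relative moduli space.

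First, I will compute the virtual real dimension of $\ov\M_{g,0;\bs_0,\bs_\i}^{\{0,\i\}}(\P^1,d)$. Using the standard formula for relative moduli spaces (absolute dimension plus $2\ell(\bs)$ for the relative markings minus $2|\bs|$ for the contact conditions), with $\lr{c_1(T\P^1),d}=2d$, $\dim_\C \P^1=1$, and $|\bs_0|=|\bs_\i|=d$, I get
$$\dim_\R=4d+2(1-3)(1-g)+2\ell(\bs_0)+2\ell(\bs_\i)-2d-2d=4g-4+2\ell(\bs_0)+2\ell(\bs_\i).$$
Since $V=\{0,\i\}$ is zero-dimensional, $V_{\bs_0}\!\times\!V_{\bs_\i}$ is discrete, so $\GW^{\P^1,\{0,\i\}}_{g,d;\bs_0,\bs_\i}()\in H_0(V_{\bs_0}\!\times\!V_{\bs_\i})$ vanishes unless $\dim_\R=0$. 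Together with the constraint $\ell(\bs_0),\ell(\bs_\i)\!\ge\!1$, this forces $g\!=\!0$ and $\ell(\bs_0)\!=\!\ell(\bs_\i)\!=\!1$, hence $\bs_0\!=\!\bs_\i\!=\!(d)$.

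Next I will identify the moduli space $\ov\M_{0,0;(d),(d)}^{\{0,\i\}}(\P^1,d)$ explicitly. A representative is a degree-$d$ holomorphic map $f\!:\!\P^1\!\lra\!\P^1$ with $f^{-1}(0)\!=\!\{z_1\}$ of multiplicity $d$ and $f^{-1}(\i)\!=\!\{z_2\}$ of multiplicity $d$, together with an ordering of these two relative marked points. After using the automorphism group of the domain to put $z_1\!=\!0$ and $z_2\!=\!\i$, the map is of the form $f(z)\!=\!cz^d$ for some $c\!\in\!\C^*$. The residual automorphisms of $(\P^1,\{0,\i\})$ form the diagonal torus $\C^*$ acting by $z\!\mapsto\!\la z$, which sends $c$ to $c\la^{-d}$; this action is transitive on $\C^*$. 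Thus the set of equivalence classes is a single point, and the stabilizer of a representative is the subgroup $\{\la\!\in\!\C^*\!:\lambda^d\!=\!1\}\!\cong\!\Z/d\Z$.

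Finally, I will verify transversality so that the virtual count equals the weighted cardinality. The linearization of $\dbar$ at $f(z)\!=\!z^d$, with the order-$d$ tangency conditions at $0$ and $\i$ imposed, is a first-order operator on $f^*T\P^1\!\cong\!\cO_{\P^1}(2d)$ whose unobstructed part has complex index equal to half of $\dim_\R$ above, namely $0$. Positivity of $c_1(T\P^1)$ (equivalently, $H^1(\P^1,\cO(2d))\!=\!0$ together with the observation that the tangency conditions cut out a subspace transversally due to the totally ramified structure at $0$ and $\i$) gives the vanishing of the obstruction space, so the moduli space is regular. The hard part in this argument is checking that the contact conditions do not introduce any cokernel: concretely, one must identify the domain of the linearization with sections of $\cO(2d)$ that vanish to appropriate order at $0$ and $\i$ and verify that this subspace still surjects onto the range. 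This follows from an explicit computation for $z\!\mapsto\!z^d$, which is totally ramified at $0$ and $\i$. The moduli space is thus a single reduced point with automorphism group $\Z/d\Z$, so $\GW^{\P^1,\{0,\i\}}_{0,d;(d),(d)}()=1/d$, completing the proof.
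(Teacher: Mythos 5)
Your proof is correct and follows essentially the same route as the paper's: a virtual-dimension count shows $\dim_{\C}\ov\M_{g,0;\bs_0,\bs_{\i}}^{\{0,\i\}}(\P^1,d)=2g-2+\ell(\bs_0)+\ell(\bs_{\i})\ge 0$, which forces $g=0$ and $\bs_0=\bs_{\i}=(d)$, after which the moduli space is identified as the single point given by the map $z\mapsto z^d$ with automorphism group of order~$d$. The only addition is your explicit transversality verification for $z\mapsto z^d$, which the paper's proof leaves implicit.
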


\begin{proof}
By \cite[(1.21)]{IPsum},
\BE{dimM2_e}\begin{split}
\dim_{\C}\ov\M_{g,0;\bs_0,\bs_{\i}}^{0,\i}(\P^1,d)
&=2d+(1\!-\!3)(1\!-\!g)+\ell(\bs_0)+\ell(\bs_{\i})-\deg\,\bs_0-\deg\,\bs_{\i}\\
&=2g-2+\ell(\bs_0)+\ell(\bs_{\i})\ge 2g\ge 0.
\end{split}\EE
This dimension is 0 only if $g\!=\!0$ and $\ell(\bs_0),\ell(\bs_{\i})=1$.
If $g\!=\!0$ and $\bs_0,\bs_{\i}\!=\!(d)$, 
$\ov\M_{g,0;\bs_0,\bs_{\i}}^{0,\i}(\P^1,d)$ consists of a single element,
the map $z\!\lra\!z^d$.
Since the order of the group of automorphisms of this map is~$d$, it contributes
$1/d$ to the GW-invariant.
\end{proof}

\begin{lmm}[{\cite[Lemma 14.2]{IPsum}}]
\label{P1_lmm2}
Let $0,\i,1$ denote three distinct points in~$\P^1$, $V\!=\!\{0,\i,1\}$,  
$d\!\in\!\Z^+$ with $d\!\ge\!2$, and 
\BE{bs1dfn_e}\bs_1\!=\!(2,\underset{d-2}{\underbrace{1,\ldots,1}}).\EE
The relative degree~$d$ GW-invariants of $(\P^1,V)$ enumerating maps 
with simple branching over~1 and no constraints from $\P^1$  or $\ov\M$ are given~by
\begin{equation*}\begin{split}
\GW_{g,d;\bs_0,\bs_{\i}}^{\P^1,\{0,\i\}}(b)
&\equiv \frac{1}{(d\!-\!2)!}\GW_{g,d;\bs_0,\bs_{\i},\bs_1}^{\P^1,\{0,\i,1\}}()\\
&=\begin{cases} 1,&\tn{if}~g\!=\!0,~\{\ell(\bs_0),\ell(\bs_{\i})\}=\{1,2\},~
\deg\,\bs_0,\deg\,\bs_{\i}\!=\!d;\\
0,&\tn{otherwise}.
\end{cases}
\end{split}\end{equation*}
\end{lmm}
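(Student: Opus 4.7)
The plan is to adapt the two-step strategy used in the proof of Lemma~\ref{P1_lmm1}: first use a virtual dimension count to identify which $(g,\bs_0,\bs_\infty)$ can give nonzero invariants, then directly enumerate the contributing maps.

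For the first step, apply the virtual dimension formula \cite[(1.21)]{IPsum}, generalized to three divisor components, to the moduli space $\ov\M_{g,0;\bs_0,\bs_\infty,\bs_1}^{\{0,\infty,1\}}(\P^1,d)$. Using $\ell(\bs_1)\!=\!d-1$ and $\deg\bs_1\!=\!d$, a direct computation gives
\begin{equation*}
\dim_{\C}\ov\M_{g,0;\bs_0,\bs_\infty,\bs_1}^{\{0,\infty,1\}}(\P^1,d)
=2d-2(1-g)+\ell(\bs_0)+\ell(\bs_\infty)+(d-1)-3d
=2g-3+\ell(\bs_0)+\ell(\bs_\infty).
\end{equation*}
Since no $\P^1$- or $\ov\M$-insertions are prescribed, the invariant vanishes unless this dimension is zero; combined with $\ell(\bs_0),\ell(\bs_\infty)\!\ge\!1$, this forces $g=0$ and $\{\ell(\bs_0),\ell(\bs_\infty)\}=\{1,2\}$, as claimed.

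For the enumeration, swap $0$ and $\infty$ if necessary so that $\bs_0\!=\!(d)$ and $\bs_\infty\!=\!(a,d-a)$ for some $1\!\le\!a\!\le\!d-1$. A contributing map is then a degree-$d$ rational $f\!:\P^1\!\lra\!\P^1$ totally ramified at the (unique) relative marked point over~$0$, with poles of orders $a$ and $d-a$ at the two labelled relative marked points over~$\infty$, and with a single simple critical point over~$1$. Use the three-dimensional Möbius freedom on the domain to send the three distinguished relative marked points to $0,\infty,1\!\in\!\P^1$; then $f(z)\!=\!cz^d/(z-1)^{d-a}$ for some $c\!\in\!\C^*$. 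A direct computation shows that $f'$ has a unique, simple zero at $z_\ast\!=\!d/a$ with $f(z_\ast)\!=\!cd^d/(a^a(d-a)^{d-a})$, so the requirement $f(z_\ast)\!=\!1$ determines $c$ uniquely and the branching over~$1$ is automatically simple. Since the three distinguished marked points are rigidified, no nontrivial Möbius transformation preserves them all --- even when $a\!=\!d\!-\!a$, no such transformation fixes $0,\infty,1$ and swaps the two marked preimages of~$\infty$ --- so $f$ has no nontrivial automorphisms. The remaining $d-2$ simple preimages of~$1$ may be labelled in $(d-2)!$ ways in $\ov\M_{0,0;\bs_0,\bs_\infty,\bs_1}^{\{0,\infty,1\}}(\P^1,d)$, yielding $\GW_{0,d;\bs_0,\bs_\infty,\bs_1}^{\P^1,\{0,\infty,1\}}()\!=\!(d-2)!$, and dividing by $(d-2)!$ gives the claimed invariant~$1$.

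The main technical point to verify is that the boundary strata of $\ov\M_{0,0;\bs_0,\bs_\infty,\bs_1}^{\{0,\infty,1\}}(\P^1,d)$ --- in particular those with bubble components mapped into a rubber copy of $\P(\cN_{\P^1}\{p\}\!\oplus\!\cO)$ for $p\!\in\!\{0,\infty,1\}$, or degenerations at which the simple branch point collides with a simple contact point --- all have strictly smaller virtual dimension and so do not contribute, as in Section~\ref{RelInv_subs}. This is standard in the semi-positive setting but needs to be checked explicitly to confirm that the virtual count reduces to the naive geometric enumeration above.
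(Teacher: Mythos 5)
Your proof is correct and takes essentially the same approach as the paper: the same virtual dimension count $2g-3+\ell(\bs_0)+\ell(\bs_\infty)$ forcing $g=0$ and $\{\ell(\bs_0),\ell(\bs_\infty)\}=\{1,2\}$, followed by the same explicit parametrization of degree-$d$ rational maps rigidified by the three distinguished relative marked points (the paper places the length-one contact over~$\i$ and writes $f(z)=Cz^a(z-1)^b$ with extra critical point at $z=a/d$, while you place it over~$0$; these differ only by swapping the roles of $0$ and~$\i$). The boundary-strata concern you flag at the end is not addressed in the paper's proof either — it is subsumed in the general dimension-counting discussion of the semi-positive setting in Section~\ref{RelInv_subs}, where strata with rubber or multiply-covered components are shown to have strictly lower virtual dimension.
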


\begin{proof}
Similarly to \eref{dimM2_e},
$$\dim_{\C}\ov\M_{g,0;\bs_0,\bs_{\i},\bs_1}^{0,\i,1}(\P^1,d)
=2g-3+\ell(\bs_0)+\ell(\bs_{\i})\ge 2g-1\ge -1.$$
This dimension is 0 only if $g\!=\!0$ and $\ell(\bs_0)\!+\!\ell(\bs_{\i})\!=\!3$.
Every holomorphic function on $\C$ with a pole of order $d$ at $\i$ and zeros at $0$ and
$1$ of orders $a$ and $b$, respectively, with $a\!+\!b\!=\!d$, is of the form
$z\!\lra\!Cz^a(z\!-\!1)^b$.
There is a unique value of~$C$ so that this function sends the remaining critical point,
$z\!=\!a/d$, to~$1$.
Thus, $\ov\M_{g,0;\bs_0,\bs_{\i},\bs_1}^{0,\i,1}(\P^1,d)$ consists 
of $(d\!-\!2)!$ automorphism-free elements 
(corresponding to the orderings of the simple preimages of~1).
\end{proof}

\begin{rmk}\label{14.1_rmk}
\cite[Lemma 14.3]{IPsum} is not used in the rest of the paper. 
Furthermore, its statement is wrong,
as the authors forget to divide by the order of the automorphism group of covers of the torus.
The notation for GW-invariants in \cite[Sections~14.1-14.5]{IPsum} is inconsistent with earlier parts
of the paper, as the first subscript is supposed to indicate the target space.
The notation for the simple branch point invariant of \cite[Lemma~14.2]{IPsum},
which is never formally defined, is even more confusing, since an insertion in parenthesis
is supposed to indicate a class on a product of $\ov\M$ and copies of~$X$.
The conclusion in the proof of Lemma~14.1 about the $S$-matrix does not follow from
the rest of the argument, since it may have contributions from higher genus and classes
coming from~$\ov\M$. 
\end{rmk}

\subsection{Invariants of $\bF_n$: \cite[Section 14.3]{IPsum}}
\label{Fn_subs}

\noindent
This section computes some relative GW-invariants of $(\bF_n,S_0\!\cup\!S_{\i})$, where
$$\bF_n=\P(\cO_{\P^1}(n)\!\oplus\!\cO_{\P^1}),\qquad
S_0=\P(0\!\oplus\!\cO_{\P^1})\subset\bF_n, \qquad
S_{\i}=\P(\cO_{\P^1}(n)\!\oplus\!0)\subset\bF_n.$$
We denote by $s_0$ and $f$ the homology classes of $S_0$ and of a fiber of $\bF_n\lra\P^1$.
For $A\!\in\!H_2(\bF_n)$, ordered partitions $\bs_0$ and $\bs_{\i}$ of $A\!\cdot\!S_0$ and
$A\!\cdot\!S_{\i}$, respectively, and $\al\!\in\!\T^*(\bF_n)$,
$$\GW_{g,A;\bs_0,\bs_{\i}}^{\bF_n,S_0\sqcup S_{\i}}(\al)\in 
H_*(S_0^{\ell(\bs_0)})\otimes H_*(S_{\i}^{\ell(\bs_{\i})}).$$
If $A\!=\!as_0\!+\!bf$, then
$$A\cdot S_{\i}=b, \qquad A\cdot S_0=na\!+\!b, \qquad
\blr{c_1(T\bF_n),A}= (2\!+\!n)a+2b .$$
Thus, by \cite[(1.21)]{IPsum},
\BE{dimFn_e}\begin{split}
\dim_{\C}\GW_{g,A;\bs_0,\bs_{\i}}^{\bF_n,S_0\sqcup S_{\i}}(\al)
&=  (2\!+\!n)a+2b+ (2\!-\!3)(1\!-\!g)+\ell(\al)+\ell(\bs_0)+\ell(\bs_{\i})\\
&\qquad -\deg\,\al-\deg\,\bs_0-\deg\,\bs_{\i}\\
&=g\!-\!1+2a+\ell(\al)+\ell(\bs_0)+\ell(\bs_{\i})-\deg\,\al,
\end{split}\EE
if $\al\!\in\!H^*(\bF_n^{\ell(\al)})$. 
In particular, $\GW_{g,A;\bs_0,\bs_{\i}}^{\bF_n,S_0\sqcup S_{\i}}(\al)\!=\!0$ unless
\BE{dimFn_e2} g+2a \le 1 +\deg\,\al-\ell(\al).\EE

\begin{lmm}[{\cite[Lemma 14.6]{IPsum}}]
\label{Fn_lmm1}
The relative degree~$A$ GW-invariants of $(\bF_n,S_0\!\cup\!S_{\i})$ with no constraints from 
$\bF_n$ or $\ov\M$ are given~by
$$\GW_{g,A;\bs_0,\bs_{\i}}^{\bF_n,S_0\sqcup S_{\i}}()=
\begin{cases} 
\frac1b(S_0\!\otimes\!1+1\!\otimes\!S_{\i}),&\tn{if}~g\!=\!0,~A\!=\!bf,~b\!\in\!\Z^+,~
\bs_0,\bs_{\i}\!=\!(b);\\
0,&\tn{otherwise}.
\end{cases}$$
\end{lmm}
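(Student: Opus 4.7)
My plan is to argue via virtual dimension counting and direct enumeration, in the spirit of Lemmas~\ref{P1_lmm1} and~\ref{P1_lmm2}. First, with no insertions the invariant lies in $H_*(S_0^{\ell(\bs_0)}\!\times\!S_{\i}^{\ell(\bs_{\i})})$, which has maximum complex dimension $\ell(\bs_0)\!+\!\ell(\bs_{\i})$, whereas the virtual dimension given by \eref{dimFn_e} is $g-1+2a+\ell(\bs_0)+\ell(\bs_{\i})$. Non-vanishing therefore forces $g+2a\le 1$. Since $a\ge 1$ would imply $g+2a\ge 2$, we must have $a=0$ and $A=bf$; the case $b=0$ yields no stable map with non-trivial contact conditions, so $b\in\Z^+$.

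Next I would observe that for $A=bf$, every connected stable image lies in a single fiber $F_p\cong\P^1$ of $\bF_n\!\to\!\P^1$: distinct fibers are disjoint, so a connected nodal curve in class proportional to $f$ cannot straddle two fibers (any node would force the two fibers to share a point). Hence every one of the $\ell(\bs_0)$ relative marked points on the $S_0$-side maps to the single point $S_0\!\cap\!F_p$, and similarly on the $S_{\i}$-side, so the image of $\ev^V$ is the one-dimensional ``total diagonal''
$$\Delta=\big\{(p,\ldots,p;p,\ldots,p):p\in\P^1\big\}\subset S_0^{\ell(\bs_0)}\!\times\!S_{\i}^{\ell(\bs_{\i})}$$
obtained by identifying each $S_0$- or $S_{\i}$-factor with $\P^1$ via projection to the base. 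The push-forward of an $r$-dimensional class through a map with $1$-dimensional image vanishes whenever $r>1$, so the invariant is zero unless the moduli virtual dimension is at most $1$. Combined with $\ell(\bs_0),\ell(\bs_{\i})\ge 1$, this leaves only $g=0$ with $\bs_0,\bs_{\i}=(b)$.

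In this surviving case the moduli space is easily described: for each $p\in\P^1$, the unique degree-$b$ cover of $F_p$ totally ramified over $S_0\!\cap\!F_p$ and $S_{\i}\!\cap\!F_p$ is, up to reparametrization, the map $z\!\mapsto\!z^b$, and its automorphism group is $\Z/b\Z$. Thus the moduli is a $\P^1$ (parametrized by $p$) carrying an orbifold weight $1/b$, and $\ev^V$ maps it diffeomorphically onto the diagonal of $S_0\!\times\!S_{\i}\cong\P^1\!\times\!\P^1$. The homology class of this diagonal decomposes via K\"unneth as $S_0\!\otimes\!1+1\!\otimes\!S_{\i}$, yielding the claimed value $\frac1b(S_0\!\otimes\!1+1\!\otimes\!S_{\i})$.

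The main obstacle I anticipate is making the vanishing in the higher-$g$ or higher-$\ell$ cases rigorous at the level of the virtual class rather than merely of set-theoretic images; in particular, for $g=1$ Riemann--Hurwitz rules out smooth covers of a $\P^1$ branched only at two points, but nodal configurations do exist \'a priori. In the semi-positive pseudocycle framework of Section~\ref{RelInv_subs} this is handled by the image-dimension argument above applied to a generic $(J,\nu)$, just as in Lemma~\ref{P1_lmm2}; for a fully general VFC proof one would need to confirm that no contribution sneaks in from reducible genus-$1$ sources mapping into a fiber with prescribed ramification profile.
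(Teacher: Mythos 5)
Your proposal is correct and follows essentially the same route as the paper's proof: dimension count via \eref{dimFn_e} to force $a=0$, the observation that degree-$bf$ maps land in a single fiber so $\ev^V$ factors through the $1$-dimensional total diagonal $\De$, the resulting further restriction $g=0$, $\ell(\bs_0)=\ell(\bs_{\i})=1$, and then direct enumeration of the $z\!\mapsto\!z^b$ covers with automorphism weight $1/b$ followed by the K\"unneth decomposition of $\De$. Your closing worry about $g=1$ nodal configurations is already disposed of by the image-dimension argument you give (the pushforward of a class of complex dimension $2$ through a map with $1$-dimensional image vanishes), so no separate handling is needed.
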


\begin{proof}
By \eref{dimFn_e2}, $\GW_{g,A;\bs_0,\bs_{\i}}^{\bF_n,S_0\sqcup S_{\i}}()\!=\!0$ 
unless $a\!=\!0$ and $g\!=\!0,1$.
Since all elements of $\ov\M_{g,0;\bs_0,\bs_{\i}}^{S_0,S_{\i}}(\bF_n,bf)$ are maps to a fiber,
$\GW_{g,A;\bs_0,\bs_{\i}}^{\bF_n,S_0\sqcup S_{\i}}()$ lies in the image of the homomorphism
$$H_*(\De)\lra H_*(S_0^{\ell(\bs_0)})\otimes H_*(S_{\i}^{\ell(\bs_{\i})}),
\qquad\hbox{where}\quad
\De=\big\{(p,\ldots,p)\!\in\!S_0^{\ell(\bs_0)}\!\times\!S_{\i}^{\ell(\bs_{\i})}\big\},$$
induced by the inclusion.
Since $\dim_{\C}\De\!=\!1$, \eref{dimFn_e} then implies that 
$\GW_{g,bf;\bs_0,\bs_{\i}}^{\bF_n,S_0\sqcup S_{\i}}()\!=\!0$ 
unless $g\!=\!0$ and $\ell(\bs_0),\ell(\bs_{\i})\!=\!1$.
In the case $\bs_0,\bs_{\i}\!=\!(b)$, for every element 
$(p,p)\!\in\!\De\!\subset\!S_0\!\times\!S_{\i}$, there is a unique element $[u,y_1,y_2]$
of $\ov\M_{0,0;\bs_0,\bs_{\i}}^{S_0,S_{\i}}(\bF_n,bf)$ such that $u(y_1)\!=\!p\!\in\!S_0$ and 
$u(y_2)\!=\!p\!\in\!S_{\i}$; this is the map $z\!\lra\!z^b$ onto the fiber of 
$\bF_n\!\lra\!\P^1$ over~$p$.
Since the order of the automorphism group of this map is~$b$, we conclude that
$$\GW_{g,bf;\bs_0,\bs_{\i}}^{\bF_n,S_0\sqcup S_{\i}}() =\frac1b\De
=\frac1b(S_0\!\otimes\!1+1\!\otimes\!S_{\i})\in H_2(S_0\!\times\!S_{\i}),$$
by the Kunneth decomposition of the diagonal.
\end{proof}

\begin{lmm}[{\cite[Lemma 14.7]{IPsum}}]
\label{Fn_lmm2}
The relative degree~$A$ GW-invariants of $(\bF_n,S_0\!\cup\!S_{\i})$ with 
one point insertion from $\bF_n$ and
no other constraints from  $\bF_n$ or $\ov\M$ are given~by
$$\GW_{g,A;\bs_0,\bs_{\i}}^{\bF_n,S_0\sqcup S_{\i}}(p)=
\begin{cases} 
1,&\tn{if}~g\!=\!0,~A\!=\!bf,~b\!\in\!\Z^+,~ \bs_0,\bs_{\i}\!=\!(b);\\
S_0^{\ell(\bs_0)}\!\times\!S_{\i}^{\ell(\bs_{\i})},&
\tn{if}~g\!=\!0,~A\!=\!s_0\!+\!bf,~b\!\in\!\Z^{\ge0},~\deg\,\bs_0\!=\!n\!+b,
~\deg\,\bs_{\i}\!=\!b;\\
0,&\tn{otherwise}.
\end{cases}$$
\end{lmm}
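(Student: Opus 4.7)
The plan is to adapt the proof of Lemma~\ref{Fn_lmm1} by accounting for the two extra complex codimensions cut off by the point insertion. Setting $\al=p$ in \eref{dimFn_e} (so $\deg\al=2$ and $\ell(\al)=1$) gives a virtual complex dimension
\[
g+2a-2+\ell(\bs_0)+\ell(\bs_{\i}),
\]
which must be at most $\ell(\bs_0)+\ell(\bs_{\i})=\dim_{\C}(S_0^{\ell(\bs_0)}\!\times\!S_{\i}^{\ell(\bs_{\i})})$ for a nonzero invariant; hence $g+2a\le 2$, leaving $(g,a)\in\{(0,0),(1,0),(2,0),(0,1)\}$.

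For the cases with $a=0$ (so $A=bf$), every $J$-holomorphic representative factors through a single fiber of $\bF_n\!\to\!\P^1$, and the point insertion pins this fiber down to $F_{\pi(p)}$. The relative evaluation morphism $\ev^V$ therefore factors through the single point $(F_{\pi(p)}\cap S_0)^{\ell(\bs_0)}\!\times\!(F_{\pi(p)}\cap S_{\i})^{\ell(\bs_{\i})}$, so the pushforward of a positive-dimensional virtual class vanishes. This rules out the $(g,a)=(1,0)$ and $(2,0)$ contributions and forces the $(0,0)$ virtual dimension to equal zero, giving $\ell(\bs_0)+\ell(\bs_{\i})=2$ and hence $\bs_0=\bs_{\i}=(b)$. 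The moduli space then consists of the single map $z\mapsto z^b$ onto $F_{\pi(p)}$ with the absolute marked point $z_0$ at a preimage of $p$; the $\Z_b$ automorphism group of $z\mapsto z^b$ acts transitively on these $b$ preimages, so there is a unique equivalence class with trivial automorphism group, contributing $1$.

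For $(g,a)=(0,1)$ with $A=s_0+bf$, the virtual dimension equals the target dimension, so $\GW(p)$ is a scalar multiple of the fundamental class; I determine the scalar by a direct parametrization. Writing $\bF_n=\P(\cO_{\P^1}(n)\!\oplus\!\cO_{\P^1})$ with fiber coordinates $[P:Q]$, a genus-zero map of class $s_0+bf$ is a section coming from a line subbundle $\cO(-b)\hookrightarrow\cO(n)\oplus\cO$, i.e.\ a nowhere-vanishing pair $(P,Q)$ with $P\in H^0(\cO(n+b))$ and $Q\in H^0(\cO(b))$, taken modulo an overall scaling. Once the contact divisors $\sum s_iq_i$ on $S_0=\{P\!=\!0\}$ and $\sum t_jr_j$ on $S_{\i}=\{Q\!=\!0\}$ are prescribed, the identities $P=c_P\prod(z-q_i)^{s_i}$ and $Q=c_Q\prod(z-r_j)^{t_j}$ each determine their factor up to a constant, leaving only the ratio $c_P/c_Q\in\P^1$ free after quotienting by scaling. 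This ratio is uniquely and finitely determined by the requirement that the section pass through a generic $p\in\bF_n$, yielding a single section with trivial automorphism group. The induced relative evaluation morphism $\ev^V$ is therefore of degree $1$ onto $S_0^{\ell(\bs_0)}\!\times\!S_{\i}^{\ell(\bs_{\i})}$, so the multiplicity is $1$.

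The substantive step is the explicit parametrization in the $(0,1)$ case, where the one-parameter family of sections realizing any prescribed pair of contact divisors is cut to a single point by the codimension-one condition of passing through $p$. The same parametrization simultaneously verifies generic transversality of the moduli space and of $\ev^V$, so no virtual correction enters the computation.
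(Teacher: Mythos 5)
You follow the paper's overall plan (dimension count via \eref{dimFn_e}, cutting down by the point constraint, explicit parametrization of sections for the $a\!=\!1$ case), but there is a genuine gap in the $a\!=\!0$ analysis. Your ``pushforward of a positive-dimensional virtual class to a point vanishes'' argument correctly dispatches the $g\!=\!1$ cases and the $g\!=\!2$, $b\!>\!0$ cases, but it does \emph{not} apply to the subcase $g\!=\!2$, $b\!=\!0$, $\bs_0\!=\!\bs_{\i}\!=\!()$: there the relative evaluation target is already a single point and the virtual complex dimension is
$$g-1+2a+\ell(\al)+\ell(\bs_0)+\ell(\bs_{\i})-\deg\,\al=2-1+0+1+0+0-2=0\,,$$
so the invariant is a rational number that dimension considerations alone do not force to vanish. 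The paper treats this case separately: after cutting by the point constraint the moduli space is $\ov\M_{2,1}$ (stable genus-two domains carrying a constant map to~$p$), the obstruction bundle is $\bE_2^*\!\otimes\!T_p\bF_n$, and the invariant equals $\lr{c_2(\bE_2)^2,\ov\M_{2,1}}=0$, which vanishes because $\bE_2$ is pulled back from $\ov\M_{2,0}$, a space of complex dimension~$3$ (equivalently, by Mumford's relation $c_2(\bE_2)^2\!=\!0$). Without this obstruction-bundle computation the ``$0$ otherwise'' line of the lemma is not fully justified. Aside from this, your explicit parametrization of sections by pairs $(P,Q)$ modulo overall scaling in the $(g,a)\!=\!(0,1)$ case is a somewhat more detailed rendering of the same final sentence in the paper's proof, and your handling of the $g\!=\!0$, $\bs_0,\bs_{\i}\!=\!(b)$ case matches the paper's observation that the three special points on the domain break the $\Z/b$ deck symmetry of $z\mapsto z^b$.
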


\begin{proof}
By \eref{dimFn_e2}, $\GW_{g,A;\bs_0,\bs_{\i}}^{\bF_n,S_0\sqcup S_{\i}}(p)\!=\!0$ unless 
either $a\!=\!0$ and $g\!=\!0,1,2$ or $a\!=\!1$ and $g\!=\!0$.\\

\noindent
In the first case, all elements of $\ov\M_{g,1;\bs_0,\bs_{\i}}^{S_0,S_{\i}}(\bF_n,bf)$ are 
maps to a fiber and $\GW_{g,A;\bs_0,\bs_{\i}}^{\bF_n,S_0\sqcup S_{\i}}(p)$ lies in
the image of the homomorphism
$$H_*\big(q^{\ell(\bs_0)}\!\times\!q^{\ell(\bs_{\i})}\big)
\lra H_*(S_0^{\ell(\bs_0)})\otimes H_*(S_{\i}^{\ell(\bs_{\i})}),$$
where $q\!=\!\pi(p)\!\in\!\P^1$.
Thus, by \eref{dimFn_e}, $\GW_{g,bf;\bs_0,\bs_{\i}}^{\bF_n,S_0\sqcup S_{\i}}(p)\!=\!0$ unless 
either $b\!=\!0$ and $g\!=\!2$ or $g\!=\!0$ and $\bs_0,\bs_{\i}\!=\!(b)$;
otherwise, this class would not be zero-dimensional.
In the $g\!=\!2$, $b,\ell(\bs_0),\ell(\bs_{\i})\!=\!0$ subcase,
$$\big\{[u,x_1]\!\in\!\ov\M_{g,1;\bs_0,\bs_{\i}}^{S_0,S_{\i}}(\bF_n,bf)\!:~u(x_1)\!=\!p\big\}
\approx\ov\M_{2,1},$$
while the restriction of the obstruction bundle to this subspace is isomorphic to 
$\bE_2^*\!\otimes\!T_p\bF_n$, where $\bE_2\!\lra\!\ov\M_{2,1}$ is the Hodge bundle.
Since $\bE_2$ is the pull-back of the Hodge bundle over $\ov\M_{2,0}$ by the forgetful map,
$$\GW_{2,0;(),()}^{\bF_n,S_0\sqcup S_{\i}}(p)=\lr{c_2(\bE_2^*\!\otimes\!T_p\bF_n),\ov\M_{2,1}}
=\lr{c_2(\bE_2)^2,\ov\M_{2,1}}=0.$$
In the $g\!=\!0$, $\bs_0,\bs_{\i}\!=\!(b)$ subcase,  there is a unique element 
$[u,x_1,y_1,y_2]$
of $\ov\M_{0,1;\bs_0,\bs_{\i}}^{S_0,S_{\i}}(\bF_n,bf)$ such that $u(x_1)\!=\!p$; 
this is the map $z\!\lra\!z^b$ onto the fiber of $\bF_n\!\lra\!\P^1$ containing~$p$.
Unlike the case considered in the proof of Lemma~\ref{Fn_lmm1},
this element is automorphism free, due to the three marked points on its domain;
so the corresponding GW-invariant is~1.\\

\noindent
In the case $g\!=\!0$, $A\!=\!s\!+\!bf$ with $b\!\ge\!0$,
$\deg\,\bs_0\!=\!b\!+\!n$, and $\deg\,\bs_{\i}\!=\!b$, then
$$\dim_{\C}\GW_{g,A;\bs_0,\bs_{\i}}^{\bF_n,S_0\sqcup S_{\i}}(p)=\ell(\bs_0)+\ell(\bs_{\i})$$
by \eref{dimFn_e} and thus $\GW_{g,A;\bs_0,\bs_{\i}}^{\bF_n,S_0\sqcup S_{\i}}(p)$ is a multiple
of the fundamental class of $S_0^{\ell(\bs_0)}\!\times\!S_{\i}^{\ell(\bs_{\i})}$.
This multiple is~1 because $b$ points on~$S_{\i}$ determine poles of a section 
of $\cO(n)\!\lra\!\P^1$
and $b\!+\!n$ points on~$S_0$ determine the unique section of $\cO(n)$ with these poles
that passes through~$p$.
\end{proof}

\begin{rmk}\label{14.3_rmk}
We denote the divisors $S,E\subset\bF_n$ of \cite[Sections~14.3,15.1]{IPsum} by $S_0,S_{\i}$
in order to avoid confusion with the rational elliptic surface of 
\cite[Sections~14.4,15.3]{IPsum}, which is also denoted by~$E$. 
The conclusion in the proof of \cite[Lemma~14.6]{IPsum} about the $S$-matrix does not follow from
the rest of the argument, since it may have contributions from higher genus and classes
coming from~$\ov\M$. 
The proof of \cite[Lemma 14.7]{IPsum} ignores the possibility of $b\!=\!0$ considered above.
In the second case considered in this proof, the dimension of the moduli space
is $\ell(s)\!+\!\ell(s')$ after cutting down by the point constraint.
An irreducible curve representing $S\!+\!bF$ is genus~0 and embedded, because
its projection to~$S$ is of degree~1.
The above argument gives a simpler reason why the multiple is~1.
In the statement of \cite[Lemma 14.7]{IPsum}, the degree conditions on $\bs$ and $\bs'$
are reversed (and are implied by the notation).
Other, minor typos in \cite[Sections~14.1,14.3]{IPsum} include
\begin{enumerate}[label={},topsep=-5pt,itemsep=-5pt,leftmargin=*]
\item \textsf{p1010, lines -2,-1:} $X$ \to $\bF_n$;
\item \textsf{statement and proof of Lemma~14.7:} $SV_{\bs}$ \to $V_{\bs}$;
$SV_{\bs'}$ \to $V_{\bs'}$.
\end{enumerate}
\end{rmk}

\subsection{Enumeration of plane curves: \cite[Section 15.1]{IPsum}} 
\label{CH_subs}

\noindent
This section deduces the Caporaso-Harris formula enumerating curves in~$\P^2$, 
\cite[Theorem~1.1]{CaH}, from the symplectic sum formula.
Fix a line $L\!\subset\!\P^2$.
For tuples 
$$\al\equiv(\al_1,\al_2,\ldots),\be\equiv(\be_1,\be_2,\ldots)\in(\Z^{\ge0})^{\Z^+}$$ 
with finitely many nonzero entries, let
\begin{gather*}
|\al|=\al_1+\al_2+\ldots,\qquad
\al!=\al_1!\cdot\al_2!\cdot\ldots, \qquad
 I\al=\al_1+2\al_2+\ldots, \\
 I^{\al}=1^{\al_1}2^{\al_2}\ldots,\qquad
 \binom{\al}{\be}=\binom{\al_1}{\be_1}\binom{\al_2}{\be_2}\ldots\,, \qquad
\bs_{\al}=\big(\underset{\al_1}{\underbrace{1,\ldots,1}},
\underset{\al_2}{\underbrace{2,\ldots,2}},\ldots\big).
\end{gather*}
For each $k\!\in\!\Z^+$, let $\ve_k\!\in\!(\Z^{\ge0})^{\Z^+}$ be the tuple with 
the $k$-th coordinate equal to~1 and the remaining coordinates equal to~0.\\

\noindent
Given $d\!\in\!\Z^+$, $\de\!\in\!\Z^{\ge0}$,  and $\al,\be\!\in\!(\Z^{\ge0})^{\Z^+}$
such that $I\al\!+\!I\be\!=\!d$, let $N^{d,\de}(\al,\be)$ denote the number
of degree~$d$ curves in~$\P^2$ that have $\de$ nodes, have contact of order~$k$
with $L$ at
$\al_k$ fixed points and $\be_k$ arbitrary points for each $k\!=\!1,2,\ldots$, 
and pass through
\BE{rdfn_e}r=\frac{d(d\!+\!1)}{2}-\de+|\be|\EE
general points in $\P^2$.
Thus,
\BE{CHnums_e}\begin{split}
\be! N^{d,\de}(\al,\be)&=  \GT_{\chi_{\de}(d),dL}^{\P^2,L}(p^r;\bfC_{\al;\be}),\\
&\equiv  \GT_{\chi_{\de}(d),dL;\bs_{\al},\bs_{\be}}^{\P^2,L}
\big(p^r;\underset{|\al|}{\underbrace{p,\ldots,p}},\underset{|\be|}{\underbrace{L,\ldots,L}}\big)
\end{split}\EE
where $\chi_{\de}(d)=2\de-d(d\!-\!3)$ is the geometric euler characteristic of the curves
(the euler characteristic of the normalization)
and  $p,L\!\in\!H^*(L)$ are the Poincare duals of a point in $L$ and 
of the fundamental class of~$L$.
Since a degree~$d$ curve in~$\P^2$ can have at most $d(d\!-\!1)/2$ nodes,
the number~$r$ in~\eref{rdfn_e} is positive whenever $N^{d,\de}(\al,\be)\!\neq\!0$.
This number~$r$ is at least~2 if $N^{d,\de}(\al,\be)\!\neq\!0$ and $(d,\be)\!\neq\!(1,\0)$.

\begin{crl}[{\cite[Theorem~1.1]{CaH}}]\label{CH_crl}
Let $d\!\in\!\Z^+$, $\de\!\in\!\Z^{\ge0}$,  and $\al,\be\!\in\!(\Z^{\ge0})^{\Z^+}$
with $(d,\be)\!\neq\!(1,\0)$.
If $I\al\!+\!I\be\!=\!d$, 
\begin{equation*}\begin{split}
N^{d,\de}(\al,\be)&=\sum_{\begin{subarray}{c}k\in\Z^+\\ \be_k>0\end{subarray}}
\!kN^{d,\de}(\al\!+\!\ve_k,\be\!-\!\ve_k)\\
&\qquad+
\sum_{\begin{subarray}{c}\de'\in\Z^{\ge0},\,\al',\be'\in(\Z^{\ge0})^{\Z^+}\\
I\al'+I\be'=d-1\\
\de-\de'+|\be'-\be|=d-1\end{subarray}}
\!\!\binom{\al}{\al'}\binom{\be'}{\be}I^{\be'-\be}N^{d-1,\de'}(\al',\be').
\end{split}\end{equation*}
\end{crl}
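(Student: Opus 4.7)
The plan is to apply Theorem~\ref{main_thm} to the symplectic sum decomposition $\P^2 = \P^2\,\#_L\,\bF_1$, in which the line $L\subset\P^2$ is identified with $S_\i\subset\bF_1$; the normal bundles $\cO_{\P^1}(1)$ and $\cO_{\P^1}(-1)$ are dual, verifying~\eref{cNVcond_e}, and the symplectic sum is again~$\P^2$. I would choose a cohomology insertion on $\P^2\cup_L\bF_1$ whose $q_{\#}^*$-pullback to $\P^2=\P^2\,\#_L\,\bF_1$ encodes the $r$ generic point constraints of $\be!\,N^{d,\de}(\al,\be)$ (placed on $\P^2\!\setminus\!L$) together with the fixed-$\al$ and free-$\be$ relative contact data on $L$. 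By~\eref{SympSumForm_e}, the quantity $\be!\,N^{d,\de}(\al,\be)$ then equals a star-product of relative GT-invariants of $(\P^2,L)$ and of $(\bF_1,S_\i)$, summed via~\eref{homsumdfn_e} over splittings $(A_X,A_Y)\in H_2(\P^2)\!\times_L\!H_2(\bF_1)$ of total degree $dL$ with matching contact data on $L=S_\i$.

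Next I would analyze the RHS. The generic $p^r$-constraints remain on the $\P^2$-side in the degeneration (generic points lie off the divisor), and the dimension constraint~\eref{dimFn_e2} together with Lemmas~\ref{Fn_lmm1} and~\ref{Fn_lmm2} restrict the nontrivial $\bF_1$-contributions to two families: $A_Y=bf$ (a union of fibers) and $A_Y=s_0+bf$ (a section plus fibers). In the first case, Lemma~\ref{Fn_lmm1} yields the diagonal $\tfrac{1}{b}(S_0\!\otimes\!1+1\!\otimes\!S_\i)$, which combined with the contact-multiplicity factor $\lr\bs/\ell(\bs)!$ from~\eref{bspairing_e} at the $(b)$-contact produces a net factor $k$ (with $b=k$) when the matched contact has order $k$; this reproduces $\sum_k k\,N^{d,\de}(\al+\ve_k,\be-\ve_k)$ by converting a free $k$-tangency of $(\P^2,L)$ into a fixed one at a new specified point on $L$. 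In the second case, Lemma~\ref{Fn_lmm2} strips one unit of degree, leaving a residual $(d-1)L$-curve on $(\P^2,L)$ carrying tangency data $(\al',\be')$; the multinomials $\binom{\al}{\al'}\binom{\be'}{\be}$ and the product $I^{\be'-\be}$ arise from distributing the original $(\al,\be)$-tangencies between those inherited by the $(d-1)L$-component and those absorbed into the $\bF_1$-component across the $|\be'|-|\be|$ new contact nodes on $L$, with the $\lr\bs$-multiplicities at those nodes supplying the factor $I^{\be'-\be}$.

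The main obstacle will be the combinatorial bookkeeping needed to match the recursion coefficients exactly: balancing the $\ell(\bs)!$ denominators from~\eref{bspairing_e} against the unordered-to-ordered reorderings that yield the binomials; reconciling the $\la^{-2\ell(\bs)}$ factor with the Euler-characteristic split between $\chi_{\de'}(d-1)$ on the $\P^2$-side and the $\bF_1$-part; and evaluating the sum over rules of assignment~\eref{RofAdfn_e}, which here reduces to ordinary binomials because the $p^r$-insertions do not split nontrivially between the two components. A secondary obstacle is ruling out $\bF_1$-contributions with $A_Y=as_0+bf$ for $a\!\ge\!2$ or with higher genus, which must be handled case by case using the dimension formula~\eref{dimFn_e2} and the semi-positivity of $(\bF_1,S_0\!\sqcup\!S_\i)$.
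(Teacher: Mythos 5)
Your proposal uses the same decomposition and the same input lemmas (Lemmas~\ref{Fn_lmm1} and~\ref{Fn_lmm2}) as the paper, but it contains a critical error in the setup that would prevent the recursion from appearing at all. You state twice that all $r$ of the point constraints ``remain on the $\P^2$-side in the degeneration,'' yet the entire mechanism of the paper's argument is to move \emph{exactly one} of the $r$ absolute point insertions to the $\bF_1$ side; the constraint $\tfrac{d'(d'+1)}{2}-\de'+|\be'|=r-1$ in \eref{CHsplit_e} is precisely the statement that the $\P^2$-component passes through only $r-1$ points. That moved point is the symplectic-sum incarnation of the classical Caporaso–Harris degeneration in which one of the generic points is specialized to lie on the line $L$; it is what forces the curve to interact nontrivially with the exceptional $\bF_1$.

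With zero points on $\bF_1$, the semipositivity/dimension arguments and Lemma~\ref{Fn_lmm1} force the $\bF_1$-component to be a disjoint union of fiber covers with \emph{no} distinguished one and no section-type component, and Lemma~\ref{Fn_lmm2} — which explicitly concerns the invariants $\GW_{g,A;\bs_0,\bs_\i}^{\bF_1,S_0\sqcup S_\i}(p)$ with a point insertion $p\in\bF_1$ — would simply not be available to you. Yet you invoke Lemma~\ref{Fn_lmm2} in the next paragraph to produce both the $k\,N^{d,\de}(\al+\ve_k,\be-\ve_k)$ swap term (which needs the distinguished fiber cover through the point) and the degree-reduction term involving $N^{d-1,\de'}(\al',\be')$ (which needs the $S_0+bf$ section through the point). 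This is an internal contradiction: the lemma you need is disabled by your own placement of the constraints. Without moving a point, the formula you would obtain merely compares $N^{d,\de}(\al,\be)$ to itself after reshuffling tangency types across the $S_0\otimes 1 + 1\otimes S_\i$ diagonal, which is at best a symmetry, not a recursion that lowers $d$.

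Once this is corrected — move one point to $\bF_1$, so that $r-1$ points remain on $\P^2$ and the $\bF_1$-side carries a single point insertion — the remainder of your outline matches the paper's proof: Lemma~\ref{Fn_lmm1} handles the undistinguished fiber covers and contributes the $1/b$ factors and diagonal splittings; Lemma~\ref{Fn_lmm2} separates the two cases ($A_Y=bf$ for the swap term, $A_Y=s_0+bf$ for the degree-one drop); and the binomials and $I^{\be'-\be}$ factor arise from the bookkeeping of orderings and the $\lr\bs$-multiplicities exactly as you describe. The ``secondary obstacle'' you flag (ruling out $a\ge 2$ or higher genus on the $\bF_1$ side) is already eliminated by \eref{dimFn_e2} and the argument in Lemmas~\ref{Fn_lmm1} and~\ref{Fn_lmm2}, so that part is fine.
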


\noindent
As sketched in \cite[Section~15.1]{IPsum}, this formula can be proved by applying 
the natural extension of 
the symplectic sum formula~\eref{SympSumForm_e} to the decomposition
$$(\P^2,L)=(\P^2,L)\underset{L=S_{\i}}{\#}(\bF_1,S_{\i},S_0),$$
with $(\bF_1,S_{\i},S_0)$ as in Section~\ref{Fn_subs}, 
and moving one of the $r$ absolute point insertions to the~$\bF_1$ side.
Since the divisor $L\!=\!S_{\i}$ is simply connected, the connect sum
$$\#\!:H_2(\P^2;\Z)\!\!\!\!\!\!\!\underset{~~L=S_{\i}}{\times}\!\!\!\!\!H_2(\bF_1;\Z)
\lra H_2(\P^2;\Z)$$
is well-defined in this case.
Since $dL\cdot L=(aS_0\!+\!bF)\cdot S_{\i}$ if and only if $d\!=\!b$ and 
$$dL\#(aS_0\!+\!dF)=(d\!+\!a)L,$$ 
the symplectic sum formula~\eref{SympSumForm_e} and \eref{CHnums_e} give 
\BE{CHsplit_e}\begin{split}
&\be! N^{d,\de}(\al,\be)\\
&=
\sum_{\begin{subarray}{c}d'\in\Z^+,d''\in\Z^{\ge0}\\ d'+d''=d\end{subarray}}
\sum_{\begin{subarray}{c}\de'\in\Z^{\ge0},\chi''\in\Z,\al',\be'\in(\Z^{\ge0})^{\Z^+}\\
I\al'+I\be'=d'\\ 
\chi_{\de'}(d')+\chi''=\chi_{\de}(d)+2|\al'|+2|\be'|\\
\frac{d'(d'+1)}{2}-\de'+|\be'|=r-1
\end{subarray}}\!\!\!\!\!\!\!\!
\frac{I^{\al'}I^{\be'}}{\al'!}
N^{d',\de'}(\al',\be')\cdot
\GT_{\chi'',d''S_0+d'F}^{\bF_1,S_{\i}\sqcup S_0}(p;\bfC_{\be';\al'},\bfC_{\al;\be}),
\end{split}\EE
with the GT-invariant defined analogously to~\eref{CHnums_e} for each component 
of the relative divisor.\\

\noindent
By Lemmas~\ref{Fn_lmm1} and~\ref{Fn_lmm2}, 
there are two types of configurations that contribute to the GT-invariant in~\eref{CHsplit_e}:
\begin{enumerate}[label=(\arabic*),leftmargin=*]
\item genus 0 multiple covers of fibers, each with a single point of contact with $S_{\i}$
and a single point of contact with~$S_0$, with one of these fiber maps passing through the constraint
point in~$\bF_1$;
\item genus 0 multiple covers of fibers, each with a single point of contact with $S_{\i}$
and a single point of contact with~$S_0$, and one genus~0 degree $S_0\!+\!d'F$ map passing
through the constraint point in~$\bF_1$.
\end{enumerate}
By Lemma~\ref{Fn_lmm1}, a genus 0 multiple cover of a fiber not passing through the constraint
point passes through either a fixed point on~$S_0$ (i.e.~a point with contact specified by~$\al$) 
and an arbitrary point on $S_{\i}$ (i.e.~a point with contact encoded by~$\al'$) 
or an arbitrary point on $S_0$ (i.e.~a point with contact encoded by~$\be$) and
a fixed point on~$S_{\i}$ (i.e.~a point with contact specified by~$\be'$).
The orders of contact on the two ends are the same number~$b$, which is
the degree of the cover. Such a cover contributes a factor of $1/b$ to the
GT-invariant in \eref{CHsplit_e}.\\ 

\noindent
In the first case above, $d'\!=\!d$, $\de'\!=\!\de$, and $\al'\!=\!\al\!+\!\ve_k$ and 
$\be'\!=\!\be\!-\!\ve_k$ for some $k\!\in\!\Z^+$, as both relative conditions 
on the distinguished fiber map into~$\bF_1$ must be single
arbitrary points by the first statement in Lemma~\ref{Fn_lmm2}.
For each $k\!\in\!\Z^+$ with $\be_k\!>\!0$, there are 
\begin{enumerate}[label=(\alph*),leftmargin=*]
\item $\be_k$ choices for the relative marked point on the $S_0$ end 
of the distinguished fiber map into~$\bF_1$
and $\al'_k$ choices on the $S_{\i}$ end of this map,
\item $\be'!=\be!/\be_k$ choices of  ordering the ``arbitrary" points on the~$S_0$ 
end of the other fiber maps (the ordering on the $S_{\i}$ end of these maps can be fixed
by the fixed points; along with~(a), this contributes a factor of $\be!$ to the right-hand side
of~\eref{CHsplit_e}),
\item $\al!=\al'!/\al_k'$ choices of  ordering the ``arbitrary" points on the $S_{\i}$ end 
of the other fiber maps (the ordering on the $S_0$ end of these maps can be fixed
by the fixed points; 
along with~(a), this eliminates $1/\al'!$ from  the right-hand side
of~\eref{CHsplit_e}).
\end{enumerate}
Furthermore, the non-distinguished fiber maps in a single configuration 
contribute 
$$\frac{1}{I^{\al}I^{\be'}}=\frac{k}{I^{\al'}I^{\be'}}$$ 
to the invariant.
Thus, Case~1 contributes $\be!\cdot kN^{d,\de}(\al\!+\!\ve_k,\be\!-\!\ve_k)$
to the right-hand side of~\eref{CHsplit_e}.\\

\noindent
In the second case above, $d'\!=\!d\!-\!1$, 
$\al'\!=\!\al\!-\!\al_0$ for some $\al_0\!\in\!(\Z^{\ge0})^{\Z^+}$, and 
$\be\!=\!\be'\!-\!\be_0'$ for some $\be_0'\!\in\!(\Z^{\ge0})^{\Z^+}$,
as both relative conditions on the distinguished map into~$\bF_1$ must be 
fixed points by the second statement in Lemma~\ref{Fn_lmm2}.
For each pair $(\al_0,\be_0')$, there are 
\begin{enumerate}[label=(\alph*),leftmargin=*]
\item $\binom{\al}{\al_0}$ choices of fixed points on the $S_0$ end of $\bF_1$
and $\binom{\be'}{\be_0'}$ choices of fixed points on the $S_{\i}$ end of $\bF_1$
(these go on the non-fiber curve),
\item $\be!=\!(\be'\!-\!\be_0')!$ choices of  ordering the ``arbitrary" points on the~$S_0$ 
end of the fiber maps (the ordering on the $S_{\i}$ end of these maps can be fixed
by the fixed points; this contributes a factor of $\be!$ to the right-hand side
of~\eref{CHsplit_e}),
\item $\al'!\!=\!(\al\!-\!\al_0)!$ choices of  ordering the ``arbitrary" points on 
the $S_{\i}$ end 
of the fiber maps (the ordering on the $S_0$ end of these maps can be fixed
by the fixed points; this eliminates $1/\al'!$ from the formula).
\end{enumerate}
Furthermore, the fiber maps in a single configuration contribute 
$$\frac{1}{I^{\al'}I^{\be}}=\frac{I^{\be'-\be}}{I^{\al'}I^{\be'}}$$
to the invariant.
Thus, Case~2 contributes
$\be!\cdot\binom{\al}{\al'}\binom{\be'}{\be}I^{\be'-\be}N^{d-1,\de'}(\al',\be')$
to the right-hand side of~\eref{CHsplit_e}.
This establishes Corollary~\ref{CH_crl}.

\begin{rmk}\label{15.1_rmk}
Throughout \cite[Section~15.1]{IPsum}, $\P$ and $\P_1$ denote the surface $\bF_1$ of 
\cite[Section~14.3]{IPsum}.
The first identity on page~1015 cannot possibly be true, since $\GT_{\chi,dL}^{\P^2,L}$,
however its definition is interpreted, groups the relative constraints
of the same type together and treats the resulting sets in the same way,
 while $N^{d,\de}(\al,\be)$ treats the $\al$ and $\be$ constraints differently
(the $\al$-contacts are fixed and so the corresponding contact points of the domain
can be ordered).
The definition in the second displayed expression 
and the symplectic sum formula in the third displayed expression have 
the same issue.
The former is unnecessary,
since the symplectic sum formula involves GW/GT-invariants and these
are also the numbers computed in \cite[Lemma~14.7]{IPsum}.
Finally:
\begin{enumerate}[label={},topsep=-5pt,itemsep=-5pt,leftmargin=*]
\item\textsf{p1014, -3:} $\P$ \to $\P^2$;
\item\textsf{p1015, lines 3,18,19,21,29; p1016, line 8:} $\P$ \to $\P_1$;
\item\textsf{p1015, line 9:} $\ga^1$ \to $\ga_1$;
\item\textsf{line 12:} $m!$ and $|m|$ correspond to $\al!\be!$ and $|\al|\cdot|\be|$;
$\prod_i\al_i$ \to $\prod_i\al_i!$;
\item\textsf{line 14:} $\GT_{\chi,dL,\P^2}^L$ \to $\GT_{\P^2,dL,\chi}^L$; $C_m$ \to~$\bfC_m$;
\item\textsf{line 15:} $\chi$ is geometric euler characteristic;
\item\textsf{line 18:} $\GT_{\chi,aL+bF,\P}^{E,L}$ \to $\GT_{\P,aL+bF,\chi}^{E,L}$;
$(C_m;p;C_{m'})$ \to $(\bfC_{m'};p;\bfC_m)$
\item\textsf{line 23:} the $S$-matrix is the identity;
\item\textsf{bottom:} $\al'=\al+\ve_k$, $\be'=\be-\ve_k$, $\chi'=\chi$;
\item\textsf{p1016, lines 7-9:} it is unclear what this sentence is saying;
\item\textsf{line 11:} 
$N^{d,\de'}(\al\!-\!\ve_k,\be\!+\!\ve_k)$ \to $N^{d,\de}(\al\!+\!\ve_k,\be\!-\!\ve_k)$.
\end{enumerate}
\end{rmk}

\subsection{Hurwitz numbers: \cite[Section 15.2]{IPsum}} 
\label{Hurwitz_subs}

\noindent
This section deduces a cut and paste formula for branched covers of~$\P^1$,
\cite[Lemma~3.1]{GJV}, from the natural extension of
the symplectic sum formula~\eref{SympSumForm_e} to relative invariants.
We continue with the combinatorial notation introduced at the beginning of 
Section~\ref{CH_subs}.\\

\noindent
Fix a point $p\!\in\!\P^1$.
Given $d\!\in\!\Z^+$, $g\!\in\!\Z^{\ge0}$,  and $\al\!\in\!(\Z^{\ge0})^{\Z^+}$
such that $I\al\!=\!d$, let $N_{d,g}(\al)$ denote the number of 
genus~$g$ degree~$d$ branched covers of~$\P^1$ with $\al_k$ branch points of order~$k$
over~$p$ for each $k\!\in\!\Z^+$ and simple branching over
\BE{rdfn_e2}r=d+|\al|+2g-2\EE
other fixed points $p_1,\ldots,p_r$ in $\P^1$.
Thus,
\BE{Hurnums_e}
N_{d,g}(\al)=
\frac{1}{\al!\,(d\!-\!2)!^r}\ \deg\,\GW^{\P^1,V_r}_{g,d;\bs_1^r,\bs_{\al}}()\,,\EE
where $V_r\!=\!\{p_1,...,p_r,p\}$ and $\bs_1^r$ denotes $r$ copies of the tuple $\bs_1$
defined in~\eref{bs1dfn_e}.
Since $d,|\al|\!\ge\!1$ and $g\!\ge\!0$, the number~$r$ in~\eref{Hurnums_e} is positive 
unless $(d,g,\al)=(1,0,(1))$; in this exceptional case, $N_{d,g}(\al)\!=\!1$.

\begin{crl}[{\cite[Lemma~3.1]{GJV}}]\label{Hur_crl}
The generating function
\BE{HurFdfn_e} 
F(\la,u,z_1,z_2,\ldots)=
\sum_{g=0}^{\i}\sum_{d=1}^{\i}
\sum_{\begin{subarray}{c}\al\in(\Z^{\ge0})^{\i}\\ I\al=d\end{subarray}}
\!\!\!\!\!\! N_{d,g}(\al) \bigg(\!\prod_{k=1}^{\i}z_k^{\al_k}\!\!\bigg)
\frac{u^{d+|\al|+2g-2}}{(d\!+\!|\al|\!+\!2g\!-\!2)!}\la^{2g-2}\EE
satisfies the PDE
\BE{HurPDE_e}\partial_uF= \frac12 \sum_{i,j\ge1}
\big(ijz_{i+j}\la^2[\partial_{z_i}\partial_{z_j}F+
\partial_{z_i}F\cdot\partial_{z_j}F]+
(i\!+\!j)z_iz_j\partial_{z_{i+j}}F\big).\EE
\end{crl}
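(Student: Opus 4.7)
The plan is to apply the natural extension of the symplectic sum formula \eref{SympSumForm_e} to the degeneration $(\P^1,V_r)\rightsquigarrow(\P^1_L,V_{r-1}\!\cup\!\{q\})\cup_q(\P^1_R,\{q,p,p_r\})$, in which a bubble $\P^1_R$ carrying the profile point $p$ together with a single distinguished simple branch point $p_r$ splits off from the main component $\P^1_L$, which retains the remaining simple branch points $p_1,\ldots,p_{r-1}$. Since $\P^1\#_q\P^1\cong\P^1$ this is a legitimate symplectic sum decomposition, and on the generating-function side the choice of which of the $r$ simple branch points to bubble off, combined with the $u^r/r!$ weight in \eref{HurFdfn_e}, produces precisely $\partial_uF$ on the left-hand side of~\eref{HurPDE_e}.

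Next, I would enumerate the possibly disconnected covers of the bubble $\P^1_R$ with profile $\bs_L$ at $q$, profile $\bs_{\alpha}$ at $p$, and $\bs_1=(2,1^{d-2})$ at $p_r$. By Riemann--Hurwitz, any such cover decomposes as a single distinguished connected component $\Sigma^{(*)}$ carrying the ramified preimage of $p_r$, together with trivial components of degrees $k_1,\ldots,k_m$ each contributing $1/k_a$ via Lemma~\ref{P1_lmm1}. The distinguished component must satisfy $2g^{(*)}=3-\ell(\bs^{(*)})-\ell(\bs^{(*)}_p)$, so Lemma~\ref{P1_lmm2} leaves exactly two cases, each contributing~$1$: Case~A with $\bs^{(*)}=(i{+}j)$ at $q$ and $\bs^{(*)}_p=(i,j)$ at $p$, and Case~B with $\bs^{(*)}=(i,j)$ at $q$ and $\bs^{(*)}_p=(i{+}j)$ at $p$.

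Third, I would assemble these pieces with the symplectic sum multiplicity $\langle\bs_L\rangle\lambda^{-2\ell(\bs_L)}/\ell(\bs_L)!$. The $1/k_a$ factors from the trivial bubble components cancel the $\prod k_a$ appearing in $\langle\bs_L\rangle$, leaving the key numerical factors $(i{+}j)$ (Case~A) and $ij$ (Case~B). Euler-characteristic bookkeeping gives $\chi(\Sigma)=\chi(\Sigma_L)$ in Case~A and $\chi(\Sigma)=\chi(\Sigma_L)-2$ in Case~B, i.e.\ $g=g_L$ versus $g=g_L+1$, which produces exactly the $\lambda^2$ prefactor attached to the Case~B operators. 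Recognizing the sum over the degrees $(k_1,\ldots,k_m)$ and over $m$ as the action of the main-side generating function, and passing from the disconnected GT-series to the connected function $F$ via $\GT=\exp(\GW^{\mathrm{conn}})$, Case~A reproduces $(i{+}j)z_iz_j\partial_{z_{i+j}}F$, while Case~B splits into $ij\,z_{i+j}\lambda^2\partial_{z_i}\partial_{z_j}F$ (when $\Sigma^{(*)}$ sits inside a single connected main cover) and $ij\,z_{i+j}\lambda^2\partial_{z_i}F\cdot\partial_{z_j}F$ (when $\Sigma^{(*)}$ fuses two previously disjoint connected pieces). The overall $\tfrac12$ absorbs the unordered pair $\{i,j\}$.

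The principal technical obstacle will be the careful bookkeeping of all the combinatorial, automorphism, and normalization factors: separating the connected generating function $F$ from the disconnected GT-invariants appearing in \eref{SympSumForm_e} so that the $\partial_{z_i}F\cdot\partial_{z_j}F$ term emerges correctly, reconciling the $(d{-}2)!^r$ and $\alpha!$ factors in \eref{Hurnums_e} against the $u^r/r!$ normalization in the definition of $F$, and verifying that the resummation over the trivial bubble components indeed produces the expected differential operators on $F$ with unit coefficients.
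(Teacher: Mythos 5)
Your proposal is correct and follows essentially the same route as the paper's proof: the same degeneration splitting off $p$ and one simple branch point $p_r$ onto a rational bubble, the same use of Lemmas~\ref{P1_lmm1} and~\ref{P1_lmm2} to identify the bubble configurations as one distinguished component (with the two profile patterns $(i{+}j)|(i,j)$ and $(i,j)|(i{+}j)$) plus trivial covers, and the same Euler-characteristic and multiplicity bookkeeping to match the three terms of the PDE. The only stylistic difference is that you package the trivial-component resummation and the connected/disconnected split via $\GT=\exp\GW$, whereas the paper tracks the combinatorial factors $\alpha!$, $I^{\alpha'}$, and $\langle\bs\rangle$ by hand, but these are equivalent ways of organizing the same computation.
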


\noindent
As sketched in \cite[Section~15.2]{IPsum}, this statement can be proved by applying 
the symplectic sum formula to the decomposition
$$(\P^1,p_1,\ldots,p_r,p)=(\P^1,p_1,\ldots,p_{r-1},x)\underset{x=y}{\#}
(\P^1,y,p_r,p),$$
i.e.~by separating off the distinguished branch point and one of the simple branch points
onto a second copy of~$\P^1$.
In this case, the connect sum is well-defined on~$H_2$ and is given~by
\begin{gather*}
\#\!:H_2(\P^1;\Z)\!\!\underset{x=y}\times\!\!H_2(\P^1;\Z)
=\big\{(d\P^1,d\P^1)\!:~d\!\in\!\Z\big\}\lra H_2(\P^1;\Z), \\
(d\P^1,d\P^1)\lra d\P^1\,.
\end{gather*}
Thus, the symplectic sum formula and \eref{Hurnums_e} give 
\BE{Hursplit_e}\begin{split}
&\al!\,(d\!-\!2)!^rN_{d,g}(\al)\\
&\hspace{.3in}=
\sum_{\begin{subarray}{c}\Ga=(\Ga_1,\Ga_2)\\  
g(\Ga)=g+|\tV_{\Ga}|-|\tE_{\Ga}|-1\end{subarray}}
\sum_{\begin{subarray}{c}\al'\in(\Z^{\ge0})^{\Z^+}\\ |\al'|=|\tE_{\Ga}|,~ I\al'=d\end{subarray}}
\!\!\!\!\!\!\frac{I^{\al'}}{\al'!}
\Big(\deg\,\GT_{\Ga';\bs_1^{r-1},\bs_{\al'}}^{\P^1,V_{r-1}}\Big)
\Big(\deg\,\GT_{\Ga'';\bs_{\al'},\bs_1,\bs_{\al}}^{\P^1,\{y,p_r,p\}}\Big),\\
\end{split}\EE
with the outer sum taken over all bipartite connected graphs $\Ga\!=\!(\Ga',\Ga'')$
with vertices~$\tV_{\Ga}$ decorated by nonnegative integers, as in Figure~\ref{Hur_fig}; 
we denote the sum of these numbers by~$|V_{\Ga}|$.
Each vertex of $\Ga'$ (resp.~$\Ga'')$ corresponds to a map from a connected curve
of the genus given by the vertex label into the first (resp.~second)~$\P^1$. 
Each edge in~$\Ga$ represents paired relative marked points of the domains
mapped into the two copies of~$\P^1$.\\

\noindent
By Lemmas~\ref{P1_lmm1} and~\ref{P1_lmm2}, 
there are two types of configurations that contribute to the right-hand side of~\eref{Hursplit_e}:
\begin{enumerate}[label=(\arabic*),leftmargin=*]
\item genus 0 branched covers of the second $\P^1$, each with a single preimage of~$y$ 
and a single preimage of~$p$, and a genus~0 branched cover of the second $\P^1$
with a single preimage of~$y$, two preimages of~$p$, and a simple branching over~$p_r$;
\item  genus 0 branched covers of the second $\P^1$, each with a single preimage of~$y$ 
and a single preimage of~$p$, and a genus~0 branched cover of the second $\P^1$
with a single preimage of~$p$, two preimages of~$y$, and a simple branching over~$p_r$.
\end{enumerate}
By Lemma~\ref{P1_lmm1}, a degree~$k$ branched cover of the second $\P^1$ without 
the branching condition over~$p_r$ contributes a factor of $1/k$ to the
last GT-invariant in \eref{Hursplit_e}.
Such a cover  has contact of order~$k$ with $p$  (i.e.~a point with contact specified by~$\al$)
and~$y$ (i.e.~a point with contact encoded by~$\al'$).
For all curve types~$\Ga$, there are $(d\!-\!2)!$ choices of ordering the non-branched preimages
of each of the $r$ simple branch points, which together contribute a factor of $(d\!-\!2)!^r$ 
to the right-hand side of~\eref{Hursplit_e}.\\

\begin{figure}
\begin{pspicture}(-2,-2)(10,0)
\psset{unit=.3cm}
\pscircle*(5,-6){.2}\rput(5,-7){\smsize{$g$}}
\psline[linewidth=.05](5,-6)(8,-2)\pscircle*(8,-2){.2}\rput(8,-1.2){\smsize{0}}
\psline[linewidth=.05](5,-6)(6,-2)\pscircle*(6,-2){.2}\rput(6,-1.2){\smsize{0}}
\psline[linewidth=.05](5,-6)(4,-2)\pscircle*(4,-2){.2}\rput(4,-1.2){\smsize{0}}
\psline[linewidth=.05](5,-6)(2,-2)\pscircle*(2,-2){.2}\rput(2,-1.2){\smsize{0}}
\pscircle*(25,-6){.2}\rput(25,-7){\smsize{$g-1$}}
\psline[linewidth=.05](25,-6)(28,-2)\pscircle*(28,-2){.2}\rput(28,-1.2){\smsize{0}}
\psline[linewidth=.05](25,-6)(22,-2)\pscircle*(22,-2){.2}\rput(22,-1.2){\smsize{0}}
\psarc[linewidth=.04](29,-4){4.47}{153.43}{206.57}
\psarc[linewidth=.04](21,-4){4.47}{-26.57}{26.57}
\pscircle*(25,-2){.2}\rput(25,-1.2){\smsize{0}}
\pscircle*(33.5,-6){.2}\rput(33.5,-7){\smsize{$g_1$}}
\pscircle*(36.5,-6){.2}\rput(36.5,-7){\smsize{$g_2$}}
\psline[linewidth=.05](36.5,-6)(38,-2)\pscircle*(38,-2){.2}\rput(38,-1.2){\smsize{0}}
\psline[linewidth=.05](33.5,-6)(32,-2)\pscircle*(32,-2){.2}\rput(32,-1.2){\smsize{0}}
\psline[linewidth=.05](36.5,-6)(35,-2)
\psline[linewidth=.05](33.5,-6)(35,-2)
\pscircle*(35,-2){.2}\rput(35,-1.2){\smsize{0}}
\end{pspicture}
\caption{Graph types $\Ga$ contributing to the right-hand side of \eref{Hursplit_e}.}
\label{Hur_fig}
\end{figure}

\noindent
In the first case above, $\Ga'$ consists of a single vertex with label~$g$
and 
$$\al'=\al-\ve_i-\ve_j+\ve_{i+j}$$
for some $i,j\!\in\!\Z^+$, as there are two contact conditions on the $p$ end 
of a branched cover of the second~$\P^1$ (corresponding to~$\al$)
and only one on the~$y$ end (corresponding to~$\al'$).
Whenever $i\!\neq\!j$ and $\al_i,\al_j\!>\!0$, there are 
\begin{enumerate}[label=(\alph*),leftmargin=*]
\item $\al_i\al_j$ choices for the relative marked points on the $p$ end 
of the distinguished map into the second~$\P^1$,
\item $\al'!$ choices for ordering the points on the $y$ end of the second~$\P^1$ 
for a given ordering of the points on the $p$ end 
(this eliminates $1/\al'!$ from the formula).
\end{enumerate}
Furthermore, the non-distinguished fiber maps in a single configuration 
contribute 
$$\frac{1}{I^{\al}/(ij)}=\frac{1}{I^{\al'}/(i\!+\!j)}.$$ 
Thus, the contribution from this case is
$$\al_i\al_j\cdot(i\!+\!j)\cdot\al'!N_{g,d}(\al')
=\al!\cdot(i\!+\!j)(\al_{i+j}\!+\!1)N_{g,d}(\al').$$
In the $i\!=\!j$ case, there are $\al_i(\al_i\!-\!1)/2$ choices in (a) above and
the same number of choices in~(b).
So, the contribution now is
$$\frac{\al_i(\al_i\!-\!1)}{2}\cdot(i\!+\!i)\cdot\al'!N_{g,d}(\al')
=\al!\cdot\frac12(i\!+\!j)(\al_{i+j}\!+\!1)N_{g,d}(\al').$$
Both cases correspond to the last term in~\eref{HurPDE_e}, since $\al'$ is obtained from $\al$
by reducing $\al_i$ and $\al_j$ and increasing $\al_{i+j}$ by~1
(thus, $N_{g,d}(\al')$ is the coefficient of the product of $z_1,\ldots$ with one smaller
power of $z_i$ and $z_j$ and one larger power of $z_{i+j}$;
the factor of $\al_{i+j}\!+\!1$ above corresponds to differentiating $z_{i+j}^{\al_{i+j}+1}$).\\

\noindent
In the second case above, $\Ga'$ consists either of a single vertex with label $g\!-\!1$
or two vertices with labels adding up to~$g$ and 
$$\al'=\al+\ve_i+\ve_j-\ve_{i+j}$$
for some $i,j\!\in\!\Z^+$, as there is one contact condition on the $p$ end 
of a branched cover of the second~$\P^1$ (corresponding to~$\al$)
and two on the~$y$ end (corresponding to~$\al'$).
Whenever $i\!\neq\!j$ and $\al_i',\al_j'\!>\!0$, there are 
\begin{enumerate}[label=(\alph*),leftmargin=*]
\item $\al_i'\al_j'$ choices for the relative marked points on the $y$ end 
of the distinguished map into the second~$\P^1$,
\item $\al!$ choices for ordering the points on the $p$ end of the second~$\P^1$ 
for a given ordering of the points on the $y$ end 
(this contributes a factor of $\al!$ to the right-hand side of~\eref{Hursplit_e}).
\end{enumerate}
Furthermore, the non-distinguished fiber maps in a single configuration 
contribute 
$$\frac{1}{I^{\al}/(i+j)}=\frac{1}{I^{\al'}/(ij)}.$$ 
Thus, the contribution from this case is
\BE{Hur2a_e} 
\al!\cdot \al_i'\al_j'\cdot\frac{ij}{\al'!}\cdot\al'!N_{g,d}'(\al')
=\al!\cdot ij(\al_i\!+\!1)(\al_j\!+\!1)N_{g,d}'(\al'),\EE
where $N_{g,d}'(\al')$ denotes the sum of the contribution from the two possible configurations
into the first~$\P^1$, divided by $(d\!-\!2)!^{r-1}$ and $\al'!$.
In the $i\!=\!j$ case, there are $\al_i'(\al_i'\!-\!1)/2$ choices in~(a) above and
the same number of choices in~(b).
So, the contribution now is
\BE{Hur2b_e}
\al!\cdot\frac12\al_i'(\al_i'\!-\!1)\frac{ii}{\al'!}\cdot\al'!N_{g,d}'(\al')
=\al!\cdot\frac12ij (\al_i\!+\!2)(\al_i\!+\!1)N_{g,d}'(\al').\EE
The connected configuration into the first $\P^1$ contributes $N_{g-1,d}(\al')$
to the number $N_{g,d}'(\al')$.
Combined with the factors \eref{Hur2a_e} and~\eref{Hur2b_e}, 
this corresponds to the first term on the right-hand side of~\eref{HurPDE_e}, 
since $\al'$ is obtained from $\al$ by increasing $\al_i$ and $\al_j$ and reducing 
$\al_{i+j}$ by~1
(thus, $N_{g-1,d}(\al')$ is the coefficient of the product of $z_1,\ldots$ with one larger
power of $z_i$ and $z_j$ and one smaller power of $z_{i+j}$ and~$\la^2$).\\

\noindent
Finally, the contribution of the two-component configuration to~$N_{g,d}'(\al')$ is
\begin{equation*}\begin{split}
&\frac{1}{\al'!}\sum_{r_1+r_2=r-1}\sum_{g_1+g_2=g}
\sum_{\begin{subarray}{c}\al_1'+\al_2'=\al'\\ \al_{1;i}',\al_{2;j}'>0\end{subarray}}
\binom{r\!-\!1}{r_1}\binom{\al'\!-\!\ve_i\!-\!\ve_j}{\al_1'\!-\!\ve_i}
\al_1'!N_{d_1,g_1}(\al_1')\al_2'!N_{d_2,g_2}(\al_2')\\
&\qquad=(r\!-\!1)!\frac{(\al'\!-\!\ve_i\!-\!\ve_j)!}{\al'!}\sum_{r_1+r_2=r-1}\sum_{g_1+g_2=g}
\sum_{\al_1'+\al_2'=\al'}
\frac{\al_{1;i}'N_{d_1,g_1}(\al_1')}{r_1!}\frac{\al_{2;j}'N_{d_2,g_2}(\al_2')}{r_2!}\,,
\end{split}\end{equation*}
where $d_i$ is determined by $g_i$, $\al_i'$, and $r_i$.
Combined with  the factors \eref{Hur2a_e} and~\eref{Hur2b_e} and summed over 
ordered pairs $(i,j)$, this contributes
$$\al!\cdot (r\!-\!1)!\cdot\frac12\sum_{i,j}ij
\sum_{r_1+r_2=r-1}\sum_{g_1+g_2=g}
\sum_{\al_1'+\al_2'=\al'}
\frac{\al_{1;i}'N_{d_1,g_1}(\al_1')}{r_1!}\frac{\al_{2;j}'N_{d_2,g_2}(\al_2')}{r_2!}$$
to the right-hand side of \eref{Hursplit_e}.
This corresponds to the middle term on the right-hand side of~\eref{HurPDE_e}
(the factorials in the above expression precisely correspond to $u^r/r!$ in
the definition of~$F$).

\begin{rmk}\label{15.2_rmk}
Our notation in this section differs from that of \cite[Section~15.2]{IPsum} and \cite{GJV}.
The $k$-th component of our tuple~$\al$ is the number of entries in the tuple~$\al$
of \cite[Section~15.2]{IPsum} and \cite{GJV} that equal~$k$.
Thus, our usage of~$\al$ is consistent with Section~\ref{CH_subs},
which is essentially \cite[Section~15.1]{IPsum}, while 
the tuples~$\al$ of \cite[Section~15.2]{IPsum} and \cite{GJV} are denoted by~$\bs$
in the rest of~\cite{IPsum}.
Similarly to the situation with \cite[Lemma~14.2]{IPsum}, 
$\GW^{\P^1,p}_{g,d}(b^r;\bfC_m)$ is not defined in \cite[Section~15.2]{IPsum};
its intended meaning is inconsistent with the notation used in the rest of the paper.
The generating function on line~3 on page~1017 in~\cite{IPsum} is not \cite[(A.6)]{IPsum}.
More significantly, it is also not the generating function of~\cite[(3.1)]{GJV}.
Dropping $t^d$ from this definition is not material, since 
$t$ does not appear in the PDE for~$F$ and $d$ is encoded by~$m$, 
but dropping $m_a!$ is material; 
otherwise, $F$ would not satisfy the PDE.
It is not immediately clear whether the sum in \cite[(3.1)]{GJV}
is over ordered or unordered partitions~$\al$ of $n\!=\!d$
(neither of which would correspond to the generating function in~\cite{IPsum}),
but summing over the unordered partitions~$\al$ 
(which is the same as summing over our tuples~$\al$) gives 
a solution of the desired PDE.
As stated, \cite[(15.3)]{IPsum} is incorrect, 
since the sum is only over certain configurations of curves 
(as explained after this formula).
The last term in \cite[(15.3)]{IPsum} is not even defined in the paper 
(though its meaning could be guessed);
it is also unnecessary, since the symplectic sum formula involves GW/GT-invariants and 
these are also the numbers computed in \cite[Lemmas~14.1,14.2]{IPsum}.
Other, fairly minor misstatements in \cite[Section~6]{LR} include
\begin{enumerate}[label={},topsep=-5pt,itemsep=-5pt,leftmargin=*]
\item\textsf{p1016, Section 15.2, line 17,20; p1017, line 3,7:} $C_m$ \to $\bfC_m$;
\item\textsf{p1016, line -1:} there should be only one -2 in this formula;
\item\textsf{p1017, line 6:} the $S$-matrix is the identity;
\item\textsf{p1017, line 10:} it is not clear what $\GT=\exp\GW$ means here or why it is relevant;
\item\textsf{p1017, 2.:} $-\chi_1=2g-4$, $g_1+g_2=g$, $d_1+d_2=d$.
\end{enumerate}
\end{rmk}

\subsection{GW-invariants and birational geometry: \cite[Section~1,2,6]{LR}}
\label{LRapp_subs}

\noindent
This section summarizes comments on the remainder of~\cite{LR}.\\

\noindent
Connections with birational geometry are described extensively 
on page~152 of~\cite{LR}, at the beginning of the introduction.
There are many other instances of the discussion diverging in this direction
which have little to do with the content of the paper.
These include the entire page~153, the paragraph preceding Definition~1.1,
the last three sentences on page~155, the sentence after~(1.8),
the short and long paragraphs on page~159, the sentence before Definition~2.4,
and the three paragraphs of Remarks~2.15 and~2.16.\\

\noindent
There are many statements that come with no citations or imprecise citations.
These~include
\begin{enumerate}[label=(\arabic*),leftmargin=*]
\item the sentence before Corollary~A.3;
\item bottom of page of 160 (Gray's Theorem);
\item the paragraph below (2.24);
\item some statements in Remarks~2.14 and~2.15;
\item citations of [H] above Lemma 3.5 on p174 and of [HWZ1] at the top of p177;
\item statement above Remark~4.1 on p188;
\item reference to Siebert's construction at the bottom of p189;
\item reference to Donaldson's book on at the top of p198.
\end{enumerate}

\noindent
The statements of Theorems~A and~B do not make the assumptions on the manifold~$M$
clear. Based on the proofs, $M$ is a threefold in both cases.
Symplectic sum formulas are not necessary to establish these formulas;
a nearly complete geometric argument for them is already given in the paper.\\

\noindent
Theorems~A and~B are deduced from the symplectic sum formula in \cite[Section~6]{LR}.
However, the latter is barely used in their proofs and the arguments indicate how to 
avoid it entirely.
Let $\wh{X}$ be the symplectic blowup of a threefold~$X$ along an embedded curve~$C$, 
$A\!\in\!H_2(X;\Z)$ be such that $\lr{c_1(X),A}\!>\!0$,
and $\al_i\!\in\!H^4(X)\!\cup\!H^6(X)$ be a collection of classes
of total codimension corresponding to GW-invariants of degree~$A$.
These invariants are then counts of $(J,\nu)$-holomorphic curves, 
for a generic~$(J,\nu)$, passing through representatives of $\PD_X(\al_i)$.
The latter can be chosen to be disjoint from~$C$;
$J$ can be chosen to be Kahler along~$C$ and so that all curves of degree~$A$
passing through the constraints are disjoint from~$C$.
These representatives and curves then lift to~$\wh{X}$, contributing to the corresponding
GW-invariants of~$\wh{X}$;
any other curve in~$\wh{X}$ contributing to this count would descend to~$X$
and thus would be disjoint from~$C$.

\begin{rmk}\label{LR6_rmk}
The fourth sentence of the long paragraph on page~163 in~\cite{LR}
makes it sound that every two smooth CY 3-folds
are related by a sequence of flops, but it is apparently meant to apply to 
every pair of birational smooth CY 3-folds.
The flop construction is never formally defined, but apparently the sentence 
after \cite[(2.15)]{LR} is part of the definition.
Other, fairly minor misstatements in \cite[Sections~1,6]{LR} include
\begin{enumerate}[label={},topsep=-5pt,itemsep=-5pt,leftmargin=*]
\item\textsf{p155, line -7:} the above corollary \to Corollary~A.2;
\item\textsf{p156, Crl B:} there exists such a $\vph$; 
\item\textsf{p157, line 12:} this equality does not hold, as LHS is degenerate along $\pi^{-1}(Z)$;
\item\textsf{p157, lines 14-16:} this sentence makes no sense;
\item\textsf{p157, line 19:} positive \to nonnegative;
\item\textsf{p157, line 23:} is tangent to \to has contact with;
\item\textsf{p157, bottom:} no connection to justification;
\item\textsf{p158, line 1:} Theorem 5.3 \to Theorem 4.14;
\item\textsf{p158, Theorem C(iii):} need an almost complex structure on~$M$;
\item\textsf{p158, line 1:} Theorem 5.6,5.7 \to Theorems 5.6,5.8;
\item\textsf{p212, above (6.3):} there is no $\tn{Ind}\,D_u$ in (5.2);
\item\textsf{p212, below (6.4):} $\bar{M}^+$ just defined on the previous page;
\item\textsf{p212, above (6.6):} Remark 3.24 \to Remark 5.2;
\item\textsf{p213, above Pf of Crl A.2:} Corollaries A.1 and A.3 are immediate consequences;
\item\textsf{p215, line 6:} $Y$ \to $M^+$;
\item\textsf{p215, above Pf of Crl B.2:} Corollary B.1 is an immediate consequence. 
\end{enumerate}
\end{rmk}


\vspace{.2in}

\noindent
{\it Simons Center for Geometry and Physics, SUNY Stony Brook, NY 11794\\
mtehrani@scgp.stonybrook.edu}\\

\noindent
{\it Department of Mathematics, SUNY Stony Brook, Stony Brook, NY 11794\\
azinger@math.sunysb.edu}\\

\end{document}